\newcommand{\C}{\mathbb{C}}
\newcommand{\R}{\mathbb{R}}
\newcommand{\N}{\mathbb{N}}
\newcommand{\Z}{\mathbb{Z}}
\newcommand{\V}{\mathbb{V}}
\newcommand{\del}{\partial}
\newcommand{\bigslant}[2]{{\raisebox{.2em}{$#1$}\left/\raisebox{-.2em}{$#2$}\right.}}
\newtheorem{defn}{Definition}[section]
\newtheorem{lem}[defn]{Lemma}
\newtheorem{thm}[defn]{Theorem}
\newtheorem{cor}[defn]{Corollary}
\newtheorem{prop}[defn]{Proposition}
\newtheorem*{rem}{Remark}
\newtheorem*{nota}{Notation}
\newtheorem{Intro}{Theorem}
\title{$L^2$ cohomology of a Variation of Hodge structure for an infinite covering of an open curve ramified at infinity}
\author{Bastien Jean}
\begin{document}

\maketitle

\begin{abstract} Let $X$ be a compact Riemann surface, $\Sigma$ a finite set of points and $M = X\setminus \Sigma$. We study the $L^2$ cohomology of a polarized complex variation of Hodge structure on a Galois covering of the Riemann surface of finite type $M$. In this article we treat the case when the covering comes from a branched covering of $X$, and where $M$ is endowed with a metric asymptotic to a Poincaré metric. We prove that after tensorisation with the algebra of affiliated operators, the $L^2$ cohomology admits a pure Hodge structure.
\end{abstract}

\section*{Introduction}

Hodge theory has been historically linked with $L^2$ cohomology. For a compact Kähler manifold it is well known that its $L^2$ cohomology is naturally isomorphic to its De Rham cohomology, and one can use the structure of Hilbert space to define the Laplace operator. Moreover every cohomology class is represented by a unique harmonic form and it is this fact that allowed the first proof of the Hodge decomposition of the cohomology of a compact Kähler manifold.

If we look at a covering space, the $L^2$ De Rham cohomology group of a Galois covering $\pi\colon\tilde X \to X$ of a complete complex manifold of dimension $n$ can be expressed as a direct sum
\[
H^k_2(\tilde X, \C) = Harm^k(\tilde X, \C) \oplus \frac{\overline{Im(d)}}{Im(d)}
\]
where $Harm^k(\tilde X, \C)$ is the space of square integrable harmonic $k$-form, and $\frac{\overline{Im(d)}}{Im(d)}$ is the torsion part. One can notice that in this setting, unless the torsion part vanishes, it is no longer true that any closed form can be represented by a square integrable harmonic form. We have an action the Deck transformation group $\Gamma$ of the covering on the space of harmonic forms of $\tilde X$ which gives them a structure of a module over the Von Neumann algebra $\mathcal N (\Gamma)$ of the groups $\Gamma$, this allows to define a Von Neumann dimension on those spaces. The $L^2$ index theorem due to Atiyah \cite{Atiyah} states that if $X$ is compact then the $L^2$ Euler characteristic of $\tilde X$, that is the alternate sum of the Von Neumann dimensions, is equal to the Euler characteristic of $X$.
\[
\chi_{2,\Gamma}(\tilde X , \C) := \sum_{k=0}^{2n} \dim_{\mathcal N(\Gamma)} Harm^k(\tilde X, \C) = \sum_{k=0}^{2n} \dim Harm^k(\tilde X, \C) =: \chi_{2}(X,\C).
\]

When $X$ is compact Kähler, the usual theory of harmonic forms gives us a Hodge structure of weight $k$ on the space of harmonic $k$-forms, however the torsion part needs not carry one. It was noted by Dingoyan \cite{Dingoyan} that if $\mathcal U(\Gamma)$ is the ring of operators affiliated to $\mathcal N(\Gamma)$ the torsion part is of $\mathcal U (\Gamma)$-torsion i.e
\[
\mathcal U (\Gamma)\otimes_{\mathcal N(\Gamma)} \frac{\overline{Im(d)}}{Im(d)} = 0.
\]
Thus one obtains a pure Hodge structure on $\mathcal U(\Gamma)\otimes H^k_2(\tilde X, \C)$. Moreover a $L^2$ direct image functor $\mathcal F \mapsto \mathcal F _{(2)}$ over the category of coherent sheaves has been defined by Campana-Demailly \cite{Campana-Demailly} and Eyssidieux \cite{Eyssidieux00}, and functoriality properties at the level of differentials operators has been studied in \cite{Eyssidieux22}.  With this setting Dingoyan proved in \cite{Dingoyan}
\[
Gr^p_F\left(\mathcal U (\Gamma)\otimes_{\mathcal N(\Gamma)} H^k_2(\tilde X, \C)\right) = \mathcal U (\Gamma)\otimes_{\mathcal N(\Gamma)} \mathbb H^{k-p}_2(X, \Omega^p(\tilde X)_{(2)})
\]
where $\Omega^\bullet(\tilde X)$ is the holomorphic De Rham complex on $\tilde X$.

If one consider a polarized complex variation of Hodge structure $(\V,F^\bullet,\bar{F}^\bullet, h)$ of weight $w$ on $X$, one has in particular a smooth hermitian bundle on $X$ with a flat connection. It makes sense to talk about its $L^2$ De Rham complex of its pull-back and its $L^2$ cohomology groups $H^k_2(\tilde X, \pi^*\V, \omega_X,h)$. One can expect some generalization of the results of Dingoyan and expect a pure Hodge structure of weight $w+k$ on the spaces $\mathcal U (\Gamma)\otimes H^k_2(\tilde X, \pi^*\V, \omega_X,h)$. This is a straight forward consequence of \cite{Eyssidieux22} and \cite{Dingoyan}. 

In a recent paper \cite{Eyssidieux22} Eyssidieux extends this theory and defines an $L^2$ cohomology of polarizable Mixed Hodge module. More precisely, in the case where $X$ is a smooth compact projective variety he defines a functor
\[
L^2dR \colon D^bMHM(X) \to D^bE_f(\Gamma)
\]
where $MHM(X)$ is the category of mixed Hodge module on $X$ and $E_{f}(\Gamma)$ is an abelian category in which the additive category of Hilbert $\mathcal N(\Gamma)$-modules is naturally embedded. If $\mathbb M$ is a mixed Hodge module, the $\mathcal U(\Gamma)$ module $\mathcal U(\Gamma)\otimes_{\mathcal N (\Gamma)} H^q(L^2dR(\mathbb M))$ have natural filtrations  induced by Saito's Hodge filtration $F^\bullet$ and the weight filtration $W_\bullet$ on $\mathbb M$, and Eyssidieux conjectured that those filtrations endow $\mathcal U(\Gamma)\otimes_{\mathcal N (\Gamma)} H^q(L^2dR(\mathbb M))$ with a mixed Hodge structure.

In this paper we begin the study of the one dimensional case and where $\mathbb M$ is a pure polarized Hodge module. We consider an embedding $j\colon M \to X$ of a Riemann surface $M$ into a compact complex curve $X$ such that $\Sigma := X \setminus M$ is a finite set of points. It is well known (see \cite[chapter 7]{Sabbah} that the study of polarizable Hodge module on $X$ with singularities at most at $\Sigma$ amounts to the study of the $L^2$ cohomology of polarized variation of Hodge structure on $M$ endowed with a Poincaré metric $\omega_{Pc}$, i.e if $p\in \Sigma$ there exists a neighbourhood $U$ of $p$ and a holomorphic quasi-isometry
\[
\phi \colon (U\cap M, \omega_{Pc}) \to (\Delta^*_r, \frac{dz\otimes d\bar z}{|z|^2(\ln|z|)^2})
\]
where for $0<r<1$, $\Delta^*_r = \{z\in \C \mid 0<|z|<r \}$.

Indeed by the results of Zucker \cite{Zucker79} one has $H^k_2(M, \V,\omega_{Pc},h_{Hodge}) \simeq H^k(X,j_*\V)$, which gives $H^k(X,j_*\V)$ a pure Hodge structure of analytical origin. It is to be noted that this result admits some generalizations in higher dimension, often under some extra-assumptions on the local system $\V$, the reader can refer to \cite{Cattani-Kaplan-Schmid, Kashiwara-Kawai} and more recently the paper of T. Mochizuki \cite{Mochizuki}. It is likely that this will lead to a proof of the main conjecture of \cite{Eyssidieux22}.

We will consider a Galois covering $\pi\colon \tilde M \to M$ satisfying the following property: if $p\in \Sigma$ and $U$ is a neighbourhood of $p$ taken as before then $\pi^{-1}(U)$ is bihomolorphic to a disjoint union of punctured disks. It is equivalent to ask that it is the restriction to $M$ of some ramified covering $\pi\colon \tilde X \to X$ and in a forthcoming work we will study the general one dimensional case. The purpose of this paper is to adapt Zucker computations to prove the one dimensional case of \cite[Conjecture 3]{Eyssidieux22}. We define a weakly constructible sheaf of $\mathcal N(\Gamma)$-module $\ell^2\pi^*\V$ on $X$ whose definition follows the one from \cite{Eyssidieux22}; this sheaf will be called the sheaf of square integrable sections of $\V$ and we will prove the following (see \ref{Pclem}).
\begin{Intro}\label{thm 1}
\[
H^\bullet(X,\ell^2\pi^*\V) = H_2^\bullet(M,\pi^*\V, \omega_{Pc}, h)
\]
\end{Intro}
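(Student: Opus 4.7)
The plan is to adapt Zucker's approach from \cite{Zucker79} to the $\mathcal N(\Gamma)$-equivariant setting. I would construct a fine complex of sheaves on $X$ which resolves $\ell^2\pi^*\V$ and whose global sections compute the $L^2$-de Rham cohomology $H^\bullet_2(M,\pi^*\V,\omega_{Pc},h)$. For $U\subset X$ open, set
\[
\mathcal L^k(U) := \{\alpha \in L^2_{loc}(\pi^{-1}(U\cap M), \pi^*\V \otimes \Lambda^k T^*\tilde M) \mid d\alpha \in L^2_{loc}\},
\]
where norms are taken with respect to $\omega_{Pc}$ and the Hodge metric $h$, and ``locally $L^2$'' refers to compact subsets of $U$ in $X$ (not of $U\cap M$ in $M$), so that the $L^2$ condition is nontrivial at the punctures. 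Pulling back a partition of unity on $X$ yields $\Gamma$-invariant functions on $\tilde M$ that are bounded with $\omega_{Pc}$-bounded differentials near $\Sigma$; multiplication by such functions preserves $\mathcal L^\bullet$, so each $\mathcal L^k$ is a fine sheaf, and carries an $\mathcal N(\Gamma)$-module structure coming from the $\Gamma$-action on $\tilde M$.

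The core of the proof, and its main obstacle, is a local $L^2$-Poincaré lemma identifying $\mathcal L^\bullet$ with a resolution of $\ell^2\pi^*\V$. Away from $\Sigma$ this is classical since the metrics are smooth and of bounded geometry. At a puncture $p\in\Sigma$, our hypothesis on $\pi$ provides a neighbourhood $U$ with $\pi^{-1}(U\cap M) \simeq \bigsqcup_{i\in I} \Delta^*_{r_i}$, and on each component the metric $\omega_{Pc}$ pulls back to a Poincaré-type metric on a punctured disk while $\pi^*\V$ with its Hodge metric restricts to a polarized VHS in the sense of \cite{Zucker79}. The local problem therefore reduces, branch by branch, to Zucker's original local computation. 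The nontrivial issue is uniformity across the branches indexed by $I$: one must produce primitives $\beta$ of $\alpha$ whose $L^2$-norm is controlled by that of $\alpha$ with a constant independent of $i\in I$, so that Zucker's homotopy operator assembles into a bounded operator on the $\ell^2$-direct sum over $I$. Granted such uniform estimates, the local $0$-th cohomology of $\mathcal L^\bullet$ at $p$ identifies precisely with the stalk of $\ell^2\pi^*\V$ as defined in \cite{Eyssidieux22}, and higher local cohomology vanishes.

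Once the Poincaré lemma is established, $\mathcal L^\bullet$ is a fine resolution of $\ell^2\pi^*\V$, whence
\[
H^\bullet(X,\ell^2\pi^*\V) = \mathbb H^\bullet(X,\mathcal L^\bullet) = H^\bullet(\Gamma(X,\mathcal L^\bullet)).
\]
Since $X$ is compact, a global section of $\mathcal L^k$ is globally $L^2$ with globally $L^2$ differential, so $\Gamma(X,\mathcal L^\bullet)$ is the maximal-domain $L^2$-de Rham complex of $\pi^*\V$ on $\tilde M$ for $(\omega_{Pc},h)$, whose cohomology is by definition $H^\bullet_2(M,\pi^*\V,\omega_{Pc},h)$. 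Combining these identifications yields Theorem \ref{thm 1}; the bulk of the work will be devoted to the punctured-disk estimates outlined in the second paragraph, refining Zucker's analysis to control the dependence on the branch so as to obtain the required $\ell^2$-summability over $I$.
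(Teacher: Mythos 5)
Your proposal follows essentially the same route as the paper: the complex $\mathcal L^k$ you define is exactly the paper's $\mathcal L^2DR^\bullet(\pi^*\V)$, shown to be a fine (soft) resolution of $\ell^2\pi^*\V$ via a local $L^2$-Poincar\'e lemma that, at each puncture, reduces branch by branch to Zucker's punctured-disk computation, and whose global sections give the maximal-domain $L^2$-de Rham complex. The one point where the mechanisms differ is the passage to the $\ell^2$-sum over the branches: you propose tracking uniform constants in Zucker's estimates (which is indeed automatic here, since the deck group acts transitively by isometries on the components over a distinguished neighbourhood), whereas the paper writes $L^2DR^\bullet(\pi^{-1}(U),\pi^*\V)\simeq \ell^2(\Gamma/\langle\gamma\rangle)\hat\otimes L^2DR^\bullet(\tilde U,\pi^*\V)$ and invokes the spectral criterion for the Laplacian $id\hat\otimes\square_{D_{max}}$ --- two implementations of the same reduction.
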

We can define the functor $L^2$ direct image for coherent sheaves in our setting in the same fashion that \cite{Eyssidieux22}. This will allows us to consider the holomorphic $L^2$-De Rham complex $\Omega^\bullet(\pi^*\V)_{(2)}$ of our variation of Hodge structure. This complex is endowed by a filtration $F^\bullet$ induced by the Hodge filtration. We will prove the following via a $L^2$-Dolbeault lemma
\begin{Intro}\label{thm 2}
\[
\begin{aligned}
\mathbb{H}^k(X, \Omega^\bullet(\pi^*\V)_{(2)}) &\simeq H^k_2(\tilde M, \pi^*\V,\pi^*\omega_{Pc},\pi^*h)\\
\mathbb{H}^k(X, (Gr^P\Omega^\bullet(\pi^*\V))_{(2)}) &\simeq H^k(L^2Dolb^{P,\bullet}(\tilde M, \pi^*\V)).
\end{aligned}
\]
\end{Intro}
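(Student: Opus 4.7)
The plan is to construct, for each $p$, a fine resolution on $X$ of $\Omega^p(\pi^*\V)_{(2)}$ by a sheaf of smooth $L^2$ $(p,q)$-forms, and then identify the total bicomplex of global sections with the $L^2$ de Rham (respectively Dolbeault) complex on $\tilde M$. Concretely, I would introduce the sheaf $\mathcal{A}^{p,q}(\pi^*\V)_{(2)}$ on $X$ whose sections over $U\subset X$ are smooth $\pi^*\V$-valued $(p,q)$-forms on $\pi^{-1}(U\cap M)$ lying in the maximal domain of $\bar\partial$ on every relatively compact subset of $U$ (with respect to $\pi^*\omega_{Pc}$ and $\pi^*h$). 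These sheaves are fine by pullback of a $\mathcal{C}^\infty$ partition of unity on $X$, and their global sections over $X$ are exactly the smooth $L^2$ $(p,q)$-forms on $\tilde M$ whose $\bar\partial$ is also $L^2$.

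The heart of the argument is a local $L^2$-Dolbeault lemma: for every $p\in\Sigma$ there is a basis of neighborhoods $U$ of $p$ such that
\[
0\to \Omega^p(\pi^*\V)_{(2)}(U)\to \mathcal{A}^{p,0}(\pi^*\V)_{(2)}(U)\xrightarrow{\bar\partial}\mathcal{A}^{p,1}(\pi^*\V)_{(2)}(U)\to 0
\]
is exact. The assumption on the covering implies that, for $U$ small, $\pi^{-1}(U\cap M)$ is a disjoint union indexed by $\Gamma/\Gamma_p$ of punctured disks, on each of which the pulled back metric is quasi-isometric to the standard Poincaré metric and the pulled back variation of Hodge structure carries its Hodge metric. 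On a single component the statement reduces to solvability of $\bar\partial u=f$ in $L^2$ norm for a $\bar\partial$-closed $L^2$ $(p,1)$-form with values in a polarized variation of Hodge structure on $(\Delta^*,\omega_{Pc})$, which is part of Zucker's local theory \cite{Zucker79}, controlled via the canonical extension and the nilpotent orbit theorem. The component-wise estimates then allow assembling the local solutions into a section of the $\mathcal{N}(\Gamma)$-module valued sheaf.

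Granted the local lemma, $\mathcal{A}^{p,\bullet}(\pi^*\V)_{(2)}$ is a fine resolution of $\Omega^p(\pi^*\V)_{(2)}$, so $\mathbb{H}^k(X,\Omega^\bullet(\pi^*\V)_{(2)})$ is computed by the total complex of $\Gamma(X,\mathcal{A}^{\bullet,\bullet}(\pi^*\V)_{(2)})$ with total differential the pulled back flat connection. Since $(\tilde M,\pi^*\omega_{Pc})$ is complete (being a Galois covering of a complete manifold), the maximal and minimal closed extensions of this total operator coincide, so the total complex computes $H^k_2(\tilde M,\pi^*\V,\pi^*\omega_{Pc},\pi^*h)$, yielding the first isomorphism. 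For the second, Griffiths transversality identifies $Gr^P\Omega^\bullet(\pi^*\V)$ with the Higgs-type complex $(Gr^{P-k}\V\otimes\Omega^k,\theta)$, and the same fine resolution and local lemma apply with the $\mathcal{O}$-linear Higgs field as the new total differential; the resulting total complex is literally $L^2Dolb^{P,\bullet}(\tilde M,\pi^*\V)$. The main obstacle is thus the local $L^2$-Dolbeault lemma near $\Sigma$: beyond $\bar\partial$-solvability on a single Poincaré punctured disk with polarized variation of Hodge structure coefficients, one must control the estimates component-wise in $\pi^{-1}(U\cap M)$ so that the resulting solutions are well defined in the $\mathcal{N}(\Gamma)$-module framework.
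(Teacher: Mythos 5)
Your overall architecture (resolve $\Omega^p(\pi^*\V)_{(2)}$ by sheaves of locally $L^2$ $(p,q)$-forms, pass to the total complex, use completeness to identify maximal and minimal closures) is close in spirit to the paper's, which compares $\Omega^\bullet(\pi^*\V)_{(2)}$ with the soft complexes $\mathcal L^2DR^\bullet_\infty(\pi^*\V)$ and $\mathcal L^2Dolb^{P,\bullet}(\pi^*\V)$. But the step you call the heart of the argument — exactness at the punctures of
\[
0\to \Omega^p(\pi^*\V)_{(2)}(U)\to \mathcal{A}^{p,0}(\pi^*\V)_{(2)}(U)\xrightarrow{\bar\partial}\mathcal{A}^{p,1}(\pi^*\V)_{(2)}(U)\to 0
\]
— is precisely the point that fails, and the paper says so explicitly ("in the case of a point $p\in X\setminus M$ the above sequence may not be exact"). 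Zucker's local $\bar\partial$-solvability on the Poincaré punctured disk (Proposition 6.4/11.5 of \cite{Zucker79}, reproduced as Lemma \ref{estimee d-bar} in the paper) only applies to a line-bundle factor whose Hodge norm behaves like $r^{2\beta}|\ln r|^{k}$ with $\beta(k-1)\neq 0$; the excluded case $\beta=0$, $k=1$ genuinely occurs for sections of the unipotent part lying in the graded pieces $Gr^W_{\pm1}$ (e.g. $\|\tilde\xi_j\|^2\simeq|\ln r|$, or $\|\frac{dz}{z}\otimes\tilde\xi_j\|^2\simeq|\ln r|$ for $\tilde\xi_j\in W_{-1}\setminus W_{-2}$), and for those the $\bar\partial$-equation cannot be solved row by row with $L^2$ control. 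Since the rows of your bicomplex are then not exact, the identification of the hypercohomology with the cohomology of the total complex breaks down, and this affects both asserted isomorphisms.

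The paper's fix is to abandon the row-by-row resolution at the punctures and work with the full Dolbeault differential $D''=\bar\partial+\nabla'$ on the total complex: for a $D''$-closed form one first kills all $(0,1)$-components to which Lemma \ref{estimee d-bar} applies, and the closedness relation $\nabla'(\alpha^{0,1})=d''\alpha^{1,0}$ then ties the remaining bad $(0,1)$-terms (those with $\|\tilde\xi_j\|^2\simeq|\ln r|$) to $(1,0)$-terms that are handled using the surjectivity of $\tilde N_0\colon W_1((\pi^*\V)_0)\to W_{-1}((\pi^*\V)_0)$ coming from the defining property of the monodromy weight filtration; an analogous use of $\tilde N_0$ disposes of the exceptional $2$-form case. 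Your proposal contains no substitute for this interplay between $\bar\partial$, the Higgs field $\nabla'$, and the weight filtration, so as written it has a genuine gap. (A secondary point: the paper routes the first isomorphism through the smooth subcomplex $\mathcal L^2DR^\bullet_\infty$, because the full locally-$L^2$ de Rham sheaf does not split by Hodge type — a form can be $L^2$ with $L^2$ differential while its individual $(P,Q)$-components are not in the domain of $D$ — and your bicomplex of $(p,q)$-forms would run into the same domain-matching issue when reassembling the total differential $\nabla+\bar\partial$.)
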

Following the ideas of Dingoyan we prove that the torsion part of the $L^2$ cohomology is of $\mathcal U (\Gamma)$-torsion and obtain a pure Hodge structure on the cohomology groups up to $\mathcal U (\Gamma)$-torsion.
\begin{Intro}\label{thm 3}
The $\mathcal U (\Gamma)$-module $\mathcal U (\Gamma) \otimes_{\mathcal N (\Gamma)} H^k(X,\ell^2\pi^*\V)$ admits a pure Hodge structure of weight $w+k$.
\end{Intro}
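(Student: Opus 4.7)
The plan is to carry out $L^2$ Hodge theory directly on the covering $\tilde M$ and transfer the resulting decomposition to $H^k(X,\ell^2\pi^*\V)$ via Theorems \ref{thm 1} and \ref{thm 2}. The metric $\pi^*\omega_{Pc}$ is complete Kähler (the pullback of a complete metric is complete), and the polarization of the VHS gives a positive Hermitian metric on each graded piece $Gr_F^p \V$. The first step is the Hodge--Kodaira orthogonal decomposition
\[
L^2\Omega^k(\tilde M,\pi^*\V) = Harm^k \oplus \overline{Im(d)} \oplus \overline{Im(d^*)},
\]
which identifies $H^k_2(\tilde M,\pi^*\V)$ with $Harm^k \oplus T^k$, where $T^k := \overline{Im(d)}/Im(d)$ is the torsion, and similarly for $\bar\partial$ on each graded Dolbeault complex.

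The next step is to establish the Kähler identities in the polarized VHS setting, following Deligne and Zucker \cite{Zucker79}, which yield $\Delta_d = 2\Delta_{\bar\partial}$ acting on $\pi^*\V$-valued $L^2$ forms. Since this common Laplacian preserves the total bidegree $(r,s)$ coming from the form bidegree combined with the $\V$-Hodge filtration, the harmonic space decomposes as
\[
Harm^k = \bigoplus_{r+s=w+k} Harm^{r,s},
\]
the conjugation induced by $\bar F^\bullet$ exchanging $Harm^{r,s}$ and $Harm^{s,r}$.

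Following Dingoyan \cite{Dingoyan}, for any closed densely defined $\Gamma$-equivariant operator $T$ between Hilbert $\mathcal N(\Gamma)$-modules one has $\mathcal U(\Gamma)\otimes_{\mathcal N(\Gamma)} \overline{Im(T)}/Im(T) = 0$. Applied to $d$ and to the $\bar\partial$ of each $Gr^p_F \Omega^\bullet(\pi^*\V)_{(2)}$, this gives
\[
\mathcal U(\Gamma)\otimes_{\mathcal N(\Gamma)} H^k(X,\ell^2\pi^*\V) = \bigoplus_{r+s=w+k} \mathcal U(\Gamma)\otimes_{\mathcal N(\Gamma)} Harm^{r,s},
\]
and Theorem \ref{thm 2} identifies the filtration $F^p := \bigoplus_{r\geq p} \mathcal U(\Gamma)\otimes_{\mathcal N(\Gamma)} Harm^{r,s}$ with the filtration induced on hypercohomology by $F^\bullet\Omega^\bullet(\pi^*\V)_{(2)}$. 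Together with the analogous conjugate filtration $\bar F^\bullet$, this gives the desired pure Hodge structure of weight $w+k$.

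The main obstacle will be the Kähler identities and the equality $\Delta_d = 2\Delta_{\bar\partial}$ in this non-compact $L^2$ setting with a singular Hermitian bundle. These identities are formal on smooth compactly supported forms, but to promote them to an equality of unbounded self-adjoint operators on $L^2$ one needs $C^\infty_c(\tilde M,\pi^*\V)$ to be a core for each relevant closed extension. The classical Andreotti--Vesentini argument uses completeness of the metric; here the growth of $h$ near $\Sigma$ (governed by nilpotent orbit theory, as in \cite{Zucker79}) together with the non-cocompactness of the $\Gamma$-action force a $\Gamma$-equivariant cutoff argument carried out on each punctured disk of $\pi^{-1}(U)$ above $p\in\Sigma$, which is where the hypothesis on the covering (that $\pi^{-1}(U)$ is a disjoint union of punctured disks) becomes essential.
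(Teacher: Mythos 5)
The overall architecture you describe (complete K\"ahler metric, weak Hodge--Kodaira decomposition, the identity $\square_D = 2\square_{D''}$ giving the bigraded decomposition of harmonic forms, and killing the torsion after tensoring with $\mathcal U(\Gamma)$) is the same as the paper's, but the step you rely on to kill the torsion is false as stated, and it is precisely where the real work of the theorem lies. It is \emph{not} true that for an arbitrary closed densely defined $\Gamma$-equivariant operator $T$ one has $\mathcal U(\Gamma)\otimes_{\mathcal N(\Gamma)}\overline{Im(T)}/Im(T)=0$: take $T=\bigoplus_{n\ge 1}\tfrac 1n\, id$ on $\bigoplus_{n\ge1}\ell^2(\Gamma)$ and $y=(c_n e_\Gamma)_n$ with $\sum|c_n|^2<\infty$ but $\sum n^2|c_n|^2=\infty$; then $y\in\overline{Im(T)}\setminus Im(T)$, and for any nonzero $r\in\mathcal N(\Gamma)$ one still has $\sum n^2|c_n|^2\|re_\Gamma\|^2=\infty$, so no injective $r$ with dense image sends $y$ into $Im(T)$. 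Dingoyan's lemma requires the image of $T$ to be essentially dense in its closure, which by Shubin's lemma holds when $T$ is $\Gamma$-Fredholm --- and $\Gamma$-Fredholmness of $L^2DR^\bullet(\tilde M,\pi^*\V)$ is exactly what cannot be taken for granted here, since $M$ is non-compact and $\tilde M$ does not have bounded geometry, so neither Atiyah's nor Cheeger--Gromov's arguments apply.

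The paper closes this gap in its sections $8$ and $9$: it shows that $\Gamma$-Fredholmness is invariant under algebraic quasi-isomorphisms of Hilbert complexes, builds an $L^2$ \v{C}ech--De Rham double complex for a finite cover $\mathfrak U$ of $X$ adapted to the punctures, and proves (using the local $L^2$ Poincar\'e lemma on punctured disks) that $L^2DR^\bullet(\tilde M,\pi^*\V)$ is quasi-isomorphic to the finite \v{C}ech complex $C^\bullet(\mathfrak U,\ell^2\pi^*\V)$, whose terms are finitely generated Hilbert $\mathcal N(\Gamma)$-modules and which is therefore trivially $\Gamma$-Fredholm. By contrast, the issue you flag as the main obstacle --- promoting the K\"ahler identities to an equality of closed operators --- is comparatively benign and is already dispatched in section $1$: completeness of $\pi^*\omega_{Pc}$ gives $D_{min}=D_{max}$, and boundedness of $\nabla'$ (a consequence of the polarization and the Schmid--Zucker norm estimates) gives $D''_{min}=D''_{max}$, after which the identity of Laplacians passes to minimal closures. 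Your outline of the spectral sequence, the harmonic representatives and the filtration coming from Theorem \ref{thm 2} is consistent with the paper, but as written the proof has a genuine hole at the torsion step.
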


In sections $1$ and $2$ we give some survey of existing result of the $L^2$ De Rham complex and $L^2$-Dolbeault complex of polarized complex variations of Hodge structure, the study is done on arbitrary Kähler manifold on section $1$, and section $2$ is dedicated to the case studied in \cite{Zucker79} and \cite[Chapter 6]{Sabbah} of a smooth open algebraic curve with a Poincaré type metric at infinity as the computation will be useful later on. 

In sections $3$ and $4$ we describe this hypothesis of our covering and give the definition of the complex of sheaves $\Omega(\pi^*\V)_{(2)}^\bullet$, $\mathcal L ^2DR(\pi^*\V)$ and $\mathcal L ^2 Dolb^{P,\bullet}$ those sheaves will be defined on $X$, the compactification of the basis of our covering. The complex $\Omega(\pi^*\V)_{(2)}^\bullet$ will be the $L^2$ direct image image of the coherent sheaves $\Omega^\bullet(\tilde M)\otimes \pi^*\V$, and the complexes $\mathcal L ^2DR^\bullet(\pi^*\V)$ and $\mathcal L ^2 Dolb^{P,\bullet}(\pi^*\V)$ will be sheafitication of the Hilbert complex $L ^2DR^\bullet(\tilde M, \pi^*\V)$ and $L ^2 Dolb^{P,\bullet}(\tilde M, \pi^*\V)$.

In sections $5,6$ and $7$ we prove our $L^2$-Poincaré lemma and the $L^2$ Dolbeault lemma which gives the proof of the Theorems \ref{thm 1} and \ref{thm 2}.

Section $8$ will be a quick survey on the Von Neumann algebra $\mathcal N (\Gamma)$ of a group $\Gamma$ and its algebra of affiliated operators $\mathcal U (\Gamma)$, we will recall the definition of $\mathcal N (\Gamma)$-dimension on Hilbert $\mathcal N (\Gamma)$-module, we will also recall the notion of $\Gamma$-Fredholm complexes and show it is invariant by morphism of complexes of Hilbert $\mathcal N(\Gamma)$-module that are algebraic quasi-isomorphisms.

Section $9$ is dedicated to study the relation between the \v{C}ech complex $C^\bullet(\mathfrak U , \ell^2\pi^*\V)$ and the $L^2$-De Rham complex $L^2DR^\bullet(\tilde M, \pi^*\V)$, this will show that the $L^2$ De Rham complex is $\Gamma$-Fredholm and by applying the result of Dingoyan \cite{Dingoyan} we will prove \ref{thm 3}.

Section $10$ is dedicated to prove some version of the $L^2$ index theorem in this context. Since the basis of our covering is not compact and $\tilde M$ has not bounded geometry we cannot apply the existing results (see \cite{Atiyah}, \cite{Cheeger-Gromov}) which would give
\[
\chi_{2,\Gamma}(\tilde M, \pi^*\V) = \chi_2(M, \V)
\]
The case of finite covering shows us that we should look into a formula more similar to Riemann-Hurwitz. This can be done using the \v{C}ech complex and we will prove the following $L^2$ Riemann-Hurwitz.
\begin{Intro}($L^2$ Riemann-Hurwitz lemma)
We have
\[
\chi_{2,\Gamma}(\tilde M, \pi^{-1}\V) - \sum_{p\in \Sigma} \frac{\dim (j_*\V)_p}{n_p} = \chi_{2}(M, \V) - \sum_{p\in \Sigma} \dim (j_*\V)_p.
\]
Where if $p \in \Sigma$ is a puncture and $n_p $ is the order of the stabilizer of a preimage of $p$, and $(j_*\V) _p$ denotes the stalk at $p$ of the constructible sheaf $j_*\V$.
\end{Intro}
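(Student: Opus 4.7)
The approach is to reformulate both Euler characteristics as alternating sums of Von Neumann dimensions of Čech sections and match them locally via a Mayer-Vietoris decomposition. By Section 9, the Čech complex $C^\bullet(\mathfrak{U}, \ell^2\pi^*\V)$, for a suitable open cover $\mathfrak{U}$ of $X$, is $\Gamma$-Fredholm and algebraically quasi-isomorphic to the $L^2$ De Rham complex $L^2DR^\bullet(\tilde M, \pi^*\V)$; invariance of the $\mathcal{N}(\Gamma)$-Euler characteristic under algebraic quasi-isomorphism (Section 8) then yields
\[
\chi_{2,\Gamma}(\tilde M, \pi^*\V) \;=\; \sum_k (-1)^k \dim_{\mathcal{N}(\Gamma)} C^k(\mathfrak{U}, \ell^2\pi^*\V).
\]

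I would take $\mathfrak{U}$ to consist of mutually disjoint small disks $U_p$ around each puncture $p \in \Sigma$ together with a good cover $\{V_i\}$ of $M$ whose intersections (among themselves and with the $U_p$) are all contractible, and organise the sum by the two-piece Mayer-Vietoris for $X = U \cup M$ with $U := \bigsqcup_p U_p$. Since $\pi$ is a trivial $\Gamma$-bundle over contractible opens in $M$, the restriction $\ell^2\pi^*\V|_M$ is a local system of $\mathcal{N}(\Gamma)$-modules with stalks $\V_x \otimes \ell^2(\Gamma)$ of Von Neumann rank $r := \mathrm{rank}(\V)$, giving $\chi_{2,\Gamma}(M, \ell^2\pi^*\V|_M) = r\cdot\chi(M)$ and a vanishing intersection term $\chi_{2,\Gamma}(U\cap M, \ell^2\pi^*\V) = r\sum_p \chi(U_p^*) = 0$, each $U_p^*$ being an annulus. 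At a ramified puncture one has $\pi^{-1}(U_p) = \bigsqcup_{\alpha \in \Gamma/H_p} \tilde U_\alpha^*$ with $H_p \subset \Gamma$ the stabilizer of order $n_p$; since the pulled-back Poincaré metric on each component $\tilde U_\alpha^*$ is again of Poincaré type, Zucker's local analysis from Section 2 applied to $\pi^*\V$ with its pulled-back Hodge metric identifies the local $L^2$ sections on $\tilde U_\alpha^*$ with $(j_*\V)_p$, so that
\[
\dim_{\mathcal{N}(\Gamma)}\Gamma(U_p, \ell^2\pi^*\V) \;=\; \frac{\dim(j_*\V)_p}{n_p},
\]
the factor $1/n_p$ arising because the $H_p$-averaging projection on $\ell^2(\Gamma)$ has trace $1/n_p$. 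Assembling the three Mayer-Vietoris contributions gives
\[
\chi_{2,\Gamma}(\tilde M, \pi^*\V) \;=\; r\cdot\chi(M) \,+\, \sum_{p \in \Sigma} \frac{\dim(j_*\V)_p}{n_p}.
\]

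On the base, the theorem of Zucker recalled in the introduction gives $\chi_2(M,\V) = \chi(X, j_*\V)$, which by the constructibility of $j_*\V$ decomposes as $r\cdot\chi(M) + \sum_p \dim(j_*\V)_p$. Subtracting the two identities produces the claimed formula. The main obstacle I anticipate is the local computation at branch points: showing that the $L^2$ sections of the VHS $\pi^*\V$ on a ramified punctured disk, endowed with the Poincaré metric and the pulled-back Hodge metric, still match $(j_*\V)_p$ component-wise, so that the only effect of the ramification is the $1/n_p$ Von Neumann factor. This requires unwinding the definition of the sheaf $\ell^2\pi^*\V$ from Sections 3-4 against the asymptotic analysis of Section 2, exploiting the fact that the pulled-back metric on each $\tilde U_\alpha^*$ is again of Poincaré type so that Zucker's identification of $H^0_{(2)}$ with the monodromy invariants of the metric-adapted filtration continues to apply. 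A secondary technical point is the compatibility of Mayer-Vietoris with $\Gamma$-Fredholm Euler characteristics, which is automatic at the Čech level since each Čech term is a finitely generated Hilbert $\mathcal{N}(\Gamma)$-module.
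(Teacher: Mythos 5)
Your proposal follows essentially the same route as the paper: both reduce, via the quasi-isomorphism of Lemma \ref{Cech to De Rham} and the invariance of the $\mathcal N(\Gamma)$-Euler characteristic under algebraic quasi-isomorphism, to comparing the finite \v{C}ech complexes $C^\bullet(\mathfrak U,\ell^2\pi^*\V)$ and $C^\bullet(\mathfrak U,j_*\V)$ term by term, the only discrepancy coming from the degree-zero cochains over the punctured neighbourhoods $U_p$. Your Mayer--Vietoris packaging ($X=U\cup M$, with the annular intersections contributing zero) is only a cosmetic reorganisation of the paper's direct summation over the cover; the substance is identical.

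The one substantive step --- which you rightly single out as the main obstacle --- is the identity $\dim_{\mathcal N(\Gamma)}\Gamma(U_p,\ell^2\pi^*\V)=\dim(j_*\V)_p/n_p$, and your justification of it does not go through as stated. On a connected component $\tilde U^*_\alpha$ of $\pi^{-1}(U_p\cap M)$ the monodromy of $\pi^*\V$ is $T^{n_p}$, so Zucker's local analysis identifies the flat $L^2$ sections there with $Ker(T^{n_p}-\mathrm{id})$, i.e.\ with the stalk of $\tilde j_*\pi^*\V$ at the puncture of $\tilde X$ above $p$, and one gets $\Gamma(U_p,\ell^2\pi^*\V)\simeq \ell^2(\Gamma/H_p)\otimes Ker(T^{n_p}-\mathrm{id})$ rather than $\ell^2(\Gamma/H_p)\otimes (j_*\V)_p$. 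The two differ whenever a nontrivial $n_p$-th root of unity is an eigenvalue of $T$ whose eigenspace meets $Ker(N)$: for the rank-one unitary system on $\C^*\subset\mathbb P^1$ with monodromy $e^{2i\pi/n}$ at $0$ and the cyclic cover $z\mapsto z^n$ with $\Gamma=\Z/n\Z$, one has $(j_*\V)_0=(j_*\V)_\infty=0$ while $\pi^*\V$ is trivial, so $\chi_{2,\Gamma}(\tilde M,\pi^*\V)=2/n$ and $\chi_2(M,\V)=0$; the displayed formula then reads $2/n=0$ if the first sum uses $(j_*\V)_p$, and is correct only with the stalks $(\tilde j_*\pi^*\V)_{\tilde p}$ upstairs. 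To be fair, the paper's own proof makes the identical identification ($\ell^2\pi^*\V(U_p)\simeq\ell^2(\Gamma/H_p)\otimes\C^{\dim(\V_p)}$ with $\V_p$ read as the stalk of $j_*\V$), so this is a gap you share with the source rather than one you introduced; but as written neither argument establishes the statement in the generality claimed, and the local count needs to be redone with the invariants of $T^{n_p}$.
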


Finally in the last section we will express our result in the language of polarized Hodge module and prove the conjecture of Eyssidieux in our setting
\begin{Intro}
Take $X$ a compact Riemann surface and $\mathbb M$ a polarized Hodge module on $X$ of weight $w$, then $\mathcal U(\Gamma)\otimes H^p(L^2dR(\mathbb M))$ admits a Hodge structure of weight $w+p$ in the category of $\mathcal U (\Gamma)$-module. This Hodge structure is induced by Saito's Hodge filtration on the underlying $\mathcal D _X$-module.
\end{Intro}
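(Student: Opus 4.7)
The plan is to reduce this theorem to a direct application of Theorems \ref{thm 1}, \ref{thm 2} and \ref{thm 3}, by first decomposing $\mathbb M$ according to strict support and then identifying Eyssidieux's abstract $L^2dR$ functor with the concrete $L^2$ complexes constructed in the body of the paper.

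First, by Saito's structural decomposition for pure polarizable Hodge modules on a curve, $\mathbb M$ splits as a direct sum
\[
\mathbb M = \bigoplus_{p\in\Sigma'} \mathbb M_p \;\oplus\; j_{!*}(\V, F^\bullet,\bar F^\bullet, h)
\]
where each $\mathbb M_p$ is a polarized Hodge structure of weight $w$ supported at a point $p$, and $(\V,F^\bullet,\bar F^\bullet,h)$ is a polarized complex variation of Hodge structure of weight $w$ on some Zariski open $M = X\setminus \Sigma$. Because $L^2dR$ is a functor of triangulated categories it respects direct sums, so it suffices to treat each summand separately. For a skyscraper summand $\mathbb M_p$, the complex $L^2dR(\mathbb M_p)$ is concentrated in one degree and, after tensoring with $\mathcal U(\Gamma)$, is a finite direct sum indexed by the $\Gamma$-orbit of preimages of $p$ of copies of the underlying Hodge structure of weight $w$, weighted by Von Neumann dimensions of the stabilizers; it therefore carries the required pure Hodge structure.

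For the full-support summand the ramified covering $\pi\colon \tilde X \to X$ and the Poincaré metric $\omega_{Pc}$ on $M$ place us exactly in the setting of Sections $1$--$7$. The crucial step is to produce a filtered quasi-isomorphism in $D^bE_f(\Gamma)$,
\[
L^2dR(j_{!*}\V) \;\simeq\; \mathcal L^2 DR(\pi^*\V),
\]
intertwining Saito's Hodge filtration on the underlying $\mathcal D_X$-module with the filtration on the holomorphic $L^2$-De Rham complex $\Omega^\bullet(\pi^*\V)_{(2)}$ of Theorem \ref{thm 2}. Granting this, Theorems \ref{thm 1} and \ref{thm 2} identify $H^p(L^2dR(j_{!*}\V))$ with $H^p_2(\tilde M,\pi^*\V,\pi^*\omega_{Pc},\pi^*h)$ and its associated graded with $\mathbb H^\bullet$ of $(Gr^P \Omega^\bullet(\pi^*\V))_{(2)}$; Theorem \ref{thm 3} then endows $\mathcal U(\Gamma)\otimes_{\mathcal N(\Gamma)} H^p(L^2dR(\mathbb M))$ with a pure Hodge structure of weight $w+p$, by construction induced by Saito's filtration.

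The main obstacle is the filtered comparison displayed above: it is the Galois-covering analogue of Saito's classical identification of his theory with Zucker's $L^2$ cohomology on a punctured curve. Using the covering hypothesis the comparison is local on $X$, so away from $\Sigma$ both sides reduce to the smooth De Rham complex of a hermitian local system and agree tautologically. Near each $p\in\Sigma$, the VHS is described by a nilpotent orbit and $\pi^{-1}(U)$ is a disjoint union of punctured disks indexed by a $\Gamma$-orbit; the matching of Saito's $V$-filtration and Hodge filtration with the $L^2$-growth conditions for $\omega_{Pc}$ then follows from Zucker-type local calculations, which in our setting are carried out in Sections $2$ and $5$--$7$, recast equivariantly using the $L^2$ direct image functor $\mathcal F \mapsto \mathcal F_{(2)}$ of \cite{Campana-Demailly, Eyssidieux00, Eyssidieux22}.
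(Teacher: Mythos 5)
Your proposal follows the same route as the paper: decompose $\mathbb M$ by strict support, handle the punctual summand directly (where $L^2dR$ is concentrated in degree $0$, fully reduced, and carries the weight-$w$ Hodge structure of the stalks), and reduce the pure-support summand to Theorems \ref{thm 1}--\ref{thm 3} via a filtered comparison between $\Omega^\bullet(\pi^*\V)_{(2)}$ and $\ell^2\pi_*F^\bullet DR(\mathcal V_{mid})$. The only difference is one of packaging: where you invoke equivariant Zucker-type local estimates for the "main obstacle", the paper gets the filtered quasi-isomorphism for free by applying the exact functor $\ell^2\pi_*$ (and its functoriality for differential operators) to Sabbah's classical quasi-isomorphism $F^p\Omega^\bullet(\V)_{(2)}\to F^pDR(\mathcal V_{mid})$, so no new local computation is needed at this stage.
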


\section*{Acknowledgement}

The author thanks his advisor Philippe Eyssidieux for his enlightening advices and for the time he took to read this paper.

\section{The $L^2$ De Rham complex of a variation of Hodge structure}

In this section, $M$ will denote a $n$-dimensional complex manifold and $\mathcal O_M$ its sheaf of holomorphic functions. If $\mathcal V$ is the sheaf of holomorphic sections of a holomorphic bundle, $\mathcal E(\mathcal V)$ will denote the sheaf of smooth sections of the underlying smooth vector bundle. The sheaves $\mathcal E^{p,q}$ will denote the sheaves of smooth forms of type $(p,q)$ on $M$. If $\mathcal V$ is a holomorphic vector bundle we will identify $\mathcal V \otimes_{\mathcal O _M} \mathcal E ^{p,q}$ with the sheaf of smooth $\mathcal V$-valued $(p,q)$-forms. We recall some basic notions about variations of Hodge structure and its relations with $L^2$ cohomology (for more generality the reader can check \cite{Cattani-Kaplan-Schmid}).

\begin{defn}
A complex polarized variation of Hodge structure of weight $w$ on $M$ is a family $(\V,F^\bullet,\bar F ^\bullet,S)$ where $\V$ is local system of $\C$-vector space,  $F^\bullet$ (resp. $\bar F ^\bullet$) is a decreasing filtration of the holomorphic vector bundle $\mathcal V := \mathcal O _M \otimes \V$ by holomorphic subbundles (resp. by antiholomorphic subbundles) and $S$ is a flat non degenerate hermitian pairing on $M$ such that
\begin{itemize}
\item If $H^{p,q} := F^p \cap \bar F ^q$ then $\mathcal E(\mathcal V) = \underset{p+q = w}{\bigoplus} \mathcal E (H^{p,q})$.
\item If $p\neq r$ then $S(H^{p,q},H^{r,s})= 0$, and $(-1)^pS$ is positive definite on $H^{p,q}$.
\item If $D$  denotes the flat connection $d\otimes 1$ on the smooth bundle $\mathcal E(\mathcal V) := C^\infty_X\otimes \V$ then $D^{1,0}\mathcal E(F^p) \subset F^{p-1}\otimes \mathcal E^{1,0}$ and $D^{0,1}\mathcal E (\bar F ^q) \subset \bar F ^{q-1}\otimes \mathcal E^{0,1}$ where $D^{1,0}$ (resp. $D^{0,1}$) is the $(1,0)$-component (resp. $(0,1)$-component) of $D$.
\end{itemize}
\end{defn}

\begin{rem}
The bundles $H^{p,q}$ have a structure of holomorphic vector bundles, the holomorphic structure is obtained via the isomorphism
\[
H^{p,q} \simeq Gr^p_F \mathcal V := \bigslant{F^p}{F^{p+1}}.
\]
The polarisation also allows us to recover the filtration $\bar F ^\bullet$ from the filtration $F^\bullet$ via the relation
\[
\bar F ^{w-p+1} = (F^p)^\bot.
\]
\end{rem}

The hermitian metric $h=\bigoplus (-1)^pS_{|H^{p,q}}$ is called the Hodge metric. The connection $D$ induces maps

\begin{itemize}
\item $d' \colon H^{p,q}\otimes \mathcal E^{r,s} \to H^{p,q}\otimes \mathcal E^{r+1,s}$
\item $d'' \colon H^{p,q}\otimes \mathcal E^{r,s} \to H^{p,q}\otimes \mathcal E^{r,s+1}$
\item $\nabla' \colon H^{p,q}\otimes \mathcal E^{r,s} \to H^{p-1,q+1}\otimes \mathcal E^{r+1,s}$
\item $\bar \nabla' \colon H^{p,q}\otimes \mathcal E^{r,s} \to H^{p+1,q-1}\otimes \mathcal E^{r,s+1}$
\end{itemize}

The map $\nabla'$ is called the Gauss-Manin connection. We set  
\[
\begin{aligned}
D' = d' + \bar \nabla' \qquad D''= d'' + \nabla' \\
\mathcal E (\V)^{P,Q} = \underset{p+r=P, q+s = Q}{\bigoplus} H^{p,q}\otimes \mathcal E^{r,s}
\end{aligned}
\]
One has $D=D'+D''$, $D'\mathcal E (\V)^{P,Q} \subset \mathcal E(\V)^{P+1,Q}$ and $D''\mathcal E^{P,Q}(\V) \subset \mathcal E(\V)^{P,Q+1}$. One also has a decomposition of $\mathcal E^\bullet (\V)$, which is the smooth De Rham complex of $\mathcal V$, the decomposition is given by
\[
\mathcal E ^\bullet (\V) = \bigoplus_{P,Q} \mathcal E(\V)^{P,Q}.
\]
This decomposition allows us to define the Hodge filtration on the complex $\mathcal E^\bullet(\V)$ by setting
\[
F^p \mathcal E^\bullet(\V) = \bigoplus_{P\geq p} \mathcal E(\V)^{P,Q}.
\]

Now, we suppose that we are given $\omega_M$ a hermitian metric on $M$. With this metric and the Hodge metric, we can consider for $k\in \N$ the Hilbert space $L_2^k(M,\V,\omega_M,h)$ of measurable $\V$-valued $k$-form that are square integrable on $M$. We obtain an elliptic complex $L^2DR^\bullet(M,\V,\omega_M,h)$.
\[
\begin{tikzcd}
0 \arrow[r]& L^2DR^0(M, \V,\omega_M,h) \arrow[r, "D"]& \dots \arrow[r, "D"]& L^2DR^{2n}(M, \V,\omega_M,\V) \arrow[r]& 0
\end{tikzcd}
\]
where we view $D$ as an unbounded operator whose domain of definition contains the space of smooth forms with compact support. Details about elliptic complexes can be find in  \cite[\S 3]{Bruning}, we recall that the operator $D$ admits two natural closures $D_{min}$ and $D_{max}$. The operator $D_{min}$ is the minimal closure of $D$, it is defined on the space of square integrable forms $\phi$ such that there exists a sequence $(\phi_n)_{n \in \N}$ of smooth square integrable forms with compact support that converge to $\phi$ and such that the sequence $(D\phi_n)_{n\in \N}$ is a Cauchy sequence and in this case we define $D\phi$ to be the limit of the sequence $D\phi_n$. The maximal closure $D_{max}$ is defined on the space of $\V$-valued $L^2$-form $\phi$ such that $D\phi$ is $L^2$ (here $D\phi$ is computed in the sense of distributions). 

We recall that an ideal boundary condition is a closed extension $D_{bc}$ of $D$ such that $D_{min} \subset D_{bc} \subset D _{max}$, and $Im(D_{bc}) \subset Ker(D_{bc})$, in practice we will only work with the boundary condition $D_{max}$.
\begin{defn}
Given an ideal boundary condition $D_{bc}$, we define the $L^2$ cohomology for this boundary condition to be the topological vector space
\[
H^\bullet_{2,bc} := \frac{Ker(D_{bc})}{Im(D_{bc})}.
\]

We say that the cohomology is fully reduced if all cohomology groups $H^k_{2,bc}$ are Hausdorff as a topological vector spaces, or equivalently if $D_{bc}$ has a closed range. The reduced cohomology $H^\bullet_{2,bc,red}$ is the Hilbert space
\[
H^\bullet_{2,bc,red} = \frac{Ker D_{bc}}{\overline{Im(D_{bc})}}.
\]
\end{defn}

We denote by $\mathfrak d$ the formal adjoint of $D$, it is the differential operator defined on compactly supported $\mathbb V$-valued smooth forms by the identity
\[
<\mathfrak d \phi,\psi> = <\phi, D\psi> 
\] 
when $\phi,\psi$ are compactly supported smooth forms. The Laplace operator is the differential operator
\[\square_D = (D + \mathfrak d)^2.
\]
It is formally self adjoint, meaning that if $\phi$ and $\psi$ are compactly supported smooth forms one has
\[
<\square_D \phi, \psi> = <\phi, \square_D \psi>.
\]
The operators $\mathfrak d$ also admits two closures $\mathfrak d _{min}$ and $\mathfrak d _{max}$ and we have
\[
D_{min}^* = \mathfrak d _{max} \qquad D_{max}^* = \mathfrak d _{min}
\]
where $D_{min}^*$ (resp. $D_{max}^*$) denotes the adjoint of $D_{\min}$ (resp. $D_{max}$). Given an ideal boundary condition we can consider the Laplace operator for this boundary condition by setting
\[
\square_{D_{bc}} = (D_{bc} + D^*_{bc})^2.
\]
The domain of this operator is the space of forms $\omega \in Dom(D_{bc})\cap Dom(D_{bc}^*)$ satisfying $D_{bc}\omega \in Dom(D_{bc}^*)$ and $D_{bc}^*\omega \in Dom(D_{bc})$.

It is self adjoint (in the usual sense of the theory of unbounded operator in Hilbert spaces). A measurable form $\phi$ is said to be harmonic if $\square_D \phi = 0$ (here $\square_D\phi$ is again computed in the sense of distribution), such a form is smooth by elliptic regularity. It is to be noted that even though square integrable harmonic forms are in the domain of $D_{max}$ they need not be $D_{max}$-closed in general, however this will be the case if the metric is complete (for a counter-example one can consider the function $x^2-y^2$ on the Euclidean disk). We set $Harm(M,\omega_M,\V,h)$ the space of square integrable harmonic form and for a boundary condition $D_{bc}$ we set
\[
\begin{aligned}
Harm_{bc}(M,\omega_M,\V,h) &:= Ker(D_{bc})\cap Ker(D_{bc}^*)\\ 
					      &= Ker(\square_{D_{bc}}) \subset Harm(M,\omega_M,\V,h).
\end{aligned}
\]
When the metrics are clear from context, we will only write $Harm_{bc}(M,\V)$ or $Harm_{bc}$, similarly we will only write $L^2DR^\bullet(M,\V)$. A useful result is weak Hodge Decomposition \cite[Lemma 2.1]{Bruning}, sometimes also referred to as the weak Kodaira decomposition.

\begin{lem}(Weak Hodge Decomposition)
For any boundary condition $D_{bc}$ we have the following orthogonal decomposition
\[
L^2DR^k(M,\V) =  Harm_{bc}^k \oplus \overline{Im D_{bc}} \oplus \overline{Im D_{bc}^*}
\]
where $Harm_{bc}^k = Harm_{bc}\cap L^2DR^k(M,\V)$.
\end{lem}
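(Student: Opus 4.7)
The plan is to reduce the decomposition to the standard orthogonal splittings attached to a closed densely defined operator on a Hilbert space, the key input being that the ideal boundary condition $\mathrm{Im}(D_{bc}) \subset \ker(D_{bc})$ forces $D_{bc}\circ D_{bc} = 0$ in the strict (not just formal) sense on $\mathrm{Dom}(D_{bc})$. I would proceed in three steps.

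First, I would recall the fundamental dichotomy: for any closed densely defined operator $T\colon H_1 \to H_2$ we have $H_2 = \ker(T^*) \oplus \overline{\mathrm{Im}(T)}$. Applying this to $D_{bc}$ viewed as an operator from degree $k-1$ to degree $k$, I obtain
\[
L^2DR^k(M,\V) = \ker(D_{bc}^*)_k \oplus \overline{\mathrm{Im}(D_{bc})}_k,
\]
and applying it to $D_{bc}^*$ from degree $k+1$ to $k$ I also obtain
\[
L^2DR^k(M,\V) = \ker(D_{bc})_k \oplus \overline{\mathrm{Im}(D_{bc}^*)}_k.
\]
In particular $\ker(D_{bc})^\perp = \overline{\mathrm{Im}(D_{bc}^*)}$ and $\ker(D_{bc}^*)^\perp = \overline{\mathrm{Im}(D_{bc})}$.

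Second, I would verify that $\overline{\mathrm{Im}(D_{bc})}$ and $\overline{\mathrm{Im}(D_{bc}^*)}$ are orthogonal. For $\alpha \in \mathrm{Dom}(D_{bc})$ and $\beta \in \mathrm{Dom}(D_{bc}^*)$, the ideal-boundary-condition hypothesis gives $D_{bc}\alpha \in \ker(D_{bc}) \subset \mathrm{Dom}(D_{bc})$ with $D_{bc}(D_{bc}\alpha)=0$, so
\[
\langle D_{bc}\alpha,\, D_{bc}^*\beta\rangle = \langle D_{bc}(D_{bc}\alpha),\,\beta\rangle = 0.
\]
Passing to the closures via continuity of the inner product yields $\overline{\mathrm{Im}(D_{bc})} \perp \overline{\mathrm{Im}(D_{bc}^*)}$.

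Third, I would combine the two pieces via the complement-of-intersection formula. Since $\mathrm{Harm}_{bc}^k = \ker(D_{bc})_k \cap \ker(D_{bc}^*)_k$, we have
\[
(\mathrm{Harm}_{bc}^k)^\perp = \overline{\ker(D_{bc})_k^\perp + \ker(D_{bc}^*)_k^\perp} = \overline{\,\overline{\mathrm{Im}(D_{bc}^*)}_k + \overline{\mathrm{Im}(D_{bc})}_k\,}.
\]
By the orthogonality established in the second step, the sum on the right is already an orthogonal direct sum, hence closed. Combining with $L^2DR^k = \mathrm{Harm}_{bc}^k \oplus (\mathrm{Harm}_{bc}^k)^\perp$ delivers the claimed decomposition.

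The only genuinely non-formal point is the second step: one must be careful that $D_{bc}\circ D_{bc}$ vanishes on the full domain $\mathrm{Dom}(D_{bc})$, not just on smooth compactly supported forms, and this is exactly where the ideal-boundary-condition hypothesis $\mathrm{Im}(D_{bc}) \subset \ker(D_{bc})$ is used in an essential way; everything else is routine Hilbert-space bookkeeping.
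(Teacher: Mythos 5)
Your proof is correct, and it is essentially the standard argument: the paper does not prove this lemma itself but cites \cite[Lemma 2.1]{Bruning}, whose proof runs along the same lines as yours (the two splittings $H = \ker(T^*)\oplus\overline{\mathrm{Im}(T)}$, orthogonality of $\overline{\mathrm{Im}(D_{bc})}$ and $\overline{\mathrm{Im}(D_{bc}^*)}$ via the ideal boundary condition, and the complement-of-intersection identity). You correctly isolate the one non-formal point, namely that $\mathrm{Im}(D_{bc})\subset\ker(D_{bc})\subset\mathrm{Dom}(D_{bc})$ is what lets you integrate by parts against $D_{bc}^*\beta$ on the full domain.
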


If the metric $\omega_M$ is complete then $D_{min}=D_{max}$ (see \cite[Theorem 7.2]{Zucker79}), thus there exists a unique ideal boundary condition, in this case we will drop the subscript $bc$ in the notation of the $L^2$ cohomology group we will also make an abuse of notation by identifying $D$ and $\mathfrak d$ with their closures $D_{max}$ and $\mathfrak d_{max}$. Under this hypothesis the Laplace operator $\square_D$ is essentially self-adjoint and we will also denote its minimal closure by $\square_D$. It is defined on the the space of forms $\phi$ such that $\mathfrak d \phi \in Dom(D)$ and $D\phi \in Dom(\mathfrak d )$. In this case for any $\phi \in Dom(\square_D)$, one has the classical equality
\[
<\square_D \phi, \phi> = \|D\phi\|^2 + \|\mathfrak d \phi\|^2.
\]
From this equality it follows that harmonic forms are closed and co-closed. Still assuming the metric is complete the weak Hodge decomposition gives us
\[
L^2DR^k(M,\V) = Harm^k(M,\V) \oplus \overline{Im(D)} \oplus \overline{Im(\mathfrak d)}
\]
where $Harm^k(M,\V)$ is the space of $L^2$-harmonic $k$-forms. Thanks to this orthogonal decomposition, one has canonical isomorphisms
\[
\begin{aligned}
H^k_2(M,\V) &\simeq Harm^k(M,\V) \oplus \frac{\overline{Im(D)}}{Im(D)} \\
H^k_{2,red}(M,\V) &\simeq Harm^k(M,\V).
\end{aligned}
\]

Since the Laplace operator $\square_D$ is positive self-adjoint we have a projection valued measure $(E^k_\lambda(\square_D))_{\lambda \geq 0}$ such that 
\[\square_D = \int_{\lambda \in \R^+}\lambda dE^k_\lambda(\square_D).\]
Moreover $D$ commutes with $\square_D$ so the $DE^k_\lambda(\square_D) \subset E^{k+1}_\lambda(\square_D)$ for $\lambda \geq 0$. In other words for $\lambda \geq 0$ the inclusion $E^\bullet_\lambda(L^2DR^\bullet(M,\V)) \to L^2DR^\bullet(M,\V)$ defines a subcomplex of the $L^2$ De Rham complex. The Hausdorff condition to have a reduced cohomology can be expressed in terms of the spectrum of the Laplacian.
\begin{lem}\label{criteria of reduced cohomology}
The following assertions are equivalent:
\begin{enumerate}
\item The $L^2$ cohomology  groups are fully reduced.
\item There exists $\varepsilon > 0$ such that the $E^\bullet_0(\square_D) = E^\bullet_\varepsilon(\square_D)$.
\item For $\lambda > 0$ the inclusion of complexes $E^\bullet_0(\square_D) \to E^\bullet_\lambda(\square_D)$ is a quasi-isomorphism.
\item $0$ is isolated in the spectrum of $\square_D$.
\end{enumerate}
\end{lem}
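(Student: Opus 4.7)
I propose to prove the chain of equivalences (2) $\Leftrightarrow$ (4), (1) $\Leftrightarrow$ (4), and (2) $\Leftrightarrow$ (3). The first is immediate from the spectral theorem: $E^\bullet_0(\square_D) = E^\bullet_\varepsilon(\square_D)$ is by definition the vanishing of the spectral measure of $(0,\varepsilon]$, i.e.\ $\mathrm{Spec}(\square_D) \cap (0,\varepsilon] = \emptyset$. For (1) $\Leftrightarrow$ (4), I would use the weak Hodge decomposition: the cohomology is fully reduced iff $D$ has closed range in every degree, iff (by the closed range theorem) $D^*$ does, iff $\square_D = DD^* + D^*D$ is bounded below on $(Harm)^\perp = \overline{Im(D)} \oplus \overline{Im(D^*)}$. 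Here one uses that $\square_D$ restricts to $DD^*$ on $\overline{Im(D)} \subset Ker(D)$ and to $D^*D$ on $\overline{Im(D^*)} \subset Ker(D^*)$, so boundedness below of $\square_D$ there corresponds exactly to closed range for $D$ and $D^*$; and boundedness below of $\square_D$ on $(Harm)^\perp = (Ker\,\square_D)^\perp$ is precisely (4).

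For (2) $\Leftrightarrow$ (3), the key observation is that $D$ commutes with every spectral projection of $\square_D$ and vanishes on $E^\bullet_0(\square_D) = Harm^\bullet$; hence $(E^\bullet_\lambda, D)$ decomposes orthogonally as $(Harm^\bullet, 0) \oplus (E^\bullet_{(0,\lambda]}, D)$, and the inclusion $E^\bullet_0 \to E^\bullet_\lambda$ is a quasi-isomorphism iff $E^\bullet_{(0,\lambda]}$ is acyclic. Assuming (2), either $\lambda \leq \varepsilon$ and $E^\bullet_{(0,\lambda]} = 0$, or $\lambda > \varepsilon$ and $\square_D \geq \varepsilon$ on $E_{(\varepsilon,\lambda]}$ so that the Green operator $G = \square_D^{-1}$ is bounded there. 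A $D$-closed form $\omega \in E^k_{(\varepsilon,\lambda]}$ can then be written $\omega = \square_D G\omega = D(D^* G \omega)$, since $DG\omega = GD\omega = 0$; the primitive $D^* G\omega$ lies in $E^{k-1}_{(\varepsilon,\lambda]}$ by functional calculus, establishing acyclicity.

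The main obstacle is the contrapositive of (3) $\Rightarrow$ (2). Assume $0$ is not isolated in $\mathrm{Spec}(\square_D)$; then for some degree the spectrum of $\square_D$ restricted to $\overline{Im(D)} \oplus \overline{Im(D^*)}$ accumulates at $0$, and after possibly applying $D$ to move from $\overline{Im(D^*)}$ into $\overline{Im(D)}$ in the next degree, I can find a sequence $\mu_n \to 0^+$ together with unit vectors $\omega_n \in E^k_{(\mu_n/2, 2\mu_n)} \cap \overline{Im(D)}$, which are automatically $D$-closed. Each $\omega_n$ has canonical primitive $\alpha_n := D^* G \omega_n$ of norm $\|\alpha_n\|^2 = \langle \omega_n, G\omega_n \rangle$ of order $\mu_n^{-1}$. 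Choosing positive scalars $c_n$ with $\sum|c_n|^2 < \infty$ but $\sum|c_n|^2 \mu_n^{-1} = \infty$, and fixing $\lambda$ bounding the $\mu_n$, the series $\omega := \sum c_n \omega_n$ converges in $L^2$ to a $D$-closed element of $E^k_{(0,\lambda]}$. Any $L^2$ primitive of $\omega$ would, after projection to $\overline{Im(D^*)}$, have to agree with $c_n \alpha_n$ on each spectral block and so have squared norm at least $\sum|c_n|^2 \|\alpha_n\|^2 = \infty$, a contradiction; hence $\omega$ defines a non-zero class in $H^k(E^\bullet_{(0,\lambda]})$, contradicting (3).
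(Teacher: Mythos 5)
Your argument is correct. Note that the paper does not actually prove this lemma: it appears in the survey portion of Section 1 and is stated as a standard fact about Hilbert complexes (the relevant background being delegated to the reference \cite{Bruning}), so there is no in-text proof to compare against. Your chain of equivalences is the standard one, and the key mechanisms — the weak Hodge decomposition, the identification of closed range of $D$ with a spectral gap for $D^*D$ on $(Ker\, D)^{\perp} = \overline{Im(D^*)}$, the bounded Green operator on the spectral band $(\varepsilon,\lambda]$ giving the primitive $D^*G\omega$, and the divergent series of primitives in the contrapositive of (3) $\Rightarrow$ (2) — are all deployed correctly. Two small points you should make explicit if this were written out in full: in the last step, pass to a subsequence so that the intervals $(\mu_n/2,2\mu_n)$ are pairwise disjoint (this is what makes the $\omega_n$ mutually orthogonal, hence gives convergence of $\sum c_n\omega_n$, and what justifies identifying the spectral blocks of a putative primitive one at a time), and record that $D$ is injective on $\overline{Im(D^*)}\cap E_{(\mu_n/2,2\mu_n)}$ because $\langle \square_D x,x\rangle = \|Dx\|^2 \geq (\mu_n/2)\|x\|^2$ there — this injectivity is what forces any primitive of $c_n\omega_n$ lying in $\overline{Im(D^*)}$ to coincide with $c_n\alpha_n$ on that block, so that its norm must diverge.
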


The Hilbert spaces $L^2DR^\bullet(M,\V) := \underset{k}{\bigoplus} L^2DR^k(M,\V)$ admits an orthogonal decomposition and a Hodge filtration similar the one of $\mathcal E ^\bullet(M,\V)$ by setting
\[
\begin{aligned}
L^2DR^\bullet(M,\V) &= \bigoplus_{P,Q} (L^2DR^\bullet(M,\V))^{P,Q} \\
F^pL^2DR^\bullet(M,\V) & = \bigoplus_{P\geq p} (L^2DR^\bullet(M,\V))^{P,Q} 
\end{aligned}
\]
If one wants to obtain a Hodge decomposition on the $L^2$ cohomology, it is natural to consider the Dolbeault complex $L^2Dolb^{P,\bullet}(M,\V)$ which is the elliptic complex 
\[
\begin{tikzcd}
0 \ar[r] &L^2DR^{0,0}(Gr^P_F) \ar[r, "D'' "] & L^2DR^{1,0}(Gr^{P-1}_F)\oplus L^2DR^{0,1}(Gr^{P}_F) \ar[r, "D'' "]& \dots
\end{tikzcd}
\]
As for the De Rham complex, one has two closed extension to consider $D''_{min}$ and $D''_{max}$.
We will denote by $\mathfrak d '$ and $\mathfrak d ''$ their formal adjoints, and consider the Laplace operators $\square_{D'} := (D'+\mathfrak d ')$, $\square_{D''} := (D'' + \mathfrak d '')$, they are formally self-adjoint. If the metric $\omega_M$ is Kähler, one has the following equality (see \cite[Theorem 2.7]{Zucker79}) over the space of compactly supported smooth forms
\[
\square_D = \square_{D'}+\square_{D''} = 2\square_{D''}.
\]

If we want to extend this equality, we first need to study when the two closures $D''_{min}$ and $D''_{max}$ coincide to avoid any problems coming from ideal boundaries. One has $D'' = d'' +\nabla'$, on smooth compactly supported forms, and since $d''$ is the $\bar \del$ operator of our holomorphic vector bundle by \cite[p.92]{Andreotti} we have $d''_{min} = d''_{max}$, so if $\nabla'$ is bounded, one has $D''_{min} = D''_{max}$ (see \cite[Theorem 7.1]{Zucker79}). The same reasoning that we did previously yields that $\square_{D''}$ is essentially self adjoint, we will also denote by $\square_{D''}$ its minimal closure. In this case, we can take the minimal closure the equality of Laplacian to obtain the following.
\begin{prop}
We have the equality of closed operator of $L^2DR^\bullet(M,\V)$
\[
\square_D = \square_{D'}+\square_{D''} = 2\square_{D''}.
\]
Moreover we also have the following orthorgonal decomposition of $\square_D$
\[
\square_D |_{L_2^\bullet(M,\V)} = \bigoplus_{P+Q = w+k} \square_{D''} |_{L_2^\bullet(M,\V)^{P,Q}}.
\]
\end{prop}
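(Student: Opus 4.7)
The plan is to begin with the pointwise formal identity $\square_D = \square_{D'} + \square_{D''} = 2\square_{D''}$, which holds on smooth compactly supported $\V$-valued forms by the Kähler identities recalled in \cite[Theorem 2.7]{Zucker79}, and to promote it to an equality of closed operators via the essential self-adjointness machinery that was just put in place.

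First, I would assemble the ingredients recalled immediately before the statement: the completeness of $\omega_M$ yields $D_{min} = D_{max}$; the boundedness of the Gauss--Manin connection $\nabla'$ combined with $d''_{min} = d''_{max}$ gives $D''_{min} = D''_{max}$; and the Laplacians $\square_D$ and $\square_{D''}$ are essentially self-adjoint on the space of smooth compactly supported $\V$-valued forms, with unique self-adjoint extensions given by the closures of their restrictions to this core. The operator $\square_{D'}$ is formally self-adjoint, and since the formal identity forces $\square_{D'} = \square_{D''}$ on the common core, it is essentially self-adjoint with the same closure as $\square_{D''}$. The formal equalities $\square_D = \square_{D'} + \square_{D''} = 2\square_{D''}$ on the core therefore pass to the unique self-adjoint closures, yielding the first assertion.

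For the orthogonal decomposition, the Hodge metric $h$ makes the pointwise decomposition $\mathcal E(\V) = \bigoplus_{p+q=w}\mathcal E(H^{p,q})$ orthogonal; combined with the standard orthogonality of the $(r,s)$-types of the exterior algebra, this induces an orthogonal Hilbert-space decomposition
\[
L^2DR^k(M,\V) = \bigoplus_{P+Q=w+k} L^2DR^\bullet(M,\V)^{P,Q}.
\]
Since $D''$ shifts bidegrees by $(0,1)$ and its formal adjoint $\mathfrak d''$ shifts them by $(0,-1)$, using $(D'')^2 = (\mathfrak d'')^2 = 0$ one obtains $\square_{D''} = D''\mathfrak d'' + \mathfrak d'' D''$, which preserves bidegree on the core. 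The orthogonal projections $\Pi^{P,Q}$ onto the summands are bounded and commute with $\square_{D''}$ on the compactly supported core; a standard argument (a bounded operator commuting with a symmetric operator on a core of essential self-adjointness commutes with its closure) then transfers this commutation to the closure of $\square_{D''}$, producing the claimed direct sum decomposition.

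The main obstacle I expect is of a purely domain-theoretic nature: the operators $\square_D$, $\square_{D'}$, $\square_{D''}$ are defined a priori through unbounded closures, and equality on a dense subspace does not in general imply equality of closures. The argument really rests on the essential self-adjointness on the compactly supported core, itself a consequence of the completeness of $\omega_M$ (for $D$) and of the boundedness of $\nabla'$ (for $D''$); these hypotheses ensure the minimal and maximal closures coincide, and thus that any two formally self-adjoint operators agreeing on the core share a single self-adjoint extension. Once this is secured, both parts of the proposition reduce to routine bookkeeping on the core of compactly supported smooth forms.
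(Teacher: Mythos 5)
Your proposal is correct and follows essentially the same route as the paper: the text immediately preceding the proposition establishes exactly the ingredients you use (completeness giving $D_{min}=D_{max}$, boundedness of $\nabla'$ together with $d''_{min}=d''_{max}$ giving $D''_{min}=D''_{max}$, hence essential self-adjointness of the Laplacians), and then obtains the stated identities by taking minimal closures of the pointwise Kähler identity on the compactly supported core, with the bidegree decomposition following because $\square_{D''}$ preserves type.
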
 
If we have $P+Q=k+w$ we set $Harm^{P,Q}(M,\V) := Harm^k(M,\V) \cap \mathcal E^{P,Q}(\V)$ the space of harmonic forms of type $(P,Q)$, and one has
\[
Harm^k(M,\V) = \bigoplus_{P+Q = k+w} Harm^{P,Q}(M,\V).
\]
As for for the case $D$ the space $Harm^{P,Q}(M,\V)$ can be identified with the reduced $L^2$ Dolbeault cohomology. This gives the following proposition.

\begin{prop}
If $\V$ a local system underlying a polarised variation of Hodge structure of weight $w$ on a complete Kähler manifold $M$, the reduced $L^2$ cohomology groups $H^k_{2,red}(M,\V)$ admits a pure Hodge structure of weight $k+w$. The component of type $(P,Q)$ is canonically isomorphic to $Harm^{P,Q}(M,\V)$. 
\end{prop}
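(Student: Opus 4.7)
The plan is to show that the bigrading on smooth forms $\mathcal E^\bullet(\V)=\bigoplus_{P,Q}\mathcal E(\V)^{P,Q}$ descends to the space of square integrable harmonic forms, and then transport this via the isomorphism $H^k_{2,red}(M,\V)\simeq Harm^k(M,\V)$ coming from the weak Hodge decomposition (available because $\omega_M$ is complete, so $D_{min}=D_{max}$). The whole argument is an unwinding of the Kähler identity $\square_D=2\square_{D''}$ proved in the preceding proposition, so most of the real work is already done.

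The key step is to invoke the orthogonal decomposition
\[
\square_D|_{L^2DR^k(M,\V)} \;=\; \bigoplus_{P+Q=w+k} \square_{D''}|_{(L^2DR^k(M,\V))^{P,Q}}
\]
from the previous proposition. Taking kernels gives at once
\[
Harm^k(M,\V) \;=\; \bigoplus_{P+Q=w+k} Harm^{P,Q}(M,\V),
\]
which is a bigrading of total weight $w+k$, i.e.\ a complex Hodge structure in Deligne's sense. Transporting along $H^k_{2,red}(M,\V)\simeq Harm^k(M,\V)$ yields the required Hodge structure on reduced cohomology. Compatibility with the filtration $F^p L^2DR^\bullet(M,\V)=\bigoplus_{P\geq p}(L^2DR^\bullet(M,\V))^{P,Q}$ is automatic: both $D'$ and $D''$ send $F^p$ to itself, so $F^p$ is a subcomplex and induces a filtration on cohomology; on the other hand, the orthogonal projection onto $Harm^\bullet$ preserves the bigrading because $\square_D$ does, hence the filtration induced on $H^k_{2,red}$ agrees with $\bigoplus_{P\geq p} Harm^{P,Q}$.

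To identify the $(P,Q)$-piece with reduced $L^2$ Dolbeault cohomology, apply the weak Hodge decomposition to the elliptic complex $L^2Dolb^{P,\bullet}(M,\V)$ with the differential $D''$. Because $\square_{D''}=\tfrac12\square_D$ restricted to forms of pure type, the space of $D''$-harmonic forms in bidegree $(P,Q)$ coincides with $Harm^{P,Q}(M,\V)$, and the weak Hodge decomposition identifies this with the reduced cohomology $H^Q(L^2Dolb^{P,\bullet}(M,\V))_{red}$ in a canonical way.

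The one delicate point, which is not really an obstacle but deserves to be flagged, is that the whole argument requires $D''_{min}=D''_{max}$ so that there is a single ideal boundary condition for the Dolbeault complex; as noted in the excerpt this follows from completeness of $\omega_M$ together with boundedness of the Gauss-Manin piece $\nabla'$, a property of polarised variations of Hodge structure that we invoke as standard. Once this is in hand, the proposition is a direct consequence of the Laplacian decomposition.
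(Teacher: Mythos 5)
Your proposal is correct and follows essentially the same route as the paper: the paper also deduces the bigrading $Harm^k(M,\V)=\bigoplus_{P+Q=k+w}Harm^{P,Q}(M,\V)$ from the orthogonal decomposition of $\square_D$ into the $\square_{D''}|_{(L^2DR^\bullet)^{P,Q}}$, transports it through $H^k_{2,red}\simeq Harm^k$, and identifies each $(P,Q)$-piece with reduced $L^2$ Dolbeault cohomology. The only caveat worth noting is that the boundedness of $\nabla'$ (needed for $D''_{min}=D''_{max}$) is a genuine hypothesis rather than an automatic feature of polarised variations of Hodge structure on an arbitrary complete Kähler manifold, but the paper makes the same standing assumption in the preceding discussion.
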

The case of the unreduced cohomology is harder to deal with, since it is not clear that the complex $Gr_F L^2DR^\bullet(M,\V)$ is quasi isomorphic to the Dolbeault complex. It was observed in \cite{Eyssidieux22} that the smooth subcomplex is more convenient to deal with the filtrations. We recall the definition of the smooth complex $L^2DR_\infty^\bullet(M, \V)$. First we define for $k\in \N$
\[
L^2DR_1^k(M, \V) = \left\{ \phi \in Dom(D)\cap Dom(\mathfrak{d}) \mid D\phi \in Dom(\mathfrak d ),\, \mathfrak d  \phi \in Dom(D)  \right\}.
\]
And we define by induction on $j\geq 1$,
\begin{align*}
L^2DR_{j+1}^k(M, \V) := \left\{ \phi \in L^2DR_j^k(M, \V)  \right.&\mid D_{max}\square_D^j \phi \in Dom(\mathfrak d _{max}), \\
												& \left. \mathfrak d _{max} \square_D ^j\phi \in Dom(D_{max})  \right\}.
\end{align*}
Finally we define $L^2DR^k_\infty(M,\V)$ to be
\[
L^2DR_{\infty}^k(M, \V) := \bigcap_{j\in \N^*} L^2DR_j^k(M, \V).
\]
And this defines a subcomplex $L^2DR_{\infty}^\bullet(M, \V)$ of $L^2DR^\bullet$ since the operators $D$ and $\mathfrak d$ commute with the Laplacian. Using $\square_{D''}$ one can define the smooth subcomplex $L^2Dolb^{P,\bullet}_\infty(M,\V)$ in a similar way. From \cite[Lemme 5.2.3]{Eyssidieux22} and \cite[Theorem 2.12]{Bruning} we have the following.
\begin{lem}\label{quasi-iso}
Assume $\lambda'>\lambda>0$ then the inclusions
\[
\begin{aligned}
E^\bullet_\lambda(M,\V) &\subset E_{\lambda'}^\bullet(M,\V) &\subset L^2DR_\infty^\bullet(M,\V) &\subset L^2DR^\bullet(M,\V) \\
E^\bullet_\lambda(M,\V) &\subset E_{\lambda'}^\bullet(M,\V) &\subset L^2Dolb_\infty^\bullet(M,\V) &\subset L^2Dolb^\bullet(M,\V)
\end{aligned}
\]
are quasi-isomorphisms.
\end{lem}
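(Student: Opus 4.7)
My plan is to use the spectral decomposition of the Laplacian $\square_D$ to reduce each claim to a single homotopy argument. Since $D$ and $\mathfrak{d}$ commute with every spectral projection of $\square_D$, for each $\mu>0$ one obtains an orthogonal splitting into Hilbert subcomplexes
\[
L^2DR^\bullet(M,\V) = E^\bullet_\mu \oplus E^\bullet_{(\mu,\infty)},
\]
and the same splitting restricts to $L^2DR^\bullet_\infty(M,\V)$ because the bounded projection $E_\mu$ preserves this subspace. The inclusion $E^\bullet_\mu \subset L^2DR^\bullet_\infty$ is then automatic: forms of bounded spectral support lie in $\bigcap_j \mathrm{Dom}(\square_D^j)$ by the functional calculus, and by \cite[Lemme 5.2.3]{Eyssidieux22} this intersection coincides with the inductively defined $L^2DR^\bullet_\infty$.

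On the high-frequency piece $E^\bullet_{(\mu,\infty)}$ the operator $\square_D$ is bounded below by $\mu>0$, so it admits a bounded self-adjoint inverse $G$ that commutes with $D$ and $\mathfrak{d}$. I will show that $K:=\mathfrak{d}G$ is an explicit chain contraction: for $\phi\in\mathrm{Dom}(D)\cap E^\bullet_{(\mu,\infty)}$, $G\phi\in\mathrm{Dom}(\square_D)\subset\mathrm{Dom}(D)\cap\mathrm{Dom}(\mathfrak{d})$, hence $\mathfrak{d}G\phi\in\mathrm{Dom}(D)$, and
\[
(DK+KD)\phi = (D\mathfrak{d}+\mathfrak{d}D)G\phi = \square_D G\phi = \phi.
\]
Moreover $K$ preserves $L^2DR^\bullet_\infty$, because $\mathfrak{d}$ and $G$ both commute with $\square_D$, giving $\square_D^j K\phi = \mathfrak{d}G\square_D^j\phi \in L^2$ whenever $\phi\in L^2DR^\bullet_\infty$.

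Combining these observations, both $E^\bullet_{(\mu,\infty)}$ and its intersection with $L^2DR^\bullet_\infty$ are acyclic subcomplexes, so the orthogonal splittings identify $H^\bullet(E^\bullet_\mu)$ with both $H^\bullet(L^2DR^\bullet)$ and $H^\bullet(L^2DR^\bullet_\infty)$ via the natural inclusions. Applying this with $\mu=\lambda$ and with $\mu=\lambda'$ shows that $E^\bullet_\lambda\hookrightarrow L^2DR^\bullet_\infty\hookrightarrow L^2DR^\bullet$ and $E^\bullet_{\lambda'}\hookrightarrow L^2DR^\bullet$ are all quasi-isomorphisms; two-out-of-three in the commutative diagram of inclusions then yields the same for the remaining arrows $E^\bullet_\lambda\hookrightarrow E^\bullet_{\lambda'}$ and $L^2DR^\bullet_\infty\hookrightarrow L^2DR^\bullet$. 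The Dolbeault version proceeds verbatim with $D''$, $\mathfrak{d}''$ and $\square_{D''}$ in place of their De Rham counterparts, using the assumption $D''_{min}=D''_{max}$ so that $\square_{D''}$ is essentially self-adjoint. The main delicate point is verifying that $K$ truly stabilises $L^2DR^\bullet_\infty$, which is the only step where the inductive definition of the smooth subcomplex plays a nontrivial role.
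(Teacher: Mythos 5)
Your argument is correct and is essentially the standard one: the paper itself gives no proof of this lemma, delegating it to \cite[Lemme 5.2.3]{Eyssidieux22} and \cite[Theorem 2.12]{Bruning}, and those results are proved by exactly your mechanism — the spectral splitting off of $E^\bullet_\mu$ together with the Green's-operator homotopy $K=\mathfrak d G$ on the part of the spectrum above $\mu$, which kills the cohomology of the high-frequency subcomplex and of its intersection with the smooth subcomplex. Your identification of $L^2DR^\bullet_\infty$ with $\bigcap_j\mathrm{Dom}(\square_D^j)$ and the two-out-of-three step are also consistent with how the paper uses these references, so nothing further is needed.
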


\begin{rem}
We have a canonical isomorphism
\[
Gr^P_F L^2DR^\bullet_\infty(M,\V) \to L^2Dolb^{P,\bullet}_\infty(M,\V).
\]
\end{rem}

\section{Holomorphic $L^2$ De Rham complex of an open curve with a metric with Poincaré singularities}

In this section we review some characterisation of square integrability of sections of a polarized variation of Hodge structure on a punctured disk in terms of meromorphic extension and monodromy filtration. Those results are due to Schmid \cite{Schmid} and Zucker \cite{Zucker79} for the case of real variations of Hodge structures, the reader can refer to \cite[Chapter 6]{Sabbah} for the case of complex variation of Hodge structure, and in this section we will use the same notations as \cite{Sabbah} as much as possible.

We consider a a smooth curve $M$ embedded in a Riemann surface $X$. We assume $\Sigma := X\setminus M$ is a finite number of points and $M$ has a Kähler metric $\omega_M = \omega_{Pc}$ asymptotic to a Poincaré metric near the punctures, i.e if $p\in \Sigma$ there exists $U$ a neighbourhood of $p$ and a bi-Lipschitz isomorphism for some $r<1$
\[
(U\cap M,\omega_{Pc}) \longrightarrow \left(\Delta_{r}^*, \frac{i}{2}\frac{dz\wedge d\bar z}{|z|^2(\ln|z|)^2}\right)
\]
where $\Delta^*_{r}$ denotes the disk of radius $r$ minus the origin $0$. Such a metric always exists on $M$ and $(M,\omega_{Pc})$ is complete and have finite volume. Moreover any smooth form on $X$ has bounded norm for the metric $\omega_{Pc}$. We consider a polarized variation of Hodge structure $(\V, F^\bullet, \bar F ^\bullet, S)$ on $M$. 

\begin{prop}\label{Deligne extension}[\cite[Theorem 4.4]{Malgrange} and \cite[Proposition 5.4]{Deligne}]
Let $\Sigma$ be a discrete set of point of a Riemann surface X, and set $M:=X\setminus \Sigma$. If $(\mathcal V, \nabla)$ be a flat holomorphic vector bundle on $M$ then for all $\beta \in \R$ there exists a unique extension $\mathcal V_*^{\beta}$ of $\mathcal V$ to $X$ such that
\begin{enumerate}
\item The connection $\nabla$ has at most a logarithmic pole at points $p\in \Sigma$ with respect to the meromorphic extension associated to $\mathcal V_* = \mathcal O_X(*\Sigma)\otimes_{\mathcal O_X} \mathcal V_*^{\beta}$.
\item The real part of the eigenvalues of the residue of the connection lies in $[\beta,\beta+1[$.
\end{enumerate}
\end{prop}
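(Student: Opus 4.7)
Since an extension of $\mathcal V$ to $X$ is determined by its restrictions to neighborhoods of the points $p \in \Sigma$, and since uniqueness will let me patch those local extensions, I would fix $p \in \Sigma$, choose a coordinate so that a small neighborhood $\Delta$ of $p$ is a disk with $p = 0$ and $M \cap \Delta = \Delta^*$, and reduce to constructing the extension on $\Delta$. On $\Delta^*$ the flat bundle $(\mathcal V,\nabla)$ is equivalent to the datum of a monodromy operator $T \in GL(V)$ acting on a reference fiber $V$, and I would build the extension directly from $T$.

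For existence, I pick $A \in \mathrm{End}(V)$ with $\exp(2\pi i A) = T$ whose eigenvalues all have real part in $[\beta, \beta+1)$. Such $A$ exists: after decomposing $V$ along generalized eigenspaces of $T$, one takes the principal branch of the logarithm on each generalized eigenspace and shifts by a suitable integer multiple of $2\pi i$ to place the resulting eigenvalue in the strip $\{\mu \in \C : \mathrm{Re}(\mu) \in [\beta, \beta+1)\}$. On the universal cover of $\Delta^*$ with coordinate $w$ and covering map $z = e^w$, the flat sections $f_i(w) := e^{wA} v_i$ (with $v_i$ a basis of $V$) transform by $T$ under $w \mapsto w+2\pi i$, so they correspond to sections of $\mathcal V$ with the correct monodromy. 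The descended frame $\tilde e_i := z^A v_i$ is single-valued on $\Delta^*$, and I define $\mathcal V_*^{\beta}$ to be the locally free $\mathcal O_\Delta$-module on $\Delta$ with frame $\tilde e = (\tilde e_1, \ldots, \tilde e_r)$. A direct computation shows the connection matrix of $\nabla$ in this frame is $A\,\tfrac{dz}{z}$, giving conditions (1) and (2).

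Uniqueness is the key step. Given two extensions $\mathcal V_1, \mathcal V_2$ with frames $\tilde e_1, \tilde e_2$ and residues $A_1, A_2$, the change of frame $\tilde e_1 = \tilde e_2 \cdot \phi$ defines $\phi \in GL_r(\mathcal O_\Delta(*0))$, and writing $\phi = \sum_{k \in \Z} \phi_k z^k$ and comparing connection matrices yields the recursion $k \phi_k = \phi_k A_1 - A_2 \phi_k$ for each $k \in \Z$. The linear operator $X \mapsto X A_1 - A_2 X$ on $\mathrm{End}(V)$ has eigenvalues $\{\alpha - \alpha'\}$ with $\alpha \in \mathrm{Spec}(A_1)$ and $\alpha' \in \mathrm{Spec}(A_2)$, and by (2) each such difference has real part in $(-1,1)$; in particular $\alpha - \alpha' = k$ is impossible for any nonzero integer $k$. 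Hence $\phi_k = 0$ whenever $k \neq 0$, so $\phi = \phi_0$ is a constant matrix, which must be invertible since $\phi \in GL_r(\mathcal O_\Delta(*0))$. This identifies $\mathcal V_1$ with $\mathcal V_2$ and in turn permits the local extensions to glue into a well-defined global $\mathcal V_*^{\beta}$ on $X$.

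The main obstacle is this uniqueness calculation: the half-open interval condition on the residue eigenvalues is exactly what kills every non-constant Fourier mode of the transition matrix, and both the openness at $\beta+1$ and the closedness at $\beta$ are forced for the vanishing to go through.
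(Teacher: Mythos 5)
Your proof is correct and follows essentially the same route as the paper, which only recalls the existence construction (its frame $\xi_j=\exp\left((2i\pi\alpha+N_\alpha)\tau\right)e_j$ is exactly your $z^{A}v$ after decomposing into generalized eigenspaces of $T$) and defers the full statement to Malgrange and Deligne. The uniqueness step you supply — killing the nonzero Fourier modes of the transition matrix via the Sylvester operator $X\mapsto XA_1-A_2X$, whose eigenvalue differences have real part in $(-1,1)$ precisely because of the half-open normalization — is the standard argument from those references and is carried out correctly.
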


We recall the basis of the construction : first we localise around a puncture $p\in \Sigma$, we have a flat bundle on $\Delta^*$ that we need to extend to on $\Delta$ (the radius of the disk is unimportant here as we do not consider the metric for the construction). We set $\pi\colon \mathbb H \to \Delta^*$, $\tau \mapsto exp(2i\pi\tau)$ a universal covering. The pull back $\pi^*\mathcal V$ is isomorphic as a flat vector bundle to $\mathbb H \times V$ where $V$ is a $n$ dimensional vector space. We have $\pi_1(\Delta^*) \simeq \Z$ and the monodromy is given by the image $T^{-1}\in GL(V)$ of $\gamma$ by the monodromy representation. One has
\[
\mathcal V = \bigslant{(\mathbb H \times V)}{\Z}.
\]
One has the decomposition into generalized eigenspaces $V = \underset{\lambda \in \C}{\bigoplus} Ker(T - \lambda)^n$. Set $e_1,\dots, e_n$ a basis of horizontal sections of $\pi^*\mathcal V$ flagged according to this decomposition. If $e_j \in Ker(T - \lambda)^n$, there exists $\alpha$ with $\beta \leq \alpha < \beta +1$ with $\lambda = e^{2i\pi\alpha}$ and on this space $T = e^{2i\pi\alpha}e^{N_\alpha}$ where $N_\alpha$ is nilpotent, one can then set
\[
\xi _j = exp\left((2i\pi\alpha + N_\alpha) \tau\right)e_j
\]
it is a holomorphic basis of sections satisfying $\xi_j(\tau+1) = T \xi_j(\tau)$, so it actually defined on $\Delta^*$ and it generates the sections at $p$. 

\begin{lem}\cite[Theorem 6.3.2]{Sabbah}
In the case where $\mathcal V$ is underlying a complex variation of Hodge structure, the monodromy has eigenvalues in $\mathbb S ^1$. In particular the eigenvalues of the residue on $\mathcal V_*^\beta$ will be real numbers.
\end{lem}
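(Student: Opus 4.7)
The plan is to combine the preservation of the polarization $S$ by the monodromy with a growth estimate for the Hodge metric near the puncture. First I would localize near a puncture $p\in \Sigma$ and, following the construction recalled just above, lift to the universal covering $\pi\colon \mathbb H \to \Delta^*$, trivializing $\pi^*\mathcal V \simeq \mathbb H \times V$ with monodromy $T\in GL(V)$. The statement reduces to showing that each eigenvalue $\lambda$ of $T$ has $|\lambda|=1$, since the residue eigenvalues are then obtained by taking real logarithms normalized into $[\beta,\beta+1)$.

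The main analytic input I would invoke is Schmid's norm estimate: for a polarized complex variation of Hodge structure on $\Delta^*$, the Hodge metric $h$ expressed in a multi-valued flat frame is bounded above and below by powers of $|\log|z||$. Granted this, I pick a generalized eigenvector $v\in V$ of $T$ with eigenvalue $\lambda = e^{2i\pi\alpha}$, $\alpha = a+ib$, and look at the single-valued holomorphic section $\xi(\tau) = \exp\bigl((2i\pi\alpha + N_\alpha)\tau\bigr)\cdot v$ constructed above, which descends to a section of $\mathcal V$ on $\Delta^*$. Going back to the $z$-coordinate via $z=e^{2i\pi\tau}$, one checks that $|\xi(z)|_h = |z|^{-b}\, |v|_h$ up to polynomial factors in $\log|z|$ coming from the nilpotent part $N_\alpha$. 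Since $|v|_h$ is required by Schmid's estimate to be of polynomial growth in $|\log|z||$, the exponential factor $|z|^{-b}$ can only be compatible with $b=0$; hence $|\lambda|=1$.

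For the second assertion, on the Deligne extension $\mathcal V_*^\beta$ the residue $R$ of the connection satisfies $T = \exp(-2i\pi R)$, and by construction its eigenvalues are exactly the chosen $\alpha$'s lying in $[\beta,\beta+1)$. The first part forces each such $\alpha$ to be real, so the spectrum of $R$ lies in $[\beta,\beta+1)\subset \R$.

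The main obstacle is precisely the invocation of Schmid's norm estimate, which is the nontrivial analytic ingredient; once granted, the rest is a direct computation on the universal cover. An alternative route, avoiding Schmid, would exploit flatness of the polarization directly: $\bar T^{\,t} S T = S$ forces the eigenvalues of $T$ to come in pairs $(\lambda, 1/\bar\lambda)$, and combining this with the sign-definiteness of $(-1)^p S$ on each Hodge component via a nilpotent orbit limiting argument at $p$ would yield $|\lambda|=1$ without passing through the growth of $h$. Either path lands on the same conclusion and immediately gives the corollary on the residue.
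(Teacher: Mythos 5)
The paper does not actually prove this lemma: it is quoted directly from \cite[Theorem 6.3.2]{Sabbah}, so there is no internal proof to compare against, and your proposal must be judged on its own terms. Your main route has two genuine problems. First, the computation is wrong: with $z=e^{2i\pi\tau}$ and $\alpha=a+ib$ one has $|e^{2i\pi\alpha\tau}|=|z|^{a}e^{-b\arg z}$, and since $\arg z$ stays in a bounded interval for the single-valued section $\xi$ on $\Delta^*$ (and on any angular sector), the imaginary part $b$ contributes only a bounded factor, not $|z|^{-b}$; the exponential blow-up you want to contradict simply does not appear in the modulus of $\xi$. Second, the norm estimate you invoke (Hodge norm of a flat multivalued frame bounded above and below by powers of $|\log|z||$, as in the lemma recalled in Section 2 of the paper) is a consequence of the nilpotent and $SL_2$-orbit theorems, which are established \emph{after} --- and using --- the fact that the monodromy eigenvalues lie on $\mathbb S^1$; invoking it here is circular. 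The same objection applies to the ``nilpotent orbit limiting argument'' in your alternative route. The identity $S(Tv,Tw)=S(v,w)$ by itself only yields that the eigenvalues come in pairs $(\lambda,1/\bar\lambda)$, which is strictly weaker than $|\lambda|=1$.

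The standard proof (Borel's lemma, which is what references of this type use) goes through the distance-decreasing property of the period map $\Phi\colon\mathbb H\to D$, an a priori curvature statement requiring no asymptotic input: since $d_{\mathbb H}(iy,iy+1)\to 0$ as $y\to\infty$ and $\Phi(\tau+1)=T\cdot\Phi(\tau)$, writing $\Phi(iy)=g_y\cdot o$ one finds that $g_y^{-1}Tg_y$ accumulates on the compact isotropy group of the base point $o$, which forces the conjugation-invariant eigenvalues of $T$ to have modulus one. If you want to keep the ``norm growth of an eigenvector'' flavour, the correct quantitative input is again the distance-decreasing property: it gives the a priori bound $\|e\|_{h(\tau')}\le \|e\|_{h(\tau)}e^{Cd_{\mathbb H}(\tau,\tau')}$ for a flat section $e$, hence at most polynomial growth of $\|v\|_{h(\tau+n)}$ in $n$ for fixed $\tau$; on the other hand the equivariance of $h$ under the deck transformation turns horizontal translation by $n$ into multiplication by $T^{\pm n}$ on the fibre, so for an eigenvector $v$ one gets $\|v\|_{h(\tau+n)}=|\lambda|^{\pm n}\|v\|_{h(\tau)}$, and exponential versus polynomial growth forces $|\lambda|=1$. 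The comparison must be made by translating horizontally along the monodromy direction, not by reading off a power of $|z|$. Your concluding paragraph on the residue is fine once the first assertion is secured.
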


\begin{rem}
We localize and take $X = \Delta$ and $M=\Delta^*$ with coordinate $z$. The morphism of $\mathcal O _X$-module $\mathcal V_* \to \mathcal V_*$ given by the multiplication by $z$, induces an isomorphism between $\mathcal V_*^\beta$ and $\mathcal V_{*}^{\beta+1}$.
\end{rem}
\begin{nota}
If we set $\mathcal V^{>\beta} = \bigcup_{\beta' > \beta} \mathcal V_*^{\beta'}$ there exists $\beta'>\beta$ such that $\mathcal V_*^{\beta'} = \mathcal V_*^\beta$. We will set $Gr^\beta \mathcal V_* = \bigslant{\mathcal V_*^\beta}{\mathcal V_*^{>\beta}}$.
\end{nota}

If $N$ is a nilpotent endomorphism of a vector space $V$, there exists a unique increasing filtration $W_\bullet(N)$ of $V$ satisfying
\[
\begin{aligned}
NW_k \subset W _{k-2} & \\
N^k\colon Gr^W_k \to Gr^W_{-k} & \text{ is an isomorphism}.
\end{aligned}
\]
The construction of this filtration is due to Schmid \cite[\S 4]{Schmid}. Using the same notation as before each $N_\alpha$ induces a filtration $W_\bullet(N_\alpha)$ of $Ker((T - e^{2i\pi\alpha})^n)$, we denote by $W_k$ the direct sum of those filtration. The filtration $W_\bullet$ can be characterized by its asymptotic behaviour (see \cite[Theorem 6.6]{Schmid} with a shift in the filtration for the case of integral variation of Hodge structure \cite[Chapter 6]{Sabbah} for the general case).
\begin{lem}
A horizontal multivalued section $e \in V$ is in $W_k \setminus W_{k-1}$ if and only if its Hodge norm $\|e\|_h^2 \simeq Im(\tau)^k$ uniformly on any vertical strip.
\end{lem}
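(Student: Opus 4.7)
The plan is to deduce the lemma from Schmid's nilpotent orbit and $SL_2$-orbit theorems, which together describe the asymptotic behaviour of the Hodge metric of a polarised VHS near a puncture. First I would decompose $V = \bigoplus_\alpha V_\alpha$ into generalised eigenspaces of $T$ and reduce to the unipotent case on each piece: both the nilpotent operator $N$ and the filtration $W_\bullet$ split along this decomposition, and since $S$ vanishes on off-diagonal pieces the Hodge metric is block-diagonal. The semisimple factor is then absorbed in the holomorphic basis $\xi_j$, so one is reduced to the case $T = e^N$ with $N$ nilpotent.

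Next I would introduce the twisted Hodge filtration $\tilde F^\bullet(\tau) := e^{-\tau N} F^\bullet(\tau)$, which is invariant under $\tau \mapsto \tau + 1$ and hence descends to a filtration on $\Delta^*$. The nilpotent orbit theorem (\cite{Schmid}, resp.\ \cite[Chapter 6]{Sabbah} in the complex VHS case) asserts that $\tilde F^\bullet$ extends holomorphically across the puncture $z=0$, yielding a limit filtration $F^\bullet_\infty$ which together with the weight filtration $W_\bullet(N)$ defines a polarised limiting mixed Hodge structure on $V$. This limiting data provides a canonical positive definite Hodge norm $\|\cdot\|_\infty$ on $V$ together with a canonical splitting of $W_\bullet$ (for instance the Deligne splitting).

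The key analytic input is then Schmid's $SL_2$-orbit theorem, which yields, uniformly on any vertical strip, an asymptotic
\[
\|e\|^2_{h(\tau)} \;\simeq\; \sum_k (Im\,\tau)^k \, \|e_k\|_\infty^2,
\]
where $e = \sum_k e_k$ is the decomposition of $e$ with respect to the chosen splitting of $W_\bullet$. The lemma then follows immediately: if $e \in W_k$, all components $e_j$ with $j > k$ vanish, bounding $\|e\|_h^2$ above by a constant multiple of $(Im\,\tau)^k$; if furthermore $e \notin W_{k-1}$ then $e_k \neq 0$, and the remaining terms contribute only lower powers of $Im\,\tau$, so that the asymptotic order is exactly $(Im\,\tau)^k$. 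The reverse implication is read off from the same asymptotic.

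The main obstacle is entirely packaged into Schmid's $SL_2$-orbit theorem, whose proof requires the detailed analysis of an $SL_2(\R)$-equivariant model metric near the puncture. Since the paper treats this as a known fact from \cite{Schmid} and \cite[Chapter 6]{Sabbah}, what really remains is to check that the normalisation of $W_\bullet$ chosen here (namely $N^k \colon Gr^W_k \to Gr^W_{-k}$ an isomorphism, so that $W_\bullet$ is centred at $0$) is consistent with the exponents of $Im\,\tau$ that appear in the asymptotic, and to record the resulting equivalence.
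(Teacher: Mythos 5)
The paper gives no proof of this lemma at all: it simply cites Schmid's norm estimates (\cite[Theorem 6.6]{Schmid}, up to a shift of the weight filtration) and \cite[Chapter 6]{Sabbah}, so there is no in-paper argument to compare against. Your outline is the standard derivation of exactly that cited result --- reduction to the unipotent part along the eigenspace decomposition of $T$, the nilpotent orbit theorem to produce the limiting mixed Hodge structure, and the $SL_2$-orbit theorem giving the mutual boundedness of the Hodge norm with $\sum_k Im(\tau)^k\|e_k\|_\infty^2$ uniformly on vertical strips, from which the equivalence with $e\in W_k\setminus W_{k-1}$ is read off --- and it is correct and consistent with the paper's treatment, including your remark that the centering $N^k\colon Gr^W_k\to Gr^W_{-k}$ is what makes the exponent equal to $k$ rather than $k-w$.
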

The operator $N_\beta$ induces a nilpotent endomorphism on $Gr^\beta \mathcal V_*$ and we will also denote by $W_k(N_\beta)$ the induced filtration, we take $p\colon \mathcal V_*^\beta \to Gr^\beta \mathcal V_*$ the projection and set as in \cite{Sabbah} $M_k\mathcal V_*^\beta = p^{-1}(W_k(N_\beta))$.

\begin{rem}
The filtration $W_k(N_\beta) \subset Ker(T-e^{2i\pi\beta})^n \subset V$ defines locally near the punctures a flat subvector bundle $\mathcal W_k(N_\beta)$ of $\mathcal V$, we take $\overline{\mathcal W _k(N_\beta)}$ its extension to $X$ in the sense of Proposition \ref{Deligne extension}, such that the eigenvalues of the residue is equal to $\beta$. Then we have an isomorphism
\[
\begin{tikzcd}
M_k\mathcal V_*^\beta \simeq \overline{\mathcal W_k(N_\beta)} + \mathcal V_*^{>\beta}.
\end{tikzcd}
\]
\end{rem}

The asymptotic behaviour of the monodromy filtration gives us the following result.
 
\begin{thm}\cite[Theorem 6.3.5]{Sabbah}
If $\xi\in \mathcal V_*^\beta$ extends to a section of $M_k\mathcal V_*^\beta$ and projects non trivially in $Gr^M_{k-1}Gr^\beta \mathcal V_*$ if and only if
\[
\|\xi\|_h^2 \sim |z|^{2\beta} |\ln(z) |^k
\]
on any angular sector. Moreover $e$ projects non trivially on $Gr^W_k$ is and only if
\[
\|\xi\|^2 \simeq |z|^{2\beta} |\ln(z) |^k.
\]
\end{thm}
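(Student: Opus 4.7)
The plan is to localize at a puncture $p \in \Sigma$ and reduce everything to an explicit computation on the universal cover $\pi \colon \mathbb H \to \Delta^*$, $\tau \mapsto e^{2i\pi\tau}$. The computation will combine the canonical basis of sections of $\mathcal V_*^\beta$ described before the statement with the asymptotic lemma on horizontal multivalued sections and with the structural properties of the monodromy filtration $W_\bullet(N_\beta)$.

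First I would choose a basis $e_1, \dots, e_n$ of horizontal multivalued sections of $\mathcal V$ adapted to the generalized eigenspace decomposition of the monodromy, and assemble the holomorphic basis
\[
\xi_j = \exp\bigl((2i\pi\alpha_j + N_{\alpha_j})\tau\bigr) e_j = z^{\alpha_j} \exp(N_{\alpha_j}\tau) e_j
\]
of $\mathcal V_*^\beta$, with $\alpha_j \in [\beta, \beta+1)$. Using the remark identifying $M_k \mathcal V_*^\beta$ with $\overline{\mathcal W_k(N_\beta)} + \mathcal V_*^{>\beta}$, any section $\xi \in M_k \mathcal V_*^\beta$ can be written as $\xi = z^\beta \exp(N_\beta \tau) e + \eta$ with $e \in W_k(N_\beta)$ and $\eta \in \mathcal V_*^{>\beta}$. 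Since $\eta$ has Hodge norm bounded by $|z|^{2\beta+2\varepsilon}$ times a power of $|\ln z|$ for some $\varepsilon > 0$, it enters only as a remainder that is negligible compared with $|z|^{2\beta}|\ln z|^k$.

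Next I would analyse the leading term by expanding $\exp(N_\beta\tau) e = \sum_{m \geq 0} \tau^m N_\beta^m e/m!$, a finite sum since $N_\beta$ is nilpotent. If $e$ projects nontrivially to $Gr^W_k(N_\beta)$, the defining properties $N_\beta W_l \subset W_{l-2}$ and $N_\beta^k \colon Gr^W_k \xrightarrow{\sim} Gr^W_{-k}$ ensure $N_\beta^m e \in W_{k-2m} \setminus W_{k-2m-1}$ for $0 \leq m \leq k$. The asymptotic lemma recalled just before the statement then gives $\|N_\beta^m e\|_h^2 \simeq \mathrm{Im}(\tau)^{k-2m}$ on vertical strips. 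Combined with $|\tau|^2 \simeq \mathrm{Im}(\tau)^2$ in such a strip, each term in the expansion contributes a squared Hodge norm of order $\mathrm{Im}(\tau)^k$. Translating through $\mathrm{Im}(\tau) = -\log|z|/(2\pi)$ and the elementary equivalence $|\ln z| \simeq |\log|z||$ on any angular sector yields the desired asymptotic $\|\xi\|_h^2 \simeq |z|^{2\beta}|\ln z|^k$, while for $\xi \in M_k \mathcal V_*^\beta$ without the nontrivial projection hypothesis the same computation only gives the one-sided bound $\|\xi\|_h^2 \lesssim |z|^{2\beta}|\ln z|^k$.

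The main obstacle I anticipate is exactly turning the upper bound into a two-sided equivalence: a priori the summands $\tau^m N_\beta^m e$ with $m \geq 1$ could interfere destructively and cancel the leading $e$-contribution. Ruling this out requires the polarization structure of the limiting mixed Hodge structure, namely Schmid's $\mathrm{sl}_2$-orbit theorem, which shows that the asymptotic Hodge metric becomes block-diagonal along the weight grading of $N_\beta$ and that $N_\beta$ acts as a polarized Lefschetz operator. This is the input that distinguishes the sharp equivalence for sections projecting nontrivially onto $Gr^W_k$ from the weaker dominated estimate valid on all of $M_k \mathcal V_*^\beta$, and it is the deep ingredient imported via \cite{Sabbah, Schmid}.
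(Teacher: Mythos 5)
The paper offers no proof of this statement: it is quoted verbatim as \cite[Theorem 6.3.5]{Sabbah} and used as a black box, so there is nothing internal to compare against. Your sketch is a correct reconstruction of the cited source's argument — reduce modulo the negligible $\mathcal V_*^{>\beta}$ part to a single leading term $z^\beta\exp(N_\beta\tau)e$, expand the nilpotent exponential, get the upper bound from Schmid's norm estimate $\|N_\beta^m e\|_h^2\simeq \mathrm{Im}(\tau)^{k-2m}$ term by term, and obtain the two-sided bound for sections projecting nontrivially to $Gr^W_k$ from the asymptotic splitting of the Hodge metric along the weight grading furnished by the $\mathrm{sl}_2$-orbit theorem — and you correctly identify that last splitting as the one genuinely deep input that cannot be avoided.
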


With this norm approximation we can express locally square integrable forms in the neighbourhood of a puncture in terms of the minimal extension (see \cite[Proposition 4.4, Proposition 6.9]{Zucker79}, or \cite[Lemma 6.10.16]{Sabbah}).
\begin{prop}\label{caracl2}\cite[Proposition 4.4]{Zucker79}
Let $(\Omega^\bullet(\V)_{(2)},\nabla)$ be the complex of sheaves defines on $X$, such that $\Omega^k(\V)_{(2)}(U)$ is the space of holomorphic $k$-form on $M\cap U$ that are square integrable on $M\cap K$ for all $K$ compact of $U$. Then its is equal to the following complex
\[
\begin{tikzcd}
0 \ar[r]& M_0\mathcal V ^0_* \ar[r, "\nabla"]& \Omega_X^1(log \Sigma) \otimes_{\mathcal O_X} M_{-2}\mathcal V^{-1}_* \ar[r]& 0
\end{tikzcd}
\]
\end{prop}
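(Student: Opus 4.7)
The statement is local at each puncture $p \in \Sigma$, so I localise and take $U = \Delta$ with coordinate $z$ centred at $p$, so that $U \cap M = \Delta^*$. Away from $\Sigma$ both sheaves coincide with the usual holomorphic de Rham complex of $\mathcal V$, so only the behaviour at $z = 0$ needs to be analysed. The key input is the norm asymptotic just recalled before the proposition: a section $\xi$ of $\mathcal V_*^\beta$ whose image in $Gr^\beta\mathcal V_*$ is of pure monodromy weight $k$ satisfies $\|\xi\|_h^2 \simeq |z|^{2\beta}|\ln|z||^k$ on every angular sector. Together with the Poincaré volume form $dV = \frac{dx\wedge dy}{|z|^2(\ln|z|)^2}$ and the dual-metric estimate $\|dz\|^2_{\omega_{Pc}} \simeq |z|^2(\ln|z|)^2$, this reduces each square-integrability question near $0$ to the elementary convergence of $\int_0^r \rho^a|\ln\rho|^b\,d\rho$ with exponents determined by the degree of the form.

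For a holomorphic section (degree $0$) the integral becomes $\int_0^r \rho^{2\beta-1}|\ln\rho|^{k-2}\,d\rho$, which converges iff $\beta > 0$ or ($\beta = 0$ and $k \leq 0$); running this criterion over the finitely many Deligne/monodromy strata produces exactly the subsheaf $M_0\mathcal V_*^0$. For a holomorphic $1$-form $\xi\,dz$ the extra pointwise factor $\|dz\|^2$ shifts things to $\int_0^r \rho^{2\beta+1}|\ln\rho|^k\,d\rho$, converging iff $\beta > -1$ or ($\beta = -1$ and $k \leq -2$); using the multiplication-by-$z$ isomorphism $\mathcal V_*^{-1} \xrightarrow{\sim} \mathcal V_*^0$ from the preceding Remark to match $\xi\,dz$ with the corresponding section of $\Omega^1_X(\log\Sigma)\otimes_{\mathcal O_X} M_{-2}\mathcal V_*^{-1}$ gives the claimed degree-one term.

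It remains to check that $\nabla$ lands in the asserted target, i.e.\ $\nabla M_0\mathcal V_*^0 \subset \Omega^1_X(\log\Sigma)\otimes M_{-2}\mathcal V_*^{-1}$, which combines two classical features of the Deligne extension: $\nabla$ has a logarithmic pole along $\Sigma$, supplying the factor $\Omega^1_X(\log\Sigma)$, and Schmid's property $N\cdot W_k \subset W_{k-2}$ applied to $N_0$ accounts for the drop in monodromy weight from $0$ to $-2$. I expect the main delicate point to be the bookkeeping of the two Deligne levels $\mathcal V_*^0$ and $\mathcal V_*^{-1}$ together with the correct identification of the concrete $L^2$ subsheaf of $\mathcal V$-valued forms with the abstract tensor product $\Omega^1_X(\log\Sigma)\otimes M_{-2}\mathcal V_*^{-1}$ via the $z$-shift isomorphism; everything else reduces to elementary integration against the Sabbah norm asymptotic.
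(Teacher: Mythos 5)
Your strategy is the one the paper itself relies on (the paper simply defers to Zucker and to the $L^2$-adapted frame lemma stated just below the proposition): localise at a puncture, expand in the holomorphic frame $\xi_j = z^{\alpha_j}\exp(N_{\alpha_j}\ln z/2i\pi)e_j$ attached to a flat multivalued frame flagged by $W_\bullet$, invoke the norm asymptotics $\|\xi_j\|_h^2\simeq |z|^{2\alpha_j}|\ln|z||^{k_j}$, and reduce to elementary radial integrals. Both of your convergence criteria are correct. One small remark: to pass from ``each pure generator satisfies the criterion'' to ``the sheaf of all locally $L^2$ holomorphic sections is exactly $M_0\mathcal V_*^0$'' you must exclude cancellation between the strata and you must know a priori that an $L^2$ holomorphic section has meromorphic coefficients in this frame; both points are exactly what the $L^2$-adapted frame lemma ($C^{-1}\sum\|f_i\xi_i\|^2\leq\|\sum f_i\xi_i\|^2\leq C\sum\|f_i\xi_i\|^2$) provides, and you use it only implicitly.

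The substantive issue is that your degree-one identification does not close. Your (correct) computation shows that $\xi\,dz$ is locally $L^2$ iff $\xi$ is a section of $M_{-2}\mathcal V_*^{-1}$, i.e.\ the degree-one term is $\Omega^1_X\otimes_{\mathcal O_X}M_{-2}\mathcal V_*^{-1}$; rewriting $dz\otimes\eta=\frac{dz}{z}\otimes z\eta$ and using that multiplication by $z$ carries $M_{-2}\mathcal V_*^{-1}$ onto $M_{-2}\mathcal V_*^{0}$, this equals $\Omega^1_X(\log\Sigma)\otimes_{\mathcal O_X}M_{-2}\mathcal V_*^{0}$. With the conventions fixed in the statement of the Deligne extension (residue eigenvalues in $[\beta,\beta+1[$), this differs from the displayed target $\Omega^1_X(\log\Sigma)\otimes M_{-2}\mathcal V_*^{-1}$ by a twist by $\mathcal O_X(\Sigma)$: the displayed sheaf contains $\frac{dz}{z}\otimes\xi_j$ for every generator $\xi_j$ of $\mathcal V_*^{>-1}$, and already for $\alpha_j=0$, $k_j=0$ one has $\|\frac{dz}{z}\otimes\xi_j\|^2\simeq|\ln|z||^{2}$, whose integral against the Poincar\'e volume form reduces to $\int_0\rho^{-1}d\rho=+\infty$. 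Your appeal to ``the multiplication-by-$z$ isomorphism to match $\xi\,dz$ with the corresponding section of $\Omega^1_X(\log\Sigma)\otimes M_{-2}\mathcal V_*^{-1}$'' is precisely where this factor of $z$ is lost: $dz\otimes\eta$ and $\frac{dz}{z}\otimes\eta$ are different sections with different norms, and the shift isomorphism changes the Deligne level of $\eta$ at the same time as you change the generator of the line of forms. You should either state the degree-one term as $\Omega^1_X\otimes M_{-2}\mathcal V_*^{-1}$ (equivalently $\Omega^1_X(\log\Sigma)\otimes M_{-2}\mathcal V_*^{0}$) or record explicitly which normalisation of $\mathcal V_*^{\bullet}$ makes the displayed formula literally true; the verification that $\nabla M_0\mathcal V_*^0$ lands in this corrected target then works exactly as you describe, via the logarithmic pole and $N_0W_k\subset W_{k-2}$.
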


This proposition is a consequence the result below \cite[p.433]{Zucker79}.
\begin{lem}
If $(e_1,\dots, e_n)$ is a flat multivalued basis of $\V$ flagged according to the filtration $W_\bullet$ then induced frame $(\xi_1,\dots, \xi_n) \in \mathcal V^0_*$ is $L^2$ adapted, i.e there exists a constant $C > 1$ if $(f_1,\dots, f_n)$ are measurable then
\[
C^{-1} \sum\|f_i \xi_i\|^2 \leq \left\|\sum f_i \xi_i \right\|^2 \leq C \sum\|f_i \xi_i\|^2
\]
\end{lem}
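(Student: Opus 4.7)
The strategy is entirely local. By a partition of unity and the quasi-isometry hypothesis on $(M,\omega_{Pc})$, it suffices to work on a small punctured disk $\Delta^*_r$ with the standard Poincaré metric and establish the norm equivalence there. I parametrize via the universal cover $\mathbb{H} \to \Delta^*_r$, $\tau \mapsto z = e^{2i\pi\tau}$, with $y = \mathrm{Im}(\tau)$. The first reduction is to decompose $V = \bigoplus_\alpha V_\alpha$ along the generalized eigenspaces of the monodromy with $\alpha \in [0,1)$, and regroup the basis $(e_i)$ so that each $e_i$ belongs to a single $V_{\alpha_i}$ with a well-defined weight $k_i$, that is, $e_i \in W_{k_i}(N_{\alpha_i}) \setminus W_{k_i-1}(N_{\alpha_i})$.

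The key input is Schmid's nilpotent/$SL_2$-orbit theorem in the complex setting of \cite[Chapter 6]{Sabbah}: on each $V_\alpha$ the Hodge metric $h(\tau)$ is asymptotic, as $y \to \infty$, to a nilpotent orbit metric $h^{nilp}_\alpha(v,w) = h^\infty_\alpha(e^{-yN_\alpha/2}v, e^{-yN_\alpha/2}w)$, where $h^\infty_\alpha$ is a fixed positive Hermitian form on $V_\alpha$ making the weight filtration $W_\bullet(N_\alpha)$ split-orthogonal, and different eigenspaces $V_\alpha, V_{\alpha'}$ are mutually $h^\infty$-orthogonal. I then choose a basis that is simultaneously flagged for $W_\bullet$ and orthonormal for $h^\infty = \bigoplus h^\infty_\alpha$; any other flagged basis differs from this one by a constant invertible block upper-triangular change of basis, under which the $L^2$-adapted property is manifestly invariant. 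Substituting into $\xi_i = e^{(2i\pi\alpha_i + N_{\alpha_i})\tau}e_i$ and expanding with the nilpotent orbit asymptotic recovers $\|\xi_i\|_h^2 \simeq |z|^{2\alpha_i}|\ln|z||^{k_i}$ (which is \cite[Theorem 6.3.5]{Sabbah}), while the off-diagonal Gram entries $\langle \xi_i,\xi_j\rangle_h$ come out with prefactor $|z|^{\alpha_i+\alpha_j}$ times a polynomial in $\ln|z|$ whose degree is strictly smaller than $(k_i+k_j)/2$ when $i \neq j$, thanks to the $h^\infty$-orthogonality of the graded pieces and of the different eigenspaces.

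Once the Gram matrix $G_{ij}(z) = \langle \xi_i,\xi_j\rangle_h$ is shown to satisfy the uniform pointwise equivalence
\[
C^{-1}\sum_i |f_i|^2 G_{ii}(z) \leq \sum_{i,j} f_i \overline{f_j}\, G_{ij}(z) \leq C \sum_i |f_i|^2 G_{ii}(z)
\]
for all $z$ in a small punctured neighborhood of the origin and all scalars $f_i \in \C$, integrating with measurable coefficients against the Poincaré volume form immediately yields the claimed $L^2$-adapted equivalence. The main obstacle is exactly the uniform well-conditioning of the normalized Gram matrix $(G_{ij}/\sqrt{G_{ii}G_{jj}})$: this requires the full force of Schmid's orbit theorem, in particular the orthogonality of the weight filtration with respect to the limit form $h^\infty$, to guarantee that the off-diagonal ratios stay bounded strictly away from $\pm 1$ as $|z| \to 0$. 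Away from the puncture the Hodge metric is smooth and the frame $(\xi_i)$ remains uniformly independent, so the estimate extends trivially to the whole disk, concluding the proof.
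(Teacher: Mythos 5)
Your argument is correct and is essentially the argument of the source this lemma is quoted from: the paper itself gives no proof, deferring to \cite[p.~433]{Zucker79}, and both Zucker's proof and yours rest on the same key input, namely Schmid's $SL_2$/nilpotent-orbit theorem giving mutual boundedness (uniformly on angular sectors) of the Hodge metric with a model metric for which a splitting of $W_\bullet$ and the distinct monodromy eigenspaces are orthogonal, from which the pointwise well-conditioning of the normalized Gram matrix and hence $L^2$-adaptedness follow by integration. The only step I would spell out more is the reduction from an arbitrary $W_\bullet$-flagged frame to the $h^\infty$-orthonormal one: invariance under a constant flag-preserving change of basis is not completely ``manifest'' (it fails for general constant changes of basis), and here it uses that within a single weight-graded piece all the $\xi_i$ have mutually comparable norms with a positive-definite limiting Gram matrix, while the strictly triangular part of the change of basis only contributes terms of strictly lower logarithmic order.
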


\section{Branched covering of curves}
We recall the definition of a branched covering of complex manifold, it is taken from \cite{Namba}.
\begin{defn}
A surjective holomorphic map $\pi\colon (\tilde M,\mathcal O_{\tilde M}) \to (M,\mathcal O _M)$ between complex manifolds is called a branched covering if it satisfies the following properties.
\begin{enumerate}
\item The morphism $\pi$ has discrete fibers.
\item The ramification locus 
\[
R_\pi := \left\{ p \in \tilde M \mid \pi^*\colon \mathcal O_{\tilde M, \pi(p)} \to \mathcal O_{\tilde M, p} \text{is  not an isomorphism}  \right\}
\] is a hypersurface of $\tilde M$ and its image $B_\pi = \pi(R_\pi)$ is a hypersurface of M. 
\item The induced map $\pi \colon \tilde M \setminus R_\pi \to M\setminus B_\pi$ is a topological covering.
\item Any point $p\in M$ admits a neighbourhood $U$ such that $p$ has only one antecedent in each connected component $\tilde U$ of $\pi^{-1}(U)$ and $\pi\colon \tilde U \to U$ is surjective and proper.
\end{enumerate}
\end{defn}

In the case of complex curves, it means that every point $p\in M$ admits a coordinate neighbourhood $U \simeq \Delta$ such that any connected component $\tilde U$ of $\pi^{-1}(U)$ biholomorphic to a disk $\Delta$ and in coordinates $\pi$ acts as
\[
\begin{aligned}
\pi \colon & \tilde U &\to&\, U \\
		& z &\mapsto& z^{n_{\tilde U}} 
\end{aligned}
\]
for some integer $n_{\tilde U} \geq 1$ that depends on the connected $\tilde U$.  In the following we will consider the case of a complex curve $M$ embedded in a compact Riemann surface $X$ such that $\Sigma := X\setminus M$ is a finite set of points. We will assume that we have a Galois covering $\pi\colon \tilde M \to M$, with $Deck(\tilde M/ M) = \Gamma$ such that for $p\in \Sigma$, the meridian loop $\gamma$ around $p$ has a finite order in $\Gamma$. Such coverings appear for example when we consider the restriction of a ramified covering $\pi\colon \tilde X \to X$ with $B_\pi \subset \Sigma$. In fact under any such covering is of this form as stated by the lemma below.

\begin{lem}
Let $\Gamma$ be a discrete group acting freely and properly discontinuously on a Riemann surface $\tilde M$ and let $\pi$ be the covering $\pi\colon \tilde M \to \tilde M / \Gamma =: M$ and consider an embedding $j\colon M \to X$ of $M$ into a compact Riemann surface $X$ with $\Sigma:=X\setminus M$ consisting of a finite number of points.  Assuming that for $\gamma$ a meridian loop around a point $p\in \Sigma$, $\pi^{-1}\gamma$ is a disjoint union of simple closed curves there exists an embedding of Riemann surfaces $\tilde j \colon \tilde M \to \tilde X$, with the following properties.
\begin{enumerate}
\item The action of $\Gamma$ on $\tilde M$ extends to a properly discontinuous action on $\tilde X$.
\item  $\pi$ extends to a surjective $\Gamma$-invariant holomorphic branched covering 
\[\pi \colon \tilde X \to X.\]
\end{enumerate}
In this case one has the following commutative diagram.
\[
\begin{tikzcd}
\tilde M \arrow[d, "\pi"] \arrow[r, hook, "\tilde j"] & \tilde X \arrow[d, "\pi"] \\
M \arrow[r, hook, "j"] & X
\end{tikzcd}
\]
\end{lem}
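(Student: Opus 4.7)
The plan is to construct $\tilde X$ by filling in, for each puncture $p\in\Sigma$ and each connected component $\tilde U$ of $\pi^{-1}(U\cap M)$ (where $U$ is a small disk neighborhood of $p$), a single point that will lie above $p$. The hypothesis on meridian loops is exactly what forces the local geometry to be of branched-covering type.

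First I would localize. Fix $p\in\Sigma$, choose a neighborhood $U\simeq\Delta$ with $U\cap M\simeq \Delta^*$, and examine $\pi^{-1}(U\cap M)\to U\cap M$. This is a covering of $\Delta^*$, hence classified by a subgroup of $\pi_1(\Delta^*,\ast)\simeq\Z$; the possibilities per connected component $\tilde U$ are the universal cover $\mathbb H\to\Delta^*$ (trivial subgroup) or $\Delta^*\xrightarrow{z\mapsto z^n}\Delta^*$ (subgroup $n\Z$). In the first case a lift of the meridian loop $\gamma$ is a non-compact horizontal line in $\mathbb H$, so $\pi^{-1}\gamma$ restricted to $\tilde U$ is not a simple closed curve; the hypothesis therefore forces every component $\tilde U$ to be of the form $\Delta^*\xrightarrow{z\mapsto z^n}\Delta^*$ for some finite $n=n_{\tilde U}\geq 1$. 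I then glue in, for each such $\tilde U$, an additional point $\tilde p_{\tilde U}$ corresponding to the puncture of $\Delta^*$. The Riemann surface structure on $\tilde U\cup\{\tilde p_{\tilde U}\}\simeq\Delta$ is canonical, and the map $\pi$ extends holomorphically as $z\mapsto z^{n_{\tilde U}}$ sending $\tilde p_{\tilde U}$ to $p$. Letting $\tilde X$ be $\tilde M$ together with all these added points (with the induced topology and holomorphic atlas), we obtain a Riemann surface and a holomorphic extension $\pi\colon\tilde X\to X$ that is proper on a neighborhood of each new point and surjective by construction. The four axioms of a branched covering follow locally by inspection of $z\mapsto z^{n_{\tilde U}}$.

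Next I would extend the $\Gamma$-action. Any $g\in\Gamma$ permutes the fibers of $\pi$, hence permutes the connected components $\tilde U$ of $\pi^{-1}(U\cap M)$ lying above a given $U$; the induced biholomorphism $g\colon\tilde U\to g(\tilde U)$ is a biholomorphism between two copies of $\Delta^*$ commuting with the projections to $\Delta^*$, so it is of the form $z\mapsto\zeta z$ for some root of unity $\zeta$, which extends uniquely and holomorphically across the added puncture. This gives a well-defined continuous, in fact holomorphic, extension of the $\Gamma$-action to $\tilde X$, and $\Gamma$-invariance of the extended $\pi$ is automatic by continuity from the dense open subset $\tilde M$. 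For proper discontinuity, the stabilizer of an added point $\tilde p_{\tilde U}$ consists of elements fixing $\tilde U$ setwise while fixing its puncture; such an element acts as a rotation $z\mapsto\zeta z$ on $\Delta$, so the stabilizer is a finite cyclic group (of order dividing $n_{\tilde U}$). Taking a small invariant disk around $\tilde p_{\tilde U}$ whose $\Gamma$-translates above the chosen $U$ are the finitely many components of $\pi^{-1}(U)$ yields a neighborhood whose $\Gamma$-translates meet only along the stabilizer, which is the proper-discontinuity condition at $\tilde p_{\tilde U}$. At points of $\tilde M$ the condition is assumed, so $\Gamma$ acts properly discontinuously on all of $\tilde X$.

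The main obstacle I expect is the bookkeeping at the added points: one must verify both that the complex structure and the $\Gamma$-action extend continuously and holomorphically across each $\tilde p_{\tilde U}$, and that the resulting action is properly discontinuous despite having non-trivial finite stabilizers on the new points. Once one observes that each $\tilde U$ is biholomorphic to $\Delta^*$ with $\pi$ acting as $z\mapsto z^n$, the Riemann extension theorem (or the explicit local normal form) handles the complex-analytic extension, and the local rigidity of biholomorphisms of $\Delta^*$ commuting with $z\mapsto z^n$ handles the group action. The rest is diagram-chasing and verification of the four defining properties of a branched covering.
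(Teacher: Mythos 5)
Your proof is correct, and it reaches the conclusion by a genuinely different (more self-contained) route than the paper. The paper obtains the underlying topological space $\tilde X$ and the point-set properties of the extended map by citing the Fox completion of a spread, and only then introduces the complex structure, defining $\mathcal O_{\tilde X}$ as the sheaf of holomorphic functions on $\tilde M$ that are locally bounded, and checking via the local normal form $z\mapsto z^{n}$ that this sheaf makes $\tilde X$ a Riemann surface and $\pi$ a ramified covering. You instead build $\tilde X$ by hand: the classification of connected coverings of $\Delta^*$ together with the hypothesis that $\pi^{-1}\gamma$ consists of simple closed curves rules out the universal-cover components and forces each component over a punctured neighbourhood to be $\Delta^*\xrightarrow{z\mapsto z^{n}}\Delta^*$; you then adjoin one point per component and extend the complex structure, the map, and the group action using the removable singularity theorem and the rigidity of automorphisms of $\Delta^*$ commuting with $z\mapsto z^{n}$ (which also identifies the finite cyclic stabilizers needed for proper discontinuity). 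What your approach buys is independence from the Fox reference and an explicit verification of every axiom of a branched covering; what the paper's approach buys is brevity and a canonical, uniquely characterized completion. The only points worth spelling out in your version are the routine checks that the enlarged space is Hausdorff and second countable (immediate, since distinct added points lie in disjoint filled-in components and $\Gamma$ is countable); these are handled for free by the Fox completion in the paper's version.
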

\begin{proof}
The existence of a topological space $\tilde X$ such that we have the commutative diagram is a particular case of Fox completion \cite{Fox}. By construction of the Fox completion $\pi\colon \tilde X \to X$ is surjective, has discrete fibers and any point $p\in X$ admits a neighbourhood $U$ such that on any connected component $\tilde U$ of $\pi^{-1}(U)$, $p$ admits a unique antecedant and $\pi\colon \tilde U \to U$ is surjective. We endow $\tilde X$ with the sheaf $\mathcal O _{\tilde X}$ where $\mathcal O_{\tilde X}(U)$ consists of regular functions $f\in \mathcal O _{\tilde M}(\tilde M \cap U)$ that are bounded on any compact subset $K\subset U$. We clearly have $\mathcal O _{\tilde X \mid \tilde M} = \mathcal O _{\tilde M}$. We have to verify that $\mathcal O _{\tilde X}$ gives $\tilde X$ the structure of a Riemann surface. Let $q\in \tilde X \setminus \tilde M$ and $p= \pi(q)$, by hypothesis the meridian loop $\gamma$  around $p$ has a finite order $n_\gamma$ in $\Gamma$ ; it follows that $p$ admits a neigbourhood $U$ biholomorphic to the disk $\Delta$, the connected component $\tilde U$ that contains $q$ is homeomorphic to a disk $\Delta$ and in those coordinates $\pi$ is given by
\[
\begin{aligned}
\pi\colon &\tilde U &\to&\, U \\
& z &\mapsto& z^{n_\gamma +1}.
\end{aligned}
\]
It follows that $\pi\colon \tilde U \to U$ is proper, and if $f$ is an holomorphic function on $U\setminus\{q\}$, it can be written in coordinates $f(z)= \underset{n\in \Z}{\sum} a_n z^n$, it is bounded near $q$ if and only if $a_n = 0$ when $n<0$, it follows that $\mathcal O _{\tilde X}$ gives $\tilde X$ the structure of a Riemann surface. The ramification set $R_\pi$ and its image are included in $\tilde X \setminus \tilde M$ and $\Sigma$ respectively so $\pi\colon \tilde X \to X$ is a ramified covering.
\end{proof}

\begin{rem}
In the case of the theorem, take $q\in \tilde X \setminus \tilde M$, we have a distinguished neighbourhood $\tilde U\simeq \Delta$ of $q$, such that
\[
\begin{aligned}
\pi \colon &\tilde U \simeq \Delta &\to \pi(\tilde U) &:= U\simeq \Delta \\
	     & z &\mapsto z^{n_{\tilde U}}
\end{aligned}
\]
If $n$ is the order of the meridian loop around $p$ in $\Gamma$, one has $n_{\tilde U} = n+1$ so $n_{\tilde U}$ only depends on $\pi(q)$.
\end{rem}

\section{The complexes of sheaves locally square integrable forms}

Again in this section we consider an embedding $j\colon M \to X$, where $M$ is an open Riemann surface, $X$ is a compact Riemann surface and such that $\Sigma := X \setminus M$ is a finite set of points. We consider a Galois covering $\pi \colon \tilde M \to M$, we assume that $\tilde M$ is embedded in a Riemann surface $\tilde X$ and that $\pi$ extend to a ramified covering $\pi \colon \tilde X \to X$. We consider a polarized complex variation of Hodge structure $(\V,F^\bullet,\bar F^\bullet,S)$ on $M$, we will set $n = rk(\V)$. 

We introduce here some complexes of sheaves of locally square integrable forms. All the sheaves introduced in this section will be defined on $X$.

\subsection*{The sheaf $\ell^2\pi_*\V$}

The sheaf $\pi_!\pi^*\V$ on $X$ is a locally constant sheaf of $\C[\Gamma]$-modules, where the $\C[\Gamma]$-module structure comes from by the left regular action on the space of locally constant function on $\pi^{-1}(U)$ i.e $\gamma\cdot f = f\circ \gamma^{-1}$. If we consider the constant sheaf $\ell^2(\Gamma)_M$ on $M$, it has naturally a structure of $\C[\Gamma]$-bimodule, and we define the sheaf 
\[
\ell^2\pi_*\V:= j_*\left(\ell^2(\Gamma)_M \otimes_{\C[\Gamma]} \pi_!\pi^*\V\right).
\]

In \cite{Eyssidieux22}, given a covering Galois $\pi\colon \tilde X \to X$ with covering group $\Gamma$, a functor $\ell^2\pi_*$ is defined on the category of constructible sheaves on $X$ to the category of weakly constructible sheaves on $X$ by 
\[
\ell^2\pi_*F := \ell^2(\Gamma)_X \otimes_{\C[\Gamma]} \pi_!\pi^F.
\]
To show the consistency of our notation with the one in \cite{Eyssidieux22}, we will show the following.

\begin{prop}
There exists a natural isomorphism
\[
\ell^2(\Gamma)_X \otimes_{\C[\Gamma]} \pi_!\pi^*j_*\V \to j_*\left(\ell^2(\Gamma)_M \otimes_{\C[\Gamma]} \pi_!\pi^*\V \right).
\]
is an isomorphism.
\end{prop}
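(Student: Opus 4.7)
The strategy is to construct a canonical morphism from the left-hand side to the right-hand side by means of the $(j^*,j_*)$-adjunction, and then verify it is an isomorphism stalk-by-stalk. For the construction, one first notes that $\ell^2(\Gamma)_X = j_*\ell^2(\Gamma)_M$, because $\Sigma$ is discrete in the curve $X$ and each punctured neighborhood $U\cap M$ of $p\in\Sigma$ is connected. From the Cartesian square associating $\tilde j$, $\pi$, $j$, the open base change $j^*\pi_!\simeq \pi_!\tilde j^*$ together with $\tilde j^*\pi^*j_*\V = \pi^*j^*j_*\V = \pi^*\V$ yield
\[
j^*\bigl(\ell^2(\Gamma)_X\otimes_{\C[\Gamma]}\pi_!\pi^*j_*\V\bigr)\;\simeq\;\ell^2(\Gamma)_M\otimes_{\C[\Gamma]}\pi_!\pi^*\V,
\]
and the unit of adjunction then provides the desired morphism $\mathrm{LHS}\to j_*j^*\mathrm{LHS} = \mathrm{RHS}$.

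Over $M$ the morphism is just the identity, so it remains to verify the isomorphism on stalks at each $p\in\Sigma$. Fix $U$ a small disk about $p$; by the covering hypothesis $\pi^{-1}(U)=\bigsqcup_{\alpha\in\Gamma/\mathrm{Stab}_p}\tilde U_\alpha$ with each $\tilde U_\alpha\simeq\Delta$ and $\pi|_{\tilde U_\alpha}$ given by $z\mapsto z^{n_p}$, where $n_p$ is the order of $\mathrm{Stab}_p$. On the LHS side, $(\pi_!\pi^*j_*\V)(U)=\bigoplus_{\alpha}(\pi^*j_*\V)(\tilde U_\alpha)$ where the sum is over finitely supported families; using the local model at the ramification points, this $\C[\Gamma]$-module is naturally identified with $\C[\Gamma]\otimes_{\C[\mathrm{Stab}_p]}(j_*\V)_p$, so that tensoring with $\ell^2(\Gamma)$ gives an LHS stalk of the form $\ell^2(\Gamma)\otimes_{\C[\mathrm{Stab}_p]}(j_*\V)_p$. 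On the RHS side, $\pi_!\pi^*\V$ restricts to $U\cap M$ as a local system of infinite rank with stalk $\C[\Gamma]\otimes\V$ and monodromy $R_{\gamma_p}\otimes T$, where $\gamma_p\in\Gamma$ is the image of the meridian loop and $T$ is the monodromy of $\V$ around $p$; tensoring with $\ell^2(\Gamma)_M$, computing the stalk of $j_*$ as the monodromy-invariant sections over the punctured disk, and identifying $(j_*\V)_p$ with the fixed-point subspace $\V^T$, yields the same expression $\ell^2(\Gamma)\otimes_{\C[\mathrm{Stab}_p]}(j_*\V)_p$.

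The main obstacle is to show that the morphism constructed by adjunction, when traced through these two stalk computations, coincides with the natural identification between the two descriptions. This requires keeping careful track of both the Galois action by left multiplication of $\Gamma$ on $\ell^2(\Gamma)$ and the monodromy action $R_{\gamma_p}\otimes T$, and in particular of the specialization $(j_*\V)_p\hookrightarrow\V$ which mediates between ``sections of $\pi^*j_*\V$ on the disk $\tilde U_\alpha$'' and ``monodromy-fixed sections of $\pi^*\V$ on $\tilde U_\alpha\setminus\{0_\alpha\}$''. Once this naturality is established, the stalk isomorphisms assemble into the asserted isomorphism of sheaves on $X$.
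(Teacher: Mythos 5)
Your construction of the comparison morphism is fine and is essentially the route the paper takes: the paper also reduces to the open part (using that $j^{-1}j_*\to\mathrm{id}$ is an isomorphism and that $j_*\pi_!\pi^*=\pi_!\tilde j_*\pi^*$), while you package the same reduction as open base change plus the unit of the $(j^*,j_*)$-adjunction. The gap is in your stalk computation of the right-hand side at a puncture $p$. The sheaf $\ell^2(\Gamma)_M\otimes_{\C[\Gamma]}\pi_!\pi^*\V$ is locally constant near $p$ with fibre $\ell^2(\Gamma)\otimes_{\C}\V$ and monodromy $R_{\gamma_p}\otimes T$, so the stalk of its $j_*$ at $p$ is the full space of invariants $(\ell^2(\Gamma)\otimes\V)^{R_{\gamma_p}\otimes T}$. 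Decomposing $\ell^2(\Gamma)=\bigoplus_\chi\ell^2(\Gamma)_\chi$ into isotypic components for the right action of the finite cyclic group $\langle\gamma_p\rangle$, this equals
\[
\bigoplus_{\chi}\ \ell^2(\Gamma)_\chi\otimes\ker\bigl(T-\chi(\gamma_p)^{-1}\bigr),
\]
which picks up a nonzero summand for \emph{every} eigenvalue of $T$ that is an $n_p$-th root of unity. Your left-hand stalk $\ell^2(\Gamma)\otimes_{\C[\mathrm{Stab}_p]}(j_*\V)_p\simeq\ell^2(\Gamma/\mathrm{Stab}_p)\otimes\V^T$ (the stabilizer acts trivially on the stalk at its fixed point) retains only the trivial character and the eigenvalue $1$. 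These disagree in general: for $\Gamma=\Z/2$ acting on $\tilde M=\Delta^*$ by $z\mapsto-z$ and $\V$ of rank one with monodromy $T=-1$, the left side has stalk $0$ at the origin while the right side has stalk $\C$. So the step ``yields the same expression'' is not a matter of tracing naturality carefully; the two spaces are genuinely different.

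It is worth noting that your computation, done honestly, isolates the same difficulty that is hidden in the paper's own argument: the paper's last step asserts $\tilde j_*\pi^*=\pi^*j_*$ as a consequence of $j\circ\pi=\pi\circ\tilde j$, but this base change fails at a ramification point $q$ over $p$, where $(\pi^*j_*\V)_q=\V^T$ while $(\tilde j_*\pi^*\V)_q=\V^{T^{n_p}}=\bigoplus_{\zeta^{n_p}=1}\ker(T-\zeta)$. What the unproblematic part of either argument establishes is $j_*\bigl(\ell^2(\Gamma)_M\otimes_{\C[\Gamma]}\pi_!\pi^*\V\bigr)\simeq\ell^2(\Gamma)_X\otimes_{\C[\Gamma]}\pi_!\tilde j_*\pi^*\V$; the remaining identification of $\tilde j_*\pi^*\V$ with $\pi^*j_*\V$ is exactly where the extra eigenvalue contributions enter. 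As written your proof cannot be completed: one needs either an additional hypothesis (the covering unramified over $\Sigma$, or no nontrivial $n_p$-th root of unity among the eigenvalues of the local monodromies), or the left-hand side must be restated with $\pi_!\tilde j_*\pi^*\V$ in place of $\pi_!\pi^*j_*\V$.
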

\begin{proof}
There is a natural morphism of sheaves
\[
j_*\ell^2(\Gamma)_M \otimes_{\C[\Gamma]} j_*\pi_!\pi^*\V \to j_*\left(\ell^2(\Gamma)_M \otimes_{\C[\Gamma]} \pi_!\pi^*\V \right)
\]
It is an isomorphism since the natural morphism of functor $j^{-1}j_* \to id$ is an isomorphism in this case. So one has to check that the left side is isomorphic to $\ell^2(\Gamma)_X \otimes \pi_!\pi^* j_*\V$. It is clear that the left side is isomorphic to $\ell^2(\Gamma)_X \otimes j_*\pi_!\pi^* \V$. We begin to show 
\[
j_*\pi_!\pi^* =  \pi_!\tilde j_*\pi^*
\]
where $\tilde j \colon \tilde M \to \tilde X$ is the inclusion. Indeed if $F$ is a sheaf on $M$ one has $j_*\pi_!\pi^*F =  \pi_!\tilde j_*\pi^*F$ on $M$, and if $p\in \Sigma$ is a puncture we take an open neigbourhood $U$ of $p$ with $U\cap M$ biholomorphic to $\Delta^*$. Set $\tilde U$ a connected component of $\pi^{-1}(U\cap M)$ so $\pi^{-1}(U\cap M)$ is biholomorphic to $\Gamma/<\gamma> \times \tilde U$ where $\gamma$ is the meridian loop around $p$. With this identification one has 
\[
\begin{aligned}
\pi_! \tilde j_*\pi^*F(U) &= \{ s \in \Gamma(\pi^{-1}(U)\cap \tilde M,\pi^*F) \mid \pi\colon \tilde X \to X \text{ is proper on } supp(s) \} \\
				  &\simeq \{ s \in \Gamma(\pi^{-1}(U\cap M),\pi^*F) \mid \pi\colon \tilde M \to M \text{ is proper on } supp(s) \} \\
				  &= j_* \pi_! \pi^* F.
\end{aligned}
\]
Now the result follows from $j \circ \pi = \pi \circ \tilde j$ which implies $\tilde j_* \pi^* = \pi^*j_*$.
\end{proof}

\subsection*{The complex $\Omega^\bullet(\pi^*\V)_{(2)}$}

The complex $\Omega^\bullet(\pi^*\V)_{(2)}$ is the complex 
\[
\begin{tikzcd}
\mathcal O (\pi^*\V)_{(2)} \arrow[r, "\nabla"] & \Omega^1(\pi^*\V)_{(2)}
\end{tikzcd}
\]
where $\mathcal O (\pi^*\V)_{(2)}(U)$ (resp. $\Omega^1(\pi^*\V)_{(2)}(U)$) is the space of holomorphic functions (resp. holomorphic $1$-forms) on $\pi^{-1}(U\cap M)$ that are square integrable on every compact $\pi^{-1}(K\cap M)$ for $K$ compact of $U$. In case of the trivial covering, we have seen that the square integrability near the punctures can be interpreted in terms of the monodromy filtration $W_\bullet$ and the Deligne extension. So it will be useful to characterize those on the pull back of the variation of Hodge structure $(\pi^*\V, \pi^*F^\bullet, \pi^*\bar F ^\bullet, \pi^*h)$.

Fix $U$ a neighbourhood of a puncture $p\in X \setminus M$ biholomorphic to a disk $\Delta$. Recall that if $T$ denotes the local monodromy near a puncture one has a decomposition $\V_{|U} = \bigoplus_{\alpha \in ]-1,0]} \V_\alpha$, where on $\V_\alpha$ the operator $T$ has the form $e^{2i\pi\alpha}e^{N_\alpha}$ and $N_\alpha$ is nilpotent and induces an increasing filtration $W_\bullet(\V_\alpha)$ on $\V_\alpha$ and we defined $W_\bullet(\V) = \bigoplus W_\bullet(\V_\alpha)$. Take $\tilde U$ a connected component of $\pi^{-1}(U\cap M)$, on $\tilde U$ one has a decomposition
\[
\pi^*\V(\tilde U) = \bigoplus_{\beta \in ]-1,0]} \big(\pi^*\V\big)_\beta.
\]
Since the monodromy of $\pi^*\V$ on $\tilde U$ is given by $T^{n_p}$ one has an identification
\[
\big(\pi^*\V\big)_\beta = \bigoplus_{n_p\alpha = \beta mod \,1} \pi^*(\V_\alpha).
\]
On $\pi^*\V_\alpha$ the monodromy is given by $e^{2i\pi[n_p\alpha]}e^{n_pN_\alpha}$ where $n_p$ is the order of the stabilizer of $\tilde U$, $[n_p\alpha]$ denotes the representant of $n_p\alpha\, mod\, 1$ in $]-1,0]$. The filtration on $(\pi^*\V)_\beta$ is thus induced by the nilpotent operator $\tilde N _\beta = \bigoplus n_p N_\alpha$. From this we deduce the following lemma.

\begin{lem} 
One has
\[
W_\bullet((\pi^*\V_{|\tilde U})_\beta) = \bigoplus_{[n_p \alpha] = \beta} \pi^*W_\bullet(\V_\alpha) \qquad W_\bullet(\pi^*\V_{|\tilde U}) = \pi^*W_\bullet(\V)
\]
\end{lem}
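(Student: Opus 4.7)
The plan is to reduce both equalities to two elementary functoriality properties of the monodromy weight filtration $W_\bullet(N)$ attached to a nilpotent endomorphism $N$, and then unwind the direct sum decompositions recalled in the paragraph preceding the lemma.

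The first property needed is \emph{scaling invariance}: for any nilpotent $N$ on a finite-dimensional vector space and any $c \in \C^*$ one has $W_\bullet(cN) = W_\bullet(N)$. This is immediate from the uniqueness characterization of $W_\bullet$: the conditions $NW_k \subset W_{k-2}$ and $N^k\colon \mathrm{Gr}^W_k \to \mathrm{Gr}^W_{-k}$ being an isomorphism are preserved when $N$ is replaced by $cN$, since $(cN)^k = c^k N^k$ remains a linear isomorphism in degree $k$. The second property is \emph{direct sum compatibility}: $W_\bullet(N_1 \oplus N_2) = W_\bullet(N_1) \oplus W_\bullet(N_2)$, which again follows from uniqueness, the right-hand filtration clearly satisfying both defining conditions for $N_1 \oplus N_2$. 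The same uniqueness argument shows moreover that any linear isomorphism intertwining two nilpotent operators carries their monodromy filtrations to one another; in particular pulling back a flat filtration along the étale part of $\pi$ commutes with the construction of $W_\bullet$.

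Applied to the situation at hand, the discussion preceding the lemma gives the identification
\[
(\pi^*\V_{|\tilde U})_\beta = \bigoplus_{[n_p \alpha] = \beta} \pi^*\V_\alpha,
\]
and the nilpotent part of the monodromy $T^{n_p}$ of $\pi^*\V_{|\tilde U}$ on this eigenspace is $\tilde N_\beta = \bigoplus_{[n_p\alpha]=\beta} n_p \pi^*N_\alpha$. Successive application of the two functoriality properties above then gives
\[
W_\bullet((\pi^*\V_{|\tilde U})_\beta) = W_\bullet(\tilde N_\beta) = \bigoplus_{[n_p\alpha]=\beta} W_\bullet(n_p \pi^*N_\alpha) = \bigoplus_{[n_p\alpha]=\beta} W_\bullet(\pi^*N_\alpha) = \bigoplus_{[n_p\alpha]=\beta} \pi^* W_\bullet(\V_\alpha),
\]
which is the first equality. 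Summing over $\beta \in {]-1,0]}$ and using $(\pi^*\V)_{|\tilde U} = \bigoplus_\beta (\pi^*\V_{|\tilde U})_\beta$ yields the second equality $W_\bullet(\pi^*\V_{|\tilde U}) = \pi^*W_\bullet(\V)$.

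No serious obstacle is expected; the lemma is essentially a compatibility check. The only point worth keeping track of is that the scaling factor $n_p$ in $\tilde N_\beta$ does not perturb the filtration, which is exactly the content of scaling invariance, and that the reindexing from $\alpha$ to $\beta = [n_p\alpha]\in {]-1,0]}$ preserves the direct sum structure of the generalized eigenspace decompositions.
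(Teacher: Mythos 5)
Your proof is correct and follows exactly the route the paper intends: the paper gives no explicit proof, simply deducing the lemma from the identification $\tilde N_\beta = \bigoplus n_p N_\alpha$ in the preceding paragraph, and your argument supplies precisely the missing details (scaling invariance $W_\bullet(cN)=W_\bullet(N)$ and direct-sum compatibility of the monodromy weight filtration, both via its uniqueness characterization).
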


\subsection*{The complex $\mathcal L ^2 DR^\bullet(\pi^*\V)$}

For an open $U\subset X$, the complex $(\mathcal L^2 DR^\bullet(\pi^*\V)(U),D)$ is the complex of $\pi^*\V$-valued measurable $k$-forms $\phi$ over $\pi^{-1}(U\cap M)$ such that $\phi$ and $D\phi$ are square integrable on $\pi^{-1}(K\cap M)$ for all compact $K \subset U$ (here $D$ is computed in the sense of distribution). It is clear that the presheaves
\[
U \mapsto \mathcal L^2 DR^k(\pi^*\V)(U)
\]
are actually sheaves. Since a smooth form on $X$ has bounded norm for the metric $\omega_{Pc}$, if $\chi$ is a smooth function, and $\phi \in  \mathcal L^2 DR^k(\pi^*\V)$ the form
\[
\chi \cdot \phi = (\chi \circ \pi)\phi
\]
is also in $\mathcal L^2DR^k(\pi^*\V)$ and there is a natural structure of $C^\infty_X$-module on the sheaves $\mathcal L^2DR^k(\pi^*\V)$. It implies the following.
\begin{lem}
The complex $\mathcal L^2DR^\bullet(\pi^*\V)$ is a complex of fine sheaves, and the following equality holds.
\[
\mathbb H^\bullet (X, \mathcal L^2 DR^\bullet(\pi^*\V)) = H^\bullet_2(\tilde M,\pi^*\V, \omega_{Pc}, \pi^*h).
\]
\end{lem}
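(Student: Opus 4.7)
The strategy splits into two independent steps. First I would establish that each $\mathcal L^2DR^k(\pi^*\V)$ is a fine sheaf, which reduces the hypercohomology of the complex to the cohomology of its global sections. Second I would identify these global sections with the global $L^2$ De Rham complex on $\tilde M$, using compactness of $X$.

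For fineness, I would equip each $\mathcal L^2DR^k(\pi^*\V)$ with a $C^\infty_X$-module structure by pulling back along $\pi$: for $\chi \in C^\infty_X(U)$ and $\phi \in \mathcal L^2DR^k(\pi^*\V)(U)$, set $\chi \cdot \phi := (\chi \circ \pi)\phi$. The Leibniz rule gives
\[
D(\chi \cdot \phi) = \pi^*(d\chi) \wedge \phi + (\chi \circ \pi)\, D\phi.
\]
Since $\chi$ and $d\chi$ are smooth forms on the compact manifold $X$, they have bounded norm for $\omega_{Pc}$, as recalled in Section~2 and used implicitly just before the lemma, so their pullbacks have bounded norm for $\pi^*\omega_{Pc}$. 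It follows that both $\chi\cdot\phi$ and $D(\chi\cdot\phi)$ remain in $\mathcal L^2DR^\bullet(\pi^*\V)(U)$, giving the module structure. Since $C^\infty_X$ admits partitions of unity subordinate to any open cover, each $\mathcal L^2DR^k(\pi^*\V)$ is fine, hence acyclic for $\Gamma(X,-)$. Standard homological algebra then yields $\mathbb H^k(X, \mathcal L^2DR^\bullet(\pi^*\V)) = H^k(\Gamma(X, \mathcal L^2DR^\bullet(\pi^*\V)))$.

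It then remains to identify $\Gamma(X, \mathcal L^2DR^k(\pi^*\V))$ with $L^2DR^k(\tilde M, \pi^*\V, \omega_{Pc}, \pi^*h)$. By definition, a global section is a measurable $\pi^*\V$-valued form $\phi$ on $\tilde M$ such that $\phi$ and $D\phi$ are square integrable on $\pi^{-1}(K \cap M)$ for every compact $K \subset X$. Taking $K = X$, which is permitted since $X$ is compact, yields $\phi, D\phi \in L^2(\tilde M)$, i.e.\ $\phi \in \mathrm{Dom}(D_{max})$. Moreover $\pi^*\omega_{Pc}$ is complete, being the pullback of a complete metric along a covering, so $D_{min} = D_{max}$ and the $L^2$ cohomology is unambiguously defined. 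Assembling the two steps gives the claimed equality.

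There is no serious obstacle. The only point to verify with care is that multiplication by $\pi^*\chi$ for $\chi \in C^\infty_X$ preserves both the $L^2$ condition on $\phi$ and the $L^2$ condition on $D\phi$, which reduces to boundedness of $\pi^*\chi$ and $\pi^*d\chi$, and is ultimately a feature of the asymptotic Poincaré geometry near the punctures. Without this boundedness, fineness would fail and the argument would need to be replaced by an acyclic cover / \v{C}ech computation.
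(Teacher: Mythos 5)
Your argument is correct and is essentially the one the paper relies on: the $C^\infty_X$-module structure $\chi\cdot\phi=(\chi\circ\pi)\phi$, justified by the boundedness of smooth forms on $X$ (and of $d\chi$) for $\omega_{Pc}$ together with the Leibniz rule, gives fineness, and global sections over the compact $X$ recover $L^2DR^\bullet(\tilde M,\pi^*\V)$ with the $D_{max}$ (equivalently, by completeness, the unique) boundary condition. The paper states this in the paragraph immediately preceding the lemma rather than in a displayed proof, so there is nothing to add.
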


\subsection*{The complex $\mathcal L^2DR_\infty^\bullet(\pi^*\V)$}

For each open $U\subset X$, we can consider the Laplace operator $\Delta_D = (D+\mathfrak d )^2$ computed in the sense of distribution. We set $\mathcal L^2DR^k_0(\pi^*\V) = \mathcal L^2DR^k(\pi^*\V)$ and as for the global $L^2$ De Rham complex of $\S1$, we can define the sheaves $\mathcal L^2DR^k_j(\pi^*\V)$ by induction, where $\phi \in \mathcal L^2DR^k_{j+1}(\pi^*\V)(U)$ if and only if $\phi \in \mathcal L^2DR^k_{j}(\pi^*\V)(U)$ and $D \Delta^j\phi$ and $\mathfrak d \Delta^j \phi$ are square integrable on $\pi^{-1}(K\cap U)$ for any $K \subset U$ compact. We define
\[
\mathcal L^2DR^k_\infty(\pi^*\V) := \bigcap_{j\in \N} \mathcal L^2DR^k_{j}(\pi^*\V)
\]
which gives a subcomplex of sheaves $\mathcal L^2DR_\infty^\bullet(\pi^*\V)$. It is not clear that it is a sheaf of $C^\infty_X$ module in general since the covariant derivatives of the metric $\omega$ are involved in the computation of the adjoint $\mathfrak d$. However one can still have the following result.

\begin{lem}
The complex $\mathcal L ^2DR^\bullet_\infty$ is a complex of soft sheaves. And one has
\[
\mathbb H^\bullet(X,\mathcal L ^2DR^\bullet_\infty) = H^\bullet_{(2)}(\tilde M,\pi^*\omega_{Pc},\pi^*\V, \pi^*h).
\]
\end{lem}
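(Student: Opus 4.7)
The plan is to prove softness first, then identify global sections, then apply standard sheaf theory.

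\textbf{Softness.} I would show that $\mathcal L^2DR^\bullet_\infty$ carries a natural module structure over $\pi^{-1}C^\infty_X$, which combined with the usual partition of unity argument on $X$ yields softness. The author's remark correctly points out that the full $C^\infty_X$-module structure is not obvious because $\mathfrak d$ involves covariant derivatives of the metric. The key technical input needed is the following: any smooth function $\chi \in C^\infty(X)$ pulled back via $\chi \circ \pi$ has uniformly bounded Poincaré covariant derivatives of all orders on $\tilde M$. This is a standard fact following from the explicit form of $\omega_{Pc}$ near a puncture: in local coordinates $z$ with $\omega_{Pc} = \frac{dz \otimes d\bar z}{|z|^2(\log|z|)^2}$, each covariant derivative introduces a rescaling factor $|z||\log|z|$, and a smooth function of $z,\bar z$ on $X$ has Euclidean derivatives bounded near $0$, so the Poincaré norms stay bounded (in fact they tend to zero). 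Simultaneously, the Poincaré metric has bounded geometry on $M$ in the sense that its curvature tensor and all its covariant derivatives are bounded, which is precisely what is needed to control all coefficients appearing in iterated Leibniz expansions of $\mathfrak d$.

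\textbf{Preservation of regularity.} Given that estimate, I would verify by induction on $j$ that multiplication by $\chi \circ \pi$ preserves $\mathcal L^2DR^k_j(\pi^*\V)$. The commutator identity $\Delta(\chi\phi) = \chi\Delta\phi + [\Delta,\chi]\phi$, together with the Leibniz rules for $D$ and $\mathfrak d$, expresses $D^a \mathfrak d^b \Delta^m(\chi\phi)$ as a finite sum of terms of the form (bounded multiplier involving derivatives of $\chi$) times $D^{a'}\mathfrak d^{b'}\Delta^{m'}\phi$ with $a'+b'+2m' \leq a+b+2m$. Since each such term is locally $L^2$ by the assumption $\phi \in \mathcal L^2DR^k_j$ and the uniform bound on the multiplier, we conclude $\chi\phi \in \mathcal L^2DR^k_j$. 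Intersecting over $j$ gives the $\mathcal L^2DR^k_\infty$-stability, and softness follows.

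\textbf{Global sections and hypercohomology.} Since $X$ is compact, the local $L^2$-on-compacts conditions in the definition of $\mathcal L^2DR^k_\infty$ become global $L^2$ conditions on $\tilde M$. The metric $\pi^*\omega_{Pc}$ is complete on $\tilde M$ (being the pull-back of a complete metric under a covering), so $D_{max} = D_{min}$ and the distributional $D, \mathfrak d, \Delta$ used in the sheaf definition agree with their closures on $L^2$. Consequently,
\[
\Gamma(X, \mathcal L^2DR^k_\infty(\pi^*\V)) = L^2DR^k_\infty(\tilde M, \pi^*\V, \pi^*\omega_{Pc}, \pi^*h).
\]
Because $\mathcal L^2DR^\bullet_\infty$ is a complex of soft sheaves, its hypercohomology is the cohomology of global sections, and Lemma \ref{quasi-iso} (the inclusion $L^2DR^\bullet_\infty \hookrightarrow L^2DR^\bullet$ is a quasi-isomorphism) identifies this with the full $L^2$ cohomology $H^\bullet_{(2)}(\tilde M, \pi^*\omega_{Pc}, \pi^*\V, \pi^*h)$.

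\textbf{Main obstacle.} The delicate point is the uniform control of the Poincaré covariant derivatives of both $\chi \circ \pi$ and the metric itself to handle iterated applications of $D$, $\mathfrak d$, and $\Delta$. Rather than expand the full Leibniz combinatorics, I would reduce everything to the bounded-geometry-type estimate near punctures (constant negative curvature in a neighborhood of each $p \in \Sigma$) so that the multiplier bounds are uniform. With that in hand, the argument is an induction that essentially transplants the proof of fineness of $\mathcal L^2DR^\bullet$ to the smooth subcomplex.
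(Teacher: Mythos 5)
Your proof follows the same route as the paper's: softness is obtained by multiplying a section defined near a closed set $K$ by a cutoff $\chi\circ\pi$ and extending by zero (the paper checks Godement's criterion on restriction to closed subsets), and the cohomological identification comes from acyclicity of soft sheaves together with Lemma \ref{quasi-iso}. The only substantive difference is that you spell out the justification for the step the paper merely asserts --- that multiplication by $\chi\circ\pi$ preserves the smooth subcomplex $\mathcal L^2DR^\bullet_\infty(\pi^*\V)$ --- via uniform Poincaré-norm bounds on the covariant derivatives of $\chi\circ\pi$ and the bounded geometry of $\omega_{Pc}$ near the punctures, which is a welcome addition rather than a departure.
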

\begin{proof}
The last assertion of the lemma comes from the fact that soft sheaves are acyclic, so its hypercohomology coincides with the cohomology of the complex $\Gamma \mathcal L ^2DR_\infty^\bullet(\pi^*\V) = L^2DR_\infty^\bullet(\tilde M, \pi^*\V)$, and the conclusion follows the Lemma \ref{quasi-iso}.

To prove the first part, by \cite[Theorem 3.4.1]{Godement} we need to check that every point $p\in X$ admits an open neighbourhood $U$ such that if $K\subset U$, with $K$ closed in $X$, then the restriction of global sections to $K$ is surjective. Take $U$ a neighbourhood of a puncture, with $U\cap M \simeq \Delta_{r}^*$, and $K \subset U$ a closed subset of $M$, and $\phi \in \mathcal L ^2DR^k(\pi^*\V)(K)$. The form $\phi$ is defined on a open neighbourhood $W$ of $K$ that we can assume to be relatively compact on $U$, one can take a smooth function $\chi$, where $\chi = 1$ on a neighbourhood of $K$ and $supp(\chi) \subset W$, then $(\chi\circ \pi)\phi$ is a form that extends $\phi$ to $\pi^{-1}(U)$ and it is also  an element of $\mathcal L^2DR^k_\infty(\pi^*\V)(U)$. Thus the sheaf  $\mathcal L^2DR^k(\pi^*\V)$ is soft.
\end{proof}

\section{The $L^2$ Holomorphic Poincaré Lemma}

This section is dedicated of proving the following result which is an equivariant version of Zucker Poincaré lemma \cite[Proposition 4.1]{Zucker79}.

\begin{thm}[Holomorphic Poincaré lemma]
The following sequence of sheaves is exact.
\[
\begin{tikzcd}
0 \arrow[r] & \ell^2\pi^*\V  \arrow[r] &  \mathcal O (\pi^*\V)_{(2)} \arrow[r, "\nabla"] & \Omega^1(\pi^*\V)_{(2)} \arrow[r] & 0
\end{tikzcd}
\]
\end{thm}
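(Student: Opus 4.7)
The statement is local on $X$, so it suffices to check exactness of stalks, or equivalently on sections over a basis of sufficiently small open neighborhoods around each point. Injectivity of the first arrow is immediate: a flat holomorphic section of $\pi^*\V$ is determined by its constants in any flat frame, and these constants lie in $\ell^2(\Gamma)$ by the local $L^2$ hypothesis. It remains to verify exactness in the middle and surjectivity on the right, and for this I distinguish two cases according to whether the point $p$ lies in $M$ or in $\Sigma$.

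For a regular point $p \in M$, take a small disk $U \subset M$ over which $\pi$ is étale, so $\pi^{-1}(U) = \bigsqcup_{\gamma \in \Gamma} U_\gamma$ with each $\pi\colon U_\gamma \to U$ a biholomorphism, and $\pi^*\V$ is trivialised on each $U_\gamma$ by a common flat frame. A section $\omega = (\omega_\gamma) \in \Omega^1(\pi^*\V)_{(2)}(U)$ can then be integrated component by component: the classical holomorphic Poincaré lemma on a disk gives primitives $f_\gamma$ of $\omega_\gamma$, unique up to a constant in the flat trivialisation. Fixing an equivariant lift of a base point of $U$ and choosing the integration constants accordingly, a standard Poincaré inequality on any relatively compact $K \Subset U$ yields $\|f_\gamma\|_{L^2(K_\gamma)} \leq C \|\omega_\gamma\|_{L^2(U_\gamma)}$ with $C$ independent of $\gamma \in \Gamma$. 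Summing in $\gamma$ gives $f = (f_\gamma) \in \mathcal O(\pi^*\V)_{(2)}(U)$. Exactness in the middle reduces to the same observation, since $\ker\nabla$ consists of flat holomorphic sections, and local square integrability on compacts of $U$ is exactly the condition defining $\ell^2\pi^*\V(U)$.

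For a puncture $p \in \Sigma$, take $U$ a disk neighborhood of $p$ with $U \cap M \simeq \Delta^*$. Because $\pi$ extends to a branched covering of $X$, one has $\pi^{-1}(U) = \bigsqcup_{[\gamma]\in \Gamma/\langle \gamma_p\rangle} \tilde U_{[\gamma]}$ in $\tilde X$, each $\tilde U_{[\gamma]}$ being a disk mapped to $U$ by $z \mapsto z^{n_p}$. On each $\tilde U_{[\gamma]} \setminus \{0\}$ the restriction of $(\pi^*\V, \pi^*F^\bullet, \pi^*\bar F^\bullet, \pi^*h)$ is a polarized variation of Hodge structure with monodromy $T^{n_p}$, and a coordinate computation shows that $\pi^*\omega_{Pc}$ is quasi-isometric to a Poincaré metric on $\tilde U_{[\gamma]}$. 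Proposition \ref{caracl2} then identifies the sections of $\Omega^\bullet(\pi^*\V)_{(2)}$ over $U$ with the direct sum over $[\gamma]$ of the monodromy-filtered subcomplexes $M_0\mathcal V^0_* \to \Omega^1_{\tilde X}(\log\Sigma)\otimes M_{-2}\mathcal V^{-1}_*$ on $\tilde U_{[\gamma]}$, and Zucker's classical $L^2$ holomorphic Poincaré lemma \cite[Proposition 4.1]{Zucker79}, applied on each $\tilde U_{[\gamma]}$, gives exactness at the central puncture, with kernel equal to the invariants $(\pi^*\V)^{T^{n_p}}$. Summing over cosets and comparing with the computation $\pi_!\pi^*\V(U\cap M) = \C[\Gamma]\otimes_{\C[\langle \gamma_p\rangle]} (\pi^*\V)^{T^{n_p}}$ identifies the kernel on $U$ with $\ell^2\pi^*\V(U) = j_*(\ell^2(\Gamma)_M \otimes_{\C[\Gamma]} \pi_!\pi^*\V)(U)$, completing the stalk exactness at $p$.

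The main work is bookkeeping rather than new analysis: in the regular case one must organize the primitives across the $\Gamma$-sheets so that the uniform Poincaré estimate truly delivers an $\ell^2(\Gamma)$-family; in the puncture case the key is to match the direct sum of Zucker-type resolutions over the cosets $\Gamma/\langle \gamma_p\rangle$ with the induced $\C[\Gamma]$-module structure built into the definition of $\ell^2\pi^*\V$. The only nontrivial analytic input beyond these organisational points is Zucker's original holomorphic Poincaré lemma on a punctured disk carrying a polarized VHS with Poincaré metric, which applies verbatim on each sheet $\tilde U_{[\gamma]}$.
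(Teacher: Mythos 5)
Your overall strategy is the same as the paper's: localize on $X$, split $\pi^{-1}(U)$ into its connected components indexed by $\Gamma$ (resp.\ $\Gamma/\langle\gamma_p\rangle$), solve $\nabla\nu=\eta$ component by component, and resum. The regular-point case is fine: for a holomorphic primitive, normalizing by $f_\gamma(p_\gamma)=0$ at an equivariant lift of a base point is (by the mean value property) the same as normalizing the mean to zero, so your Poincaré estimate is exactly the Wirtinger inequality the paper uses, with a constant independent of $\gamma$ because the sheets are mutually isometric.

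The gap is at the punctures. You invoke Zucker's Proposition 4.1 as a qualitative exactness statement on each sheet $\tilde U_{[\gamma]}$ and then ``sum over cosets,'' but when $\Gamma/\langle\gamma_p\rangle$ is infinite this summation is not bookkeeping: a section of $\Omega^1(\pi^*\V)_{(2)}$ over $U$ is a family $(\eta_\gamma)$ with $\sum_\gamma\|\eta_\gamma\|^2_{L^2(K)}<\infty$ for every compact $K\subset U$, and to produce a preimage in $\mathcal O(\pi^*\V)_{(2)}(U)$ you must choose primitives satisfying $\|\nu_\gamma\|_{L^2(K)}\leq C\,\|\eta_\gamma\|_{L^2(K')}$ with $C$ \emph{uniform in} $\gamma$; mere existence of some primitive on each sheet does not give a square-summable family. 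This quantitative solvability is not the statement of \cite[Proposition 4.1]{Zucker79}, and supplying it is precisely why the paper reopens Zucker's argument: it writes the primitive explicitly as a power series $\nu_{k,\gamma}=\sum_j(-1)^j\sum_m a_m(\beta+m+1)^{j-1}z^{m+1+\beta}\exp\!\left(\tfrac{\tilde N_\beta}{2i\pi}\ln z\right)\!\left(\tfrac{\tilde N_\beta}{2i\pi}\right)^j\gamma^*\tilde e_k$, bounds its norm by that of $f_{k,\gamma}dz\otimes\gamma^*\tilde\xi_k$ with a constant independent of $\gamma$, and treats the residue term separately via the surjectivity of $\tilde N_0\colon W_0\to W_{-2}$. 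To close your argument you would need either to extract this estimate from Zucker's proof (it is there, but must be made explicit), or to upgrade single-sheet exactness to bounded solvability by an open-mapping argument on the Fréchet spaces of local sections of one sheet, and then transport the constant to all sheets by the deck isometries. As written, the step ``summing over cosets'' does not follow.
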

\begin{proof}
The exactness of the sequence
\[
\begin{tikzcd}
0\arrow[r] & \ell^2\pi^*\V \arrow[r]& \mathcal O(\pi^*\V)_{(2)} \arrow[r, "\nabla"] & \Omega^1(\pi^*\V)_{(2)}
\end{tikzcd}
\]
is clear, so we only need to check the surjectivity of
\[
\nabla\colon \mathcal O(\pi^*\V)_{(2)} \to \Omega^1(\pi^*\V)_{(2)}.
\]

We begin with case of points of $p\in M$. We can take a relatively compact neighbourhood $U\subset M$ quasi-isometric to a disk $\Delta$ endowed with the euclidean metric. Fix $\tilde U$ a connected component of $\pi^{-1}(U)$, if $\gamma \in \Gamma$, it defines an isometry between $\tilde U$ and $\gamma \tilde U$. Fix $(\tilde e_1, \dots \tilde e_n)$ a horizontal frame of $\tilde U$, then $(\gamma^*\tilde e_1, \dots \gamma^*\tilde e_n)$ is also an horizontal frame of $\gamma\tilde U$. There exists a constant $C_0>1$ satisfying for all \[
C_0^{-1} < \|\tilde e_i\|_\infty = \|\gamma^* \tilde e_i\|_\infty < C_0
\]
where $\|\cdot\|_\infty$ denotes the supremum of the Hodge norm. We extends the $\tilde e _i$ to $\pi^{-1}(U)$ by $0$, and it follows that the $(\gamma\tilde e _i)_{1\leq i \leq n,\, \gamma \in \Gamma}$ defines an $L^2$-adapted frame. Moreover $\tilde U$ is quasi-isometric to $\Delta$, one can fix such quasi-isometry $\phi$, so there is a positive $C_1$ such that if $\omega$ is a form on $\tilde U$
\[
C_1^{-1} \|\omega\|_2^2 \leq \|\phi^*\omega\|_2^2 \leq C_1 \|\omega\|_2^2.
\]
Since $\tilde U$ is isometric to $\gamma \tilde U$, the later is also quasi-isometric to the disk $\Delta$ with the same Lipschitz constant $C_1$. Take $\eta \in \Omega^1(\pi^*\V)_{(2)}(U)$ given in coordinates by $\eta := \sum_{\underset{1\leq i\leq n}{\gamma\in \Gamma}} f_{i,\gamma} dz \otimes \gamma^* e_i \in \Omega^1(\pi^{-1}\V)_{(2)}(U)$ one has by the previous discussion
\[
C^{-1}\sum_{\underset{1\leq i\leq n}{\gamma\in \Gamma}} \|f_{i,\gamma}\|^2_{2,\Delta} \leq \|\eta\|^2_{2} \leq  C \sum_{\underset{1\leq i\leq n}{\gamma\in \Gamma}} \|f_{i,\gamma}\|^{2}_{2,\Delta} 
\]
where $C$ is a positive constant and the subscript $\Delta$ is here to emphasize the fact that we consider the $L^2$-norm induced by the Euclidean metric on the disk. We are thus reduced to the problem of $L^2$ estimates on the disk, this is given by the Wirtinger inequality: there exists a constant $K> 0$ if $g \in L^2(\Delta)$ satisfies $\int_\Delta g dLeb = 0$ and $dg$ is a square integrable form then
\[
\|g\|^2 \leq K \|dg\|^2.
\]
We take $F_{i,\gamma}$ a primitive of $f_\gamma$ and set $g_{i,\gamma} = F_\gamma - \frac{1}{\pi}\int_{\Delta} F_\gamma(z) dLeb(z)$ one can set 
\[
\nu := \sum_{\underset{1\leq i\leq n}{\gamma\in \Gamma}} g_{i,\gamma} \otimes \gamma^* e_i.
\]
Then we have $\nabla \nu = \eta$ and $\|\nu\|_2^2 \leq KC\|\eta\|_2^2$ thus it is square integrable.

The case of punctures $p\in X\setminus M$ will be similar to the proof of \cite[Proposition 3.12]{Zucker76}, we fix such a point $p$, $U$ a distinguished neighbourhood of $p$. Fix $\tilde U$ a connected component of $\pi^{-1}(U\cap M)$ it is quasi-isometric to some punctured disk $\Delta_r^*$ endowed with the Poincaré metric. There is a decomposition $(\pi^*\V)_{\tilde U} = \underset{\beta \in ]-1,0]}{\bigoplus} (\pi^*\V)_{\beta}$, by taking a multivalued horizontal frame on each $(\pi^*\V)_{\beta}$ flagged according to the monodromy filtration $W_\bullet((\pi^*\V)_\beta)$, we obtain a multivalued horizontal frame $(\tilde e _1, \dots, \tilde e_n)$ of $(\pi^*\V)_{|\tilde U}$. If $\tilde e _i \in (\pi^*\V)_\beta$ we set
\[
\tilde \xi_i := z^\beta exp\left(\frac{\tilde N_\beta}{2i\pi}z\right) \tilde e_i \in (\mathcal V_*^{>-1})_{\tilde U}.
\]
The $(\tilde \xi _1,\dots,\tilde \xi _n)$ defines a $L^2$-adapted frame of $\bar{\mathcal V}$. Same as before if $\gamma \in \Gamma$, then $(\gamma^*\tilde \xi _1,\dots,\gamma^*\tilde \xi _n)$ is an $L^2$-adapted frame of $\gamma \tilde U$. Set $H := Stab_\Gamma(\tilde U)$, and fix a representant $\gamma$ in each class of $\Gamma/H$.

Let $\eta = \sum_{[\gamma] \in \Gamma / H} \eta_\gamma$ be of square integrable holomorphic $1$-form on $\pi^{-1}(U\cap M)$, where each $\eta_\gamma$ has the form
\[
\eta_\gamma := \sum_{k=1}^n f_{k,\gamma}dz\otimes \gamma^*\tilde \xi_k
\]
And there exists a positive constant $C_0$ independant of $g$ satisfying
\[
C_0^{-1} \sum_{k=1}^n \|f_{k,\gamma}dz\otimes \gamma^* \tilde \xi_k\|^2\leq \|\eta_\gamma\|_2^2 \leq C_1 \sum_{[\gamma] \in \Gamma/H} \sum_{k=1}^n \|f_{k,\gamma}dz\otimes \gamma^* \tilde \xi_k\|^2.
\]
Since all $\gamma \tilde U$ are quasi-isometric to the punctured disk $\Delta_r^*$ with the same Lipschitz constant, it follows that there exists a positive constant $C_1$ satisfying
\[
\begin{aligned}
C_1^{-1} \sum_{[\gamma] \in \Gamma / H}\sum_{k=1}^n \|f_{k,\gamma}dz\otimes \gamma^* \tilde \xi_k\|^2\leq \|\eta\|_2^2 \leq C_0 \sum_{k=1}^n \|f_{k,\gamma}dz\otimes \gamma^* \tilde \xi_k\|^2.
\end{aligned}
\]
As before it is sufficient to find a primitive to each $f_{k,\gamma}dz\otimes \gamma^* \tilde \xi_k$ whose norm is controlled by the norm of $f_{k,\gamma}dz\otimes \gamma^* \tilde \xi_k$. Assume $f_{k,\gamma}dz$ extends as a holomorphic section on the whole disk, then we can set
\[
f_{k,\gamma} := \sum_{m=0} ^{+\infty} a_{m} z^m 
\]
and set $\beta \in ]-1,0]$ such that $\tilde e_k$ is a multivalued horizontal section of $(\pi^*\V)_\beta$
\[
\nu_{k,\gamma} :=  \sum_{j=0}^n (-1)^j \sum_{m=0}^{+\infty} a_m(\beta+ m+1)^{j-1}z^{m+1 + \beta} \exp\left(\frac{\tilde N _\beta}{2i\pi}\ln(z)\right)\left(\frac{\tilde N _\beta}{2i\pi}\right)^j \gamma^*\tilde e_k
\]
The term $(\beta+ m+1)^{j-1}$ make sense for any values of $j,m$ since $\beta > -1$, and $\left(\frac{\tilde N _\beta}{2i\pi}\right)^j \gamma^*\tilde e_k $ is a horizontal multivalued section of $(\pi^*\V)_{\gamma \tilde U}$, it follows that the
\[
z^\beta \exp\left(\frac{\tilde N _\beta}{2i\pi}\ln(z)\right)\left(\frac{\tilde N _\beta}{2i\pi}\right)^j \gamma^*\tilde e_k
\]
is a well defined holomorphic section the Deligne extension, it follows that $\nu_{k,\gamma}$ is well defined. One can check
\[
\begin{aligned}
\nabla(\nu_{k,\gamma}) &= \sum_{j=0}^n (-1)^j \sum_{m=0}^{+\infty} a_m( \beta + m+1)^{j}z^{m + \beta}\exp\left(\frac{\tilde N_\beta}{2i\pi}\ln(z)\right)\left( \frac{\tilde N_\beta}{2i\pi}\right)^j \gamma^* \tilde e_k dz \\ 
				&+ \sum_{j=0}^n (-1)^j \sum_{m=0}^{+\infty} a_m(\beta + m+1)^{j-1}z^{m + \beta+1} \exp\left(\frac{\tilde N_\beta }{2i\pi}\ln(z)\right)\!\left(\frac{\tilde N_\beta}{2i\pi}\right)^{j+1}\!\gamma^* \tilde e_k \frac{dz}{z}\\
			&= \sum_{j=0}^n (-1)^j \sum_{m=0}^{+\infty} a_m(\beta + m+1)^{j}z^{m + \beta} \exp\left(\frac{\tilde N_\beta}{2i\pi}\ln(z)\right)\left( \frac{\tilde N_\beta}{2i\pi}\right)^j \gamma^*\tilde e_kdz  \\ 
				&- \sum_{j=1}^{n+1} (-1)^j \sum_{m=0}^{+\infty} a_m(\beta + m+1)^{j}z^{m + \beta} \exp\left(\frac{\tilde N_\beta}{2i\pi}\ln(z)\right)\left(\frac{\tilde N _\beta}{2i\pi}\right)^{j} \gamma^*\tilde e_k dz\\
			&= \sum_{m=0}^{+\infty} a_m z^{m} \gamma^* \tilde \xi_k = f_{k,\gamma}dz\otimes \gamma^*\tilde \xi_k
\end{aligned}
\]
To conclude the proof we only need to see that one can bound the $L^2$ norm of $\nu_{k,\gamma}$ on a punctured disk $\Delta_{r'}^*$ in terms of the $L^2$-norm of $f_{k,\gamma}\otimes \gamma^* \tilde \xi_k$ for any $R'<R<1$. Indeed if $\tilde e_k \in W_l((\pi^*\V)_\beta) \setminus W_{l-1}((\pi^*\V)_\beta)$ there exists a positive constant $C$ that does not depends on $\gamma$ satisfying
\[
\begin{aligned}
C^{-1} \int^R_0 \sum_{m=0}^{+\infty} |a_m|^2r^{2m+ 2\beta}|\ln r|^{l+2}rdr &\leq \|f_{k,\gamma}\otimes \tilde \xi_k\|_{\Delta_R^*}^2 \\
														&\leq C  \int^R_0 \sum_{m=0}^{+\infty} |a_m|^2r^{2m+ 2\beta}|\ln r|^{l+2}rdr.
\end{aligned}
\]
Since the norm of each terms $exp\left(\frac{n_pN}{2i\pi}\ln(z)\right)\left(\frac{n_pN}{2i\pi} \right)^k\tilde w_j$ behave asymptotically as $\ln(r)^{l+2-2k}$ we can bound each term uniformly in $j$ and $\gamma$
\[
\left\|z^\beta exp\left(\frac{\tilde N_\beta}{2i\pi}\ln(z)\right)\left(\frac{\tilde N_\beta}{2i\pi} \right)^j\tilde e_k\right\|^2 \leq C' r^{2\beta}\ln(r)^{l+2}
\]
So one has
\[
\begin{aligned}
\|\nu_{k,\gamma}\|^2_{\Delta_R^*} &\leq C' \sum_{j=0}^{n} \int^{R'}_0 \sum_{m=0}^{+\infty} |a_m|^2(m+1)^kr^{2m+2\beta + 2}|\ln|z||^{l}\frac{dr}{r} \\
	      &\leq nC' \int^{R'}_0 \sum_{m=0}^{+\infty} |a_m|^2(m+1)^nr^{2m+2}|\ln|z||^{l}dr  \\
	      &\leq C' \int^{R}_0 \sum_{m=0}^{+\infty} |a_m|^2(m+1)^n\left(\frac{R'}{R}\right)^{2m+2}r^{2m+1}|\ln|z||^{l}dr \\
	      &\leq C'' \|f_{k,\gamma} dz\otimes \gamma^*\tilde \xi_k\|_{\Delta_R^*}^2.
\end{aligned}
\]
The constant $C''$ being indepandant of $\gamma$ and $f_{k,\gamma}$, this conclude the case where the $f_{k,\gamma}$ are defined on the whole disk. Otherwise by Proposition \ref{caracl2}, since $f_{k,\gamma}\otimes \gamma^*\tilde \xi _k$ is square integrable one has $\tilde \xi_k \in W_{-2}((\pi^*\V)_0)$ and $f_{k,\gamma}$ admits a simple pole at the origin. However $\tilde N_0$ induces a surjection $W_0((\pi^*\V)_0) \to W_{-2}((\pi^*\V)_0)$ thus there exists $\tilde e \in W_0((\pi^*\V)_0)$ with $\tilde N_0 \tilde e = \tilde e_k$, if we set $\tilde\xi = exp\left(\frac{\tilde N_0}{2i\pi}\ln(z)\right) \tilde e$ and $a_{-1}$ the residue of $f_{k,\gamma}$ at the origin then
\[
\|a_{-1}\gamma^*\tilde\xi\|^2 \leq C \|f_{k,\gamma}dz\otimes \gamma^*\tilde \xi_k\|^2
\]
and $f_{k,\gamma}dz\otimes \gamma^*\tilde \xi_k\ - \nabla (a_{-1} \gamma^*\tilde \xi_k)$ is of the previous forms which concludes the proof.
\end{proof}

\section{The $L^2$ Poincaré Lemma}
This section is dedicated to some $L^2$ Poincaré lemmas. Before stating the main result we give the following lemma.
\begin{lem}\label{quasi-iso smooth subcomplex}
The inclusion
\[
\mathcal L ^2 DR_\infty^\bullet(\pi^*\V) \to \mathcal L ^2DR^\bullet(\pi^*\V)
\]
is a quasi-isomorphism.
\end{lem}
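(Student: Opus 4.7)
The plan is to verify the quasi-isomorphism at the level of cohomology sheaves via a cut-off-and-correct argument that reduces the local statement to the global quasi-isomorphism of Lemma \ref{quasi-iso}. It suffices to establish surjectivity of $\mathcal H^k(\mathcal L^2DR_\infty^\bullet) \to \mathcal H^k(\mathcal L^2DR^\bullet)$, since the injectivity statement (a smooth closed germ that is $\mathcal L^2DR^\bullet$-exact is $\mathcal L^2DR^\bullet_\infty$-exact after shrinking) is handled by a parallel argument applied to an $L^2$-primitive.

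Fix $x\in X$, a small open neighborhood $U\ni x$ and a closed germ $[\phi] \in \mathcal H^k(\mathcal L^2DR^\bullet(\pi^*\V))_x$ represented by some $\phi \in \mathcal L^2DR^k(\pi^*\V)(U)$. Pick $V \Subset U$ and a smooth cut-off $\chi\colon X \to [0,1]$ with $\chi \equiv 1$ on $V$ and $\mathrm{supp}(\chi) \subset U$. After possibly shrinking $U$ and $V$, we arrange $\mathrm{supp}(d\chi) \subset M$, i.e.\ disjoint from the finite set $\Sigma$. Define $\tilde\phi := (\chi\circ\pi)\phi$ extended by zero to $\tilde M$; this is globally $L^2$ on $\tilde M$, and $D\tilde\phi = (d\chi\circ\pi)\wedge\phi$ is an $L^2$ form supported in the $\Gamma$-invariant interior region $\pi^{-1}(\mathrm{supp}(d\chi))$, where $\pi^*\omega_{Pc}$ is uniformly quasi-isometric to the Euclidean metric on a disjoint union of bounded open sets. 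On this interior region, the classical smooth Poincaré lemma combined with local mollification and equivariant cut-off produces a smooth $L^2$ form $\alpha$ on $\tilde M$ with $D\alpha = D\tilde\phi$. Then $\tilde\phi - \alpha$ is a globally closed $L^2$ form on $\tilde M$, so Lemma \ref{quasi-iso} yields $\eta \in L^2DR^k_\infty(\tilde M,\pi^*\V)$ and $\tilde\psi \in L^2DR^{k-1}(\tilde M,\pi^*\V)$ with $\tilde\phi - \alpha = \eta + D\tilde\psi$. Restricting to $V$, where $\chi|_V = 1$, gives $\phi|_V = (\eta+\alpha)|_V + D\tilde\psi|_V$; the form $\eta+\alpha$ is smooth $L^2$ and hence defines an element of $\mathcal L^2DR^k_\infty(\pi^*\V)(V)$, while $\tilde\psi|_V$ is locally $L^2$ and so defines an element of $\mathcal L^2DR^{k-1}(\pi^*\V)(V)$, exhibiting $[\phi]$ in the image of the inclusion on stalks.

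The main obstacle is the interior smoothing step that produces $\alpha$. The form $D\tilde\phi$ is by construction globally exact with $L^2$-primitive $\tilde\phi$, but $\tilde\phi$ is not smooth; one must upgrade it to a smooth, globally $L^2$ primitive of $D\tilde\phi$. This is essentially a smooth local $L^2$ Poincaré lemma on the Euclidean-like interior region $\pi^{-1}(\mathrm{supp}(d\chi))$, whose proof requires combining the classical Poincaré lemma on contractible pieces, a local Friedrichs-type mollification to improve regularity, and a careful use of the $\Gamma$-orbit structure to maintain uniform bounds guaranteeing global $L^2$-integrability. Once $\alpha$ is available, the rest of the argument is formal. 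The symmetric construction for injectivity proceeds by applying the same cut-off procedure to an $L^2$-primitive and again invoking Lemma \ref{quasi-iso} globally to locate a smooth $L^2$-primitive on the interior.
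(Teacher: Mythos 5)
There is a genuine gap at the step you yourself flag as the main obstacle, and it is not repairable as stated. You need a form $\alpha$ on $\tilde M$, lying in the smooth subcomplex, with $D\alpha = D\tilde\phi = (d\chi\circ\pi)\wedge\phi$. But whatever regularity you demand of $\alpha$ --- classical smoothness, or membership in $L^2DR_\infty^\bullet(\tilde M,\pi^*\V)$ --- the identity $D\alpha=(d\chi\circ\pi)\wedge\phi$ forces the right-hand side to have that same regularity, and it does not: $\phi$ is an arbitrary germ with only $\phi, D\phi\in L^2_{loc}$, so $(d\chi\circ\pi)\wedge\phi$ is in general neither $C^\infty$ nor even in $Dom(\mathfrak d_{max})$ (the latter would require control of all first derivatives of $\phi$, not just of $D\phi$). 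Mollification cannot rescue this, since mollifying $\tilde\phi$ changes $D\tilde\phi$. A second, related problem is that you identify ``smooth and $L^2$'' with membership in $\mathcal L^2DR^k_\infty(\pi^*\V)(V)$. The subcomplex $L^2DR_\infty$ is not the complex of $C^\infty$ square-integrable forms; it is cut out by the iterated domain conditions $D\Delta_D^j\phi,\ \mathfrak d\Delta_D^j\phi\in L^2_{loc}$, which near a puncture are nontrivial integrability constraints that a $C^\infty$, locally square-integrable form need not satisfy. So even granting the existence of $\alpha$, the assertion that $\eta+\alpha$ defines an element of $\mathcal L^2DR^k_\infty(\pi^*\V)(V)$ does not follow from smoothness.

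The paper's proof avoids all of this by never globalizing: it localizes to $\tilde U=\pi^{-1}(U\cap M)$ and works with the Hilbert complex $L^2DR^\bullet(\tilde U,\pi^*\V)$ for the ideal boundary condition $D_{max}$. For injectivity, if $\phi=D_{max}\psi$ with $\phi$ in the smooth subcomplex, one replaces $\psi$ by its orthogonal projection $\psi_0$ onto $\overline{Im(\mathfrak d_{min})}$ given by the weak Hodge decomposition; then $D_{max}\psi_0=\phi$ still holds and $\square_{D_{max}}^{j+1}\psi_0=\mathfrak d_{min}\square_{D_{max}}^jD_{max}\psi_0=\mathfrak d_{min}\square_D^j\phi$ is automatically defined for all $j$, so $\psi_0$ already lies in the smooth subcomplex. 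For surjectivity, \cite[Theorem 2.12]{Bruning} (the same input behind Lemma \ref{quasi-iso}) applied directly to $\tilde U$ produces a representative of the class of $\phi$ in $\bigcap_j Dom(\square_{D_{max}}^j)$, and one concludes by observing that $\square_{D_{max}}$ agrees with the distributional Laplacian where defined. If you wish to salvage the cut-off strategy, you would have to replace the exact equation $D\alpha=D\tilde\phi$ by a decomposition of $\phi$ itself near $\mathrm{supp}(d\chi)$ coming from interior elliptic theory, which essentially reproduces the Hilbert-complex argument; the direct local argument is both shorter and correct.
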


\begin{proof}
For a point $p\in X$ we will denote by $\mathcal H ^k_p$ (resp. $\mathcal H ^k_{\infty,p}$) the cohomology germs of the complex $\mathcal L ^2DR^\bullet(\pi^*\V)$ (resp. $\mathcal L ^2DR_\infty^\bullet(\pi^*\V)$). If $p\in M$, it is straight forward to see that both $\mathcal H ^k_p$ and $\mathcal H ^k_{\infty,p}$ vanishes if $k\neq0$ and are isomorphic to $\C$ if $k=0$. So the we will focus on the case of $p\in X\setminus M$.

Let $\phi$ is the germ of a closed form in $\mathcal L ^2DR_\infty^\bullet(\pi^*\V)_p$, if there exists $\psi \in \mathcal L ^2DR^\bullet(\pi^*\V)_p$ such that 
$D \psi = \phi$, one can take a neighbourhood $U$ of $p$, such that $\phi$ and $\psi$ are square integrable on $\tilde U := \pi^{-1}(U\cap M)$. With the notation of section $\S1$ we can view $\phi$ and $\psi$ as element of the Hilbert space $L^2DR^\bullet(\tilde U, \pi^*\V)$, satisfying $\phi = D_{max} \psi$. By the weak Hodge decomposition
\[
L^2DR^\bullet(\tilde U, \pi^*\V) = Harm \oplus \overline{Im(D_{max})} \oplus \overline{Im(\mathfrak d _{min})}
\]
where $Harm$ is the space of forms that are $D_{max}$-closed and $\mathfrak d _{min}$-closed. One can set $\psi_0 = pr_{\overline{Im(\mathfrak d _{min})}} \psi$ which is square integrable on $\tilde U$ and satisfies $D_{max}\psi_0 = D_{max}\psi = \phi$. We set  the operator $\square_{D_{max}} =(D_{max}+\mathfrak d _{min})^2 $ and for all $j\in \N$ one has
\[
\square^{j+1}_{D_{max}}\psi_0 = \mathfrak d _{min}\square_{D_{max}}^j D_{max}\psi_0 = \mathfrak d_{min} \square_D^j \phi
\]
so $\square_{D_{max}}^j\psi_0$ is defined for all $j\in \N$, noting that $\square_{D_{max}}$ is equal to $\Delta_D$ where it is defined, one obtains that $\psi_0 \in \mathcal L ^2DR_\infty^\bullet(\pi^*\V)_p$ and the inclusion induces an injection in cohomology.

For the surjectivity, if $\phi \in \mathcal L ^2DR^\bullet(\pi^*\V)_p$ is a closed form, we fix $U$ neighbourhood of $p$ such that $\phi$ is a closed a square integrable form on $\tilde U := \pi^{-1}(U\cap M)$, by \cite[Theorem 2.12]{Bruning}, there exists a closed form $\psi$ also square integrable on $\tilde U$ in the cohomology class of $\phi$, that satisfies
\[
\psi \in \bigcap_{j\in \N} Dom(\square^j_{D_{max}})
\]
and again since $\square_{D_{max}}$ coincides with $\Delta_D$ where it is defined, one obtains the surjectivity.
\end{proof}

The main result of this section is the following theorem. It is an extension of Zucker's Poincaré lemma \cite[Theorem 6.2]{Zucker79} to an equivariant setting.

\begin{thm}[$L^2$ Poincaré lemma]\label{Pclem}
The complexes of sheaves $\mathcal L ^2DR^\bullet(\pi^*\V)$ and $\mathcal L ^2DR^\bullet_\infty(\pi^*\V)$ are soft resolutions of $\ell^2\pi_*\V$.
\end{thm}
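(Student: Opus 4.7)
The softness of both complexes was established in Section 4, and Lemma \ref{quasi-iso smooth subcomplex} reduces the claim to showing that $\mathcal L ^2 DR^\bullet(\pi^*\V)$ is a resolution of $\ell^2\pi_*\V$. Concretely, two local statements must be verified: the kernel of $D$ in degree zero coincides with the stalk of $\ell^2\pi_*\V$, and the complex is locally exact in positive degrees.

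The plan is to combine the holomorphic Poincaré lemma of the previous section with an $L^2$ $\bar\del$-Poincaré argument on each piece of $\pi^{-1}(U \cap M)$. Indeed, the holomorphic Poincaré lemma identifies $\ell^2\pi_*\V$ with the two-term complex $[\mathcal O (\pi^*\V)_{(2)} \xrightarrow{\nabla} \Omega^1(\pi^*\V)_{(2)}]$, and if one can show that each of these coherent sheaves admits a resolution by its $\bar\del$-Dolbeault complex inside $\mathcal L ^2DR^\bullet(\pi^*\V)$, the total complex of the resulting double complex recovers $\mathcal L ^2DR^\bullet(\pi^*\V)$ and the desired exactness follows by a standard spectral sequence argument.

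For a point $p \in M$, both ingredients reduce to classical estimates on a precompact disk applied componentwise on $\pi^{-1}(U) = \bigsqcup_{\gamma \in \Gamma} \gamma \tilde U$, with uniformity in $\gamma$ provided by the Wirtinger inequality and the isometry between the components. At a puncture $p \in \Sigma$, we choose a distinguished neighbourhood $U$ such that $\pi^{-1}(U \cap M) = \bigsqcup_{[\gamma] \in \Gamma/H} \gamma \tilde U$, with $H = Stab_\Gamma(\tilde U)$, each piece quasi-isometric to $\Delta_r^*$ endowed with the Poincaré metric and with Lipschitz constant independent of $\gamma$. Decomposing a closed $L^2$ form in the $L^2$-adapted frame $(\gamma^*\tilde \xi_i)$ constructed in the proof of the holomorphic Poincaré lemma reduces the problem to the scalar case on $\Delta_r^*$, where Zucker's $L^2$ $\bar\del$-Poincaré lemma provides explicit primitives with weighted norm control. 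The uniformity in $\gamma$ of both the quasi-isometries and the adapted frames transfers to uniformity of the primitive estimates, so the componentwise construction assembles into a square integrable global primitive on $\pi^{-1}(U \cap M)$.

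The identification of $\ker D$ with $\ell^2\pi_*\V$ then follows from the norm asymptotics $\|e\|_h^2 \sim |z|^{2\beta}|\ln|z||^k$ recalled in Section 2: locally at a puncture, an $L^2$ horizontal section of $\pi^*\V$ on a single $\gamma \tilde U$ must land in the monodromy-invariant part of the appropriate graded piece of $M_0\mathcal V_*^0$, and $\ell^2$-summability over $[\gamma] \in \Gamma/H$ matches precisely the local description of $\ell^2 (\Gamma) \otimes_{\C[\Gamma]} \pi_!\pi^*\V$. The main obstacle is maintaining uniform control of the primitive estimates in $\gamma \in \Gamma/H$; this is exactly what forces the explicit primitives to be written in the $L^2$-adapted frame attached to the global Deligne extension on $\tilde X$, rather than any local choice, so that the quasi-isometry and frame constants do not depend on the orbit representative.
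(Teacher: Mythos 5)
Your reduction to $\mathcal L^2DR^\bullet(\pi^*\V)$ via Lemma \ref{quasi-iso smooth subcomplex}, and your treatment of interior points and of the passage from one connected component $\gamma\tilde U$ to all of $\pi^{-1}(U\cap M)$ by estimates uniform in $\gamma$, are sound (the paper achieves that last reduction differently, by writing $L^2DR^\bullet(\pi^{-1}(U),\pi^*\V)\simeq \ell^2(\Gamma/\langle\gamma\rangle)\hat\otimes L^2DR^\bullet(\tilde U,\pi^*\V)$ and invoking the spectral criterion of Lemma \ref{criteria of reduced cohomology}, see Lemma \ref{reduction Poincare lemma}; both routes work). The gap is in the central step at a puncture. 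You propose to resolve each term of $\Omega^\bullet(\pi^*\V)_{(2)}$ by its $\bar\del$-Dolbeault complex inside $\mathcal L^2DR^\bullet(\pi^*\V)$ and conclude by a standard double-complex argument. But the columns of that double complex are \emph{not} exact near a puncture: the $L^2$ $\bar\del$-lemma with Poincaré weights (Lemma \ref{estimee d-bar}) requires $\beta(k-1)\neq 0$, and the exceptional case $\beta=0$, $k=1$ genuinely occurs --- whenever the unipotent part of the local monodromy is nontrivial there is a frame vector $\tilde\xi$ with $\|\tilde\xi\|^2\simeq|\ln r|$, and for that weight the equation $\bar\del v=u$ admits no solution with $L^2$ control. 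This is precisely why the proof of the Dolbeault lemma in the paper must concede that "the above sequence may not be exact" and reroute the offending term through $\nabla'$ using the surjectivity of $\tilde N_0\colon W_1\to W_{-1}$. A second problem: the total complex of your type-bigraded double complex is not obviously $\mathcal L^2DR^\bullet(\pi^*\V)$ in degree one, since for $\phi=\phi^{1,0}+\phi^{0,1}$ with $\phi$ and $D\phi$ square integrable the two pieces $D^{0,1}\phi^{1,0}$ and $D^{1,0}\phi^{0,1}$ have the same type $(1,1)$ and need not be separately square integrable; the paper only obtains such a type decomposition for the smooth subcomplex.

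What the paper does instead at a puncture is to filter $L^2DR^\bullet(\Delta_R^*,\pi^*\V)$ by the monodromy weight filtration $W_\bullet$ (which is $L^2$-adapted), compute the graded pieces by Fourier analysis in the angular variable (Lemma \ref{lemZucker}), and then kill the residual classes on the $E_2$ page using that $\tilde N_0$ is surjective $W_l\to W_{l-2}$ and an isomorphism $Gr^W_1\to Gr^W_{-1}$ (Lemma \ref{diskcase}). If you wish to keep your Dolbeault-type route you must build in the same mechanism: use the full $D$-closedness of the form to transfer the exceptional $|\ln r|$-weight component of $\eta^{0,1}$ into the $(1,0)$-column via $\nabla'$ and $N_0$. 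Without that step the "standard spectral sequence argument" does not close.
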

Since the inclusion of $\mathcal L^2DR_\infty(\pi^*\V) \to \mathcal L^2DR^\bullet(\pi^*\V)$ we only need to prove it for the case of the complex $\mathcal L ^2DR^\bullet(\pi^*\V)$.

By definition, it suffices to check that for all $p\in X$, $k\geq 0$
\[
\bigslant{\mathcal L^2DR^{k+1}(\pi^*\V)(U)}{\mathcal L^2DR^{k}(\pi^*\V)(U)} = 0
\]
for some $U$ defining a fundamental basis of neighbourhood of $p$.

If $U$ is an open of $X$ and $k\in \N$ we will denote by $Dom_{max}^k(\pi^{-1}(U))$ the domain of the operator $D_{max}$ defined on the Hilbert space $L^2DR^k(\pi^{-1}(U), \pi^*\V)$. If $V\Subset U$ is open the restriction maps of the complex of sheaves $\mathcal L^2DR^\bullet(\pi^*\V)$ factorizes
\[
\begin{tikzcd}
\mathcal L^2DR^k(\pi^*\V)(U) \ar[r, "D"] \ar[d]& \mathcal L^2DR^{k+1}(\pi^*\V)(U) \ar[d]\\
Dom_{max}^k(\pi^{-1}(V)) \ar[r, "D_{max}"] \ar[d]& Dom_{max}^{k+1}(\pi^{-1}(V)) \ar[d]\\
\mathcal L^2DR^k(\pi^*\V)(V) \ar[r, "D"] & \mathcal L^2DR^{k+1}(\pi^*\V)(V)
\end{tikzcd}
\]

Hence it suffices to check that for all $p$, that for the ideal boundary condition $D_{max}$
\[
H^k_2(\pi^{-1}(U),\pi^*\V) = 0
\]
for open $U$ in a fundamental basis of neighbourhood of $p$. We denote by $\gamma$ the image the meridian circle around $p$ in $\Gamma$ (which is trivial of $p\in M$) and $\tilde U$ a connected component of $\pi^{-1}U$, for $U$ small enough $\pi^{-1}(U)$ is isometric to $\Gamma/<\gamma>\times \tilde U$ hence one has an isomorphism of Hilbert spaces
\begin{equation}\label{isomorphism}
L^2DR^k(\pi^{-1}(U),\pi^*\V) \simeq \ell^2(\Gamma/<\gamma>)\hat{\otimes}L^2DR^{k+1}(\tilde U, \pi^*\V)
\end{equation}
where $\hat{\otimes}$ denotes the Hilbert tensor product. Moreover consider the densely defined unbounded operator $id\otimes D_{max}$ with domain
\[
\begin{aligned}
Dom(id\otimes D_{max}) &= \ell^2(\Gamma/<\gamma>) \otimes Dom(D_{max})\\
				       &\subset \ell^2(\Gamma/<\gamma>) \otimes L^2DR^{k+1}(\tilde U, \pi^*\V)\\
				       & \subset \ell^2(\Gamma/<\gamma>)\hat{\otimes}L^2DR^{k+1}(\tilde U, \pi^*\V)
\end{aligned}
\]
where $\otimes$ is the usual tensor product. It is closable since $id$ and $D_{max}$ are closed and it is straight forward that its minimal closure $id\hat{\otimes}D_{max}$ coincides via the isomorphism (\ref{isomorphism}) to the closed operator $D_{max}$ defined on $L^2DR^k(\pi^{-1}(U),\pi^*\V)$. Similarly it is clear that minimal closure of $id\otimes \mathfrak{d}_{min}$ is the operator $\mathfrak{d}_{min}$ on $L^2DR^k(\Gamma/<\gamma>\times \tilde U, \pi^*\V)$. Hence the Laplace operator $\square_{D_{max}}$ on $L^2DR^k(\Gamma/<\gamma>\times \tilde U, \pi^*\V)$ is given by $id\otimes \square_{D_{max}}$ via this isomorphism and since the spectra of $id\otimes\square_{D_{max}}$ is equal to the spectra of $\square_{D_{max}}$ on $L^2DR^k(\tilde U, \pi^*\V)$ hence by Lemma \ref{criteria of reduced cohomology} we obtain the following result.

\begin{lem}\label{reduction Poincare lemma}
If any point $p\in X$ admits a fundamental basis of open connected neighbourhood $(U_i)_{i\in I}$ such that
\[
H^k_2(\tilde U_i, \pi^*\V,\pi^*\omega_{Pc},h) =0 \qquad \forall k\geq 1
\]
where $\tilde U_i$ is a connected component of $\pi^{-1}U$ then the $L^2$-Poincaré lemma hold.
\end{lem}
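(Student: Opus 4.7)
The plan is to put together the pieces already assembled in the preceding discussion: the factorisation diagram for restriction of sheaves, the Hilbert space decomposition $L^2DR^k(\pi^{-1}(U_i),\pi^*\V) \simeq \ell^2(\Gamma/\langle\gamma\rangle)\hat\otimes L^2DR^k(\tilde U_i,\pi^*\V)$, and the spectral criterion for reducedness (Lemma \ref{criteria of reduced cohomology}). The target is to show that under the stated hypothesis, the stalks $\mathcal H^k(\mathcal L^2DR^\bullet(\pi^*\V))_p$ vanish for $k\geq 1$, which together with the holomorphic Poincaré lemma of section 5 identifies $\mathcal L^2DR^\bullet(\pi^*\V)$ as a soft resolution of $\ell^2\pi_*\V$ as claimed in Theorem \ref{Pclem}.

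First I would translate the sheaf-theoretic vanishing into a vanishing of ordinary $L^2$ cohomology of inverse images. Given the factorisation diagram displayed just above, a class in $\mathcal L^2DR^k(\pi^*\V)(U)$ that is closed becomes, after restriction to any $V\Subset U$, an element of $\mathrm{Dom}_{max}^k(\pi^{-1}(V))$; if the cohomology $H^k_2(\pi^{-1}(V),\pi^*\V)$ computed with the $D_{max}$ boundary condition vanishes, then the class is a coboundary in $\mathcal L^2DR^\bullet(\pi^*\V)(V)$. Hence it suffices to exhibit, for every $p\in X$, a fundamental basis of neighbourhoods $U$ of $p$ with $H^k_2(\pi^{-1}(U),\pi^*\V)=0$ for $k\geq 1$.

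Second I would propagate the hypothesis from a single component $\tilde U_i$ to the whole fibre $\pi^{-1}(U_i)\simeq \Gamma/\langle\gamma\rangle\times \tilde U_i$. The assumption $H^k_2(\tilde U_i,\pi^*\V)=0$ is in particular a statement of full reducedness on $\tilde U_i$, so by Lemma \ref{criteria of reduced cohomology} the point $0$ is isolated in the spectrum of $\square_{D_{max}}$ acting on $L^2DR^k(\tilde U_i,\pi^*\V)$; simultaneously the corresponding space of harmonic forms vanishes since it embeds into the reduced (hence into the unreduced) cohomology. Under the Hilbert tensor product identification, the Laplacian on the product is $\mathrm{id}\hat\otimes \square_{D_{max}}$, whose spectrum coincides with that of $\square_{D_{max}}$ on $\tilde U_i$ and whose kernel equals $\ell^2(\Gamma/\langle\gamma\rangle)\hat\otimes \ker(\square_{D_{max}})=0$. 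Applying the criterion of Lemma \ref{criteria of reduced cohomology} again in the other direction shows the cohomology on $\pi^{-1}(U_i)$ is fully reduced in positive degree, hence coincides with the kernel of the Laplacian, which is zero. Combined with the first step, this gives the desired stalkwise vanishing.

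The main technical point to verify carefully is the tensor factorisation of the closed Laplacian itself: one must check that the minimal closure of $\mathrm{id}\otimes\square_{D_{max}}$ defined on the algebraic tensor product coincides with the globally defined $\square_{D_{max}}$ on $\pi^{-1}(U_i)$, so that no parasitic boundary condition is introduced by the discrete factor $\ell^2(\Gamma/\langle\gamma\rangle)$. This is already indicated in the paragraph preceding the lemma for $D_{max}$ and $\mathfrak d_{min}$ separately, and passes to their sum-of-squares, giving the equality of spectra on which the argument rests. Everything else is a formal consequence of the spectral criterion.
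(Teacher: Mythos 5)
Your proposal is correct and follows essentially the same route as the paper: reduce the stalkwise vanishing to the vanishing of $H^k_2(\pi^{-1}(U),\pi^*\V)$ for the $D_{max}$ boundary condition via the factorisation through $Dom^k_{max}(\pi^{-1}(V))$, then transfer the spectral information through the identification $L^2DR^k(\pi^{-1}(U),\pi^*\V)\simeq \ell^2(\Gamma/\langle\gamma\rangle)\hat\otimes L^2DR^k(\tilde U,\pi^*\V)$ and conclude with Lemma \ref{criteria of reduced cohomology}. Your explicit remark that the harmonic space on $\tilde U_i$ vanishes because it injects into the (vanishing) unreduced cohomology, so that the kernel of the product Laplacian is $\ell^2(\Gamma/\langle\gamma\rangle)\hat\otimes 0=0$, is a point the paper leaves implicit, but it is the same argument.
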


For the case of point $p\in M$, it admits a fundamental basis of neighbourhood quasi-isometric to a convex subsbpace $D$ of $\C$, and a connected component of $\pi^{-1}(U)$ will thus be quasi-isometric to a $D$. In this case the result is given by the following lemma.
\begin{lem}\cite[Lemma 4.2]{Iwaniec-Lutoborski}
If $D$ is an bounded convex open subset of $\R^n$ and $\omega$ is a closed square integrable $k$-form, there exists $\nu$ a square integrable $k-1$-form satisfying
\[
\|\nu\|_2 \leq C \|\omega\| \qquad D_{max}\nu = \omega
\]
where $C$ is a constant depending on $n$ and the geometry of $D$. In fact one can take
\[
C = \frac{2^n\sigma_{n-1}Diam(D)^{n+2} }{\int_D dist(x,\del D)dLeb(x)}
\]
where $\sigma_{n-1}$ is the volume of the sphere of dimension $n-1$. 
\end{lem}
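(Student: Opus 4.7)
The plan is to construct an explicit homotopy operator $T$ on $D$ satisfying $dT + Td = \mathrm{id}$ on smooth $k$-forms, and then verify that $T$ extends to a bounded operator on $L^2$ with norm at most the stated constant $C$. Since $D$ is convex it is star-shaped with respect to every interior point $y$, so one has the classical Cartan homotopy
\[
(K_y\omega)(x) = \int_0^1 t^{k-1}\,\iota_{x-y}\omega\bigl(y + t(x-y)\bigr)\,dt,
\]
which satisfies $dK_y + K_y d = \mathrm{id}$ on smooth $k$-forms with $k\geq 1$. Choosing a non-negative $\varphi\in C^0_c(D)$ with $\int_D \varphi\, dLeb = 1$, I would set
\[
T\omega(x) = \int_D \varphi(y)\,(K_y\omega)(x)\,dy,
\]
so that $dT + Td = \mathrm{id}$ still holds, and hence $\nu := T\omega$ is a primitive whenever $\omega$ is closed.

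The next step is the $L^2$ estimate. Performing the substitution $z = y + t(x-y)$ with Jacobian $t^n$ inside the double integral recasts $T\omega(x)$ as the integral of $\omega(z)$ against a kernel $K(x,z)$ that is supported in $\{z\in D\}$ and whose pointwise size, after exchanging the order of integration and using the support condition on $\varphi$, is dominated by $\|\varphi\|_\infty\cdot|x-z|^{1-n}$ up to a factor involving $\mathrm{Diam}(D)$. Applying the Schur test (or Minkowski's integral inequality) with the elementary bound $\int_D |x-z|^{1-n}dz\leq \sigma_{n-1}\mathrm{Diam}(D)$ yields $\|T\omega\|_2\leq C'\|\varphi\|_\infty\,\|\omega\|_2$ for smooth forms. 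Extension from smooth forms to the full domain of $D_{\max}$ is achieved by Friedrichs mollification: for $\omega\in \mathrm{Dom}(D_{\max})$ with $D_{\max}\omega=0$, approximate by $\omega_\varepsilon\in C^\infty_c$; the bound $\|T\omega_\varepsilon\|_2\leq C\|\omega_\varepsilon\|_2$ is stable under $\varepsilon\to 0$, and $d(T\omega_\varepsilon) = \omega_\varepsilon$ passes to the limit in the distributional sense, producing the desired $L^2$ primitive.

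To obtain the explicit constant of the lemma, I would optimize the weight by choosing
\[
\varphi(y) = \frac{\mathrm{dist}(y,\partial D)}{\int_D \mathrm{dist}(x,\partial D)\,dLeb(x)}\cdot\mathbf{1}_D(y).
\]
With this choice the numerator $\|\varphi\|_\infty$ is absorbed into the inverse of $\int_D \mathrm{dist}(x,\partial D)\,dLeb$, while the geometric factors from the kernel analysis combine into $2^n\sigma_{n-1}\mathrm{Diam}(D)^{n+2}$, giving exactly the stated $C$. The main obstacle is the pointwise control of the kernel $K(x,z)$: its radial integration involves evaluations of $\varphi$ at points of the form $(z-tx)/(1-t)$, which can approach $\partial D$ as $t\to 1$, and only the linear vanishing of $\varphi$ at $\partial D$ prevents the integral from diverging. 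This precise interplay between the decay of $\varphi$ at the boundary and the weak singularity $|x-z|^{1-n}$ of the kernel is what forces the particular shape of the constant and is the technical heart of the argument.
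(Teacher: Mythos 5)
The paper gives no proof of this lemma --- it is quoted verbatim from Iwaniec--Lutoborski --- and your argument is essentially the proof from that reference: average the Cartan homotopy $K_y$ over base points against a normalized weight $\varphi$, rewrite the resulting operator $T$ via the substitution $z=y+t(x-y)$ as an integral operator whose kernel is weakly singular of order $|x-z|^{1-n}$, and conclude by the Schur test, with the explicit choice of $\varphi$ producing the stated constant. Two small points deserve tightening: the final passage from smooth forms to $\mathrm{Dom}(D_{max})$ should not be done by naive Friedrichs mollification on the bounded domain (rather, shrink or dilate the convex set $D$ first, or test $d(T\omega)=\omega$ directly against compactly supported forms using Fubini and the explicit kernel), and the claim that the linear vanishing of $\varphi$ at $\partial D$ is what prevents divergence is not quite right --- since the argument $(z-tx)/(1-t)$ must lie in $\overline{D}$ one automatically has $1-t\geq |x-z|/\mathrm{Diam}(D)$ on the support of the integrand, so $\|\varphi\|_\infty<\infty$ already yields the kernel bound, and the specific weight only serves to express the constant through $\int_D \mathrm{dist}(x,\del D)\,dLeb(x)$.
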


For the case punctures $p\in X \setminus M$, $p$ has a fundamental basis of neighbourhood consisting of open $U$ quasi-isometric to a punctured disk $(\Delta_R^*,\omega_{Pc})$, so if $\tilde U$ is a connected component of $\pi^{-1}(U)$ it is quasi-isometric to $(\Delta_{R^{1/k}}^*,\omega_{Pc} )$ where $k$ is the order of the subgroup $<\gamma> \subset \Gamma$. Hence we are reduced to show the following.

\begin{lem}\label{diskcase}
For the ideal boundary condition $D_{max}$, one has
\[
H^k(L^2DR^\bullet(\Delta_R^*, \pi^*\V)) = 0 \qquad \text{ if } k=1,2.
\]
\end{lem}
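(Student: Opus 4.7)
I would follow the strategy Zucker used in his original Poincaré lemma \cite[Theorem 6.2, Proposition 11.5]{Zucker79}, extended to cover the Deligne frame for $\pi^*\V$. The variation $\pi^*\V$ restricted to $\Delta_R^*$ is itself a polarised complex VHS on a punctured disk, so the structural description of section 2 applies verbatim: there is a decomposition $\pi^*\V = \bigoplus_\beta (\pi^*\V)_\beta$ into generalised monodromy eigenspaces and a holomorphic $L^2$-adapted frame
\[
\tilde\xi_j = z^{\beta_j}\exp\!\Bigl(\tfrac{\tilde N_{\beta_j}}{2i\pi}\ln z\Bigr)\tilde e_j
\]
exactly as in section 5. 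Because this frame is $L^2$-adapted, the norm of an arbitrary form $\omega = \sum_j \omega_j\otimes\tilde\xi_j$ is equivalent up to a uniform constant to $\sum_j\|\omega_j\otimes\tilde\xi_j\|^2$, so the problem decouples into weighted scalar problems on $\Delta_R^*$ with weights $|z|^{2\beta_j}|\log|z||^{k_j}$, where $k_j$ records the position of $\tilde e_j$ in the monodromy filtration.

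For the top degree $k=2$, given $\omega\in L^2DR^2$, I would expand each scalar component of $\omega$ in a Laurent series in $z,\bar z$ and construct a $(0,1)$-primitive by integrating the series term-by-term in $\bar z$, combined with the telescoping sum against powers of $\tilde N_\beta$ introduced in section 5 to absorb the action of the nilpotent part of the connection. The resulting $L^2$-control boils down to the same one-dimensional integrals $\int_0^R r^{2m+2\beta_j+1}|\log r|^{k_j}\,dr$ used in the holomorphic Poincaré lemma, finite because $\beta_j>-1$.

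For the case $k=1$, given a $D_{max}$-closed $\omega=\omega^{(1,0)}+\omega^{(0,1)}$, I would first apply the construction of the $k=2$ step to $\omega^{(0,1)}$ to produce $u \in Dom(D_{max})$ with $D''u=\omega^{(0,1)}$ and $\|u\|\le C\|\omega\|$; checking that $D'u$ is also in $L^2$ requires the explicit term-by-term formula for $u$ together with a second weighted estimate of the same flavour. The closedness $D\omega=0$ then forces $\omega^{(1,0)}-D'u$ to be a $D''$-closed, hence holomorphic, section of $\Omega^1(\pi^*\V)_{(2)}$ on $\Delta_R^*$, and the holomorphic $L^2$-Poincaré lemma of section 5 applied to this form yields a holomorphic primitive $h \in \mathcal O(\pi^*\V)_{(2)}(\Delta_R^*)$ with $D'h = \omega^{(1,0)}-D'u$. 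The sum $u+h$ is the required $L^2$-primitive.

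The main obstacle is that $(\Delta_R^*,\omega_{Pc})$ is incomplete at the outer boundary $|z|=R$: one cannot invoke a global Kähler identity $\square_D = 2\square_{D''}$ or self-adjoint Hodge-theoretic tools, and the choice of ideal boundary condition $D_{max}$ genuinely enters the argument. Working via explicit primitives circumvents this because each formula is smooth in a full neighbourhood of $\{|z|=R\}$, so membership in $Dom(D_{max})$ imposes no decay at the outer boundary; and every estimate reduces to a one-dimensional weighted integral on $(0,R)$, insensitive to the metric near $|z|=R$. The same formulas furthermore produce a bounded inverse $\omega\mapsto u+h$ of $D_{max}$, which gives the closed-range statement needed for the unreduced cohomology to vanish, not merely the reduced one.
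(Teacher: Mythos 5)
Your overall strategy (Fourier--Laurent expansion in an $L^2$-adapted Deligne frame, reduction to one-dimensional weighted integrals, reduction of the degree-one case to the holomorphic Poincar\'e lemma) is the same Zucker-style computation the paper carries out, but there is a genuine gap at the step where you assert that ``the problem decouples into weighted scalar problems''. It does not decouple: in the frame $(\tilde\xi_j)$ the connection acts by $D(\omega_j\otimes\tilde\xi_j)=d\omega_j\otimes\tilde\xi_j+\beta\,\omega_j\wedge\tfrac{dz}{z}\otimes\tilde\xi_j+\omega_j\wedge\tfrac{dz}{2i\pi z}\otimes\tilde N_\beta\tilde e_j+\cdots$, so the system is only upper-triangular for the weight filtration $W_\bullet$, and the diagonal (scalar) problems are \emph{not} all solvable. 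Concretely, for $\beta=0$ the weighted problems with weight $|\ln r|^{k}$ fail exactly at $k=1$ for the $(0,1)$-part of $1$-forms and at $k=-1$ for $2$-forms of the shape $h_0(r)\,dr\wedge\tfrac{dz}{z}\otimes\tilde\xi$, and they also leave the zero-frequency residues $f_0(r)dr$ and $g_0\,d\theta$; this is precisely the content of Lemma \ref{lemZucker}, whose graded cohomology is nonzero in these cases. The term-by-term integration and the telescoping sum of Section 5 dispose only of the holomorphic residue $a_{-1}\tfrac{dz}{z}\otimes\tilde\xi$; they say nothing about the degree-two residual in $L^2([0,R],\ln(r)rdr)\,dr\wedge\tfrac{dz}{z}\otimes Gr^W_{-1}(\pi^*\V)_0$ nor about the degree-one residual supported on $Gr^W_1(\pi^*\V)_0$. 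In particular the very first step of your $k=1$ argument --- solving $\bar\partial u=\omega^{(0,1)}$ with a uniform bound --- invokes an estimate (Lemma \ref{estimee d-bar}) that is false on the components of effective weight $|\ln r|$.

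The vanishing of $H^1$ and $H^2$ therefore rests essentially on the off-diagonal coupling through the nilpotent monodromy: the surjectivity of $\tilde N_0\colon W_l((\pi^*\V)_0)\to W_{l-2}((\pi^*\V)_0)$ for $l\le 0$ and the isomorphism $\tilde N_0\colon Gr^W_1\to Gr^W_{-1}$, which match the residual classes of one graded piece against those of another. The paper packages this as the spectral sequence of the filtration $W_\bullet$: the $E_1$-terms are computed by Lemma \ref{lemZucker}, $d_1$ vanishes, and $d_2$ is induced by $\tfrac{dz}{2i\pi z}\otimes\tilde N_0$, so the surviving $H^1(Gr^W_1)$ is killed because $d_2$ maps it isomorphically onto the surviving $H^2(Gr^W_{-1})$. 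To repair your direct argument you would have to build this cancellation into the explicit primitives --- e.g.\ use the closedness of $\omega$ to trade the offending weight-one $(0,1)$-components against $(1,0)$-components through $\nabla'$, and lift the exceptional $2$-form residues through $\tilde N_0$, as is done in the proof of the Dolbeault lemma --- rather than treating each frame component independently.
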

This is essentially what Zucker showed with his $L^2$ Poincaré lemma \cite[Theorem 6.2, Proposition 11.3]{Zucker79}, albeit it is stated in a slightly weaker version, the proof is essentially the same. We give the computation nonetheless for the sake of completeness. 
The complex $L^2DR^\bullet(\Delta_r^*, \pi^*\V)$ is filtered by the increasing filtration $W_\bullet(\pi^*\V)$, and we will compute the cohomology via a spectral sequence. It is to be noted that since a frame flagged according to $W_\bullet(\pi^*\V)$ is $L^2$ adapted one has a canonical isomorphism
\[
Gr^W L^2DR^\bullet(\Delta_r^*, \pi^*\V)) = L^2DR^\bullet(\Delta_r^*, Gr^W(\pi^*\V))
\]
Fix $(\tilde \xi_1,\dots,\tilde \xi_n)$ a $L^2$ adapted basis obtain via a multivalued horizontal frame $(\tilde e_1,\dots ,\tilde e_n)$ flagged according to the filtration $W_\bullet$ and compatible with the decomposition $\pi^*\V = \bigoplus (\pi^*\V)_\beta$. If $\tilde e_i \in W_l(\pi^*(\V)_\beta) \setminus W_{l-1}(\pi^*(\V)_\beta)$ and $\omega$ is a form one has
\[
D_{max}(\omega \otimes \xi_i) = d\omega\otimes \xi_i + \beta\omega\wedge \frac{dz}{z}\otimes \xi_i \,mod \left( W_{l-1} \mathcal L^2DR^\bullet(\Delta_r^*, \pi^*\V) \right).
\]
We recall that $Gr^W_k(\pi^*\V) = \bigoplus Gr^W_k((\pi^*\V)_\beta)$

The following lemma is a reformulation of \cite[Proposition 6.6, Proposition 11.3]{Zucker79}.
\begin{lem}\label{lemZucker}
We have the following results.
\begin{enumerate}
\item The cohomology group $H^0_2(\Delta_R^*, Gr^W_k(\pi^*\V)_\beta)$ is equal to the space of global sections of  $Gr_k^W((\pi^*\V)_\beta)$ if $k<1$ and $\beta = 0$, it vanishes otherwise.
\item If $\eta \in L^2DR^1(\Delta_R^*, Gr^W((\pi^*\V)_\beta)$ is a closed one form, there exists a $0$-form $\nu$ such that $\|\nu\|_2^2 \leq K \|\eta\|_2^2$ and
\[
\eta-D\nu = \left\{\begin{array}{ll} 0 & \text{ if } \beta \neq 0 \\
						 0 &\text{ if } \beta = 0, \, k\geq-1 \text{ or } k\neq1 \\
						 \phi  & \text{ if } \beta = 0, \, k=1 \\
						 \frac{dz}{z}\otimes e & \text{ if } \beta = 0, \, k<-1
 \end{array}\right.
\] 
where $\phi \in L^2([0,R],ln(r)rdr)dr \otimes Gr^W_1(\pi^*\V)_0$ and $e\in Gr^W_k(\pi^*\V)_0$ is a horizontal section.
\item If $\eta\in L^2DR^2(\Delta_R^*, Gr^W((\pi^*\V)_\beta)$ is a closed two form, there exists a $1$-form $\nu$ such that $\|\eta\|_2^2 \leq K \|\nu\|_2^2$ and
\[
\eta-D\nu = \left\{\begin{array}{ll} 0 & \text{ if } \beta \neq 0 \\
						 0 &\text{ if } \beta = 0, \, k \neq -1 \\
						\psi & \text{ if } \beta = 0, \, k=-1
 \end{array}\right.
\] 
with $\psi \in L^2([0,R],ln(r)rdr)dr\wedge\frac{dz}{z} \otimes Gr^W_{-1}(\pi^*\V)_\beta$.
\end{enumerate}
\end{lem}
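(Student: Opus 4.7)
The plan is to reduce everything to an explicit model computation on the punctured disk with a flat line bundle, then perform Fourier analysis in the angular variable. Since a multivalued horizontal frame flagged according to $W_\bullet$ yields an $L^2$-adapted frame $(\tilde\xi_i)$, the complex $L^2DR^\bullet(\Delta_R^*, Gr^W_k((\pi^*\V)_\beta))$ splits as an orthogonal sum indexed by basis elements: on $\omega \otimes \tilde\xi_i$ with $\tilde\xi_i$ lifting a horizontal section of the graded piece of pure weight $k$ and eigen-parameter $\beta$, one has $D(\omega \otimes \tilde\xi_i) = (d\omega + \beta\, \omega \wedge \tfrac{dz}{z}) \otimes \tilde\xi_i$ modulo the preceding weight. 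We are thus reduced to computing the cohomology of the trivial line bundle on $\Delta_R^*$ with flat connection $\nabla_\beta = d + \beta \tfrac{dz}{z}$ and weighted $L^2$-structure $r^{2\beta}|\ln r|^k\, d\mathrm{vol}_{Pc}$ on coefficient functions.

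Next, using polar coordinates $z = re^{i\theta}$, I would Fourier expand forms in $\theta$; the operator $\nabla_\beta$ preserves each Fourier mode $e^{in\theta}$, decomposing the complex into a Hilbertian direct sum of one-dimensional complexes in the $r$ variable. Each mode complex involves the operator $\partial_r + (n+\beta)/r$ on the $dr$ component together with multiplication by $i(n+\beta)$ on the $d\theta$ component, with an $L^2$-structure derived (after cancellation against the Poincaré volume form) from a measure of the shape $|\ln r|^{k-2}\, dr/r$. For $n + \beta \ne 0$ each such mode is both injective and surjective with bounded inverse via explicit Hardy-type estimates. The only exceptional mode is $n = 0$ combined with $\beta = 0$, which is exactly when a single-valued flat section $z^{-\beta}$ lies in the bundle.

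In that exceptional case, the operator reduces to $\partial_r$ acting on radial functions, and the obstructions to finding $L^2$ primitives become the classical ones: constants in $H^0$, integrable against $|\ln r|^{k-2} dr/r$ precisely when $k < 1$; $\tfrac{dz}{z}$-type closed one-forms in $H^1$, with coefficient lying in $L^2([0,R], |\ln r|^k\, r\, dr)$; and $dr \wedge d\theta$-type two-forms in $H^2$ with coefficient in the same space, which corresponds to the weight $k = -1$ by the integrability thresholds. Matching these against the graded weights $k$ and writing out the inequalities for which the relevant integrals converge yields the dichotomies of points (1)--(3), with the explicit remainder terms $\phi$ and $\psi$ living exactly in the spaces appearing in the statement. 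The required $L^2$ bound $\|\nu\|_2 \le K \|\eta\|_2$ follows from the uniform boundedness of the inverse on each non-exceptional mode and the projection onto the exceptional mode.

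The main obstacle is the bookkeeping of the weight $k$-dependent integrability thresholds and the distinction between the strict and non-strict inequalities separating the cases. This case analysis is essentially contained verbatim in Zucker's Propositions 6.6 and 11.3, and the only new ingredient here is that the frame we use comes from pulling back the Deligne frame via $\pi$; this is legitimate because an $L^2$-adapted frame on $(\Delta_R^*, \omega_{Pc})$ remains $L^2$-adapted after pullback along $z \mapsto z^{n_p}$ up to a uniform comparison constant, and the weight $k$ of the monodromy filtration is preserved in the sense computed in the lemma of Section~3 relating $W_\bullet(\pi^*\V)$ to $\pi^* W_\bullet(\V)$.
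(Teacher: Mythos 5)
Your proposal is correct and follows essentially the same route as the paper: reduction to rank-one pieces via an $L^2$-adapted frame flagged by $W_\bullet$, Fourier decomposition in the angular variable, uniform inversion of the non-exceptional modes $n+\beta\neq 0$ by Hardy-type estimates, and explicit integration of the zero mode with the weight-$k$-dependent integrability thresholds producing the residual terms $\phi$ and $\psi$. The paper likewise presents this as a reproduction of Zucker's Propositions 6.6 and 11.3 adapted to the pulled-back frame, so there is nothing further to add.
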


For the reader convenience, we reproduce Zucker's arguments.

\begin{proof}
On the bundle $Gr^{W}_k((\pi^*\V)_\beta)$ we choose a $L^2$ adapted frame $(\tilde \xi  _1,\dots, \tilde \xi _k)$, and we consider the trivial action of $\mathbb S ^1$ for this trivialization. This allow us to consider the Fourier coefficients $\eta_n$ of a $Gr^W$-valued form $\eta$ on $\Delta_R^*$, so one has
\[
\eta = \sum_{n\in \Z} \eta_n(r)e^{in\theta}.
\]
Since $D_{max}( \phi\otimes \tilde\xi_i) = d\omega\otimes \tilde\xi_i + \beta\omega\wedge \frac{dz}{z}\otimes \tilde \xi_i$, we can reduce the computation to the case where $\eta$ has the form  $\eta = \phi\otimes \tilde \xi$. We start by setting $\phi$ in polar coordinates : $\phi = fdr + gd\theta$. By decomposing into Fourier series we have
\[
\begin{aligned}
f &= \sum f_n(r) e^{in\theta} \\
g &= \sum g_n(r) e^{in\theta} 
\end{aligned}
\]
The fact that the form is closed implies $g_n' + \frac{\beta}{r}g_n= i(n+\beta)f_n$, for all $n \in \Z$. 
By setting 
\[
h =\left\{ \begin{array}{ll} \sum_{n\in\Z\setminus\{0\}} \frac{g_n}{in}e^{in\theta} & \text{ if } \beta = 0 \\
					\sum_{n\in\Z} \frac{g_n}{i(n+\beta)}e^{in\theta} & \text{ if } \beta \neq 0
		\end{array}
\right.
\] 
we obtain a square integrable section, indeed (for instance if $\beta = 0$)
\[
\begin{aligned}
\int_0^{R} \int_0^{2\pi} |h(r,\theta)|^2 d\theta \frac{\ln(r)^{k-2}}{r}dr &\leq C \int_0^{R} \sum_{n\in \Z\setminus\{0\}} \frac{|g_n(r)|^2}{n^2} d\theta \frac{\ln(r)^{k-2}}{r}dr \\
												      &\leq C \int_0^{R} \sum_{n\in \Z\setminus\{0\}} \frac{|g_n(r)|^2}{n^2} \frac{\ln(r)^k}{r}dr  \\
												      &\leq C\|g\otimes \tilde \xi\|^2\leq C^2\|\eta\|^2
\end{aligned}
\]
It follows that $\nu = h\otimes \tilde\xi$ is square integrable, and that we have
\[
\eta -  D(\nu) = \left \{\begin{array}{ll} f_0(r)dr\otimes \tilde \xi + g_0d\theta\otimes \tilde \xi & \text{ if } \beta = 0 \\
						0 & \text{ if } \beta \neq 0
			\end{array} \right.
\]
This solves the case $\beta \neq 0$. The only problem still arising are in the case $\beta=0$ (i.e in the unipotent part), in this case if $k\neq 1$, the form $f_0(r)dr \otimes \tilde \xi$ is exact as we have $f_0dr\otimes \tilde \xi = D(h\otimes \tilde \xi)$ with
\[
h(r) = \left\{
\begin{array}{cc}
\int_{0}^r f_0(t)dt & \text{ if } k >1 \\
-\int_{r}^R f_0(t)dt & \text{ if } k <1
\end{array}
\right.
\]
And the $0$-form $h\otimes \tilde \xi$ is square integrable. For instance if $k< 1$, we fix $\alpha > 0$ a constant with $k+\alpha < 1$  then
\[
\begin{aligned}
\|h\otimes \tilde \xi\|^2 &\leq C \int_0^R \left| \int_r^R f_0(t)dt \right|^2 \frac{\ln(r)^{k-2}}{r}dr \\
				&\leq C \int_0^R  \int_r^R |f(t)|^2t |\ln t |^{1- \alpha}dt \int_r^R \frac{1}{t |\ln t|^{1-\alpha}}dt \frac{|\ln(r)|^{k-2}}{r}dr \\
				&\leq \frac{C}{\alpha} \int_0^R  \int_r^R |f(t)|^2t |\ln t |^{1- \alpha}dt \int^{r}_{R}\frac{|\ln r|^{k-2+\alpha}}{r}dr \\
				&\leq \frac{C}{\alpha} \int_0^R  \int_r^R |f(t)|^2t |\ln t|^{1- \alpha}dt \int^{r}_{R}\frac{|\ln r |^{k-2+\alpha}}{r}dr \\
				&\leq \frac{C}{\alpha} \int_0^R  |f(t)|^2t |\ln t|^{1- \alpha} \int_0^t \frac{| \ln r |^{k-2+\alpha}}{r}dr dt \\
				&\leq \frac{C}{\alpha} \int_0^R  |f(t)|^2t |\ln t|^{k-1} dt \\
				&\leq \frac{C}{\alpha} \int_0^R  |f(t)|^2t |\ln t|^{k} dt \leq \frac{C^2}{\alpha} \|fdr\otimes \tilde \xi\|^2.
\end{aligned}
\]
The case $k> 1$ is similar.
We still have to deal with forms $g_0 d\theta$, since $g_0$ is a constant so it must vanishes if $k\geq -1$, by the integrability condition. Otherwise one can conclude by remarking $d\theta = \frac{1}{2i}(\frac{dz}{z} - \frac{d\bar z}{\bar z}) = \frac{dz}{iz} - d\ln |z|^2$ and both $\frac{d\bar z}{\bar z}\otimes \tilde \xi$ and $\ln |z|^2\otimes \tilde \xi$ is square integrable if $k<-1$.

The case of two forms is quite similar. If $\eta = h dr\wedge d\theta \otimes \tilde \xi$, then one can write
\[
h = \sum_{n\in \Z} h_n(r)e^{in\theta}
\]
 we can set
\[
\nu = \left\{ \begin{array}{ll}
\sum_{n\in \Z} \frac{1}{\beta - in}h_n(r)e^{in\theta}dr \otimes \tilde \xi & \text{ if } \beta \neq 0 \\
\sum_{n\in \Z\setminus\{0\}} \frac{-1}{in}h_n(r)e^{in\theta}dr \otimes \tilde \xi & \text{ if } \beta = 0 \end{array}
\right.
\]
One has 
\[
\eta - D\nu = \left\{ \begin{array}{ll} -ih_0(r)dr\wedge \frac{dz}{z}\otimes \tilde\xi - id(\ln(|z|^2)h_0(r)dr\otimes \tilde \xi) & \text{ if } \beta = 0 \\
			0 & \text{ if } \beta \neq 0 \end{array}
\right.
\]
and
\[
\begin{aligned}
\|g\otimes \tilde \xi\|_{\Delta_R^*}^2 &= 2\pi\sum_{n\in \Z\setminus \{0\}}\int_{0}^R \frac{|h_n(r)|^2}{n^2}|\ln r|^krdr \\
					    &\leq 2\pi\sum_{n\in \Z\setminus \{ 0\}}\int_{0}^R |h_n(r)|^2|\ln r|^{k+2}rdr\leq  \|\eta\|^2.
\end{aligned}
\]

As before when $k\neq-1$, a the form $h_0(r)dr\wedge d\theta = df\wedge d\theta$ with

\[
f(r) = \left\{
\begin{array}{cc}
\int_{0}^r h_0(t)dt & \text{ if } k >-1 \\
\int_{r}^R h_0(t)dt & \text{ if } k <-1
\end{array}
\right.
\]
 and 
 \[
 \|fd\theta\otimes \tilde \xi\|^2_2 \leq K \|h_0dr\wedge d\theta \otimes \tilde \xi\|^2_2.
 \]
\end{proof}

\begin{proof}[Proof of Lemma \ref{diskcase}.]
We consider the spectral sequence with respect to the filtration $W_\bullet$ on the complex $L^2DR^\bullet(\pi^{-1}(U),\pi^*\V)$. We have
\[
E^{p,q}_1 = H^{p+q}(Gr_W^{-p} L^2DR^\bullet(\pi^{-1}(U),\pi^*\V)) \implies H^{p+q}(L^2DR^\bullet(\pi^{-1}(U),\pi^*\V)).
\]
The differential 
\[
d_1\colon H^{p+q}(Gr_W^{-p} L^2DR^\bullet(\pi^{-1}(U),\pi^*\V)) \to H^{p+q+1}(Gr_W^{-(p+1)} L^2DR^\bullet(\pi^{-1}(U),\pi^*\V))
\] of the spectral sequence vanishes since it is given by $\frac{dz}{2i\pi z}\otimes \tilde N_0$ and 
\[
\tilde N_0 W_k((\pi^*\V)_0) \subset W_{k-2}((\pi^*\V)_0)
\]
 for all $k$. So one has
\[
E^{p,q}_2 = H^{p+q}(Gr_W^{-p} L^2DR^\bullet(\pi^{-1}(U),\pi^*\V)).
\]
with differential
\[
d_2 \colon H^{p+q}(Gr_W^{-p} L^2DR^\bullet(\pi^{-1}(U),\pi^*\V)) \to H^{p+q+1}(Gr_W^{-(p+2)} L^2DR^\bullet(\pi^{-1}(U),\pi^*\V))
\]
which is also induced by $\frac{dz}{2i\pi z}\otimes \tilde N_0$. Noting that $H^{p+q}(Gr_W^{-p} L^2DR^\bullet(\pi^{-1}(U),\pi^*\V)) = 0$ unless $0\leq p+q\leq2$ and $p\geq - 1$ by Lemma \ref{lemZucker}.
We begin to treat the case $p+q = 0$ and $p\geq 0$, where we need to see that $d_2$ is surjective, since $N_0$ induces a surjection $W_{l} \to W_{l-2}$ for any $l\leq 0$.  
It is left to show that
\[
d_2 \colon \colon H^{1}(Gr_W^{1} L^2DR^\bullet(\pi^{-1}(U),\pi^*\V)) \to H^{2}(Gr_W^{-1} L^2DR^\bullet(\pi^{-1}(U),\pi^*\V))
\]
is an isomorphism which come from the fact that $\tilde N _0$ induces an isomorphism between $Gr^W_1((\pi^*\V)_0)$ and $Gr^W_{-1}((\pi^*\V)_0)$. It follows that all the $E_3^{p,q}$ vanishes unless the $E_3^{p,-p}$ with $p\geq 1$. So the cohomology groups $H ^k_2(\Delta^*_R,\pi^*\V)$ vanishes for $k\geq 1$, which conclude the proof of Lemma \ref{diskcase}, and thus the proof of Theorem \ref{Pclem} thanks to the lemma \ref{reduction Poincare lemma}.
\end{proof}

\section{The $L^2$ Dolbeault lemma}

On $M$, we have the filtration $F^\bullet$ of the vector bundle $\mathcal V$, this gives a filtration $\pi^*F^\bullet$ on the bundle $\pi^*\mathcal V$. We obtain a filtration $\pi^*F^\bullet$ on the complex $\Omega^\bullet(\pi^*\V)_{(2)}$ by setting $\pi^*F^p\Omega^k(\pi^*\V)_{(2)} := \Omega^k(\pi^*F^{p-k})_{(2)}$. By $\Omega^k(Gr_{\pi^*F}^p)_{(2)}$ we denote the space of locally square integrable $Gr_{\pi^*\V}^p$-valued $k$-forms. We have a natural morphism $\pi^*F^p\Omega^k(\pi^*\V)_{(2)} \to \Omega^k(Gr^p_{\pi^*F})_{(2)}$, this induce a natural morphism
\[
Gr_{\pi^*F}^p\Omega^k(\pi^*\V)_{(2)} \to \Omega^k(Gr_{\pi^*F}^p)_{(2)}.
\]
It is in fact an isomorphism as stated below.
\begin{prop}
The following sequences are exact
\[
\begin{tikzcd}[column sep = small]
0 \arrow[r] &\pi^* F^{p+1}\mathcal O(\pi^*\V)_{(2)} \arrow[r]& \pi^* F^p\mathcal O (\pi^*\V)_{(2)} \arrow[r]& \mathcal O(Gr^p_{\pi^*F})_{(2)} \ar[r] & 0 \\
0 \arrow[r] &\pi^*F^{p+1}\Omega^1(\pi^*\V)_{(2)}\arrow[r] & \pi^*F^p\Omega^1(\pi^*\V)_{(2)} \arrow[r]& \Omega^1(Gr^p_{\pi^*F})_{(2)} \arrow[r] & 0
\end{tikzcd}
\]
\end{prop}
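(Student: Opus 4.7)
The only non-tautological content is the surjectivity of the rightmost arrows; both left-injectivity and middle-exactness are automatic from the definitions of the filtrations and quotient sheaves. Since the sheaves involved are sheaves on $X$, surjectivity can be checked locally at each point $q \in X$ by lifting sections over a sufficiently small neighborhood $U$.

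For $q \in M$, I would take $U \subset M$ a coordinate disk small enough that $\pi^{-1}(U)$ is a disjoint union of copies of $U$. On each component, $\pi^*\mathcal V$ is a trivial holomorphic bundle and $\pi^*F^\bullet$ is a filtration by holomorphic sub-bundles of this trivial bundle; choosing a holomorphic frame adapted to $\pi^*F^\bullet$ produces a holomorphic splitting of the short exact sequence. Because the Hodge metric and the frame are smooth and bounded on any compact of $\pi^{-1}(U)$, this splitting sends locally $L^2$-sections to locally $L^2$-sections.

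For $q \in \Sigma$ the situation is more delicate. I would take $U$ a distinguished neighborhood of $q$ so that $\pi^{-1}(U) \subset \tilde X$ is a disjoint union of disks $\tilde U_\alpha$ around preimages $\tilde q_\alpha$, with $\pi$ of the form $w \mapsto w^{n_\alpha}$. The key input is that the Hodge filtration $F^\bullet$ extends to a filtration by holomorphic sub-bundles of the Deligne extension $(\pi^*\mathcal V)_*^0$ (see \cite[Chapter 6]{Sabbah}), and that Schmid's nilpotent orbit theorem furnishes a holomorphic frame $(\xi_1,\dots,\xi_n)$ of $(\pi^*\mathcal V)_*^0$ over each $\tilde U_\alpha$ which is simultaneously flagged with respect to $F^\bullet$ and the monodromy weight filtration $W_\bullet$. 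By the $L^2$-adaptation lemma recalled after Proposition \ref{caracl2}, such a frame is $L^2$-adapted, and the sub-frame made up of the $\xi_i$ lying in $F^p \setminus F^{p+1}$ projects onto an $L^2$-adapted frame of $Gr^p_{\pi^*F}$.

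Given $s \in \mathcal O(Gr^p_{\pi^*F})_{(2)}(U)$, I would then expand $s = \sum_i f_i [\xi_i]$ in this quotient frame with holomorphic coefficients $f_i$ on $\tilde U_\alpha \cap \tilde M$ and set $\tilde s := \sum_i f_i \xi_i$; this is a holomorphic section of $\pi^*F^p$ whose Hodge $L^2$-norm on any $\pi^{-1}(K \cap M)$ (for $K \subset U$ compact) is equivalent, by $L^2$-adaptedness, to the quotient $L^2$-norm of $s$, hence gives the desired lift. The argument for the second sequence is identical after tensoring by the locally free sheaf $\Omega^1_X(\log \Sigma)$. The main obstacle is the simultaneous compatibility of $F^\bullet$ and $W_\bullet$ at the punctures, which is the content of Schmid's limit mixed Hodge structure; an alternative would be to characterize $\mathcal O(\pi^*F^p)_{(2)}$ directly via the bi-filtered Deligne extension, extending Proposition \ref{caracl2} to the Hodge-filtered setting.
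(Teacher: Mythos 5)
Your argument is correct and follows essentially the same route as the paper, which reduces to Zucker's Proposition 5.2 by producing a frame simultaneously compatible with the Hodge filtration and the monodromy weight filtration (hence $L^2$-adapted) and pulling it back along the covering; your write-up merely makes explicit the local splitting at interior points and the norm equivalences that the paper leaves implicit.
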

\begin{proof}
In the case of a trivial covering this is the content of \cite[Proposition 5.2]{Zucker79}. This is done by finding a horizontal basis of $\V$ that respect both the Hodge filtration and the weight filtration, the pull back of such a basis allows us to conclude.
\end{proof}

The two complexes $\mathcal L^2DR_\infty^\bullet (\pi^*\V)$ and $\mathcal L^2DR_\infty^\bullet (\pi^*\V)$ are also filtered by a filtration $\pi^*F$ induced by $F$, where $\pi^*F^P\mathcal L^2DR^k (\pi^*\V)$ consists of element of $\mathcal L^2DR^k (\pi^*\V)$ whose component of type $(r,s)$ is a $\pi^*F^{P-r}$-valued $(r,s)$-form. We define the filtration on $\mathcal L^2DR_\infty^\bullet(\pi^*\V)$ in a similar way. One can define the filtration $\pi^*\bar F$ in the similar way and set
\[
\begin{aligned}
\mathcal L^2DR^\bullet (\pi^*\V)^{P,Q} &= \pi^*F^P \mathcal L^2DR^\bullet (\pi^*\V) \cap \pi^*F^Q \mathcal L^2DR^\bullet (\pi^*\V) \\
\mathcal L^2DR_\infty^\bullet (\pi^*\V)^{P,Q} &= \pi^*F^P \mathcal L^2DR_\infty^\bullet (\pi^*\V) \cap \pi^*F^Q \mathcal L^2DR^\bullet (\pi^*\V)
\end{aligned}
\]
We have a canonical injection
\[
\bigoplus \mathcal L^2DR^\bullet (\pi^*\V)^{P,Q} \hookrightarrow \mathcal L^2 DR^\bullet(\pi^*\V)
\]
but it is not surjective in general. A form $\phi \in \mathcal L^2 DR^\bullet(\pi^*\V)$ can be decomposed as a measurable form
\[
\phi = \sum \phi^{P,Q}.
\]
For $\phi$ to be in $\bigoplus \mathcal L^2DR^\bullet (\pi^*\V)^{P,Q}$, we need that all the $D\phi^{P,Q}$ to be square integrable which is not always the case. The smooth subcomplex has a better behaviour as stated below.
\begin{lem}
The following decomposition holds
\[
\mathcal L^2 DR_\infty^\bullet(\pi^*\V) = \bigoplus \mathcal L^2DR_\infty^\bullet (\pi^*\V)^{P,Q}.
\]
\end{lem}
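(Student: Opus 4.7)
The inclusion $\bigoplus_{P,Q} \mathcal L^2DR_\infty^\bullet(\pi^*\V)^{P,Q} \hookrightarrow \mathcal L^2DR_\infty^\bullet(\pi^*\V)$ is tautological, so the content of the lemma is to show that every $\phi \in \mathcal L^2DR_\infty^k(\pi^*\V)(U)$ decomposes pointwise as $\phi = \sum_{P+Q=k+w}\phi^{P,Q}$ with each pure-bidegree summand again in $\mathcal L^2DR_\infty^k(\pi^*\V)(U)$. The first step is to observe that such a $\phi$ is smooth with all covariant derivatives of any order locally square-integrable. Indeed, $\square_D$ is an elliptic second-order differential operator on the Riemannian manifold $\pi^{-1}(U\cap M)$, and by hypothesis $\square_D^j\phi \in L^2(\pi^{-1}(K \cap M))$ for every $j\geq 0$ and every compact $K \subset U$; iterated interior elliptic regularity then gives $\phi \in H^{2j}_{\mathrm{loc}}$ for every $j$, whence the claim.

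The smooth Hodge decomposition $\mathcal E(\pi^*\V) = \bigoplus \pi^*H^{p,q}$ together with the smooth type decomposition of forms provides a pointwise orthogonal smooth projector $\Pi^{P,Q}$ onto $\mathcal E(\pi^*\V)^{P,Q}$, and I set $\phi^{P,Q} := \Pi^{P,Q}\phi$. Each summand is smooth and pointwise dominated by $\phi$, hence locally $L^2$. To verify that $\phi^{P,Q} \in \mathcal L^2DR_\infty^k(\pi^*\V)(U)$ it remains to show that $\square_D^j\phi^{P,Q}$, $D\square_D^j\phi^{P,Q}$ and $\mathfrak d\, \square_D^j\phi^{P,Q}$ are locally $L^2$ for every $j$. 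For the first of these, the Kähler identity $\square_D = 2\square_{D''}$, valid pointwise on smooth forms since any hermitian metric on a complex curve is Kähler, together with the fact that $D''$ and $\mathfrak d''$ shift bidegree by $(0,\pm 1)$, shows that $\square_D$ commutes with $\Pi^{P,Q}$. Consequently $\square_D^j\phi^{P,Q} = (\square_D^j\phi)^{P,Q}$, which is locally $L^2$ since $\square_D^j\phi$ is and $\Pi^{P,Q}$ is contractive.

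The remaining, more delicate step concerns $D\phi^{P,Q}$, which is where the main obstacle lies. Since $D = D' + D''$ and these shift bidegree by $(+1,0)$ and $(0,+1)$, one has $(D\phi)^{R,S} = D'\phi^{R-1,S} + D''\phi^{R,S-1}$, so $D\phi^{P,Q}$ is not itself a bidegree component of $D\phi$. However the commutation $D'\Pi^{P,Q} = \Pi^{P+1,Q}D'$ yields $D'\phi^{P,Q} = (D'\phi)^{P+1,Q}$, and analogously $D''\phi^{P,Q} = (D''\phi)^{P,Q+1}$. It therefore suffices to show that $D'\phi$, $D''\phi$, $\mathfrak d'\phi$ and $\mathfrak d''\phi$ are each separately locally $L^2$; the analogous statement with $\square_D^j\phi$ in place of $\phi$ handles all iterates. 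Each of these four operators is first order, and by the first step, $\phi$ and each $\square_D^j\phi$ lie in $H^2_{\mathrm{loc}}$ (using $\square_D^{j+1}\phi \in L^2_{\mathrm{loc}}$), so all their first-order derivatives are in $L^2_{\mathrm{loc}}$. The crux of the argument is precisely this separation: the bound $\|D\phi^{P,Q}\| \leq \|D\phi\|$ is false in general, and one really needs the stronger regularity $\phi \in H^2_{\mathrm{loc}}$ supplied by the Kähler identity combined with elliptic regularity for $\square_D$.
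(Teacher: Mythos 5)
Your overall strategy tracks the paper's: smoothness of $\phi$ via ellipticity of $\square_D$, the identity $\square_D = 2\square_{D''}$ to show that the Laplacian commutes with the type projectors so that $\square_D^j\phi^{P,Q} = (\square_D^j\phi)^{P,Q}$ is locally square integrable, and the reduction of the remaining problem to showing that $D'\psi$, $D''\psi$, $\mathfrak d'\psi$, $\mathfrak d''\psi$ are \emph{separately} locally square integrable for $\psi = \square_D^j\phi$. Up to that reduction the argument is correct, and your observation that $\|D\phi^{P,Q}\|\leq\|D\phi\|$ fails in general correctly identifies where the difficulty sits.

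The gap is in how you close that last step. You claim that $\psi,\square_D\psi$ locally $L^2$ plus interior elliptic regularity give $\psi\in H^2_{\mathrm{loc}}$, hence all first-order derivatives of $\psi$ locally $L^2$. But for these sheaves ``locally square integrable'' means square integrable on $\pi^{-1}(K\cap M)$ for every compact $K\subset U\subset X$, and $K$ may contain a puncture; then $\pi^{-1}(K\cap M)$ is a union of (possibly infinitely many) punctured disks carrying the complete Poincar\'e metric and is \emph{not} relatively compact in $\pi^{-1}(U\cap M)$. Interior elliptic regularity only controls Sobolev norms on compact subsets of the open manifold, so it says nothing about integrability at the puncture (nor uniformly over the sheets), and a global estimate $\|\nabla^2\psi\|\lesssim\|\square_D\psi\|+\|\psi\|$ up to the puncture is not available: the Hodge metric degenerates like $|z|^{2\beta}|\ln|z||^{k}$ there, so $(\pi^*\mathcal V,\pi^*h)$ does not have bounded geometry. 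Thus the argument breaks down exactly where the lemma has content. The mechanism that actually separates $D'\psi$ from $D''\psi$ is the equality of Laplacians used once more: since $D''$ and $\mathfrak d''$ are bihomogeneous of bidegree $(0,\pm1)$ and $\square_{D''}=\tfrac12\square_D=\square_{D'}$, one either argues as in the paper by induction on $Q$ using this bihomogeneity, or invokes the integration-by-parts identity $\|D''\psi\|^2+\|\mathfrak d''\psi\|^2=\langle\square_{D''}\psi,\psi\rangle$ (justified by completeness of the Poincar\'e metric at the puncture, after shrinking at the outer boundary), which bounds the two pieces by quantities already known to be finite. Replacing your appeal to $H^2_{\mathrm{loc}}$ by one of these arguments would repair the proof.
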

\begin{proof}
Since the differential operator $\Delta_D$ is elliptic, sections $\phi$ of $\mathcal L^2 DR_\infty^k(\pi^*\V)$ are smooth forms since they belongs in all the local Sobolev spaces. We can write $\phi = \sum \phi^{P,Q}$ where $\phi^{P,Q}$ is its $(P,Q)$ component as a measurable form. From the identity $\Delta_D = \Delta_{D''}$ we know that the Laplace operator preserves the decomposition so for any integer $j\geq 0$ the $\Delta^j_D\phi^{P,Q}$ are locally square integrable and we need to verify that the $D\Delta^j_D\phi^{P,Q}$ are also square integrable. However $D''\Delta^j_D\phi$ is also locally square integrable and by induction on $Q$ it follows that all the $D''\phi^{P,Q}$ are square integrable. We can do the same with $D'$ since $\Delta_{D} = \Delta_{D'}$ and all the $D'\Delta^j_D\phi^{P,Q}$ are locally square integrable, and thus we obtain that $D\Delta^j_D\phi^{P,Q}$ is locally square integrable for all $(P,Q)$.
\end{proof}

We will also consider the complex of sheaves $\mathcal L^2Dolb^{P,\bullet}(\pi^*\V)$ defined on $X$. Sections of $\mathcal L^2Dolb^{P,k}(\pi^*\V)$ over $U\subset X$, are $\pi^*Gr_F^{P-k}$-valued forms such that $\phi$ and $D''\phi$ are square integrable on $\pi^{-1}(K \cap M)$ for any $K$ compact of $U$. The square integrability of $\phi$ and $D''\phi$ is equivalent to the square integrability of $\phi$ and $\bar \del \phi$ since $d''$ induces the $\bar \del$ on the holomorphic bundle $Gr_F^p$ and $\nabla'$ is bounded. From this complex one can construct a subcomplex $\mathcal L^2Dolb_\infty^{P,\bullet}(\pi^*\V)$ of forms $\phi$ such that $\Delta^k_{D''}\phi$ is in $\mathcal L^2Dolb^{P,\bullet}(\pi^*\V)$ for all $k\in \N$. By the existence of the decomposition
\[
\mathcal L^2 DR_\infty^k(\pi^*\V) = \underset{P+Q = w+k}{\bigoplus} \mathcal L^2 DR^k_\infty(\pi^*\V)^{P,Q}.
\]
The projection on the $(P,Q)$ component gives us a natural isomorphism of complex
\[
Gr_{\pi^*F}^P\mathcal L^2DR^\bullet_\infty(\pi^*\V) \to \mathcal L^2Dolb^{P,\bullet}_\infty(\pi^*\V).
\]

\begin{lem}\label{qu-iso Dolbeault}
The inclusion of complexes
\[
\mathcal L^2Dolb^{P,\bullet}_\infty(\pi^*\V) \to \mathcal L^2Dolb^{P,\bullet}(\pi^*\V)
\]
is a quasi-isomorphism.
\end{lem}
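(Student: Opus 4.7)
The strategy is to mimic verbatim the proof of Lemma \ref{quasi-iso smooth subcomplex}, replacing the De Rham differential $D$ with the Dolbeault operator $D''$, its formal adjoint $\mathfrak{d}$ with $\mathfrak{d}''$, and the Laplacian $\square_D$ with $\square_{D''}$. For a point $p \in X$ we denote by $\mathcal H^k_p$ and $\mathcal H^k_{\infty,p}$ the cohomology germs at $p$ of $\mathcal L^2 Dolb^{P,\bullet}(\pi^*\V)$ and $\mathcal L^2 Dolb^{P,\bullet}_\infty(\pi^*\V)$ respectively. For $p \in M$ elliptic regularity and the usual Dolbeault--Grothendieck lemma handle both complexes, so it suffices to treat a puncture $p \in X \setminus M$. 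I would then take a fundamental basis of distinguished neighbourhoods $U$ of $p$, for each of which $\tilde U := \pi^{-1}(U\cap M)$ is a disjoint union of punctured disks endowed with the Poincaré metric, and pass to the Hilbert complex $L^2 Dolb^{P,\bullet}(\tilde U, \pi^*\V)$. Since $\nabla'$ is bounded and $d''_{min}=d''_{max}$, one has $D''_{min}=D''_{max}$ on $\tilde U$, so $\square_{D''}$ is essentially self-adjoint and its closure coincides with the distributional Laplacian wherever the latter is defined.

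For injectivity in cohomology, given $\phi \in \mathcal L^2 Dolb^{P,\bullet}_\infty(\pi^*\V)_p$ that is $D''$-exact in $\mathcal L^2 Dolb^{P,\bullet}(\pi^*\V)_p$, shrink $U$ so that $\phi = D''_{max} \psi$ for some $\psi \in L^2 Dolb^{P,\bullet}(\tilde U,\pi^*\V)$. The weak Hodge decomposition
\[
L^2 Dolb^{P,\bullet}(\tilde U,\pi^*\V) = \mathrm{Harm}_{D''} \oplus \overline{Im(D''_{max})} \oplus \overline{Im(\mathfrak d''_{min})}
\]
lets me set $\psi_0 := pr_{\overline{Im(\mathfrak d''_{min})}}\psi$, still $L^2$ on $\tilde U$ and satisfying $D''_{max}\psi_0 = \phi$. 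Then by induction
\[
\square_{D''}^{j+1}\psi_0 = \mathfrak d''_{min}\square_{D''}^{j} D''_{max}\psi_0 = \mathfrak d''_{min}\square_{D''}^{j}\phi,
\]
so $\square_{D''}^{j}\psi_0$ is defined for every $j$, and identifying $\square_{D''}$ with $\Delta_{D''}$ in the sense of distributions places $\psi_0$ in $\mathcal L^2 Dolb^{P,\bullet}_\infty(\pi^*\V)(U)$.

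For surjectivity, let $\phi \in \mathcal L^2 Dolb^{P,\bullet}(\pi^*\V)_p$ be $D''$-closed, represented on some $\tilde U$ as a closed square-integrable form. By \cite[Theorem 2.12]{Bruning} applied to the Hilbert complex $L^2 Dolb^{P,\bullet}(\tilde U,\pi^*\V)$, the cohomology class of $\phi$ admits a representative $\psi$ lying in $\bigcap_{j\in\mathbb N} Dom(\square_{D''_{max}}^j)$. Again invoking the equality between $\square_{D''}$ and the distributional Laplacian, $\psi$ is a germ in $\mathcal L^2 Dolb^{P,\bullet}_\infty(\pi^*\V)_p$ cohomologous to $\phi$.

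The only real subtlety, and the step I expect to occupy most of the verification, is making sure that Brüning's theorem and the weak Hodge decomposition apply to the local Dolbeault Hilbert complex on $\tilde U$: this requires $D''_{min}=D''_{max}$ on $\tilde U$ together with the boundedness of $\nabla'$ on the Hodge bundles, both of which hold in the Poincaré setting under consideration. Once these ingredients are in place the argument is a direct transcription of the De Rham case treated in Lemma \ref{quasi-iso smooth subcomplex}.
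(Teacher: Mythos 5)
Your proposal is correct and follows essentially the same route as the paper: the author likewise transports the argument of Lemma \ref{quasi-iso smooth subcomplex} to the Dolbeault setting, using the weak Hodge decomposition with respect to $D''_{max}$ and $\mathfrak d''_{min}$ to project $\psi$ onto $\overline{Im(\mathfrak d''_{min})}$ for injectivity, and invoking \cite[Theorem 2.12]{Bruning} together with the identification of $\square_{D''_{max}}$ with the distributional Laplacian for surjectivity. Your additional remarks on $D''_{min}=D''_{max}$ and the boundedness of $\nabla'$ only make explicit hypotheses the paper leaves implicit.
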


\begin{proof}
The proof is similar to the one of the Lemma \ref{quasi-iso smooth subcomplex}. We fix $x\in X\setminus M$ and $\phi$ a germ of closed $(k+1)$-form in $\mathcal L^2Dolb^{P,k+1}_\infty(\pi^*\V)$, if $\phi = D''\psi$ with $\psi \in \mathcal L^2Dolb^{P,k}(\pi^*\V)$. We fix $U$ a neighbourhood of $x$ such that $\phi$, $\psi$ are square integrable on $\pi^{-1}(U\cap M)$. When we view $\phi$ and $\psi$ as element of the Hilbert space $L^2Dolb^{P,\bullet}(\pi^{-1}(U),\pi^*\V)$ we have $D_{max}\psi = \phi$. Using the notation of $\S1$ by the weak Hodge decomposition we have the orthogonal decomposition of the Hilbert space of square integrable forms
\[
L_2(\pi^{-1}(U),\pi^*\V) = Harm_{max} \oplus \overline{Im(D''_{max})} \oplus \overline{Im(\mathfrak d ''_{min})}
\]
since $\mathfrak d''_{min}$ is the adjoint of $D''_{max}$ and we can set $\psi_0 = pr_{\overline{Im(\mathfrak d ''_{min})}} \psi$, and $D''_{max}\psi_0 = \phi$ and $\psi_0 \in \mathcal L^2Dolb^{P,k}_\infty(\pi^*\V)$, which leads to the fact that the induced morphism in cohomology is injective.

The surjectivity can also be handled in the same way as for the lemma \ref{quasi-iso smooth subcomplex}. If $\phi \in \mathcal L ^2Dolb^{P,\bullet}(\pi^*\V)_p$ is a closed form, we fix $U$ neighbourhood of $p$ such that $\phi$ is a closed a square integrable form on $\tilde U := \pi^{-1}(U\cap M)$, by \cite[Theorem 2.12]{Bruning}, there exists a closed form $\psi$ also square integrable on $\tilde U$ in the cohomology class of $\phi$, that satisfies
\[
\psi \in \bigcap_{j\in \N} Dom(\square^j_{D''_{max}})
\]
and again since $\square_{D''_{max}}$ coincides with $\Delta_D''$ where it is defined, one obtains the surjectivity.
\end{proof}
We can now show the following theorem, which is the analog of \cite[Theorem 6.3]{Zucker79}
\begin{thm}[Dolbeault Lemma]\label{Dolbeault lemma}
The morphism between complex of sheaves of $\mathcal N (\Gamma)$-module
\[
\Omega^\bullet(\pi^*\V)_{2} \to\mathcal L^2 DR^\bullet_\infty(\pi^*\V)
\]
is a filtered quasi-isomorphism.
\end{thm}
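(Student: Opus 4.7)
The plan is to reduce the filtered quasi-isomorphism to a quasi-isomorphism on each graded piece for $\pi^*F^\bullet$, and from there to a local $L^2$ Dolbeault vanishing. The reduction to graded pieces is standard, since the filtration is bounded in each degree by the rank of $\V$. By the exact sequences of the preceding proposition and Griffiths transversality of $\nabla$, the source graded complex is the two-term complex
\[
Gr^P_{\pi^*F}\Omega^\bullet(\pi^*\V)_{(2)} \;=\; \bigl[\, \mathcal O(Gr^P_{\pi^*F})_{(2)} \xrightarrow{\ \overline{\nabla}\ } \Omega^1(Gr^{P-1}_{\pi^*F})_{(2)} \,\bigr],
\]
with $\overline{\nabla}$ the $\mathcal O_X$-linear Kodaira--Spencer map induced by $\nabla$. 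On the target, the decomposition $\mathcal L^2DR^\bullet_\infty(\pi^*\V) = \bigoplus \mathcal L^2DR^\bullet_\infty(\pi^*\V)^{P,Q}$ together with the canonical isomorphism $Gr^P_{\pi^*F}\mathcal L^2DR^\bullet_\infty(\pi^*\V) \simeq \mathcal L^2Dolb^{P,\bullet}_\infty(\pi^*\V)$ and Lemma \ref{qu-iso Dolbeault} reduce matters to showing that the natural morphism
\[
Gr^P_{\pi^*F}\Omega^\bullet(\pi^*\V)_{(2)} \longrightarrow \mathcal L^2Dolb^{P,\bullet}(\pi^*\V)
\]
is a quasi-isomorphism of complexes of sheaves on $X$.

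I would then view the right-hand side as the total complex of the double complex with $(r,s)$-entry $\mathcal L^2DR^{r,s}(Gr^{P-r}_{\pi^*F})$, with horizontal differential $\nabla'$ (Gauss--Manin) and vertical differential $d''$, the $\bar\partial$-operator of the holomorphic bundle $Gr^{P-r}_{\pi^*F}$. Since $\dim_{\C}M = 1$, only $r,s \in \{0,1\}$ contribute and the spectral sequence associated to the filtration by $r$ lives on a $2\times 2$ grid, so it degenerates at $E_2$ for trivial reasons. Granting the local $L^2$ Dolbeault lemma below, the $E_1$-page is concentrated on the row $s=0$ with $E_1^{r,0} = \Omega^r(Gr^{P-r}_{\pi^*F})_{(2)}$ and $d_1 = \overline{\nabla}$; hence $E_\infty = E_2$ matches the cohomology of the source complex and the quasi-isomorphism follows.

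The main technical input is the local $L^2$ Dolbeault lemma: for every $x \in X$ and every $r \geq 0$, there is a fundamental system of open neighbourhoods $U$ such that the $d''$-cohomology of $L^2DR^{r,\bullet}(\pi^{-1}(U \cap M),\, Gr^{P-r}_{\pi^*F})$, with respect to the maximal closure, vanishes in degrees $\geq 1$ and equals the locally $L^2$ holomorphic $r$-forms in degree $0$. Exactly as in the proof of Theorem \ref{Pclem}, the Hilbert tensor product decomposition
\[
L^2DR^{r,\bullet}\bigl(\pi^{-1}U,\, Gr^{P-r}_{\pi^*F}\bigr) \;\simeq\; \ell^2(\Gamma/\langle\gamma\rangle)\,\hat\otimes\,L^2DR^{r,\bullet}\bigl(\tilde U,\, Gr^{P-r}_{\pi^*F}\bigr)
\]
reduces the vanishing to that on a single connected component $\tilde U$ of $\pi^{-1}U$. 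When $U$ is centered at a point of $M$, $\tilde U$ is quasi-isometric to a Euclidean disk and the classical Cauchy--Green $L^2$ $\bar\partial$-Poincaré lemma applies.

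The hard part is the case where $U$ is a neighbourhood of a puncture $p \in \Sigma$: then $\tilde U$ is quasi-isometric to a punctured disk $\Delta^*$ with the Poincaré metric. Choosing a holomorphic frame of $Gr^{P-r}_{\pi^*F}$ adapted to the monodromy filtration, as in $\S 2$, writes the $L^2$ condition as a finite sum of scalar weighted norms with density $|z|^{2\beta}(\ln|z|)^k$ for $\beta \in {]-1,0]}$. The $\bar\partial$-equation is then decomposed mode by mode via Fourier series $\eta = \sum_n \eta_n(r)e^{in\theta}$, each mode reducing to a first-order ODE in $r$ whose solution is given by an explicit integral; $L^2$-boundedness of the primitive follows from Hardy-type inequalities analogous to those in the proof of Lemma \ref{lemZucker}. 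The only genuine obstructions (the zero Fourier mode at $\beta = 0$, with the correct range of weights $k$) reproduce exactly the holomorphic sections comprising $\Omega^r(Gr^{P-r}_{\pi^*F})_{(2)}$ via Proposition \ref{caracl2}, which is what is needed to land on the $E_1^{r,0}$ row and conclude the argument.
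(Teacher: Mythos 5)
Your overall architecture — reduce to the graded pieces of $\pi^*F^\bullet$, pass to $\mathcal L^2Dolb^{P,\bullet}(\pi^*\V)$ via the isomorphism $Gr^P_{\pi^*F}\mathcal L^2DR^\bullet_\infty \simeq \mathcal L^2Dolb^{P,\bullet}_\infty$ and Lemma \ref{qu-iso Dolbeault}, and view the target as the total complex of the $\nabla'$--$\bar\del$ double complex — agrees with the paper. The gap is in your ``main technical input'': the claimed local $L^2$ Dolbeault lemma, namely that the $d''$-cohomology of the columns vanishes in degrees $\geq 1$ near a puncture, so that the $E_1$-page is concentrated on the row $s=0$. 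This is false at the punctures. On a punctured disk with the Poincaré metric, the equation $\bar\del v = u\,d\bar z$ for a line bundle with weight $r^{2\beta}|\ln r|^{k}$ is solvable with $L^2$ bounds only outside an exceptional case (Lemma \ref{estimee d-bar} requires $\beta(k-1)\neq 0$); in the exceptional weight ($\beta=0$ with $\|\tilde\xi_j\|^2\simeq|\ln r|$ for $(0,1)$-forms, and the analogous case for $(1,1)$-forms once the factor $\|\tfrac{dz}{z}\|^2=(\ln r)^2$ is accounted for) there are square-integrable $\bar\del$-closed classes that are not $\bar\del$-exact with $L^2$ control. The paper states explicitly that ``the above sequence may not be exact'' at a puncture. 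Hence $E_1^{r,1}\neq 0$, and your conclusion $E_\infty=E_2=$ cohomology of $Gr^P_{\pi^*F}\Omega^\bullet(\pi^*\V)_{(2)}$ does not follow as written. Your parenthetical about the ``genuine obstructions'' only accounts for the degree-$0$ identification of $\ker\bar\del$ with the locally $L^2$ holomorphic forms; it does not address the failure of surjectivity of $\bar\del$ in column degree $1$.

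The missing idea — which is the actual content of the paper's argument at the punctures — is that these residual exceptional classes are absorbed by the horizontal differential $\nabla'$. Since $\tilde N_0\colon W_1((\pi^*\V)_0)\to W_{-1}((\pi^*\V)_0)$ is surjective (a defining property of the monodromy weight filtration), each leftover term $u\,d\bar z\otimes\gamma^*\tilde\xi_j$ is, modulo holomorphic forms, of the form $D''(u\otimes\gamma^*\tilde\xi)$ for a suitable $\tilde N_0$-preimage $\tilde\xi$, with constants uniform in $\gamma$ because the components $\gamma\tilde U$ are isometric. In spectral-sequence language you must compute the $d_1=\nabla'$ differential on the nonvanishing part of $E_1^{r,1}$ and show it is surjective onto it (and that nothing new survives to $E_2$ off the $s=0$ row); equivalently, one argues directly on the total complex as the paper does. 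If you add this step, together with the uniformity in $\gamma$ that your $\ell^2(\Gamma/\langle\gamma\rangle)\hat\otimes(\cdot)$ reduction also provides, the proof closes and is then essentially the paper's.
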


Before doing the proof, we recall the following result from Zucker \cite[Proposition 6.4, Proposition 11.5]{Zucker79}
\begin{lem}\label{estimee d-bar}
If $(\mathcal V , h)$ is a holomorphic hermitian line bundle on a disk $\Delta_R$ of radius $R<1$, admitting a generating section $\tilde \xi$ satisfying
\[
C^{-1} r^{2\beta}|\ln r|^k \leq h(\tilde \xi,\tilde \xi) \leq C r^{2\beta}|\ln r|^k
\]
for an integer number $k$, $-1<\beta\leq 0$ with $\beta(k-1)\neq 0$ and a real constant $C>1$. Then for any square integrable holomorphic one form $fd\bar z \otimes \tilde \xi$, there exists a holomorphic one form $g\otimes e$ satisfying
\[
\begin{aligned}
(\bar \del g)\otimes \tilde \xi &= fd\bar z \otimes \tilde \xi \\
\|g\otimes \tilde \xi\|^2_2 &\leq K \|fd\bar z \otimes \tilde \xi\|^2_2
\end{aligned}
\]
where $K$ is a positive constant depending only on $\beta$, $R$ and $C$.
\end{lem}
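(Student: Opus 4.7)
The plan is to reduce the problem to a scalar weighted $\bar\partial$-equation on the disk and then solve it mode by mode via Fourier series in $\theta$, applying weighted Hardy inequalities to get the required $L^2$ estimate. Writing $g \otimes \tilde\xi$ in the frame $\tilde\xi$, the norm hypothesis on $h(\tilde\xi,\tilde\xi)$ together with the Poincaré volume form (where, crucially, the pointwise norm of $d\bar z$ equals $|z||\ln|z||$, which cancels $dLeb$ out of the volume form) gives the equivalences
\[
\|f\,d\bar z\otimes \tilde\xi\|^2 \asymp \int_{\Delta_R} |f|^2 r^{2\beta}|\ln r|^k\, dLeb, \qquad \|g\otimes \tilde\xi\|^2 \asymp \int_{\Delta_R}|g|^2 r^{2\beta-2}|\ln r|^{k-2}\, dLeb.
\]
So the lemma reduces to: given $f$ in the first weighted $L^2$ space, find $g$ with $\partial g/\partial \bar z = f$ lying in the second weighted $L^2$ space, with norm control by a constant depending only on $\beta$, $k$, $R$, $C$.

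Next, expand $f = \sum_{n\in\Z} f_n(r)e^{in\theta}$ and look for $g = \sum_{n\in\Z} g_n(r)e^{in\theta}$. Using $\partial_{\bar z}=\tfrac{1}{2}e^{i\theta}(\partial_r+\tfrac{i}{r}\partial_\theta)$, the equation decouples into a family of ODEs
\[
g_n'(r) - \tfrac{n}{r} g_n(r) = 2 f_{n+1}(r),\qquad n\in\Z,
\]
equivalent to $(r^{-n}g_n)' = 2r^{-n}f_{n+1}$. Depending on $n$ (and the asymptotics of the weight $r^{2\beta-2}|\ln r|^{k-2}$ at $0$ and $R$), I choose the endpoint of integration as $0$ or $R$ so that $r^{-n}g_n$ is produced as a primitive of $2r^{-n}f_{n+1}$ compatible with square integrability against the target weight. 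For $|n|$ large the factor $r^{\pm n}$ gives rapid decay/growth that makes the weighted Hardy estimate trivial and uniform; for small $|n|$ one falls back on the classical weighted Hardy inequality
\[
\int_0^R \Bigl|\int_0^r \varphi(s)\,ds\Bigr|^2 r^{2a-2}|\ln r|^b\, dr \leq K\int_0^R |\varphi(r)|^2 r^{2a}|\ln r|^b\, dr,
\]
valid as soon as the exponent combination is non-critical. Summing via Parseval then yields $\|g\otimes \tilde\xi\|^2 \leq K\|f\,d\bar z\otimes \tilde\xi\|^2$.

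The main obstacle is the appearance of critical Fourier modes where the weighted Hardy inequality degenerates: the choice of integration endpoint is forced by the weight, and the resulting Hardy constant blows up exactly when the exponent $a$ satisfies a critical relation. Tracking the exponents $r^{2\beta-2}|\ln r|^{k-2}$ against $r^{2\beta}|\ln r|^k$, the critical mode is $n=0$ (which plays the role of $f_{n+1}=f_1$): the estimate survives provided $\beta\neq 0$ (in which case $r^{2\beta}$-weighting allows Hardy at the power-exponent level, independently of the log-exponent), or else $\beta=0$ and $k\neq 1$ (so that the logarithmic weight $|\ln r|^{k-2}$ vs. $|\ln r|^k$ gives a usable logarithmic Hardy inequality via a primitive in $\ln r$). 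This is exactly the non-resonance hypothesis $\beta(k-1)\neq 0$, and handling this mode by splitting $f_1$ between a part solved by integration from $0$ and from $R$ — with the explicit logarithmic primitive in the remaining subcase — is where the hypothesis is actually used. All other modes are controlled uniformly, the constant depending only on $\beta$, $k$, $R$ and $C$ as required.
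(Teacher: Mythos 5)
Your argument is correct and is essentially the standard one: the paper gives no proof of this lemma but cites Zucker (Propositions 6.4 and 11.5 of \cite{Zucker79}), whose argument is precisely your reduction — Fourier decomposition in $\theta$, the ODEs $(r^{-n}g_n)'=2r^{-n}f_{n+1}$ with the integration endpoint dictated by the weight, and weighted Hardy inequalities with constants $O\bigl((n+\beta)^{-2}\bigr)$ off the critical mode $n=0$. One small remark: your analysis shows the estimate holds whenever it is not the case that both $\beta=0$ and $k=1$, which is weaker than the literal hypothesis $\beta(k-1)\neq 0$ (i.e.\ $\beta\neq 0$ \emph{and} $k\neq 1$); this weaker reading is the one actually used when the lemma is applied in the proof of the Dolbeault lemma, so your version is the right one.
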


\begin{proof}[Proof of Theorem \ref{Dolbeault lemma}]
We need to show that it induces an isomorphism 
\[
Gr^P_{\pi^*F}\Omega^\bullet(\pi^*\V)_{(2)} \to Gr^p_{\pi^*F}\mathcal L^2 DR^{\bullet}_\infty(\pi^*\V).
\]

The complex $(Gr^P_{\pi^*}\mathcal L^2 DR^\bullet_\infty(\pi^*\V),D'')$ is isomorphic to the complex of sheaves $\mathcal L^2 Dolb^{P,\bullet}_\infty(\pi^*\V)$ which is itself quasi-isomorphic to $\mathcal L^2 Dolb^{P,\bullet}(\pi^*\V)$.

The complex $\mathcal L^2 Dolb^{P,\bullet}(\pi^*\V)$ is the simple complex associated to the double complex
\[
\begin{tikzcd}
\mathcal L^2 Dolb^{0,0}(\pi^*Gr^p_{F}) \arrow[r, "\nabla' "] \arrow[d, "\bar \del"]& \mathcal L^2 Dolb^{1,0}_\infty(\pi^*Gr^{p-1}_F) \arrow[d, "\bar \del"] \\
\mathcal L^2 Dolb^{0,1}_\infty(\pi^*Gr^p_F) \ar[r, "\nabla ' "] & \mathcal L^2 Dolb^{1,1}_\infty(\pi^*Gr^{p-1}_F)
\end{tikzcd}
\]
Where $\mathcal L^2 Dolb^{r,s}(\pi^*Gr^p_{F})$  is the subsheaf of $\mathcal L^2 Dolb^\bullet(\pi^*\V)$ consisting of $\pi^*Gr^p_F$-valued $(r,s)$ forms. To have a quasi-isomorphism it suffices to check that the following sequences are exact
\[
\begin{tikzcd}[column sep = small]
0 \arrow[r] & \mathcal O(\pi^*Gr^p_F)_{(2)} \arrow[r] & \mathcal L^2 Dolb^{0,0}(\pi^*Gr^p_F) \arrow[r, "\bar \del"] & \mathcal L^2 Dolb^{0,1}(\pi^*Gr^{p}_F) \arrow[r]&0 \\
0 \arrow[r] & \Omega^1(\pi^*Gr^{p-1}_F)_{(2)} \arrow[r] &\mathcal L^2 Dolb^{1,0}(\pi^*Gr^{p-1}_F) \arrow[r, "\bar \del"]& \mathcal L^2 Dolb^{1,1}(\pi^*Gr^{p-1}_F) \arrow[r] & 0
\end{tikzcd}
\]
It follows that in the case of the point of $M$ is thus reduced to the problem of the $\bar \del$ on the disk with the Euclidean metric, in this case it is well known that the above sequences are exact.

In the case of a point $p\in X \setminus M$ the above sequence may not be exact, and we need to slightly adapt our proof. As usual we take a distinguished neighbourhood $U$ of $p$, $\tilde U$ a connected component $\pi^{-1}(U)$ and consider an $L^2$-adapted frame of $\pi^*\V_{\tilde U}$ $(\tilde \xi_1,\dots, \tilde \xi_n)$ constructed in the usual way.

Let $\eta$ be a closed square integrable one-form, and $\eta_\gamma = \eta_{|\gamma \tilde U}$ one has the decomposition by type $\eta_g = \eta_{\gamma}^{1,0} + \eta_{\gamma}^{0,1}$. One can write 
\[
\eta_{\gamma}^{0,1} = \sum_{j = 1} ^n u_jd\bar z \otimes \tilde \gamma^*\tilde \xi_j.
\]
By lemma \ref{estimee d-bar} there exists for each $j$ such that $\|\tilde \xi_j\|^2 \in W_{k}\setminus W_{k-1}$ with $k\neq 1$ there exists $v_j$ such that
\[
\begin{aligned}
\bar \del v_j\otimes \tilde \gamma^*\tilde \xi_j &= u_j \otimes \tilde \gamma^*\xi_j \\
\|v_j\otimes \gamma^*\tilde \xi_j\|^2 &\leq C \|v_j\otimes \gamma^*\tilde \xi_j\|^2
\end{aligned}
\]
We take $\nu_\gamma$ the sum of such forms, then since the basis is $L^2$ adapted one has
\[
\begin{aligned}
\|\nu_\gamma\|^2 &\leq C \|\eta_\gamma^{0,1}\|^2 \leq C \|\eta_\gamma\|^2\ \\
\|\nabla'(\nu_\gamma)\|^2 &\leq C \|\eta_\gamma^{0,1}\|^2 \leq C \|\eta_\gamma\|^2.
\end{aligned}
\]
Note that the constant $C$ does not depends on $\gamma$ since all connected component are isometric, so the form $\nu$ is square integrable and $D''(\nu)$ is also square integrable. It follows that without loss of generality by replacing $\eta$ by $\eta - D''\nu$, one can assume that the terms $\eta_\gamma^{0,1}$ only has a terms of the form $u d\bar z \otimes \tilde \xi_j$ where $\tilde \xi_j$ is a section which have asymptotic behaviour $\|\tilde \xi_j\|^2 \simeq |\ln(r)|$. The form $\eta$ is closed so
\[
\nabla'(\alpha_\gamma^{0,1}) = d'' \alpha_\gamma^{1,0}.
\]
And $\nabla'(\alpha_\gamma^{0,1})$ is a sum of terms of the form $u d\bar z \wedge \nabla (\tilde \xi_j)$, so up to a holomorphic $1$-form $\alpha_\gamma^{1,0}$ is a sum of terms of the form $u\otimes \nabla(\tilde \xi_j)$. It follows that we have up to a holomorphic one form
\[
\alpha^{1,0} + \alpha^{0,1} = D''(u\otimes \tilde \xi)
\]
up to a holomorphic one-form. This prove that the morphism
\[
Gr^p_{\pi^*F}\Omega^\bullet(\pi^*\V)_{(2)} \to \mathcal L^2 Dolb^{P,\bullet}(\pi^*\V)
\]
induces a surjection between the first cohomology groups $\mathcal H^1$

Similarly, if we have a two form $\eta$ for a fixed $\gamma$ one has
\[
\alpha_\gamma = \sum u_j d\bar z \wedge \frac{dz}{z} \otimes \gamma^*\tilde \xi_j .
\]
The lemma \ref{estimee d-bar} allow us to find a primitive $v_i dz \otimes \gamma^* \tilde \xi _j $ for each term such that $\|\tilde \tilde e _j\|^2 \simeq r^{2\beta}|\ln(r)|^k$ with $\beta(k-1) \neq 0$. The case $k = 1$ happens when $\tilde e_j\in W_{-1}(\pi^*(\V)_0)\setminus W_{-2}((\pi^*\V)_0)$ since $\|\frac{dz}{z}\|^2 = \ln(r)^2$. The operator $\nabla'$ acts as $1\wedge \frac{1}{2i\pi}\frac{dz}{z} \otimes \tilde N _0$, and 
\[
\tilde N_0 \colon W_1((\pi^*\V)_0) \to W_{-1}((\pi^*\V)_0)
\] 
is surjective, we fix $\tilde f_j$ such that $\tilde N_0 \tilde f = \tilde e _j$. And we have 
\[
2i\pi\nabla'\left(u_jd\bar z \otimes \exp\left(\frac{\tilde N_0}{2i\pi}\ln(z)\right) \gamma^*\tilde f_j\right) = u_j d\bar z\wedge \frac{dz}{z} \otimes \gamma^*\tilde \xi _j
\]
and we obtain the surjectivity in cohomology, the injectivity being clear, the morphism
\[
Gr_{\pi^*F}^P\Omega^\bullet(\pi^*\V)_{(2)} \to\mathcal L^2 Dolb^{P,\bullet}_\infty(\pi^*\V) \simeq Gr^P_{\pi^*F} L^2 DR^{P,\bullet}_\infty(\pi^*\V)
\]
 is a quasi-isomorphism and this conclude the proof of Theorem \ref{Dolbeault lemma}. This conclude the proof of Theorem \ref{Dolbeault lemma}
 \end{proof}

\section{Fredholm complexes of Hilbert $\mathcal N(\Gamma)$-modules}

In this section we recall the notion of Von Neumann dimension, for more details the reader is referred to \cite{Luck}.
We consider $\Gamma$ a discrete group, and $\ell^2(\Gamma)$ the set of complex valued functions $(a_\gamma)_{\gamma \in \Gamma}$ such that $\sum_{\gamma \in \Gamma} |a_\gamma|^2 < + \infty$. The group $\Gamma$ acts faithfully by the left regular representation $\lambda \colon \Gamma \to \mathcal{B}(\ell^2(\Gamma))$, and we denote by $\mathcal{N}(\Gamma)$ the bicommutant of $\lambda(\Gamma)$.

\begin{defn}
A Hilbert $\mathcal N (\Gamma)$-module $V$ is a Hilbert space endowed with an action of $\Gamma$ that is isometric to a a closed $\Gamma$-invariant subspace of $\ell^2(\Gamma)\hat\otimes H$ \textit{(here the notation $\hat\otimes$ designed the Hilbert tensor product)}, for some Hilbert space $H$. A morphism between two Hilbert modules is a $\Gamma$-equivariant bounded linear morphism.

We say that $V$ is finitely generated if there exists a surjective morphism $\ell^2(\Gamma)^{\oplus n} \twoheadrightarrow V$.
\end{defn}

On Hilbert $\mathcal N (\Gamma)$-module we can define a trace function $tr_{\Gamma}$ on the cone of bounded positive $\Gamma$-equivariant operator. It satisfies the usual axioms of a trace function.
\begin{itemize}
\item If $f,g$ are positive $\Gamma$-equivariant operators and $f \leq g$ then $tr_{\Gamma}(f) \leq tr_\Gamma(g)$.
\item If $f,g$ are positive $\Gamma$-equivariant operators and $\lambda \geq 0$ then $tr_{\Gamma}(f+\lambda g) = tr_{\Gamma}(f) + \lambda tr_{\Gamma}(g)$.
\item If $f$ is a positive $\Gamma$-equivariant operator then $tr_{\Gamma}(f)=0$ if and only if $f=0$.
\end{itemize}

This trace allow us to consider a Von Neumann dimension $dim_\Gamma$ of a Hilbert $\mathcal N(\Gamma)$-module by taking the trace of the identity. This notion of dimension for Hilbert $\Gamma$-module can be extended to any $\mathcal N (\Gamma)$-module (see \cite[Chapter 6]{Luck}).

\begin{prop}
The dimension function $\dim_{\Gamma}$ satisfies the following properties.
\begin{itemize}
\item If $0 \rightarrow M_0 \rightarrow M_1 \rightarrow M_2 \rightarrow 0$ is an exact sequence of $\mathcal N (\Gamma)$-module then $ \dim_{\Gamma} (M_1)   = \dim_{\Gamma} (M_2)  + \dim_{\Gamma} (M_0) $.
\item  If $0 \rightarrow M_0 \rightarrow M_1 \rightarrow M_2 \rightarrow 0$ is an weakly exact sequence of Hilbert $\mathcal N(\Gamma)$-module then $ \dim_{\Gamma} (M_1)   = \dim_{\Gamma} (M_2)  + \dim_{\Gamma} (M_0) $.
\item If $M$ is a Hilbert $\mathcal N(\Gamma)$-module then $M=0$ is and only if $\dim_{\Gamma} M = 0$.
\end{itemize}
\end{prop}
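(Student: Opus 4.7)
The strategy is to handle the three statements in increasing order of difficulty, because the Hilbert module results (statements~2 and~3) feed into the proof of the general statement~1.

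\textbf{Statement 3 (faithfulness on Hilbert modules).} By definition $\dim_\Gamma(M) = tr_\Gamma(\mathrm{id}_M)$, where $\mathrm{id}_M$ is a positive $\Gamma$-equivariant operator. Applying the third axiom of the trace listed immediately before the proposition, $tr_\Gamma(\mathrm{id}_M) = 0$ forces $\mathrm{id}_M = 0$, hence $M=0$. The converse is trivial since $tr_\Gamma(0)=0$ by positivity and linearity.

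\textbf{Statement 2 (additivity on weakly exact sequences of Hilbert modules).} I would first reduce to the case where $M_0$ is a closed $\Gamma$-invariant subspace of $M_1$: the map $i\colon M_0\to M_1$ has closed image because weak exactness says $\overline{\mathrm{im}(i)} = \ker(M_1\to M_2)$, and since the next arrow has dense image, the induced map from the orthogonal complement $(\mathrm{im}\, i)^\perp$ to $M_2$ is a $\Gamma$-equivariant bijection with dense image; polar decomposition combined with the equivariance gives an isometric identification $M_2 \simeq (\mathrm{im}\, i)^\perp$ in the category of Hilbert $\mathcal N(\Gamma)$-modules, and such an isomorphism preserves $\dim_\Gamma$. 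With this identification, writing $p$ and $p^\perp$ for the orthogonal projections onto $\mathrm{im}(i)$ and its complement, one has $\mathrm{id}_{M_1} = p + p^\perp$ as positive $\Gamma$-equivariant operators. The linearity axiom of the trace then yields
\[
\dim_\Gamma(M_1) = tr_\Gamma(p) + tr_\Gamma(p^\perp) = \dim_\Gamma(M_0) + \dim_\Gamma(M_2).
\]

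\textbf{Statement 1 (additivity on exact sequences of $\mathcal N(\Gamma)$-modules).} This is the main obstacle because it concerns abstract $\mathcal N(\Gamma)$-modules, where the definition of $\dim_\Gamma$ is not directly the trace of an identity but rather the extended definition of L\"uck:
\[
\dim_\Gamma(M) = \sup\{\dim_\Gamma(P)\mid P\subset M\text{ f.g.\ projective submodule}\} \in [0,+\infty].
\]
I would establish the two inequalities separately. For $\dim_\Gamma(M_1) \le \dim_\Gamma(M_0) + \dim_\Gamma(M_2)$, take any f.g.\ projective $P\subset M_1$, consider its image $\bar P$ in $M_2$ and $P\cap M_0$, apply the weakly exact additivity (statement~2, after embedding f.g.\ projective $\mathcal N(\Gamma)$-modules into Hilbert modules via $\ell^2(\Gamma)^{\oplus n}$) and pass to the supremum. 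For the reverse inequality, one lifts f.g.\ projective submodules of $M_2$ to $M_1$, using that projectives are lifting objects, and combines them with f.g.\ projective submodules of $M_0\subset M_1$; again the Hilbert module case provides additivity at the level of the lifted pieces. The supremum in $M_0$ and $M_2$ combines additively and saturates the dimension of $M_1$. Both inequalities together give equality (including the case where one of the dimensions is $+\infty$, where the inequality becomes the statement that the other side is also infinite). The hard point is the existence of sufficiently many f.g.\ projective submodules in the general setting and the cofinality argument needed to pass to the supremum; this is the technical core of L\"uck's extended dimension function, and the reader will be referred to \cite{Luck} for the complete argument.
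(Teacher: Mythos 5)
The paper does not actually prove this proposition: it is stated as a quoted standard fact, with the sentence immediately preceding it pointing to \cite[Chapter 6]{Luck} for the extension of $\dim_\Gamma$ to arbitrary $\mathcal N(\Gamma)$-modules. So there is no in-paper argument to compare yours against; what you have written is essentially the standard proof from L\"uck's book, and your overall strategy (faithfulness from the trace axiom, additivity for Hilbert modules via polar decomposition, then the extended dimension for general modules) is the right one.

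Two soft spots are worth flagging. First, in Statement~2 you assert that $i\colon M_0\to M_1$ ``has closed image because weak exactness says $\overline{\mathrm{im}(i)}=\ker(M_1\to M_2)$''; that is not a valid deduction --- knowing the closure of the image does not make the image closed. The repair is standard and does not need closedness: polar decomposition of the injective equivariant map $i$ gives a $\Gamma$-equivariant isometry $M_0\simeq\overline{\mathrm{im}(i)}=\ker(p)$, and similarly $(\ker p)^\perp\simeq M_2$ via the weak isomorphism induced by $p$; then $\mathrm{id}_{M_1}=P_{\ker p}+P_{(\ker p)^\perp}$ and additivity of the trace finishes it. Second, in Statement~1 your forward inequality applies Statement~2 to the sequence $0\to P\cap M_0\to P\to\bar P\to 0$, but $P\cap M_0$ and $\bar P$ are general $\mathcal N(\Gamma)$-modules, not finitely generated projective ones, so this step quietly presupposes the additivity you are trying to prove. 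This is precisely where L\"uck's argument does real work (cofinality and continuity of the extended dimension function), and you correctly identify it as the technical core and defer to the reference --- which is also what the paper itself does for the entire proposition.
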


An important notion when dealing with Hilbert $\mathcal N (\Gamma)$-module is the notion of essential density.

\begin{defn}
Let $H$ be a Hilbert $\mathcal N(\Gamma)$-module, a (not necessary closed) $\Gamma$-invariant linear subspace $L\subset H$ is said to be essentially dense if for all $\varepsilon > 0$ there exists a closed sub-module $M$ of $H$ with $M\subset L$ and $dim_{\mathcal N (\Gamma)}(L^\bot) < \varepsilon$.
\end{defn}

\begin{rem}
An essentially dense subspace is dense, the converse is false. If $L\subset H$ is essentially dense, one has $\dim_{\mathcal N (\Gamma)}(H/L) = 0$.
\end{rem}

\begin{lem}
\begin{enumerate}
\item A countable intersection of essentially dense linear subspace is essentially dense (see \cite[Lemma 8.3 , (1)]{Luck}).
\item If $M\subset H$ is $\Gamma$-invariant and essentially dense in $\overline{M}$, then for all $L\subset H$  $\mathcal N(\Gamma)$ Hilbert submodule, one has $M\cap L$ is essentially dense in  $\overline M \cap L$ (see \cite[Lemma 1.17]{Shubin}).
\end{enumerate}
\end{lem}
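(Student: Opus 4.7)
The plan is to reduce both statements to elementary bookkeeping with the standard monotonicity, additivity, and countable subadditivity of $\dim_{\mathcal N(\Gamma)}$ (which hold after extending $\dim_{\mathcal N(\Gamma)}$ to arbitrary $\mathcal N(\Gamma)$-modules following L\"uck), together with the orthogonal decomposition of a Hilbert submodule and its complement.

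For (1), let $(L_n)_{n\in\N}$ be a countable family of essentially dense $\Gamma$-invariant subspaces of $H$ and fix $\varepsilon > 0$. For each $n$ I pick a closed $\mathcal N(\Gamma)$-submodule $M_n \subset L_n$ of $H$ with $\dim_{\mathcal N(\Gamma)}(M_n^\perp) < \varepsilon / 2^{n+1}$. Setting $M := \bigcap_n M_n$, this is a closed $\Gamma$-invariant submodule of $H$ contained in $\bigcap_n L_n$, and $M^\perp = \overline{\sum_n M_n^\perp}$. The inequality
\[
\dim_{\mathcal N(\Gamma)}(M^\perp) \;\leq\; \sum_{n} \dim_{\mathcal N(\Gamma)}(M_n^\perp) \;<\; \varepsilon
\]
then follows by applying monotonicity and additivity of $\dim_{\mathcal N(\Gamma)}$ to the surjections $\bigoplus_{n\leq N} M_n^\perp \twoheadrightarrow \sum_{n\leq N} M_n^\perp$ and using continuity of $\dim$ along the increasing sequence of closed submodules $\overline{\sum_{n \leq N} M_n^\perp}$ to pass to the limit as $N \to \infty$.

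For (2), fix $\varepsilon > 0$ and, using essential density of $M$ in $\overline M$, choose a closed $\mathcal N(\Gamma)$-submodule $N \subset M$ of $\overline M$ with $\dim_{\mathcal N(\Gamma)}(\overline M \ominus N) < \varepsilon$. Set $P := N \cap L$, a closed $\Gamma$-invariant submodule of $H$ contained in $M \cap L$. The inclusion $\overline M \cap L \hookrightarrow \overline M$ induces an injection of $\mathcal N(\Gamma)$-modules
\[
(\overline M \cap L)/(N \cap L) \;\hookrightarrow\; \overline M / N,
\]
because any $x \in \overline M \cap L$ lying in $N$ automatically lies in $N \cap L$. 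Identifying the quotient of a Hilbert module by a closed submodule with the corresponding orthogonal complement and invoking monotonicity of $\dim_{\mathcal N(\Gamma)}$, I obtain
\[
\dim_{\mathcal N(\Gamma)}\bigl((\overline M \cap L)\ominus P\bigr) \;\leq\; \dim_{\mathcal N(\Gamma)}(\overline M \ominus N) \;<\; \varepsilon,
\]
which is the required essential density of $M \cap L$ in $\overline M \cap L$.

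The only delicate point is to be careful about which ambient Hilbert module is used to form the orthogonal complements — the definition is given with $H$ as the ambient space, while in (2) one must reinterpret it relative to $\overline M$ and to $\overline M \cap L$ — and to rely on L\"uck's extension of $\dim_{\mathcal N(\Gamma)}$ from Hilbert modules to arbitrary $\mathcal N(\Gamma)$-modules so that the intermediate (non-Hilbert) quotients above have a well-defined and well-behaved dimension. Once these conventions are fixed both statements are a direct manipulation.
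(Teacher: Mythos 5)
Your proposal is correct: part (1) is the standard subadditivity argument ($(\bigcap_n M_n)^\perp = \overline{\sum_n M_n^\perp}$ plus countable subadditivity of $\dim_{\mathcal N(\Gamma)}$ via an $\varepsilon/2^{n+1}$ choice), and part (2) correctly exhibits $N\cap L$ as a closed submodule of $M\cap L$ whose codimension in $\overline M\cap L$ is controlled by the injection $(\overline M\cap L)/(N\cap L)\hookrightarrow \overline M/N$ together with monotonicity of L\"uck's extended dimension. The paper gives no proof of its own and simply cites L\"uck and Shubin; your arguments are exactly the ones found in those references, so nothing further is needed.
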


We will work now with complexes of Hilbert $\mathcal N (\Gamma)$-modules.

\begin{defn}
A bounded complex of Hilbert $\mathcal N (\Gamma)$-module is a bounded complex $(C^\bullet,d)$, where each $C^n$ is a Hilbert $\mathcal N (\Gamma)$-module, the domain $dom(d)$ is dense and $\Gamma$-invariant and the differential $d$ is $\Gamma$-equivariant. A morphism $f\colon (C_0^\bullet,d_0) \to (C_1^\bullet,d_1)$ is given by the data of $\Gamma$-equivariant bounded morphism $f^i\colon C_{0}^i \to C_1^i$ satisfying
\[
f(dom(d_0)) \subset dom(d1), \qquad d_1f = f d_0 \quad \text{ on } dom(d_0).
\]
\end{defn}

\begin{defn}
A $\Gamma$-equivariant closed densely defined operator $f\colon H_0 \to H_1$ between Hilbert $\mathcal N(\Gamma)$-module is said to be $\Gamma$-Fredholm if there exists $\varepsilon > 0$  and $C>0$ such that for all closed $\mathcal N(\Gamma)$-submodule $L$ contained in
\[
\mathcal L(C^\bullet,\varepsilon) = \{x \in dom(f) \mid \|fx\|\leq \varepsilon \|x\| \}
\]
has dimension $\dim_{\mathcal N(\Gamma)} L < C$.

A bounded complex of Hilbert $\mathcal N(\Gamma)$-module $(C^\bullet,d)$ is said to be $\Gamma$-Fredholm if it satisfies one of the two equivalent properties below
\begin{enumerate}
\item The restriction of the differential $d$ to $im(d)^\bot$ is $\Gamma$-Fredholm.
\item If $\square_d$ is the Laplacian $\square_d := (d+d^*)^2$ and $(E_\lambda^{\square_d})_{\lambda>0}$ is its associated projection valued measure, then there exists $\varepsilon > 0$ such that $tr_{\mathcal N (\Gamma)}E_\varepsilon^{\square_d} < +\infty$.
\end{enumerate}
\end{defn}

\begin{rem}
If $C_0^\bullet$ and $C_1^\bullet$ are two complexes then $C_0^\bullet \oplus C_1^\bullet$ is $\Gamma$-Fredholm if and only if both $C_0^\bullet$ and $C_1^\bullet$ are $\Gamma$-Fredholm.
\end{rem}

For the equivalence between the two properties, by \cite[Lemma 2.3]{Luck}, a closed operator $f\colon H_1 \to H_2$ is Fredholm if and only if $E^{f^*f}_\varepsilon < +\infty$ for $\varepsilon > 0$ small enough, and we use the weak Kodaira decomposition $C^p = Ker(\square_d) \oplus \overline{Im d} \oplus \overline{Im d^*}$ so we the decomposition of the Laplacian
\[
\begin{aligned}
\square_d = d^*d_{|\overline{Im(d)}^\bot} \oplus dd^*_{|\overline{Im(d)}}
\end{aligned}
\]
gives $2. \implies 1.$. Conversely if $d$ is Fredholm then $Ker(\square_d) = Ker(d)\cap Ker d^*$ has finite dimension so $tr_{\mathcal N(\Gamma)} E_0^{\square_d} < +\infty$ and 
\[
\begin{aligned}
tr_{\mathcal N(\Gamma)} E_\varepsilon^{\square_d} - tr_{\mathcal N(\Gamma)} E_0^{\square_d} &= tr_{\mathcal N(\Gamma)} E_\varepsilon^{d^*d_{|Ker(d)^\bot}} + tr_{\mathcal N(\Gamma)}E_\varepsilon^{dd^*_{|Ker(d^*)^\bot}} \\
																		   &< +\infty
\end{aligned}
\]

\begin{lem}\cite[Lemma 2.14]{Luck}
Let $H_0,H_1$ and $H_2$ be Hilbert $\Gamma$-module, $f\colon H_0 \to H_1$ and $g \colon H_1 \to H_2$ be two closed densely-defined $\Gamma$-equivariant operators with $f(dom(f)) \subset dom(g)$ then
\begin{enumerate}
\item If $f$ and $g$ are $\Gamma$-Fredholm, $gf$ is $\Gamma$-Fredholm.
\item If $gf$ is $\Gamma$-Fredholm and $g$ is bounded then $f$ is $\Gamma$-Fredholm.
\end{enumerate}
\end{lem}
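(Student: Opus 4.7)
The plan is to prove (2) first as a quick warm-up and then address (1) by a spectral-projection decomposition of the test submodule $L$. For (2), one simply observes that if $L \subset \mathcal{L}(f,\varepsilon)$ then for every $x \in L$, $\|gfx\| \le \|g\|\cdot\|fx\| \le \|g\|\varepsilon\|x\|$, so $L \subset \mathcal{L}(gf,\|g\|\varepsilon)$. Taking $\varepsilon$ small enough that $\|g\|\varepsilon \le \varepsilon_{gf}$ (where $(\varepsilon_{gf},C_{gf})$ are Fredholm constants for $gf$) forces $\dim_{\mathcal N(\Gamma)} L \le C_{gf}$, proving $f$ is $\Gamma$-Fredholm.

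For (1), fix Fredholm constants $(\varepsilon_f,C_f)$ and $(\varepsilon_g,C_g)$, set $\varepsilon := \varepsilon_f\varepsilon_g$, and take an arbitrary closed $\mathcal N(\Gamma)$-submodule $L \subset \mathcal{L}(gf,\varepsilon)$. Let
\[
P := \chi_{[0,\varepsilon_f^2]}(f^*f)
\]
be the spectral projection of $f^*f$ associated to its low spectrum, built via the Borel functional calculus. It is a bounded $\Gamma$-equivariant projection; its image lies in $\mathrm{dom}(f^*f) \subset \mathrm{dom}(f)$, and any $y$ in it satisfies $\|fy\|^2 = \langle f^*f y, y\rangle \le \varepsilon_f^2\|y\|^2$. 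Thus $\mathrm{Im}(P) \subset \mathcal{L}(f,\varepsilon_f)$, so $\dim_{\mathcal N(\Gamma)} \mathrm{Im}(P) \le C_f$ by $\Gamma$-Fredholmness of $f$.

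Apply the standard additivity of $\mathcal N(\Gamma)$-dimension to the bounded equivariant map $P|_L : L \to H_0$:
\[
\dim_{\mathcal N(\Gamma)} L \;=\; \dim_{\mathcal N(\Gamma)}(L \cap \ker P) \;+\; \dim_{\mathcal N(\Gamma)} \overline{P(L)}.
\]
The second term is bounded by $\dim \mathrm{Im}(P) \le C_f$. For the first term, note that on $\ker P = \mathrm{Im}(1-P)$ one has the reverse estimate $\|fx\| \ge \varepsilon_f\|x\|$ by spectral theory, so for $x \in L\cap\ker P$,
\[
\|g(fx)\| \;\le\; \varepsilon\|x\| \;\le\; \frac{\varepsilon}{\varepsilon_f}\|fx\| \;=\; \varepsilon_g\|fx\|,
\]
i.e.\ $f(L\cap\ker P) \subset \mathcal{L}(g,\varepsilon_g)$. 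Since $f$ is bounded below on $L\cap\ker P$, the restriction $f|_{L\cap\ker P}$ has zero kernel and closed range, and additivity applied once more gives $\dim(L\cap\ker P) = \dim\overline{f(L\cap\ker P)} \le C_g$. Combining, $\dim_{\mathcal N(\Gamma)} L \le C_f + C_g$ uniformly in $L$, which is exactly the $\Gamma$-Fredholm condition for $gf$ with constants $(\varepsilon_f\varepsilon_g,\, C_f+C_g+1)$.

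The only delicate points are (i) verifying the additivity formula $\dim L = \dim(L\cap\ker\phi) + \dim\overline{\phi(L)}$ for a bounded $\Gamma$-equivariant $\phi$, which follows from the polar decomposition of $\phi|_L$ together with the fact that $\dim$ is continuous for weakly exact sequences of Hilbert $\mathcal N(\Gamma)$-modules; and (ii) confirming that $\mathrm{Im}(P)$ lies in $\mathrm{dom}(f)$, which is automatic from the functional calculus because $f^*f$ is bounded on that subspace. Both are recalled in the preceding survey of $\mathcal N(\Gamma)$-dimension and were exactly the ingredients needed to upgrade the pointwise comparison of $\mathcal{L}$-sets to a module-level inequality.
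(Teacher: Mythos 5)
Your proof is correct. The paper does not prove this lemma itself but delegates it to the cited reference, and your argument — bounding $\dim_{\mathcal N(\Gamma)}L$ by splitting $L$ along the spectral projection $\chi_{[0,\varepsilon_f^2]}(f^*f)$ and using additivity of the von Neumann dimension for weakly exact sequences, together with the elementary containment $\mathcal L(f,\varepsilon)\subset\mathcal L(gf,\|g\|\varepsilon)$ for part (2) — is essentially the standard proof from L\"uck that the citation points to, so there is nothing to add.
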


\begin{rem}
The last point implies that if $f\colon C_0^\bullet \to C_1^\bullet$ is an isomorphism of complex then $C_0^\bullet$ is $\Gamma$-Fredholm if and only if $C_1^\bullet$ is $\Gamma$-Fredholm.
\end{rem}

\begin{defn}
A quasi-isomorphism between two complexes of Hilbert $\mathcal N(\Gamma)$-modules $f\colon (C_0^\bullet,d_0) \to (C_1^\bullet,d_1)$ is a morphism of complexes of Hilbert $\mathcal N(\Gamma)$-modules inducing an isomorphism of $\mathcal N(\Gamma)$-modules on the cohomology.
\end{defn}

We can remark that a quasi-isomorphism is bounded by the definition of morphism of complexes. It is to be noted that we do not ask the induced morphism on the cohomology to be an isomorphism of topological vector spaces, but only to be an isomorphism with respect to the algebraic structure of $\mathcal N(\Gamma)$-module. We will see later that the Fredholmness property is invariant by quasi-isomorphism, to do it we begin by recalling the definition of the cone and the cylinder of a morphism of complex.

\begin{defn}
Let $(C_0^\bullet,d_0)$ and $(C_1,d_1)$ be two complexes of Hilbert of $\mathcal N (\Gamma)$-modules and $f\colon C_0^\bullet \to C_1^\bullet$ be a morphism of complexes of Hilbert $\mathcal N(\Gamma)$-modules. We define $cone(f)^\bullet$ to be the complex defined by
\[
cone(f)^n = C_0^{n+1} \oplus C_1^n \qquad d_{cone} = \begin{pmatrix}-d_0 & 0 \\ f & d_1 \end{pmatrix}
\]
where the domain of the differential of the cone is $dom(d_0)\oplus dom(d_1)$.

The cylinder of $f$ is the complex $cyl(f)^\bullet := cone\big((-id,f)\colon C_0^\bullet \to C_0^\bullet \oplus C_1^\bullet \big)$. 
\end{defn}

\begin{rem}
The usual proof shows that if $f\colon C_0^\bullet \to C_1^\bullet$ is a quasi-isomorphism of Hilbert $\mathcal N (\Gamma)$-modules then $cone(f)^\bullet$ is acyclic.
One has canonical isomorphism
\[
cyl(f)^\bullet \simeq cone\big(cone(f)^\bullet[-1] \to C_0^\bullet \big) \simeq cone\big(f \oplus 0 \colon cone(id)[-1] \to C_1^\bullet\big)
\]
\end{rem}

\begin{lem}
We have the following properties
\begin{enumerate}
\item An acyclic complex $(C^\bullet,d)$ is $\Gamma$-Fredholm and contractible i.e. there exists a morphism of complex of $\mathcal N(\Gamma)$-Hilbert module $h\colon C^\bullet \to C^\bullet[1]$ satisfying $Hd+dH = id$.
\item If $f\colon C_0^\bullet \to C_1^\bullet$ is a morphism of complexes and $C_0^\bullet$ is acyclic then there exists an isomorphism of complexes $cone(f)^\bullet \simeq C_0^\bullet[1] \oplus C_1^\bullet$.
\item If two complexes $(C_0^\bullet,d_0)$ and $(C_1^\bullet,d_1)$ are quasi-isomorphic then if one is $\Gamma$-Fredholm, the other is also $\Gamma$-Fredholm.
\end{enumerate}
\end{lem}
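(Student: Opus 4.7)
For part (1), the acyclicity hypothesis forces $\ker(d^k) = \operatorname{im}(d^{k-1})$ to be closed for every $k$. The restriction
\[
d^k \colon \operatorname{dom}(d^k) \cap \ker(d^k)^\perp \longrightarrow \ker(d^{k+1})
\]
is therefore a closed bijection onto a Banach space, and the closed graph theorem produces a bounded $\Gamma$-equivariant inverse $s^k \colon \ker(d^{k+1}) \to \ker(d^k)^\perp \cap \operatorname{dom}(d^k)$. Writing $p^k \colon C^k \to \ker(d^k)$ for the orthogonal projection, I set $h^k := s^{k-1} \circ p^k \colon C^k \to C^{k-1}$. Each $h^k$ is bounded, $\Gamma$-equivariant, lands in $\operatorname{dom}(d^{k-1})$, and a direct check gives $d h + h d = \operatorname{id}$ on $\operatorname{dom}(d)$, so $C^\bullet$ is contractible. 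The same $s^k$ provides the lower bound $\|d x\| \geq \|s^k\|^{-1}\|x\|$ for $x \in \operatorname{dom}(d^k) \cap \overline{\operatorname{im}(d^k)}{}^{\perp}$, so for $\varepsilon$ small, $\mathcal{L}(d|_{\overline{\operatorname{im}(d)}^\perp},\varepsilon) = \{0\}$, which trivially yields $\Gamma$-Fredholmness.

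For part (2), apply (1) to obtain a bounded $\Gamma$-equivariant contraction $h$ of $C_0^\bullet$. Define $g := f \circ h \colon C_0^\bullet \to C_1^\bullet[-1]$; it is bounded, and using $f d_0 = d_1 f$ on $\operatorname{dom}(d_0)$ one computes
\[
d_1 g + g d_0 = d_1 f h + f h d_0 = f d_0 h + f h d_0 = f(d_0 h + h d_0) = f.
\]
Since $h$ lands in $\operatorname{dom}(d_0)$ and $f(\operatorname{dom}(d_0)) \subset \operatorname{dom}(d_1)$, the map $g$ takes values in $\operatorname{dom}(d_1)$. The bounded $\Gamma$-equivariant morphism
\[
\phi := \begin{pmatrix} \operatorname{id} & 0 \\ g & \operatorname{id} \end{pmatrix} \colon C_0^{n+1} \oplus C_1^n \longrightarrow C_0^{n+1} \oplus C_1^n
\]
has inverse $\left(\begin{smallmatrix} \operatorname{id} & 0 \\ -g & \operatorname{id} \end{smallmatrix}\right)$ and preserves the domain $\operatorname{dom}(d_0) \oplus \operatorname{dom}(d_1)$ common to both differentials; the identity $f = d_1 g + g d_0$ is precisely the statement that $\phi$ intertwines the cone differential with $\left(\begin{smallmatrix} -d_0 & 0 \\ 0 & d_1 \end{smallmatrix}\right)$, so $\phi$ is an isomorphism of complexes $\operatorname{cone}(f)^\bullet \simeq C_0^\bullet[1] \oplus C_1^\bullet$.

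For part (3), I would exploit the two descriptions of the cylinder recalled in the remark preceding the lemma:
\[
\operatorname{cyl}(f)^\bullet \simeq \operatorname{cone}\bigl(\operatorname{cone}(f)^\bullet[-1] \to C_0^\bullet\bigr) \simeq \operatorname{cone}\bigl(\operatorname{cone}(\operatorname{id}_{C_0})^\bullet[-1] \to C_1^\bullet\bigr).
\]
Since $f$ is a quasi-isomorphism, $\operatorname{cone}(f)^\bullet$ is acyclic (and so is $\operatorname{cone}(\operatorname{id}_{C_0})^\bullet$ for trivial reasons). Applying (2) to both right-hand sides, with the acyclic complex playing the role of $C_0^\bullet$, gives
\[
\operatorname{cone}(f)^\bullet \oplus C_0^\bullet \simeq \operatorname{cyl}(f)^\bullet \simeq \operatorname{cone}(\operatorname{id}_{C_0})^\bullet \oplus C_1^\bullet.
\]
By (1) both $\operatorname{cone}(f)^\bullet$ and $\operatorname{cone}(\operatorname{id}_{C_0})^\bullet$ are $\Gamma$-Fredholm, and $\Gamma$-Fredholmness is stable under direct sums and passage to direct summands (the remark after the definition). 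Hence $C_0^\bullet$ is $\Gamma$-Fredholm iff both sides of the above chain are, iff $C_1^\bullet$ is, which is the conclusion.

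The main obstacle I anticipate is the bookkeeping with unbounded differentials: one must verify that $h$ produced by the closed graph theorem in (1) genuinely maps into $\operatorname{dom}(d)$ so that $dh$ is defined everywhere, and that the cone-isomorphism $\phi$ in (2) respects domains. Both reduce to the single point that the inverse $s^k$ supplied by the closed graph theorem, being a right inverse of the restriction of $d^k$, automatically takes values in $\operatorname{dom}(d^k)$; and that morphisms of complexes preserve domains by hypothesis.
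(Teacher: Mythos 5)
Your proof is correct and follows essentially the same route as the paper: the closed graph theorem plus the resulting lower bound $c\|x\|\leq\|dx\|$ for (1), splitting the cone of a map out of an acyclic complex by means of the contraction for (2), and the two descriptions of the cylinder combined with (1) and (2) for (3). The only slip is a sign in (2): with the cone differential $\left(\begin{smallmatrix}-d_0 & 0\\ f & d_1\end{smallmatrix}\right)$ the intertwining identity for your $\phi$ requires $f=-(d_1g+gd_0)$, so you should take $g=-f\circ h$; the paper packages the same homotopy instead as a bounded section $s=d_{cone}\iota H-\iota Hd_0$ of the short exact sequence $0\to C_1^\bullet\to cone(f)^\bullet\to C_0^\bullet[1]\to 0$, which amounts to the same splitting.
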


These results are in \cite{Luck} in the case of complexes with bounded differential.

\begin{proof}
For the first point, consider an acyclic complex $(C^\bullet,d)$, one has a closed operator $d\colon im(d)^\bot \to ker(d)$, which is both surjective and injective since the complex is acyclic. Since it has a closed image and is injective there exists $c>0$ such that for all $x\in im(d)^\bot \cap dom(d)$
\[
c\|x\| \leq \|dx\|.
\]
It follows that for all $\varepsilon < c$ one has $\mathcal L (C^\bullet,\varepsilon) = 0$ and the complex is $\Gamma$-Fredholm. To show it is contractible, set $h \colon Ker(d) \to Ker(d)^\bot \cap dom(d)$ to be equal to the inverse of the operator $d$, $h$ is bounded since it is a closed operator defined on the whole Banach space $Ker(d)$ by setting 
\[
H = h \oplus 0 \colon C^\bullet = Ker(d) \oplus Ker(d)^\bot \to C^\bullet
\]
we obtain a bounded operator such that the minimal closure of $Hd + dH$ id equal to the identity.

For the second point, one has a short exact sequence of complexes
\[
\begin{tikzcd}
0 \ar[r] & C_1^\bullet \ar[r] & cone(f)^\bullet \ar[r]&C_0^\bullet[1] \ar[r] & 0
\end{tikzcd}
\]
If $\iota \colon C_0^\bullet[1] \to cone(f)^\bullet$ is the canonical injection (which is not a morphism of complex), take $H$ the contraction of $C_O^\bullet$ defined earlier then
\[
s = d_{cone}\iota H - \iota H d_0
\]
will be a section of the previous short exact sequence and a morphism of complex provided it is bounded. So for the short exact sequence to split one only need to check that (the minimal closure of) $s$ is bounded. However it is straight forward that $Hd_0$ is equal to the orthogonal projection on $Ker(d)^\bot$, and $d_{cone}\iota H$ is equal to $-\iota\oplus fh$ which is bounded.

For the last point, fix a quasi-isomorphism $f \colon C_0^\bullet \to C_1^\bullet$, then the mapping cones $cone(f)^\bullet$ and $cone(id_{C_0^\bullet})$ are acyclic and thus $\Gamma$-Fredholm. By using the last property we have isomorphisms
\[
\begin{array}{lll}
cyl(f)^\bullet &\simeq cone\big(cone(f)^\bullet[-1] \to C_0^\bullet \big) &\simeq cone(f)^\bullet \oplus C_0^\bullet\\
		   &\simeq cone\big(f \oplus 0 \colon cone(id)[-1] \to C_1^\bullet\big) &\simeq cone(id)^\bullet[-1]\oplus C_1^\bullet.
\end{array}
\]
The isomorphisms in the first line show that $cyl(f)^\bullet$ is $\Gamma$-Fredholm if and only if $C_0^\bullet$ is $\Gamma$-Fredholm, and the isomorphisms in the second line show that $cyl(f)^\bullet$ is $\Gamma$-Fredholm if and only if $C_1^\bullet$ is $\Gamma$-Fredholm.
\end{proof}

We will use the following lemma due to Shubin.

\begin{lem}\cite[Lemma 1.15]{Shubin}
If $f\colon H_0 \to H_1$ is a closed $\Gamma$-Fredholm operator then $Im(f)$ is essentially dense in $\overline{Im(f)}$.
\end{lem}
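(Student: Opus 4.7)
The plan is to apply the spectral theorem to the positive self-adjoint operator $|f|=(f^*f)^{1/2}$ and, for every $\delta>0$, to extract a closed $\mathcal N(\Gamma)$-submodule of $Im(f)$ whose orthogonal complement inside $\overline{Im(f)}$ has Von Neumann dimension less than $\delta$, using the Fredholm hypothesis and the normality of the trace.

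First I would reduce to the positive self-adjoint case via polar decomposition. Write $f=u|f|$, where $|f|$ is a closed densely defined positive self-adjoint $\Gamma$-equivariant operator on $H_0$ and $u\colon H_0\to H_1$ is a $\Gamma$-equivariant partial isometry with initial space $Ker(f)^\perp = \overline{Im(|f|)}$ and final space $\overline{Im(f)}$. (That $u$ is automatically $\Gamma$-equivariant, i.e.\ belongs to $\mathcal N(\Gamma)$, is the standard Murray--von Neumann polar decomposition applied inside the ambient von Neumann algebra.) Since $u$ restricts to a unitary isomorphism of Hilbert $\mathcal N(\Gamma)$-modules $Ker(f)^\perp \to \overline{Im(f)}$ sending $Im(|f|)$ onto $Im(f)$, the statement reduces to showing that $Im(|f|)$ is essentially dense in $\overline{Im(|f|)}=Ker(|f|)^\perp$.

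Next, applying the spectral theorem I write $|f|=\int_0^\infty s\,dE_s$ and, for each $\lambda>0$, set $H_{>\lambda}:=(I-E_\lambda)(H_0)$, the spectral subspace on which $|f|>\lambda$. On $H_{>\lambda}\cap Dom(|f|)$ the operator $|f|$ is bounded below by $\lambda$, and by Borel functional calculus it maps this domain bijectively onto $H_{>\lambda}$. Thus $H_{>\lambda}$ is a closed $\Gamma$-invariant submodule contained in $Im(|f|)\cap Ker(|f|)^\perp$, and its orthogonal complement inside $Ker(|f|)^\perp$ is exactly $Im(E_\lambda)\ominus Ker(|f|)$, of Von Neumann dimension $tr_{\mathcal N(\Gamma)}(E_\lambda) - tr_{\mathcal N(\Gamma)}(E_0)$, where $E_0$ is the projection onto $Ker(|f|)$.

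Finally I would invoke the Fredholm hypothesis through its second equivalent characterisation recalled in the text: there exists $\varepsilon_0>0$ with $tr_{\mathcal N(\Gamma)}(E_{\varepsilon_0}^{f^*f})<+\infty$; since $E_\lambda^{|f|}=E_{\lambda^2}^{f^*f}$, this gives $tr(E_\lambda)<+\infty$ for $\lambda$ small enough. For such $\lambda$ the family $(E_\lambda)$ is a decreasing net of finite-trace projections converging strongly to $E_0$ by right-continuity of the spectral resolution, and normality of $tr_{\mathcal N(\Gamma)}$ on bounded decreasing nets yields $tr(E_\lambda)\to tr(E_0)$. Hence for any $\delta>0$ one may choose $\lambda$ small enough that $\dim_{\mathcal N(\Gamma)}(Ker(|f|)^\perp \ominus H_{>\lambda})<\delta$, exhibiting $H_{>\lambda}$ as the required closed submodule of $Im(|f|)$. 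The only delicate point I anticipate is the polar decomposition step: one must check that the partial isometry $u$ attached to a closed densely defined $\Gamma$-equivariant operator lies in $\mathcal N(\Gamma)$. Once this is granted, everything else is driven by the spectral theorem and normality of the finite trace, with no analytic estimate needed.
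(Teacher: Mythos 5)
Your proof is correct and follows exactly the route the paper indicates: the text simply cites Shubin's Lemma 1.15 and remarks that his polar-decomposition argument for bounded $f$ carries over verbatim to closed operators, which is precisely the reduction $f=u|f|$ followed by the spectral-projection and trace-normality argument you spell out (with the finiteness of $tr_{\mathcal N(\Gamma)}(E_\lambda)$ supplied by the characterisation of $\Gamma$-Fredholmness via $E^{f^*f}_\varepsilon$ recalled after the definition). The one point you flag, that the partial isometry $u$ belongs to $\mathcal N(\Gamma)$, is indeed the standard fact about polar decompositions of closed operators affiliated to a von Neumann algebra, so nothing is missing.
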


\begin{rem}
The proof of Shubin is done in the case where $f$ is bounded. It relies on the existence the polar decomposition of $f$ which is also defined for arbitrary closed operator and the proof of Shubin can be used verbatim for the case of closed operator.
\end{rem}

It will be useful to consider the algebra of affiliated operators which allow us to forget the torsion part (see \cite[Chapter 8]{Luck}).

\begin{defn}
An affiliated operator of $\ell^2(\Gamma)$ is a closed unbounded operator $f \colon \mathcal \ell^2(\Gamma) \to \ell^2(\Gamma)$ commuting with the right regular representation of $\Gamma$, in particular this require its domain to be invariant under the right regular representation. We denote by $\mathcal U (\Gamma)$ the set of affiliated operators.
\end{defn}

\begin{lem}\cite[Lemma 8.3]{Luck}
If $L\subset \ell^2(\Gamma)$ is essentially dense then $f^{-1}(L)$ is essentially dense for $f\in \mathcal U(\Gamma)$. In particular the domain of an element of $\mathcal U (\Gamma)$ is essentially dense.
\end{lem}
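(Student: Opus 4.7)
The plan is to reduce the statement to the case of bounded operators via the spectral resolution of the positive self-adjoint affiliated operator $f^*f$. Since $f$ is closed and commutes with the right regular representation $\rho$ of $\Gamma$, so does $f^*$, and therefore $f^*f$ is affiliated to $\mathcal{N}(\Gamma)$. The Borel functional calculus produces spectral projections $E_n := \chi_{[0,n]}(f^*f)$ which are bounded operators commuting with $\rho(\Gamma)$, hence lie in $\mathcal{N}(\Gamma)$. Setting $H_n := E_n\bigl(\ell^2(\Gamma)\bigr)$, the restriction $f|_{H_n}$ is a bounded $\mathcal{N}(\Gamma)$-equivariant operator of norm at most $\sqrt{n}$, and since $E_n$ converges strongly to the identity one obtains
\[
\dim_{\mathcal{N}(\Gamma)}\bigl(H_n^\perp\bigr) = \operatorname{tr}_{\mathcal{N}(\Gamma)}(1 - E_n) = \|(1-E_n)\delta_e\|^2 \underset{n\to\infty}{\longrightarrow} 0.
\]

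Given $\varepsilon > 0$, I would then pick $n$ large enough so that $\dim_{\mathcal{N}(\Gamma)}(H_n^\perp) < \varepsilon/2$, and use the essential density of $L$ to select a closed $\Gamma$-invariant submodule $N \subset L$ with $\dim_{\mathcal{N}(\Gamma)}(N^\perp) < \varepsilon/2$. The natural candidate is
\[
M := (f|_{H_n})^{-1}(N) \subset H_n.
\]
This $M$ is closed in $H_n$ because $f|_{H_n}$ is continuous and $N$ is closed, it is $\Gamma$-invariant because $f|_{H_n}$ is equivariant and $N$ is invariant, and by construction $M \subset f^{-1}(L)$.

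To conclude, the orthogonal decomposition $M^\perp = H_n^\perp \oplus (H_n \ominus M)$ reduces the estimate to controlling the second summand. The operator $f|_{H_n}$ descends to an injective $\mathcal{N}(\Gamma)$-equivariant morphism $H_n/M \hookrightarrow \ell^2(\Gamma)/N$, and the additivity of $\dim_{\mathcal{N}(\Gamma)}$ along short exact sequences of $\mathcal{N}(\Gamma)$-modules yields
\[
\dim_{\mathcal{N}(\Gamma)}(H_n \ominus M) = \dim_{\mathcal{N}(\Gamma)}(H_n/M) \leq \dim_{\mathcal{N}(\Gamma)}(\ell^2(\Gamma)/N) = \dim_{\mathcal{N}(\Gamma)}(N^\perp) < \varepsilon/2.
\]
Hence $\dim_{\mathcal{N}(\Gamma)}(M^\perp) < \varepsilon$, which gives essential density of $f^{-1}(L)$. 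The second assertion of the lemma follows by specializing to $L = \ell^2(\Gamma)$, which is trivially essentially dense, so that $f^{-1}(\ell^2(\Gamma)) = \operatorname{dom}(f)$.

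The only genuinely delicate point is the verification that the spectral projections of $f^*f$ belong to $\mathcal{N}(\Gamma)$; this relies on the double-commutant characterization $\mathcal{N}(\Gamma) = \rho(\Gamma)'$ together with the fact that all bounded Borel functions of an affiliated self-adjoint operator inherit commutation with $\rho(\Gamma)$, along with the observation that $(\operatorname{dom}(f))^\perp$ need not be handled directly since the spectral cutoff $H_n$ already sits inside $\operatorname{dom}(f)$. Once this piece of von Neumann algebra machinery is in place, the rest of the argument is a routine combination of a spectral cutoff with the elementary additivity of $\dim_{\mathcal{N}(\Gamma)}$ on short exact sequences.
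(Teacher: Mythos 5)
The paper offers no proof of this lemma at all --- it is quoted verbatim from L\"uck with a citation --- so there is nothing internal to compare against; your argument is correct and is essentially the standard proof from the cited source. The spectral cutoff $E_n=\chi_{[0,n]}(f^*f)$, the verification that these projections lie in $\mathcal N(\Gamma)=\rho(\Gamma)'$ and that $H_n\subset\operatorname{dom}(f)$ with $f|_{H_n}$ bounded by $\sqrt{n}$, and the estimate $\dim_{\mathcal N(\Gamma)}(M^\perp)<\varepsilon$ obtained from the injection $H_n/M\hookrightarrow\ell^2(\Gamma)/N$ together with additivity of $\dim_{\mathcal N(\Gamma)}$ are all sound.
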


It follows that if $f,g\in \mathcal U (\Gamma)$ then the minimal closure of the operator $g\circ f$ defined on the domain $g^{-1}(dom(f))$ is an affiliated operator, this gives $\mathcal U (\Gamma)$ a ring structure.
\begin{thm}\cite[Theorem 8.22]{Luck}
\begin{enumerate}
\item $\mathcal U (\Gamma)$ is a flat $\mathcal N (\Gamma)$-module 
\item $\mathcal U (\Gamma)$ is a  Von Neumann regular ring.
\item If $S\subset \mathcal N (\Gamma)$ is the set of injective operators in $\mathcal N (\Gamma)$ that have a dense image then $\mathcal U(\Gamma)$ is isomorphic to the Ore localisation $\mathcal N (\Gamma)S^{-1}$.
\end{enumerate}
\end{thm}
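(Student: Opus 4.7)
The plan is to establish (3) first, since (1) and (2) follow from general Ore-theoretic considerations together with the functional calculus available in $\mathcal N(\Gamma)$. The key observation is that for any $f\in \mathcal U(\Gamma)$ the bounded operator $b_f:=(1+f^*f)^{-1/2}$ defined via functional calculus lies in $\mathcal N(\Gamma)$, is positive, and is injective with dense image, so $b_f\in S$. Moreover $fb_f$ extends to a bounded operator on $\ell^2(\Gamma)$: for $\xi$ in the domain of $f$ we have
\[
\|fb_f\xi\|^2 = \langle f^*f b_f\xi,b_f\xi\rangle \leq \langle (1+f^*f)b_f\xi,b_f\xi\rangle = \|\xi\|^2,
\]
so $a_f := fb_f\in \mathcal N(\Gamma)$ and $f = a_f b_f^{-1}$. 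This is the device that will produce denominators.

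For (3), I would first check that $S$ is multiplicatively closed (stable under composition since injectivity and dense image are preserved by composition of closed operators), then check the right Ore condition: given $r\in \mathcal N(\Gamma)$ and $s\in S$, the element $s^{-1}r$ lies in $\mathcal U(\Gamma)$ (since the minimal closure of the composition of affiliated operators is affiliated, by the lemma on essential density already quoted in the excerpt). Using the factorization above, write $s^{-1}r = a_f b_f^{-1}$, which yields $r b_f = s a_f$, i.e.\ an Ore relation. The remaining axioms of an Ore denominator set are immediate from the fact that elements of $S$ are non-zero-divisors in $\mathcal N(\Gamma)$. The universal map $\mathcal N(\Gamma)S^{-1}\to \mathcal U(\Gamma)$ is then well-defined, surjective by the same factorization, and injective because $rs^{-1}=0$ in $\mathcal U(\Gamma)$ forces $r=0$ on the essentially dense domain of $s^{-1}$ and hence $r=0$ in $\mathcal N(\Gamma)$.

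For (1), flatness follows from the general fact that any Ore localization is flat over the base ring (left multiplication by an Ore denominator set is an exact functor, since localization preserves exactness termwise by interchanging numerators and denominators). For (2), I would use polar decomposition: every $f\in\mathcal U(\Gamma)$ writes as $f=u|f|$ with $u$ a partial isometry in $\mathcal N(\Gamma)$ (with $u^*u$ equal to the support projection of $|f|$) and $|f|$ a positive affiliated operator. Functional calculus on $|f| = \int_0^\infty \lambda\, dE_\lambda$ produces the Moore--Penrose pseudoinverse $|f|^+ := \int_{(0,\infty)} \lambda^{-1}\, dE_\lambda \in \mathcal U(\Gamma)$, and setting $g := |f|^+ u^*$ gives
\[
fgf = u|f|\,|f|^+ u^*u|f| = u\,(u^*u)\,|f| = u|f| = f,
\]
proving von Neumann regularity.

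The main technical obstacle is verifying that the spectral-calculus manipulations do genuinely produce elements of $\mathcal N(\Gamma)$ or $\mathcal U(\Gamma)$, as opposed to merely densely defined operators with the right algebraic behavior. In particular, the fact that $b_f$ belongs to the bicommutant $\mathcal N(\Gamma)$ (and not just to the commutant of some larger algebra) rests on the unbounded functional calculus for positive self-adjoint affiliated operators, together with the observation that $1+f^*f$ is essentially self-adjoint when $f$ is closed and densely defined. Once this analytic input is in place, the algebraic argument above proceeds cleanly.
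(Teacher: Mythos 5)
Your sketch is correct: the bounded transform $f\mapsto (a_f,b_f)$ with $b_f=(1+f^*f)^{-1/2}$, the Ore condition derived from it, flatness of Ore localizations, and regularity via polar decomposition plus the spectral pseudoinverse is precisely the standard argument for this result. The paper itself offers no proof — it cites \cite[Theorem 8.22]{Luck} — and your proposal reproduces the approach of that reference, so there is nothing to compare beyond noting that your sketch matches the cited source.
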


We have the following result, which is a particular case of \cite[Lemma 2.15]{Dingoyan}).

\begin{prop}\cite[Lemma 2.15]{Dingoyan}
If $H$ is a Hilbert $\mathcal N (\Gamma)$-module and $f\colon H \to H$ is a closed operator with an image essentially dense in its closure (for example take $f$ $\Gamma$-Fredholm), then for any $x\in \overline{Im(f)}$, there exists $r\in \mathcal N(\Gamma)$ injective with dense range such that $r\cdot x \in Im(f)$.
\end{prop}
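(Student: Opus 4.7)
My plan is to construct $r$ directly via the Borel functional calculus of the positive self-adjoint operator $T := ff^*$. Since $f$ is closed, densely defined and $\Gamma$-equivariant, so is $f^*$, and $T$ is positive self-adjoint and affiliated to $\mathcal N(\Gamma)$; every bounded Borel function of $T$ therefore lies in $\mathcal N(\Gamma)$.

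The first observation is that the bounded operator $\chi(T) := T(1+T)^{-1} \in \mathcal N(\Gamma)$, associated to $\chi(\lambda) = \lambda/(1+\lambda)$, already sends every vector of $H$ into $Im(f)$. Indeed, for any $z \in H$, set $y := (1+T)^{-1}z$; the standard resolvent description gives $y \in Dom(T) = Dom(ff^*)$, which by definition means $y \in Dom(f^*)$ and $f^*y \in Dom(f)$, and one computes
\[
\chi(T)z \;=\; Ty \;=\; f(f^*y) \;\in\; Im(f).
\]
The operator $\chi(T)$ fails only to be injective on all of $H$: its kernel equals $E_{\{0\}}(T)(H) = Ker\,T = Ker\,f^*$, the last equality coming from $\|f^*y\|^2 = \langle Ty, y\rangle$ for $y \in Dom(T)$.

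To repair injectivity I would set $r := \chi(T) + p$, where $p \in \mathcal N(\Gamma)$ is the orthogonal projection onto $Ker\,f^*$. Since $x \in \overline{Im(f)} = (Ker\,f^*)^\perp$, one has $px = 0$ and hence $rx = \chi(T)x \in Im(f)$. For any decomposition $h = h_1 + h_2 \in Ker\,f^* \oplus (Ker\,f^*)^\perp$ one computes $rh = h_1 + \chi(T)h_2$, with the two summands lying in mutually orthogonal subspaces; both the injectivity and the density of the range of $r$ then reduce to the claim that $\chi(T)$ restricted to $(Ker\,f^*)^\perp$ is an injective bounded operator with dense range. This last claim holds because on $(Ker\,T)^\perp$ the spectral measure of $T$ has no atom at $0$ and $\chi$ is strictly positive on $(0,\infty)$, so $\chi(T)|_{(Ker\,f^*)^\perp}$ is a bounded positive self-adjoint operator with trivial kernel.

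The only subtle point is the domain bookkeeping underlying the identity $Ty = f(f^*y)$: one must really verify that $y = (1+T)^{-1}z$ belongs to $Dom(ff^*)$ and not merely to $Dom(f^*)$, so that the equation lands inside the actual image $Im(f)$ rather than just its closure. This is the standard description of the range of the resolvent of a positive self-adjoint operator, so no genuine obstacle arises. I note in passing that the essential density hypothesis does not explicitly enter this construction; the conclusion is pointwise in $x$ and follows from the spectral calculus of $ff^*$ combined with the correction projection $p$.
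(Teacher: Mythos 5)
Your construction has a genuine gap, and it sits exactly where you remark that ``the essential density hypothesis does not explicitly enter'': that hypothesis is indispensable, so an argument that never invokes it cannot be proving the intended statement. The operator $r=\chi(ff^*)+p$ you build is a bounded $\Gamma$-equivariant operator on $H$, i.e.\ an endomorphism of the Hilbert module $H$ (an element of the commutant of the $\Gamma$-action on $H$). It is \emph{not}, in general, an element of the group Von Neumann algebra $\mathcal N(\Gamma)$ acting on $H$ through the module structure, and the latter is what the statement demands: in the application (Corollary \ref{U(Gamma)-torsion}) one needs $r$ to be a scalar of the coefficient ring $\mathcal N(\Gamma)$, injective with dense range as an operator on $\ell^2(\Gamma)$ so that it becomes invertible in $\mathcal U(\Gamma)$ and kills the class of $x$ in $\mathcal U(\Gamma)\otimes_{\mathcal N(\Gamma)}\overline{Im(f)}/Im(f)$. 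Bounded Borel functions of $ff^*$ live in the endomorphism algebra of the module (already for $H=\ell^2(\Gamma)^{\oplus 2}$ this is a matrix algebra over $\mathcal N(\Gamma)$, not $\mathcal N(\Gamma)$ itself), so ``every bounded Borel function of $T$ lies in $\mathcal N(\Gamma)$'' is where the proof breaks. A test case that isolates the error: take $\Gamma=\{1\}$, $H=\ell^2(\N)$ and $f=\mathrm{diag}(1/n)$. Then $\mathcal N(\Gamma)=\C$, the conclusion of the proposition would force $x\in Im(f)$ for every $x\in\overline{Im(f)}=H$, which is false --- consistently with the fact that $Im(f)$ is \emph{not} essentially dense here --- and yet your $r=\mathrm{diag}(1/(n^2+1))$ exists and does satisfy $rx\in Im(f)$. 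So what you have actually proved is the true but much weaker assertion that some injective equivariant endomorphism with dense range maps $x$ into $Im(f)$, which is useless for the torsion corollary.

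The paper's proof is built precisely to avoid this trap. It first treats $H=\ell^2(\Gamma)$, where module endomorphisms are the right convolution operators $\rho(x_0)$ and hence genuinely correspond to elements of $\mathcal N(\Gamma)$; there the bounded operator $(1+s)^{-1}$ extracted from the polar decomposition of $g\circ\rho(x)$ (with $g$ the inverse of $f$ on its image) plays the role of your resolvent function $\chi$, and the identity $\rho(\rho(x_0)(x))=\rho(f(x_1))$ converts the endomorphism back into a scalar applied to $x$. For general $H$ it reduces to this case by passing to the cyclic submodule $\overline{\C[\Gamma]\cdot(x,e_\Gamma)}\subset\overline{Im(f)}\oplus\ell^2(\Gamma)$, which is isomorphic to $\ell^2(\Gamma)$, and the essential density of $Im(f)$ is used to ensure that $Im(h)$ meets this cyclic module in an essentially dense subspace. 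To salvage your approach you would have to explain how to replace $\chi(ff^*)+p$ by an element of $\mathcal N(\Gamma)$ with the same property; that replacement is the actual content of the lemma, and it is where essential density and the cyclic reduction must enter.
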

\begin{proof}
The proof is due to Dingoyan, and holds when considering Hilbert module over a general finite Von Neumann algebra, we only write it on for the case of Hilbert $\mathcal N (\Gamma)$-module.

We begin with the case of $H = \ell^2(\Gamma)$ and $Im(f)$ essentially dense in $H$. For $y\in \ell^2(\Gamma)$, let $\rho(y)\colon \ell^2(\Gamma) \to \ell^2(\Gamma)$ be the densely-defined operator defined by $\rho(y)(z) = z * y$ (where $*$ denotes the convolution product). We fix $x\in H$ and we want to find $x_0$ such that $\rho(x_0)$ is bounded (hence belong to $\mathcal N(\Gamma)$) and $\rho(x_0)(x) \in Im(f)$.

Set $g\colon Im(f) \to Ker(f)^\bot$ to be equal to the inverse of $f$ on $Im(f)$. Since both $\rho(x)$ and $g$ commutes with the left regular representation, the composition $g\circ\rho(x)$ defines a closed densely defined operator and we can consider its polar decomposition $us$, with $u$ partial isometry and $s$ self-adjoint positive. The positivity of $s$ implies that $(1+s)$ admits a bounded inverse $(1+s)^{-1}$, and using  the identity $s(1+s)^{-1} = 1 - (1+s)^{-1}$ on a dense subset of $H$, we obtain that the minimal closure of $s(1+s)^{-1}$ is bounded thus $us(1+s)^{-1}$ is also bounded. It follows that there exists $x_0$, $x_1$ such that $(1+s)^{-1}= \rho(x_0)$, and $us(1+s)^{-1} = \rho(x_1)$ by \cite[13.8.3]{Dixmier}. By the associativity of the convolution product, one obtain on a dense subset of $H$:
\[
\rho(\rho(x_0)(x))=\rho(x)\rho(x_0) = \rho(x)(1+s)^{-1} = fg\rho(x)(1+s)^{-1} = f \rho(x_1) = \rho(f(x_1)).
\]
It follows that $\rho(x_0)(x) = f(x_1) \in Im(f)$ and $\rho(x_0)$ is bounded which proves the result for the case $H=\ell^{2}(\Gamma)$.

For the general case, one consider the injective operator
\[
\begin{array}{lll}
h \colon & Ker(f)^\bot \oplus \ell^2(\Gamma) &\to \overline{Im(f)}\oplus \ell^2(\Gamma) \\
		& (x,g) &\mapsto (f(x),g)
\end{array}
\]
Let $x \in \overline{Im(f)}$ and set $F_1 := \overline{\C[\Gamma]\cdot(x,e_\Gamma)}$, it is a Hilbert submodule of $\overline{Im(f)}\oplus \ell^2(\Gamma)$, the operator $h$ has an essentially dense image since $f$ is has an essentially dense image it follows that $im(h) \cap F_1$ is essentially dense in $F_1$, so if we set $F_0 := h^{-1}(F_1)$, the induced morphism $h\colon F_0 \to F_1$ is injective and has an essentially dense image. The morphisms $p_0\colon F_0 \to \ell^2(\Gamma)$ and $p_1\colon F_1 \to \ell^2(\Gamma)$ induced by the projection on $\ell^2(\Gamma)$ are isomorphisms of Hilbert $\mathcal N (\Gamma)$-module. So $p_1hp_0^{-1}\colon \ell^2(\Gamma) \to \ell^2(\Gamma)$ is a morphism with an essentially dense image, by the previous case there exists $r\in \mathcal N (\Gamma)$ such that $r\cdot p_1(x,e_\Gamma) = p_1(r\cdot x , r\cdot e_\Gamma) \in Im(p_1hp_0)$ so $r\cdot x \in Im(h)$ which conclude the proof.
\end{proof}

Since injective operator of $\mathcal N (\Gamma)$ that have dense image admits a left inverse in $\mathcal U (\Gamma)$, it implies the following corollary.

\begin{cor}\label{U(Gamma)-torsion}
If $(C^\bullet,d)$ is a $\Gamma$-Fredholm complex of Hilbert $\mathcal N (\Gamma)$-module, then the space $\frac{\overline{Im(d)}}{Im(d)}$ is $\mathcal U (\Gamma)$-torsion, i.e
\[
\mathcal U (\Gamma) \otimes_{\mathcal N (\Gamma)} \frac{\overline{Im(d)}}{Im(d)} = 0.
\]
\end{cor}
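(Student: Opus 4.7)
The plan is to use the preceding proposition due to Dingoyan together with the Ore-localization description $\mathcal U(\Gamma) = \mathcal N(\Gamma) S^{-1}$, where $S$ is the multiplicative set of injective elements of $\mathcal N(\Gamma)$ with dense image. Since every element of $S$ becomes invertible in $\mathcal U(\Gamma)$, it suffices to show that each element of $\overline{Im(d)}/Im(d)$ is annihilated by some $r \in S$.

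First I would assemble the differential into a single closed $\Gamma$-equivariant operator by setting $H = \bigoplus_p C^p$ and viewing $d\colon H \to H$ as the direct sum of its components $d_p\colon C^p \to C^{p+1}$. By definition of a $\Gamma$-Fredholm complex, the restriction of $d$ to $\overline{Im(d)}^\bot$ is $\Gamma$-Fredholm, hence so is each $d_p$ viewed on $Ker(d_p)^\bot$; Shubin's lemma then gives that $Im(d_p)$ is essentially dense in $\overline{Im(d_p)}$ for every $p$. Taking the direct sum over $p$, $Im(d)$ is essentially dense in $\overline{Im(d)}$, which is exactly the hypothesis of the preceding proposition (``$f$ has image essentially dense in its closure''). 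Thus for every $x \in \overline{Im(d)}$ there exists $r_x \in \mathcal N(\Gamma)$, injective with dense range, such that $r_x \cdot x \in Im(d)$.

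To finish, let $[x] \in \overline{Im(d)}/Im(d)$. In $\mathcal U(\Gamma) \otimes_{\mathcal N(\Gamma)} \overline{Im(d)}/Im(d)$, the injectivity and dense image of $r_x$ give $r_x^{-1} \in \mathcal U(\Gamma)$, so
\[
1 \otimes [x] = r_x^{-1} r_x \otimes [x] = r_x^{-1} \otimes r_x \cdot [x] = r_x^{-1} \otimes 0 = 0.
\]
Since elements of the form $1 \otimes [x]$ generate $\mathcal U(\Gamma) \otimes_{\mathcal N(\Gamma)} \overline{Im(d)}/Im(d)$ as an abelian group, the tensor product vanishes.

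There is no real obstacle here: once one trusts Dingoyan's proposition (which does the actual work via the polar decomposition of $g \circ \rho(x)$) and the Ore description of $\mathcal U(\Gamma)$, the corollary is essentially the tautology that killing generators by invertible scalars kills the whole module after localization. The only minor point that needs attention is the reduction from a complex to a single operator $H \to H$, which is handled by taking the orthogonal direct sum of the graded pieces.
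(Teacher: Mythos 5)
Your proposal is correct and follows exactly the route the paper intends: Shubin's lemma gives essential density of $Im(d)$ in $\overline{Im(d)}$, Dingoyan's proposition produces for each class an annihilating $r\in\mathcal N(\Gamma)$ that is injective with dense image, and such $r$ become invertible in $\mathcal U(\Gamma)$ by the Ore-localization description. The paper leaves these details implicit (it only remarks that such operators admit a left inverse in $\mathcal U(\Gamma)$), and your write-up, including the reduction of the complex to a single closed endomorphism of $\bigoplus_p C^p$, is a faithful and correct elaboration of that argument.
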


\section{Fredholmness of the $L^2$ De Rham complex}

We return to the setting of our paper, $X$ is a compact Riemann surface and $M\subset X$ is an open such that $D:= X \setminus M$ is a finite set of points and we consider a Galois covering $\pi\colon \tilde M \to M$ with finite monodromy at infinity and Deck group $\Gamma$. We consider a complex polarized variation of Hodge structure with underlying local system $\V$ on $M$. We begin by the following lemma.

\begin{lem}
The sheaves $\ell^2\pi^*\V, \mathcal L^2DR_\infty^\bullet(\pi^*\V), \mathcal L^2DR^\bullet(\pi^*\V), \Omega(\pi^*\V)$ and their graded admits a natural structure sheaves of $\mathcal N(\Gamma)$-modules extending their structures of sheaves of $\C[\Gamma]$-modules.
\end{lem}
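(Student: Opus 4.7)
The $\C[\Gamma]$-module structure is immediate: $\Gamma$ acts on $\tilde M$ by biholomorphic isometries that preserve the pulled-back Hodge bundle $\pi^*\V$ with all its attached data (the filtrations $\pi^*F^\bullet$ and $\pi^*\bar F^\bullet$, the hermitian metric $\pi^*h$, the flat connection), so every operation entering the definitions commutes with $\Gamma$. The task is to extend this action on sections to an action of the weak closure $\mathcal N(\Gamma)$ of $\C[\Gamma]$.

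The strategy for the $L^2$ De Rham sheaves $\mathcal L^2DR^\bullet(\pi^*\V)$ and $\mathcal L^2DR_\infty^\bullet(\pi^*\V)$ is to work locally. For $U\subset X$ small enough that $\pi^{-1}(U\cap M)$ decomposes $\Gamma$-equivariantly as $\Gamma/H\times\tilde U_0$ (with $H$ either trivial when $U\subset M$, or the finite cyclic stabilizer of a connected component when $U$ contains a puncture), one has a $\Gamma$-equivariant isometry
\[
L^2DR^k(\pi^{-1}(U\cap M),\pi^*\V)\simeq \ell^2(\Gamma/H)\hat\otimes L^2DR^k(\tilde U_0,\pi^*\V),
\]
exactly as in the isomorphism (\ref{isomorphism}) used in the proof of the Poincaré lemma. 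Because $H$ is finite, $\ell^2(\Gamma/H)$ is a direct summand of $\ell^2(\Gamma)$ via the projection $\frac{1}{|H|}\sum_{h\in H}\lambda(h)$, so the right-hand side, and hence the left-hand side, is naturally a finitely generated Hilbert $\mathcal N(\Gamma)$-module. For an arbitrary open $U\subset X$, sections of $\mathcal L^2DR^k(\pi^*\V)(U)$ are measurable forms on $\pi^{-1}(U\cap M)$ whose restriction to $\pi^{-1}(K\cap M)$ lies in the above Hilbert $\mathcal N(\Gamma)$-module for every compact $K\subset U$; since the restriction maps between these modules are $\Gamma$-equivariant and bounded, the $\mathcal N(\Gamma)$-actions are mutually compatible and descend to an action on the inverse limit. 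The smooth subcomplex $\mathcal L^2DR_\infty^\bullet$ is preserved because its defining conditions involve only the operators $D$, $\mathfrak d$ and $\square_D$, each of which commutes with $\Gamma$ (hence with $\mathcal N(\Gamma)$) by invariance of the metric and connection.

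The remaining sheaves are handled by reduction to the above. For $\ell^2\pi^*\V=j_*(\ell^2(\Gamma)_M\otimes_{\C[\Gamma]}\pi_!\pi^*\V)$, the constant sheaf $\ell^2(\Gamma)_M$ is naturally an $\mathcal N(\Gamma)$--$\C[\Gamma]$-bimodule via the left and right regular representations, so tensoring over $\C[\Gamma]$ yields a left $\mathcal N(\Gamma)$-module structure which is preserved by $j_*$. For the holomorphic complex $\Omega^\bullet(\pi^*\V)_{(2)}$, viewed as a subsheaf of $\mathcal L^2DR^\bullet(\pi^*\V)$, one must check that the subspace of holomorphic locally-$L^2$ forms is $\mathcal N(\Gamma)$-stable: given $a\in\mathcal N(\Gamma)$ and a holomorphic $\phi$, the element $a\cdot\phi$ is an $L^2$-limit on compacta of $\C$-linear combinations of the translates $\gamma^*\phi$, each of which is holomorphic, so the limit is holomorphic by Weyl's lemma. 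The same argument, applied to the $\Gamma$-invariant holomorphic subbundles $\pi^*F^p\mathcal V$ of $\pi^*\mathcal V$, shows that the filtered pieces $\pi^*F^p\Omega^k(\pi^*\V)_{(2)}$ and their graded are $\mathcal N(\Gamma)$-stable; the corresponding statement for the graded of $\mathcal L^2DR^\bullet(\pi^*\V)$ and $\mathcal L^2DR_\infty^\bullet(\pi^*\V)$ is analogous. The one point that requires attention is the compatibility of the local $\mathcal N(\Gamma)$-actions with sheaf restriction, but this is automatic since restriction is $\Gamma$-equivariant and continuous for the $L^2$-topologies in play.
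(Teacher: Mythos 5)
Your proof is correct and follows essentially the same route as the paper: the paper packages the local $L^2$-norms over compacta into a Fréchet topology on the section spaces and extends the $\C[\Gamma]$-action to $\mathcal N(\Gamma)$ by strong continuity, which is the same mechanism as your local identification with the Hilbert $\mathcal N(\Gamma)$-module $\ell^2(\Gamma/H)\hat\otimes L^2DR^k(\tilde U_0,\pi^*\V)$ followed by passage to the inverse limit over compacta. Your explicit verification that the holomorphic subsheaf $\Omega^\bullet(\pi^*\V)_{(2)}$ and the filtered pieces are stable under the extended action (via closedness of holomorphic forms under $L^2_{loc}$ limits of bounded combinations of translates, which implicitly uses Kaplansky density to keep the approximating net bounded) spells out a point the paper dispatches with ``the other being similar''.
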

\begin{proof}
The structure of $\mathcal N(\Gamma)$-module on $\ell^2\pi^*\V = j_*(\ell^2(\Gamma)\otimes_{\C[\Gamma]} \pi_!\pi^*\V)$ comes from the action of $\mathcal N(\Gamma)$ on $\ell^2(\Gamma)$.

For the other sheaves, they have a natural structure of sheaves of Fréchet space which will allows us to extend the action of $\C[\Gamma]$ to an action of $\mathcal N(\Gamma)$. We do it for the case of $\mathcal L^2DR_\infty^\bullet(\pi^*\V)$ the other being similar. Fix $U \subset X$ open, and for all $K \Subset U$ and $k\in \N^*$ we define the semi-norm
\[
\begin{array}{lrl}
\rho_{K,k} \colon &\mathcal L^2DR_\infty^\bullet(\pi^*\V)(U) &\to \R^+ \\
			  &\omega &\mapsto \|\omega\|_{2,K}^2 + \|\square_D^k\omega\|_{2,K}^2
\end{array}
\]
Those semi-norms gives $\mathcal L^2DR_\infty^\bullet(\pi^*\V)(U)$ the structure of a Fréchet space. The action of $\Gamma$ preserves the semi-norms, it will follows that the morphism $\C[\Gamma] \to \mathcal B(\mathcal L^2DR_\infty^\bullet(\pi^*\V)(U))$ is continuous for the strong topology (here $\mathcal B(F)$ denotes the set of bounded operator of a Fréchet space $F$). Since it is continuous for the strong topology it extends to a morphism $\mathcal N (\Gamma) \to \mathcal B(\mathcal L^2DR_\infty^\bullet(\pi^*\V)(U))$, and $L^2DR_\infty^\bullet(\pi^*\V)(U)$ has a structure of $\mathcal N (\Gamma)$-module, it clearly is compatible with the restriction morphism since the action of $\Gamma$ is compatible with them.
\end{proof}

All the morphisms of sheaves we considered in the previous sections where $\Gamma$-equivariant so it follows that the structure of $\mathcal N (\Gamma)$-module on the cohomology groups $H^*(X,\ell^2\pi^*\V)$ coincides with the structure induced by the $L^2$-De Rham cohomology. 

To prove this use a comparison of $L^2$ De-Rham complex to the \v{C}ech complex of $\ell^2\pi^*\V$ similar to the one done with the smooth De Rham complex in \cite[\S 8]{Bott-Tu}. This comparison will use a $L^2$-version of the \v{C}ech-De Rham complex, for thus we need a notion of double Hilbert complex.

\begin{defn}\label{double complex}
A Hilbert double complex $C^{\bullet , \bullet}$ is the data of Hilbert space $C^{p,q}$ for $(p,q) \in \Z^2$ and two closed densely defined operators $D_1 \colon C^{p,q} \to C^{p+1,q}$ and $D_2\colon C^{p,q} \to C^{p,q+1}$ such that either $D_1$ or $D_2$ is bounded and we have for all $i,j\in \{1,2\}$ $Dom(D_j) \cap Im(D_i) = D_i(dom(D_j))$, and the relations
\[
D_1^2=D_2^2=0 \qquad D_1D_2 = D_2D_1.
\]

The simple complex $(C^\bullet,D)$ associated to the double complex $(C^{\bullet, \bullet}, D_1, D_2)$ is the complex defined by 
\[
C^n = \bigoplus_{p+q = n} C^{p,q} \qquad D_{|C^{p,q}} = D_1 + (-1)^p D_2 \qquad dom(D) = dom(D_1)\cap dom (D_2).
\]
\end{defn}
 
The definition is pretty much similar to the one we have when working with abelian categories, the point worth noting in this definition is the one concerning the domains. The assumptions that one of them is bounded ensures that in the simple complex associated the differential $D$ is closed. For the other condition on the domains, the one we impose is stronger than the naive one namely $D_i(dom(D_j))\subset Dom(D_j)$. The reason is so we can have the following lemma which is usual in homological algebra (for the case of complexes in an abelian category see \cite[Lemma 8.5]{Voisin} for instance).
\begin{lem}
Let $(C^\bullet,d)$ be a bounded Hilbert complex, and $(K^{\bullet, \bullet},D_1,D_2)$ a double Hilbert complex. Assume that we have the following
\begin{enumerate}
\item There exists $i \in \Z$ such that $K^{p,q} = 0$ if $q<0$ or $p<i$.
\item We have an injective morphism of Hilbert complex $\iota \colon C^\bullet \to K^{\bullet, 0}$, which induces an isomorphism $C^n\to Ker(D_2)\cap K^{n,0}$ for each $n \in \Z$ and $Ker(D_2) \cap Dom D_1 = \iota(dom(d))$.
\item For all $p$ the morphism  $\iota\colon C^p\to(K^{p,\bullet},D_2)$ is a resolution of $C^p$.
\end{enumerate}
Then $\iota$ induces a quasi-isomorphism $\iota\colon C^\bullet \to K^\bullet$.
\end{lem}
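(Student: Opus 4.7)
The plan is to prove this lemma by a direct double-complex diagram chase on the simple complex $K^\bullet$, peeling off components of higher vertical degree one row at a time while keeping careful track of the unbounded-operator domains. Both injectivity and surjectivity of $\iota_*$ on cohomology follow the same descending induction scheme on the maximal row occupied by a chosen representative.

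For injectivity: take $c\in C^n$ with $dc=0$ and $\iota(c)=D\xi$ for some $\xi\in Dom(D)\cap K^{n-1}$, decomposed as $\xi=\sum_{p+q=n-1}\xi_{p,q}$ (the hypothesis $\xi\in Dom(D_1)\cap Dom(D_2)$ forces each component to lie in the corresponding domain). Let $q_0$ be the maximal $q$ with $\xi_{\ast,q_0}\neq 0$. If $q_0\geq 1$, the vanishing of the $(p,q_0+1)$-component of $D\xi = \iota(c)\in K^{n,0}$ forces $D_2\xi_{p,q_0}=0$ for every relevant $p$, so by the column-exactness hypothesis (3) we can produce $\eta_{p,q_0-1}$ with $D_2\eta_{p,q_0-1}=(-1)^p\xi_{p,q_0}$; the compatibility hypothesis on domains then lets us take $\eta_{p,q_0-1}\in Dom(D_1)$, hence in $Dom(D)$. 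Subtracting $D(\sum_p\eta_{p,q_0-1})$ from $\xi$ annihilates row $q_0$ and deposits the remainder in strictly lower rows, and iterating delivers $\xi\in K^{n-1,0}\cap Dom(D)$ with $D\xi=\iota(c)$. Now $D_2\xi=0$, so hypothesis (2), namely $Ker(D_2)\cap Dom(D_1)=\iota(dom(d))$, gives $\xi=\iota(c')$ for a unique $c'\in dom(d)$; since $\iota$ is a morphism of complexes on $K^{\bullet,0}$ we get $\iota(dc')=D_1\iota(c')=D\xi=\iota(c)$, and injectivity of $\iota$ forces $c=dc'$. The surjectivity argument runs in parallel: starting from a cocycle $\xi\in Ker(D)\cap K^n$, the same descending induction replaces $\xi$ by a cohomologous representative in $K^{n,0}$, which by (2) has the form $\iota(c)$ for some $c\in dom(d)$ with $dc=0$.

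The chief subtlety here is not combinatorial but analytic: the lifts $\eta_{p,q_0-1}$ produced by column-exactness live a priori only in $K^{p,q_0-1}$, whereas applying the total differential $D$ to them requires membership in $Dom(D_1)\cap Dom(D_2)=Dom(D)$. This is precisely what the compatibility hypothesis $Dom(D_j)\cap Im(D_i)=D_i(Dom(D_j))$ was designed to provide: applied with $(i,j)=(2,1)$, it supplies lifts inside $Dom(D_1)$, so the induction step never escapes $Dom(D)$. The assumption that one of $D_1,D_2$ is bounded is used only to guarantee that $D$ is closed on the dense subspace $Dom(D_1)\cap Dom(D_2)$, so no additional analytic input beyond the stated hypotheses is required.
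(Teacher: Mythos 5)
Your proposal is correct and follows essentially the same argument as the paper: a descending induction on the highest occupied row of the total complex, using column-exactness (hypothesis 3) to strip off the top component and the domain compatibility condition $Dom(D_1)\cap Im(D_2)=D_2(Dom(D_1))$ to keep the correcting terms inside $Dom(D)$, then invoking $Ker(D_2)\cap Dom(D_1)=\iota(dom(d))$ once everything sits in $K^{\bullet,0}$. The paper phrases the induction in terms of the single component $x_{0,n-1}$ of maximal $q$ rather than a whole row, but since each antidiagonal $p+q=n-1$ meets each row in at most one spot these are the same step.
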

\begin{proof}
The condition we impose on the domains will allows us to use the usual proof in case of abelian group almost verbatim, we give it nonetheless. By shifting the indices one can assume that $C^{p,q} = 0$ if $p<0$ or $q<0$. Let's begin to show that $\iota$ is injective in cohomology, take $y \in Im(\iota)\cap K^n \subset K^{n,0}$ which is $D$-exact, so there exists $x = (x_{p,q})_{p+q = n-1}$ with $Dx =y$ in particular $D_2 x_{0,n-1} =0$ so if $n-1 = 0$, $x = x_{0,0} \in Ker(D_2) = Im(\iota)$. If $n-1 > 0$, we can take $z \in K^{0,n-2} \cap dom(D_2)$ such that $D_2 z = x_{0,n-2}$, since $x_{0,n-1} \in dom(D_1)$ one can take $z \in dom(D_2)\cap dom(D_1)$ so $Dz$ is well defined (this is why we need the assumptions on the domains for the double complex) and $D(x - Dz) = Dz =y$ so one can assume $x \in \bigoplus_{q < n-1} K^{p,q}$. By induction one obtains that we can assume that $x \in K^{n-1,0}$, but in this case $D_2x = 0$ since $y\in K^{n,0}$, and $x \in Im(\iota)$ which gives us the injectivity in cohomology since $x\in dom(D_1)$ is well defined $\iota^{-1}x \in dom(d)$ since $Ker(D_2) \cap Dom D_1 = \iota(dom(d))$ which gives the injectivity.

For the surjectivity, if $y\in K^n$ is a closed form, by a similar induction one can assume $y \in K^{n,0}$, in which case $y$ is both $D_2$ and $D_1$ closed so it is the image of a $d$-closed form.
\end{proof}

Now we will construct a double Hilbert complex, whose simple complex associated will be $\Gamma$-Fredholm and quasi-isomorphic to $L^2DR^\bullet(\tilde M ,\pi^*\V)$.
For each $p\in X \setminus M$ fix an open neighbourhood $U_p$ of $p$ with $U_p \cap M$ quasi-isometric to $\Delta_{1/2}^*$ with the endowed Poincaré distance. Fix $V_p \Subset U_p$ a smaller disk and take a covering of $X \setminus \bigcup V_p$ by open disks such that their preimage by $\pi$ is a disjoint union of disks. And consider $\mathfrak U = (U_\alpha)_{\alpha\in I}$ the open cover of $X$ given by the $U_p$ and the covering of $X \setminus \bigcup V_p$. We endows $I$ with an arbitrary total order, if $\alpha_0 < \dots < \alpha_q$ are elements of $I$, we denote by $U_{\alpha_0,\dots, \alpha_q}$ the intersection $U_{\alpha_1}\cap\dots\cap U_{\alpha_q}$. We can choose the covering such that in such a way that all the $U_{\alpha_0,\dots,\alpha_q}$ are quasi-isometric to convex subsets of $\C$ for $q>0$.

Now we define the $L^2$ \v{C}ech-De Rham double complex $(K^{\bullet, \bullet},D_1,D_2)$ in the same fashion the usual one (described for instance in $\cite[\S 8]{Bott-Tu}$). So for $p,q \in \Z^2$ we set
\[
K^{p,q} =\left\{\begin{array}{ll} \underset{\alpha_0<\dots<\alpha_q \in I}{\bigoplus} L^2DR^p(\pi^{-1}U_{\alpha_0,\dots, \alpha_q}, \pi^*\V) & \text{if } (p,q) \in \N^2\\
						0 &\text{if }p < 0 \text{ or } q<0 
			\end{array}\right.
\]
the differential $D_1$ is the closed operator $D_{max}$, the maximal closure of the flat connection $D$ defined on each open subset $U_{\alpha_1, \cdots, \alpha_q}$. If $\alpha_0<\dots< \alpha_q$ are elements of $I$ fix $0\leq i\leq q$, and $\omega \in K^{p,q}$, with $\omega = (\omega_{\alpha_0,\cdots,\alpha_q})$ where the $\omega_{\alpha_0,\cdots,\alpha_q} \in L^2DR^p(\pi^{-1}U_{\alpha_0,\dots, \alpha_q}, \pi^*\V)$, we define $D_2 \omega$ by
\[
(D_2\omega)_{\alpha_0,\dots,\alpha_{q+1}} = \sum_{k=0}^{q+1} (-1)^k \omega_{\alpha_0,\dots, \alpha_{k-1},\alpha_{k+1},\dots,\alpha_{q+1}} \in L^2DR^p(\pi^{-1}U_{\alpha_0,\dots,\alpha_{q+1}}, \pi^*\V)
\]

We will verify the following lemma
\begin{lem}
The $L^2$ \v{C}ech-De Rham double complex $K^{\bullet,\bullet}$ is a Hilbert double complex in the sense of Definition \ref{double complex}, and the morphism induced by the restriction $L^2DR^\bullet(\tilde M, \V) \to K^{\bullet,0}$ induces a quasi-isomorphism between Hilbert complexes
\[
L^2DR^\bullet(\tilde M, \pi^*\V) \to K^\bullet.
\] 
\end{lem}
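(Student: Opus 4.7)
The plan is to check that $(K^{\bullet,\bullet}, D_1, D_2)$ satisfies the axioms of Definition~\ref{double complex} and then to apply the preceding general lemma, with $\iota$ the restriction map $L^2DR^\bullet(\tilde M, \pi^*\V) \to K^{\bullet,0}$. Throughout, the key ingredient is a smooth partition of unity $(\rho_\alpha)_{\alpha \in I}$ on $X$ subordinate to $\mathfrak U$, which exists because $X$ is compact and $I$ is finite. The functions $\rho_\alpha$ and $d\rho_\alpha$ are bounded on $X$, so multiplication by $\rho_\alpha \circ \pi$ defines a bounded operator on each $L^2DR^p(\pi^{-1}V, \pi^*\V)$ that preserves $Dom(D_{max})$.

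For the double-complex structure, I would take $D_2$ as the bounded operator: it is a finite alternating sum of $L^2$-restrictions, each of norm at most one. The operator $D_1$ is closed as an orthogonal direct sum of maximal closures $D_{max}$ on each summand $L^2DR^p(\pi^{-1}U_{\alpha_0\dots\alpha_q}, \pi^*\V)$. The relations $D_1^2 = D_2^2 = 0$ and $D_1 D_2 = D_2 D_1$ follow from $D^2 = 0$ on smooth forms together with the fact that restriction commutes with the flat connection. Since $D_2$ is everywhere defined, three of the four domain identities are automatic; the remaining one, $Dom(D_1) \cap Im(D_2) = D_2(Dom(D_1))$, follows from the contracting homotopy constructed in the next paragraph.

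Define $h\colon K^{p,q} \to K^{p,q-1}$ by
\[
(h\omega)_{\alpha_0 \dots \alpha_{q-1}} = \sum_{\alpha \in I} (\rho_\alpha \circ \pi)\, \omega_{\alpha \alpha_0 \dots \alpha_{q-1}},
\]
each summand extended by zero to $\pi^{-1}U_{\alpha_0 \dots \alpha_{q-1}}$ (licit because $\rho_\alpha$ vanishes off $U_\alpha$). The observation of the first paragraph shows that $h$ is bounded and preserves $Dom(D_1)$, and the standard \v{C}ech computation together with $\sum_\alpha \rho_\alpha = 1$ yields $D_2 h + h D_2 = \mathrm{id}$ for $q \geq 1$. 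If $\omega = D_2 \tau \in Dom(D_1)$ then $D_2 \omega = 0$, so $\omega = D_2(h\omega)$ with $h\omega \in Dom(D_1)$, supplying the missing domain identity. The same formula also shows that $(K^{p,\bullet}, D_2)$ is acyclic in positive degrees for every $p$.

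To apply the earlier double-complex lemma, the boundedness hypothesis~(1) holds by construction. For~(2), injectivity of $\iota$ is obvious, and a $0$-cocycle $(\omega_\alpha) \in Ker(D_2)$ glues to a form $\omega$ on $\tilde M$ satisfying $\|\omega\|_2 \leq \sum_\alpha \|(\rho_\alpha \circ \pi)\omega_\alpha\|_2 \leq \sum_\alpha \|\omega_\alpha\|_2$, so $\omega \in L^2DR^p(\tilde M, \pi^*\V)$; the same argument applied to $D\omega$ confirms $Ker(D_2) \cap Dom(D_1) = \iota(Dom(D_{max}))$. Hypothesis~(3) is the acyclicity already established, and the lemma then gives the required quasi-isomorphism. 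The only delicate point in the entire argument is the interaction between local and global $L^2$ norms, and it is resolved cleanly by the boundedness of the partition of unity on the compact base $X$; no genuine analytic obstacle arises here.
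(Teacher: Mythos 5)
Your proposal is correct and follows essentially the same route as the paper: the partition-of-unity homotopy $h$ with $(h\omega)_{\alpha_0\dots\alpha_{q-1}}=\sum_\alpha(\rho_\alpha\circ\pi)\,\omega_{\alpha\alpha_0\dots\alpha_{q-1}}$ is exactly the operator $H$ the paper constructs, and it is used for the same two purposes (the domain identity $Dom(D_1)\cap Im(D_2)=D_2(Dom(D_1))$ and the acyclicity of the rows) before invoking the preceding double-complex lemma. Your explicit verification that $Ker(D_2)\cap Dom(D_1)=\iota(Dom(D_{max}))$ is a small point the paper leaves as "straightforward," but the argument is the same.
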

\begin{proof}
We have $D_2$ bounded since the covering $\mathfrak U$is finite, it is straight forward to see that one has $D_2\circ D_2 = 0 = D_1 \circ D_1$ and $D_1\circ D_2 = D_2\circ D_1$ on $dom(D_1)$.

We will construct a bounded morphism $H\colon K^{p,q+1} \to K^{p,q}$ for all $p,q \geq 0$ satisfying
\[
H(Dom(D_1)) \subset Dom(D_1) \qquad HD_2 + D_2H = id.
\]
This will prove that $Dom(D_1) \cap Im(D_2) = D_2(Dom(D_1))$, so $K^{\bullet,\bullet}$ is a double Hilbert complex, and since it is straight-forward that $L^2DR^k(M,\pi^*\V)$ is the kernel of $D_2(K^{k,0})$ the previous lemma will tell us that the restriction gives a quasi-isomorphism $L^2DR^\bullet(\tilde M, \pi^*\V) \to K^\bullet$.

This operator $H$ will be constructed in the same fashion that in the case of the smooth \v{C}ech-De Rham complex (see for instance \cite[\S8]{Bott-Tu}). We will use the same convention of $\cite{Bott-Tu}$, i.e if $\alpha_0<\dots<\alpha_q$ and $\sigma \in \mathfrak S _q$ we set
\[
\omega_{\alpha_{\sigma(0)},\dots,\alpha_{\sigma(q)}} = sign(\sigma)\omega_{\alpha_0,\dots,\alpha_q}.
\]
If we fix a partition of unity $(\rho_\alpha)_{\alpha\in I}$ subordinates to the covering $\mathfrak U$, we define $H\colon K^{p,q+1} \to K^{p,q}$ by
\[
(H\omega)_{\alpha_{0},\dots,\alpha_q} = \sum_{\alpha \in I\setminus{\alpha_0,\dots,\alpha_q}} (\rho_\alpha\circ\pi) \omega_{\alpha,\alpha_0,\dots,\alpha_q}.
\]
Where we extends the forms $\rho_\alpha \omega_{\alpha,\alpha_0,\dots,\alpha_q}$ to $U_{\alpha_{0},\dots,\alpha_q}$ by $0$. The operator $H$ is bounded and for $p,q\leq0$, one has on $K^{p,q+1}$ the relation
\[
id = HD_2 + D_2H.
\]
\end{proof}

It follows that $L^2DR^\bullet(\tilde M, \pi^*\V)$ is $\Gamma$-Fredholm if and only if the complex $K^\bullet$ is $\Gamma$-Fredholm. To check this we need to compare the complex $K^{\bullet}$ to the \v{C}ech complex $C(\mathfrak U, \ell^2\pi^*\V)$ of the sheaf $\ell^2\pi^*\V$ for the covering $\mathfrak U$.

\begin{lem}\label{Cech to De Rham}
For the covering $\mathfrak U$, the \v{C}ech complex the morphism of inclusion $C^\bullet(\mathfrak U, \ell^2\pi^*\V) \to K^{0,\bullet}$ induces a quasi-isomorphism between the complex $C^\bullet(\mathfrak U, \pi^*\V)$ and the complex $K^\bullet$. In particular $K^\bullet$ is $\Gamma$-Fredholm.
\end{lem}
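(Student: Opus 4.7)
The plan is to apply the double-complex lemma established earlier in this section with the roles of the two directions exchanged, so that the Čech complex plays the part previously played by the $L^2$ De Rham complex. Setting $\tilde K^{p,q} := K^{q,p}$, $\tilde D_1 := D_2$ and $\tilde D_2 := D_1$, the newly bounded operator is $\tilde D_1$, and both domain compatibility conditions still hold (one trivially, the other by the homotopy $H$ already constructed). The inclusion $\iota : C^\bullet(\mathfrak U, \ell^2\pi^*\V) \hookrightarrow K^{0,\bullet}$ identifies the left-hand side with $\ker \tilde D_2 \cap \tilde K^{\bullet, 0}$, and the crux of the proof is to check that for every $p \geq 0$ the augmented row
\[
0 \to C^p(\mathfrak U, \ell^2\pi^*\V) \xrightarrow{\iota} K^{0,p} \xrightarrow{D_1} K^{1,p} \xrightarrow{D_1} K^{2,p} \to 0
\]
is exact. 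Once this resolution property is in place, the transposed form of the lemma produces the desired quasi-isomorphism $C^\bullet(\mathfrak U, \ell^2\pi^*\V) \to K^\bullet$.

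The cover $\mathfrak U$ was engineered so that each non-empty intersection $U_{\alpha_0,\dots,\alpha_p}$ is either a distinguished puncture neighbourhood $U_q$ (which can only happen when $p=0$) or a relatively compact open of $M$ quasi-isometric to a convex subset of $\C$. In both cases $\pi^{-1}U_{\alpha_0,\dots,\alpha_p}$ decomposes isometrically as $\Gamma/\langle\gamma\rangle \times \tilde U$ for a connected component $\tilde U$ of the pre-image and $\gamma$ its (possibly trivial) meridian stabilizer. Exactly as in Lemma \ref{reduction Poincare lemma} we then have
\[
L^2DR^\bullet\bigl(\pi^{-1}U_{\alpha_0,\dots,\alpha_p}, \pi^*\V\bigr) \simeq \ell^2\bigl(\Gamma/\langle\gamma\rangle\bigr) \,\hat\otimes\, L^2DR^\bullet(\tilde U, \pi^*\V),
\]
and the higher cohomology of the second factor vanishes by Lemma \ref{diskcase} in the puncture case and by \cite[Lemma 4.2]{Iwaniec-Lutoborski} in the convex case. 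Since this vanishing forces the corresponding differentials to have closed range, the tensored complex is also acyclic in positive degrees, which supplies the required vanishing of $H^{\geq 1}$ for the augmented row.

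The identification of $H^0$ with the Čech coefficient is the more delicate point, and is where I expect the main obstacle to lie. A $D_1$-closed $L^2$ element of $K^{0,p}$ restricted to one piece is a $\Gamma/\langle\gamma\rangle$-indexed family of parallel sections of $\pi^*\V$ on the components, subject pointwise to a Zucker-type $L^2$ constraint at the punctures (membership in $W_0$ of the appropriate monodromy filtration, by Proposition \ref{caracl2}) and globally to $\ell^2$ summability across the family. Unwinding the definition $\ell^2\pi^*\V = j_*(\ell^2(\Gamma)\otimes_{\C[\Gamma]}\pi_!\pi^*\V)$ together with the description $\pi_!\pi^*\V(U_q \cap M) \simeq \C[\Gamma]\otimes_{\C[\langle\gamma\rangle]} W_0$ at a puncture, one checks that the two viewpoints yield the same module $\ell^2(\Gamma/\langle\gamma\rangle) \,\hat\otimes\, W_0$; the non-puncture case is strictly easier, with $W_0$ replaced by the full fibre of $\V$. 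This reconciliation between the $L^2$ and sheaf-theoretic descriptions is the main technical step.

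For the Fredholmness statement, once the quasi-isomorphism is established, $\Gamma$-Fredholmness of $K^\bullet$ reduces by the invariance result of Section 8 to that of $C^\bullet(\mathfrak U, \ell^2\pi^*\V)$. The Čech complex is a bounded complex each of whose finitely many terms is a finite direct sum of modules of the form $\ell^2(\Gamma/\langle\gamma\rangle) \otimes V$ with $V$ finite-dimensional, and is therefore of finite Von Neumann dimension. Consequently every spectral projection of the associated Laplacian is of finite Von Neumann dimension and the criterion $tr_\Gamma E^{\square}_\varepsilon < +\infty$ is automatic, which concludes the plan.
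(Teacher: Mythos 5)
Your proposal is correct and follows essentially the same route as the paper: transpose the double-complex lemma so the \v{C}ech complex augments the rows, prove acyclicity of the rows in positive degree by splitting into the puncture case (Lemma \ref{diskcase}, via the tensor decomposition $\ell^2(\Gamma/\langle\gamma\rangle)\hat\otimes L^2DR^\bullet(\tilde U,\pi^*\V)$ as in Lemma \ref{reduction Poincare lemma}) and the convex case (\cite[Lemma 4.2]{Iwaniec-Lutoborski}), and deduce $\Gamma$-Fredholmness from the finite generation of the \v{C}ech terms together with invariance under quasi-isomorphism. The only divergence is one of emphasis: you elaborate the degree-zero identification of $\ker D_1$ with $C^q(\mathfrak U,\ell^2\pi^*\V)$, which the paper simply declares to be clear.
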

Note that it does not follow from the Leray theorem since the sheaf $\ell^2\pi^*\V$ might not be acyclic in any of the neighbourhood $U_p$ for $p\in X \setminus M$.
It can also be noted that the complex $C^\bullet(\mathfrak U, \ell^2\pi^*\V)$ is $\Gamma$-Fredholm since all the $C^p(\mathfrak U, \ell^2\pi^*\V)$ are of finitely generated since $\mathfrak U$ is finite, so the last assertion is a consequence of the previous one.
\begin{proof}
It is clear that $C^q(\mathfrak U, \ell^2(\pi^*\V))$ is the kernel of $D_1\colon K^{0,q} \to K^{1,q}$.We only need to verify that $D_1$ is exact in positive degree, i.e for any open subset $U_{\alpha_0,\dots, \alpha_q}$ the Hilbert complex $L^2(\pi^{-1}U_{\alpha_0,\dots,\alpha_q},\pi^*\V)$ has no cohomology in positive degree. However any open subset $U_{\alpha_0,\dots, \alpha_q}$ is either a neighbourhood of a puncture $p \in X\setminus M$ quasi-isometric to a punctured disk $\Delta^*_R$ endowed with Poincaré metric, or $U_{\alpha_0,\dots, \alpha_q}$ is quasi-isometric to a bounded convex subset of $\C$. In the first case the complex $L^2DR^\bullet(\pi^{-1}(U_{\alpha_0,\dots,\alpha_q}))$ is acyclic as we have shown in the proof of the $L^2$-Poincaré lemma \ref{Pclem}, for the second case one has an isomorphism of Hilbert complexes
\[
L^2DR^\bullet(\pi^{-1}U,\pi^*,\V) \simeq \bigoplus L^2DR^\bullet(D, \C^n)
\]
where the $\bigoplus$ denotes the Hilbert direct sum. We know that the complex on the right has no cohomology in positive degree thanks to \cite[Lemma 4.2]{Iwaniec-Lutoborski} which gives us the wanted result. It follows that the inclusion $C^\bullet(\mathfrak U, \ell^2(\pi^*\V)) \to K^{0,\bullet}$ induces a quasi-isomorphism $C^\bullet(\mathfrak U, \ell^2(\pi^*\V)) \to K^{\bullet}$.
\end{proof}

Thanks to these lemmas, we obtain main technical result of the section.

\begin{prop}
The complex $L^2DR^\bullet(\tilde M, \pi^*\V)$ is $\Gamma$-Fredholm.
\end{prop}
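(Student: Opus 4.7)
The plan is essentially to assemble the two lemmas proved immediately above the proposition, using the fact established in the section on Fredholm complexes of Hilbert $\mathcal N(\Gamma)$-modules that $\Gamma$-Fredholmness is invariant under quasi-isomorphism of bounded complexes of Hilbert $\mathcal N(\Gamma)$-modules.

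First I would apply the lemma showing that the restriction of a form on $\tilde M$ to the open subsets $\pi^{-1}(U_{\alpha_0,\dots,\alpha_q})$ provides a quasi-isomorphism of Hilbert complexes
\[
L^2DR^\bullet(\tilde M,\pi^*\V) \longrightarrow K^\bullet,
\]
where $K^\bullet$ is the simple complex associated to the $L^2$ \v{C}ech-De Rham double complex. Next, Lemma \ref{Cech to De Rham} gives a second quasi-isomorphism $C^\bullet(\mathfrak U,\ell^2\pi^*\V) \to K^\bullet$ and, crucially, tells us that $K^\bullet$ is $\Gamma$-Fredholm: indeed, since $\mathfrak U$ is a finite cover the \v{C}ech terms $C^q(\mathfrak U,\ell^2\pi^*\V)$ are finite direct sums of sections of $\ell^2\pi^*\V$, each of which is a finitely generated Hilbert $\mathcal N(\Gamma)$-module, and the \v{C}ech differential is bounded, so the Laplacian has a bounded trace and the spectral projection $E^{\square}_\varepsilon$ has finite $\mathcal N(\Gamma)$-trace.

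It remains only to invoke the result from the Fredholmness section stating that if $f\colon C_0^\bullet \to C_1^\bullet$ is a quasi-isomorphism of bounded Hilbert $\mathcal N(\Gamma)$-complexes, then $C_0^\bullet$ is $\Gamma$-Fredholm iff $C_1^\bullet$ is. Applying this to our quasi-isomorphism $L^2DR^\bullet(\tilde M,\pi^*\V) \to K^\bullet$ and using that $K^\bullet$ is $\Gamma$-Fredholm gives the conclusion.

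There is no real obstacle here: all the work has been done in the preceding two lemmas and in the general homological machinery for Hilbert $\mathcal N(\Gamma)$-complexes. The only small subtlety is to check that the morphism $L^2DR^\bullet(\tilde M,\pi^*\V) \to K^\bullet$ is genuinely a morphism of Hilbert $\mathcal N(\Gamma)$-complexes (i.e.\ $\Gamma$-equivariant and bounded) so that the invariance statement applies, but this is immediate since it is given by restriction of forms to a $\Gamma$-invariant family of open subsets and is bounded because $\mathfrak U$ is finite.
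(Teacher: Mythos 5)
Your proposal is correct and follows exactly the route the paper takes: the paper's proof of this proposition is precisely the assembly of the two preceding lemmas (the quasi-isomorphisms $L^2DR^\bullet(\tilde M,\pi^*\V)\to K^\bullet$ and $C^\bullet(\mathfrak U,\ell^2\pi^*\V)\to K^\bullet$, with $K^\bullet$ Fredholm because the finite \v{C}ech complex has finitely generated terms) together with the invariance of $\Gamma$-Fredholmness under quasi-isomorphism established in the previous section. Nothing is missing.
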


This result is not a consequence of result in \cite{Eyssidieux22} where variation of Hodge structure which only implies the above Proposition when $\Sigma = \emptyset$. Moreover the equality of the Laplacians $\square_D = \square_{D''}$ implies that the Dolbeaut complex $L^2Dolb^{p,\bullet}(\pi^*\V)$ are also $\Gamma$-Fredholm. Recall that we have the Hodge to De Rham spectral sequence
\[
E_1^{p,q} = H^{p+q}(L^2Dolb_\infty^{p,\bullet}(\tilde M, \pi^*\V)) \implies H^{p+q}(L^2DR_\infty^\bullet(\tilde M,\pi^*\V)).
\]
We have the Kodaira decomposition
\[
H^{p+q}(L^2Dolb_\infty^{p,\bullet}(\tilde M, \pi^*\V)) = Harm^{p,w+q}(\tilde M, \pi^*\V) \oplus \frac{\overline{im(D'')}}{im(D'')}.
\]
The factor $Harm^{p,w+q}(\tilde M, \pi^*\V)$ is preserved at each page of the spectral sequence since harmonic forms are $D$-closed and orthogonal to $Im(D)$. Since the factor $\frac{\overline{im(D'')}}{im(D'')}$ are of $\mathcal U(\Gamma)$-torsion so we obtain finally the following result on the existence of pure Hodge structure on the cohomology.

\begin{thm}\label{Hodge structure L2 cohomology}
If $\V$ is a local system underlying a polarized complex variation of Hodge structure on $M$, the canonical isomorphisms of $\mathcal U (\Gamma)$-module
\[
\begin{aligned}
\mathcal U (\Gamma) \otimes_{\mathcal N(\Gamma)} H^*\left(X, \ell^2\pi^*\V\right) &\simeq \mathcal U(\Gamma)\otimes_{\mathcal N (\Gamma)}\mathbb{H}^*(X, \Omega_{(2)}^\bullet(\pi^*\V)) \\
 & \simeq \mathcal U(\Gamma)\otimes_{\mathcal N (\Gamma)} H_2^*(\tilde M, \pi^*\V)
\end{aligned}
\]
induces a pure Hodge structure of weight $w+k$ on $\mathcal U (\Gamma) \otimes_{\mathcal N(\Gamma)} H^k\left(X,\ell^2\pi^*\V \right)$ and one has a canonical isomorphisms
\[
\begin{aligned}
Gr^p_F(\mathcal U (\Gamma) \otimes_{\mathcal N(\Gamma)} H^k\left(X,\ell^2\pi^*\V \right)) &\simeq \mathcal U (\Gamma) \otimes_{\mathcal N(\Gamma)} \mathbb H ^k\left(X,Gr^p_F(\Omega_{(2)}^\bullet(\pi^*\V)) \right) \\
																	   &\simeq \mathcal U (\Gamma) \otimes Harm(\tilde M, \pi^*\V)^{p,w+k - p}.
\end{aligned}
\]
\end{thm}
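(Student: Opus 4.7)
The plan is to combine the three principal inputs that have been established in the preceding sections. Tensoring the canonical isomorphisms of Theorems \ref{Pclem} and \ref{Dolbeault lemma} with $\mathcal U(\Gamma)$ immediately gives the first chain of isomorphisms in the statement, while the filtered version of Theorem \ref{Dolbeault lemma} with respect to the Hodge filtration yields the analogous chain on each graded piece $Gr^p_F$. What remains is to promote this data to an actual pure Hodge structure of weight $w+k$, i.e.\ to a $(P,Q)$-bigrading with $P+Q=w+k$.

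For this I would run the Hodge-to-De Rham spectral sequence
\[
E_1^{p,q} = H^{p+q}(L^2Dolb_\infty^{p,\bullet}(\tilde M,\pi^*\V)) \implies H^{p+q}(L^2DR_\infty^\bullet(\tilde M,\pi^*\V))
\]
associated to the filtration $\pi^*F^\bullet$. The identity of Laplacians $\square_D = 2\square_{D''}$, valid in the complete Kähler setting, lets the $\Gamma$-Fredholmness of $L^2DR^\bullet$ just established propagate to each $L^2Dolb^{p,\bullet}$. By the weak Kodaira decomposition one then has
\[
E_1^{p,q} \simeq Harm^{p,w+q}(\tilde M,\pi^*\V) \oplus \frac{\overline{Im(D'')}}{Im(D'')},
\]
and Corollary \ref{U(Gamma)-torsion} shows that the torsion summand is annihilated after tensoring with $\mathcal U(\Gamma)$. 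The same Kodaira argument applied to the full De Rham complex decomposes the abutment with harmonic part $Harm^k(\tilde M,\pi^*\V) = \bigoplus_{P+Q=w+k} Harm^{P,Q}(\tilde M,\pi^*\V)$.

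The final step is to verify that the spectral sequence degenerates at $E_1$ after tensoring with $\mathcal U(\Gamma)$, so that the associated graded of the Hodge filtration on the abutment is exactly the harmonic $(p,w+q)$-component. The harmonic summand is preserved by every subsequent differential $d_r$, and $d_r$ vanishes on it because a square integrable harmonic form is simultaneously $D'$- and $D''$-closed and is orthogonal to $Im(D)$, hence its class lifts to a $D$-closed representative in the total complex and is killed by all higher differentials. Any remaining contribution to higher pages is supported on the torsion complement $\overline{Im(D'')}/Im(D'')$, which becomes zero after tensoring with $\mathcal U(\Gamma)$ by flatness of $\mathcal U(\Gamma)$ over $\mathcal N(\Gamma)$. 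The bigrading $\mathcal U(\Gamma)\otimes Harm^{P,Q}$ with $P+Q=w+k$ then descends to the associated graded of the Hodge filtration on the cohomology, delivering the pure Hodge structure. The main technical obstacle --- namely the $\Gamma$-Fredholmness of the $L^2$ De Rham and Dolbeault complexes --- has already been secured by the preceding \v{C}ech-De Rham comparison, so the rest is a formal consequence of the weak Kodaira decomposition and the flatness properties of the algebra of affiliated operators.
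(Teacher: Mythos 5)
Your proposal is correct and follows essentially the same route as the paper: the same Hodge-to-De Rham spectral sequence for the filtered smooth $L^2$ De Rham complex, the same weak Kodaira decomposition of the $E_1$ page into a harmonic summand plus $\overline{Im(D'')}/Im(D'')$, the same observation that the harmonic part is preserved by all differentials while the torsion part is killed after tensoring with the flat $\mathcal N(\Gamma)$-algebra $\mathcal U(\Gamma)$ via Corollary \ref{U(Gamma)-torsion}, and the same reduction of $\Gamma$-Fredholmness of the Dolbeault complexes to that of the De Rham complex through the identity of Laplacians.
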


\begin{rem}
The $\Gamma$-Fredholm property ensures that the $L^2$ cohomology groups have a finite $\mathcal N(\Gamma)$-dimension.
\end{rem}

\section{The $L^2$ Riemann-Hurwitz formula}

In this section we will show a Riemann-Hurwitz type formula. Before stating the result we set a few notations
\[
\begin{aligned}
\chi_{2,\Gamma}(\tilde M, \pi^{-1}\V) &= \sum_{k=0}^2 (-1)^k \dim_{\mathcal N(\Gamma)} H^k_{2}(\tilde M, \pi^{-1}\V)\\
\chi_{2}(M, \V) &= \sum_{k=0}^2  (-1)^k \dim H^k_{2}(M, \V)
\end{aligned}
\]
We recall that for our cohomology groups we consider we consider a hermitian metric with Poincaré singularities on $M$, and a metric given by a polarization on the flat vector bundle associated $\V$, and their pull-back and $\tilde M$. For $p\in X \setminus M$, we will denote by $n_p$ the cardinal of the isotropy subgroup of a preimage of $p$, and by $j_*\V_p$ the stalk of $j_*\V$ at $p$.

\begin{thm}($L^2$ Riemann-Hurwitz lemma)
We have the following equality.
\[
\chi_{2,\Gamma}(\tilde M, \pi^{-1}\V) - \sum_{p\in X \setminus M} \frac{\dim j_*\V_p}{n_p} = \chi_{2}(M, \V) - \sum_{p\in X \setminus M} \dim j_*\V_p
\]
\end{thm}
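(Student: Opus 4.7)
The plan is to express both Euler characteristics as finite alternating sums of (Von Neumann) dimensions of sections of sheaves on $X$ over the finite cover $\mathfrak{U}$ constructed in Section~9, and to compare the two sums open by open.

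First I would use Lemma~\ref{Cech to De Rham}: the Čech complex $C^\bullet(\mathfrak{U}, \ell^2\pi^*\V)$ is quasi-isomorphic as a bounded complex of finitely generated Hilbert $\mathcal{N}(\Gamma)$-modules to $L^2DR^\bullet(\tilde M, \pi^*\V)$. Since $\dim_{\mathcal{N}(\Gamma)}$ is additive on short exact sequences this yields
\[
\chi_{2,\Gamma}(\tilde M, \pi^*\V) = \sum_{q\geq 0}(-1)^q \sum_{\alpha_0<\cdots<\alpha_q} \dim_\Gamma \ell^2\pi^*\V(U_{\alpha_0,\ldots,\alpha_q}).
\]
The analogous identity on the base requires the cover to be acyclic for $j_*\V$: on each puncture neighborhood $U_p\cong \Delta$, the Leray spectral sequence for $j\colon \Delta^*\hookrightarrow\Delta$ collapses once one observes that $R^1 j_*\V$ is a skyscraper at $p$ whose contribution already accounts for all of $H^1(\Delta^*,\V)$, forcing $H^i(\Delta,j_*\V)=0$ for $i\geq 1$. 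Combined with Zucker's theorem $H^\bullet_2(M,\V)\cong H^\bullet(X,j_*\V)$ one obtains
\[
\chi_2(M, \V) = \sum_{q\geq 0}(-1)^q \sum_{\alpha_0<\cdots<\alpha_q} \dim (j_*\V)(U_{\alpha_0,\ldots,\alpha_q}).
\]

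Next I would compare the two sums term by term. By the construction of $\mathfrak{U}$ every intersection of positive degree, as well as every $U_\alpha$ that is not a puncture neighborhood, is contractible and contained in $M$. On such an open $U$, the preimage $\pi^{-1}(U)$ is a disjoint union of contractible opens indexed by $\Gamma$, so $\pi_!\pi^*\V(U)\cong \C[\Gamma]\otimes \C^n$ and therefore $\ell^2\pi^*\V(U)\cong \ell^2(\Gamma)\otimes \C^n$, which has $\Gamma$-dimension $n=\dim(j_*\V)(U)$. Hence these contributions cancel exactly in the difference of the two Čech Euler characteristics.

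The only remaining contributions come from the degree-zero terms at the puncture neighborhoods $U_p$. On one hand $(j_*\V)(U_p)=(j_*\V)_p$ by definition of $j_*$. On the other hand, $\pi^{-1}(U_p\cap M)$ breaks into $|\Gamma|/n_p$ components, each being a punctured disk stabilized by a cyclic subgroup of order $n_p$, and unwinding $\ell^2(\Gamma)_M\otimes_{\C[\Gamma]}\pi_!\pi^*\V$ presents its sections over $U_p$ as a module induced along this finite subgroup, whose $\Gamma$-dimension equals $\dim(j_*\V)_p/n_p$. Substituting gives
\[
\chi_{2,\Gamma}(\tilde M, \pi^*\V) - \chi_2(M, \V) = \sum_{p\in \Sigma}\!\left(\frac{\dim(j_*\V)_p}{n_p}-\dim(j_*\V)_p\right),
\]
which rearranges to the claimed identity. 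The main obstacle I expect is the stalk computation at the punctures: one must carefully track the interaction of the deck action of the finite stabilizer $\langle\gamma_p\rangle$ with the local monodromy of $\V$ on $\pi_!\pi^*\V$, and invoke the $\tfrac{1}{n_p}$-scaling of $\dim_{\mathcal{N}(\Gamma)}$ under induction from $\mathcal{N}(\langle\gamma_p\rangle)$. The acyclicity of $\mathfrak{U}$ for $j_*\V$ is a routine verification once one sees that the failure of $j_*\V$ to be locally constant is supported on the skyscraper $R^1 j_*\V$.
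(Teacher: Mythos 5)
Your proposal is correct and follows essentially the same route as the paper: both sides are computed as alternating sums of (Von Neumann) dimensions of Čech cochains over the cover $\mathfrak U$ of Section 9, all terms cancel except the degree-zero contributions at the puncture neighbourhoods, and there $\dim_{\mathcal N(\Gamma)}\ell^2(\Gamma/H_p)=1/n_p$ produces the two correction sums. The only (immaterial) difference is that you justify the base-side identity via Zucker's theorem and a Leray acyclicity check for $j_*\V$, whereas the paper simply applies Lemma \ref{Cech to De Rham} a second time with the trivial covering.
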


\begin{rem}
\begin{enumerate}
\item The integer $\chi_{2}(M, \V)$ is equal to the alternate sum of the dimension of the groups $H^k(X, j_*\V)$ so the left handside only depends on the topology of $X$ and the local system $\V$.
\item In the case $\V = \C_M$ and if $\Gamma$ is finite, we have $\chi_{2,\Gamma}(\tilde M, \pi^{-1}\V) = \frac{1}{|\Gamma|}\chi(\tilde X)$, so the formula become
\[
\chi(\tilde X) - \sum_{p\in X \setminus M} \frac{|\Gamma|}{n_p} = |\Gamma|\big( \chi(X) - |X \setminus M| \big)
\]
since $\frac{|\Gamma|}{n_p}$ is equal to the number of preimage of $p$ we recover the theorem of Riemann-Hurwitz.
\end{enumerate}
\end{rem}

\begin{proof}
By applying Lemma \ref{Cech to De Rham} to our covering and to the trivial covering one obtains.
\[
H^k(C^\bullet(\mathfrak U,\ell^2 \V)) = H^k_{2}(\tilde M, \pi^*\V) \qquad H^k(C^\bullet(\mathfrak U,j_*\V)) = H^k_{2}(M, \V)
\]
Since the \v{C}ech complex $C^\bullet(\mathfrak U,j_*\V)$ (resp. $C^\bullet(\mathfrak U,\ell^2 \V)$) is a complex of terms of finite dimension (resp. finite $\mathcal N(\Gamma)$ dimension) one obtains
\[
\begin{aligned}
\sum (-1)^k \dim_{\mathcal N(\Gamma)} H^k_{2}(\tilde M, \pi^{-1}\V) &= \sum (-1)^k \dim_{\mathcal N(\Gamma)} C^k(\mathfrak U, \ell^2\pi^*\V) \\
\sum  (-1)^k \dim H^k_{2}(M, \V) &= \sum (-1)^k \dim C^k(\mathfrak U, j_*\V) 
\end{aligned}
\]
Recall that
\[
\begin{aligned}
C^k(\mathfrak U, \ell^2\pi^*\V) &= \underset{\alpha_0<\dots<\alpha_k}{\prod} \ell^2\pi^*\V(U_{\alpha_0,\dots,\alpha_k})\\
C^k(\mathfrak U, j_*\V) &= \underset{\alpha_0<\dots<\alpha_k}{\prod} j_*\V(U_{\alpha_0,\dots,\alpha_k}).\\
\end{aligned}
\]
But for $k> 0$, one has $\ell^2\pi^*\V(U_{\alpha_0,\dots,\alpha_k}) \simeq \ell^2(\Gamma)\otimes \C^{rk(\V)}$ and $j_*\V(U_{\alpha_0,\dots,\alpha_k}) \simeq \C^{rk(\V)}$, since $U_{\alpha_0,\dots,\alpha_k}$ is a contractible and included in $M$. It follows that for $k> 0$ one has
\[
\dim_{\mathcal N(\Gamma)} C^k(\mathfrak U, \ell^2\pi^*\V) = \dim C^k(\mathfrak U, j_*\V).
\]
For $k=0$ there is two cases either $U_{\alpha_0}$ is included in $M$ and in this case quasi-isometric to a disk endowed with the Euclidean metric, or is a neighbourhood $U_p$ of a point $p$ quasi-isometric to a punctured disk $\Delta_{1/2}$ endowed with the Poincaré metric. In the first case $\ell^2\pi^*\V(U_{\alpha_0}) \simeq \ell^2(\Gamma)\otimes \C^{rk(\V)}$ and $j_*\V(U_{\alpha_0}) \simeq \C^{rk(\V)}$ ; while in the second case $j_*\V(U_p) \simeq \C^{\dim(\V_p)}$ and $\ell^2\pi^*\V(U_p) \simeq \ell^2(\Gamma/H_p)\otimes \C^{\dim(\V_p)}$ where $H_p$ is the isotropy subgroup of a preimage of $p$, note that $|H_p|=n_p$ by definition and that $\dim_{\mathcal N(\Gamma)} \ell^2(\Gamma/H_p) = \frac{1}{n_p}$. So one obtains
\[
\begin{aligned}
\chi_{2,\Gamma}(\tilde M, \pi^{-1}\V) &= \underset{U_{\alpha_0} \subset M}{\sum} rk(\V) +  \sum_{p \in X \setminus M} \frac{\dim(\V_p)}{n_p} + \sum_{k>0} (-1)^k \dim_{\mathcal N(\Gamma)} C^k(\mathfrak U, \ell^2\pi^*\V) \\
\chi_{2}(M, \V) &= \underset{U_{\alpha_0} \subset M}{\sum} rk(\V) +  \sum_{p \in X \setminus M} \dim(\V_p) + \sum_{k>0} (-1)^k \dim C^k(\mathfrak U, j_*\V)
\end{aligned}
\]
and the result follows.
\end{proof}

\section{Link with the theory of polarized Hodge module}

As before we have $j\colon M \to X$ an embedding of Riemann surfaces, with $X$ compact and $\Sigma := X \setminus M$ consisting of a finite set of points.

\subsection*{Middle extension of a variation of Hodge structure}

We set $\mathcal D _X$ the sheaf of differential operators on $X$, by a $\mathcal D _X$-module we will always mean a left $\mathcal D_X$-module. If $(\mathcal V, F^\bullet, \nabla)$ is a holomorphic vector bundle on $M$ with a flat connection, underlying a polarizable variation of Hodge structure we will denote by $\mathcal V _*$ its canonical meromorphic extension. It is endowed locally with the parabolic filtration $\mathcal V_*^\bullet$ described in section $3$. We denote by $\mathcal V_{mid}$ the $\mathcal D _X$ module generated by $\mathcal V_*^{>-1}$, i.e
\[
\mathcal V_{mid} = \sum_{j\in \N} (\nabla_{\del_z})^j \mathcal V_*^{>-1}.
\]
We call $\mathcal V _{mid}$ the middle extension of $\mathcal V$, it is endowed with a coherent good filtration $F^\bullet \mathcal V_{mid}$ defined by
\[
F^p \mathcal V_{mid} := \sum_{j\in \N} (\nabla_{\del_z})^j F^{p-j} \mathcal V^{>-1}_*.
\]

It is locally endowed with the Kashiwara-Malgrange filtration $V^\bullet \mathcal V_{mid} := \mathcal V_{mid}^\bullet$ which is given by

\[
\mathcal V _{mid}^\beta = \left\{ \begin{array}{cc} \mathcal V^\beta_* & \text{ if } \beta > -1 \\
									    (\nabla_{\del_z})^{-\lceil \beta \rceil }\mathcal V _*^{\beta - \lceil \beta \rceil} + \mathcal V_*^{> \beta} & \text{ if } \beta \leq -1
\end{array}
\right.
\]
Where $\lceil. \rceil$ denote the upper integral part of a real number, so for $\beta \in \R$ one has $\lceil\beta \rceil - 1< \beta \leq \lceil\beta \rceil$. For $\beta\in \R$, the sheaf $\mathcal V_{mid}^\beta$ is $\mathcal O_X$-coherent. The space $Gr^\beta\mathcal V_{mid}$ is finite dimensional and $(z\del_z-\beta)$ induce a nilpotent operator on $Gr^\beta \mathcal V_{mid}$, which coincides with the action of the $N_\beta$ we had on $\mathcal V_*$, and it induces a filtration $W_\bullet(N_\beta)$ on $Gr^\beta \mathcal V_{mid}$, in a similar fashion of what we did for the meromorphic extension we set
\[
M_k\mathcal V^\beta_{mid} := p^{-1}\left (W_k(N_\beta)Gr^\beta\mathcal V_{mid} \right)
\]
where $p\colon \mathcal V^\beta_{mid} \to Gr^\beta \mathcal V_{mid}$ is the projection.

\begin{prop}\cite[Proposition 6.14.2, Corollary 6.14.4]{Sabbah}
We have the following.
\begin{itemize}
\item The filtration $F^\bullet \mathcal V _{mid}$ is exhaustive.
\item For $\beta > -1$ we have $F^p\mathcal V _{mid}^\beta = (j_*F^p\mathcal V) \cap \mathcal V_*^\beta$ and $z(F^p\mathcal V_{mid}^\beta) = F^p\mathcal V_{mid}^{\beta + 1}$.
\item For $\beta \leq 0$, $\del_zF^pGr^\beta \mathcal V_{mid} = F^{p-1}Gr^{\beta-1}\mathcal V_{mid}$.
\item The sheaves $F^p \mathcal V_{mid}$, $F^p\mathcal V_{mid}^\beta$ and $F^pM_k\mathcal V_{mid}$ are $\mathcal O _X$ locally free and of finite rank.
\end{itemize}
\end{prop}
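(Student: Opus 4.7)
The plan is to reduce all four statements to local computations near each puncture $p \in \Sigma$. Away from $\Sigma$ the middle extension $\mathcal V_{mid}$ coincides with the holomorphic bundle underlying the VHS and everything is clear, so I fix a coordinate disc $\Delta$ centered at $p$ with coordinate $z$. The main technical input is the existence of a local multivalued horizontal frame of $\mathcal V$ that is simultaneously compatible with the decomposition of $\mathcal V_*$ into generalized monodromy eigenspaces, with the monodromy filtrations $W_\bullet(N_\alpha)$, and with the limiting Hodge filtration. Such an adapted frame exists thanks to Schmid's nilpotent orbit theorem, which produces a splitting of the limit mixed Hodge structure compatible with the $\mathfrak{sl}_2$-triple generated by $N_\alpha$. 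Using it, build as in \S4 the sections $\tilde\xi_j := z^\alpha \exp\!\left(\frac{N_\alpha}{2i\pi}\log z\right)\tilde e_j$ generating the parabolic piece $\mathcal V_*^\beta$, each of pure Hodge and weight bi-type by construction.

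Equipped with this local model, each bullet becomes a direct verification. \emph{Exhaustiveness}: since $F^\bullet \mathcal V$ is a finite filtration on $M$, one has $F^p\mathcal V_*^{>-1}=\mathcal V_*^{>-1}$ for $p$ sufficiently negative, whence $F^p\mathcal V_{mid}=\mathcal V_{mid}$ by the generating formula. \emph{Second bullet}: for $\beta>-1$ one has $\mathcal V_{mid}^\beta = \mathcal V_*^\beta$; a section $\sigma \in (j_*F^p\mathcal V)\cap \mathcal V_*^\beta$ expands in the adapted frame with coefficients in $\mathcal O_X$, and the components along the $\tilde\xi_j$ of Hodge level $<p$ must vanish by independence of their asymptotic growth, so $\sigma \in F^p\mathcal V_*^\beta$; the equality $z\cdot F^p\mathcal V_{mid}^\beta = F^p\mathcal V_{mid}^{\beta+1}$ is then immediate from $z\tilde\xi_j$ being the corresponding frame for $\mathcal V_*^{\beta+1}$. \emph{Third bullet}: the identity $\partial_z\tilde\xi_j = z^{-1}\bigl(\alpha + N_\alpha/(2i\pi)\bigr)\tilde\xi_j$ shows that $z\partial_z$ acts on $Gr^\beta\mathcal V_{mid}$ as $\alpha + N_\alpha/(2i\pi)$; the inclusion $\partial_z F^p\subset F^{p-1}$ is built into the definition, and the reverse inclusion on graded pieces follows from the bijectivity of this operator for $\beta<0$ (where $\alpha\neq 0$) and from the $\mathfrak{sl}_2$-property that $N_0 \colon F^p Gr^W_k \to F^{p-1} Gr^W_{k-2}$ is surjective in the range needed to match $Gr^{-1}_V$ to $Gr^0_V$. \emph{Fourth bullet}: each of $F^p\mathcal V_{mid}$, $F^p\mathcal V_{mid}^\beta$, $F^p M_k\mathcal V_{mid}$ is $\mathcal O_X$-coherent, since it is a finite sum of coherent pieces of the locally free meromorphic extension, and is torsion free as a subsheaf of a locally free sheaf; on the Riemann surface $X$ this forces local freeness.

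The main obstacle is the $\partial_z$-strictness in the third bullet: the generating formula $F^p\mathcal V_{mid} = \sum_{j\geq 0}\partial_z^j F^{p-j}\mathcal V_*^{>-1}$ yields the inclusion $\partial_z F^p Gr^\beta\subset F^{p-1} Gr^{\beta-1}$ for free, but surjectivity is the non-trivial part and rests on Schmid's theorem that the limit Hodge filtration together with $N$ forms an $\mathfrak{sl}_2$-orbit. Once this is known, the constancy of the Hodge numbers of the VHS on $M$ propagates to constancy of the ranks of $F^p Gr^\beta \mathcal V_{mid}$ near $p$, and combined with coherence this yields the local freeness asserted in the final bullet.
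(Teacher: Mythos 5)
The paper does not prove this proposition at all: it is imported wholesale from Sabbah (\cite[Proposition 6.14.2, Corollary 6.14.4]{Sabbah}), so there is no in-paper argument to compare yours against. Your sketch does follow the standard route (local analysis at a puncture, an adapted multivalued frame, Schmid's nilpotent orbit and $\mathrm{SL}_2$-orbit theorems as the real input), but two steps as written do not work. First, in the second bullet you deduce $\sigma\in F^p\mathcal V_*^\beta$ from ``independence of asymptotic growth'' of the frame sections of Hodge level $<p$. The Hodge norm asymptotics $\|\xi_j\|^2\simeq |z|^{2\beta}|\ln z|^k$ detect only the eigenvalue $\beta$ and the weight $k$ in $W_\bullet(N)$, not the Hodge level: two frame vectors with the same $(\beta,k)$ but different positions relative to $F_\infty^\bullet$ have identical growth, so no coefficient is forced to vanish by a growth comparison. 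The correct mechanism is different: the nilpotent orbit theorem says $F^p\mathcal V$ extends to a holomorphic \emph{subbundle} $F^p\mathcal V_*^\beta\subset\mathcal V_*^\beta$, and for a subbundle (as opposed to a mere subsheaf) the equality $(j_*F^p\mathcal V)\cap\mathcal V_*^\beta=F^p\mathcal V_*^\beta$ is elementary. Second, at $\beta=0$ the surjectivity $\partial_zF^pGr^0=F^{p-1}Gr^{-1}\mathcal V_{mid}$ is a statement about the filtration itself, not its $W$-graded pieces; the graded surjectivity of $N_0\colon F^pGr^W_k\to F^{p-1}Gr^W_{k-2}$ that you invoke does not by itself yield the filtered statement. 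What is actually needed is that $N$ is a morphism of the limit mixed Hodge structure of type $(-1,-1)$, hence \emph{strict} for $F_\infty$: $N(F_\infty^p)=F_\infty^{p-1}\cap\mathrm{Im}(N)$. That strictness is the content of Schmid's theorem here and should be named explicitly.

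A smaller but real issue: your first and fourth bullets are jointly inconsistent with the generating formula as the paper prints it. From $F^p\mathcal V_{mid}=\sum_j(\nabla_{\del_z})^jF^{p-j}\mathcal V_*^{>-1}$ with $F^\bullet$ decreasing, one gets $F^{p-j}\mathcal V_*^{>-1}=\mathcal V_*^{>-1}$ for all large $j$, hence $F^p\mathcal V_{mid}=\mathcal V_{mid}$ for every $p$ below a fixed threshold --- and $\mathcal V_{mid}$ is not $\mathcal O_X$-coherent in general (already for a rank-one local system with monodromy $e^{2i\pi\alpha}$, $\alpha\notin\Z$, one gets the full meromorphic extension). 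Your exhaustiveness argument uses exactly this degenerate feature, while your coherence argument assumes the sum is finite; both cannot hold for the formula as written. The intended definition is $\sum_j(\nabla_{\del_z})^jF^{p+j}\mathcal V_*^{>-1}$ (so the sum terminates because $F^{p+j}\mathcal V=0$ for $j\gg0$), and with that correction your coherence-plus-torsion-freeness argument for local freeness on a curve is fine, while exhaustiveness requires the small extra observation that $\del_z^j\mathcal V_*^{>-1}$ appears in $F^p\mathcal V_{mid}$ once $p+j$ is small enough.
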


The De Rham complex of $\mathcal V_{mid}$ is given by
\[
DR(\mathcal V_{mid}) := \left\{ 0 \to \mathcal V _{mid} \overset{\nabla}{\longrightarrow} \Omega^1_X \otimes \mathcal V_{mid} \to 0 \right\}
\]
And its perverse De Rham complex is given by ${}^{p}\!DR(\mathcal V_{mid}) := DR(\mathcal V_{mid})[1]$. We have the Kashiwara-Malgrange filtration and the Hodge filtration on those bundle defined by
\[
\begin{tikzcd}
V^\beta DR(\mathcal V _{mid}) := 0 \ar[r]& \mathcal V^\beta _{mid} \ar[r, "\nabla"]& \Omega^1_X\otimes \mathcal V_{mid}^{\beta-1} \ar[r]& 0 \\
F^p DR(\mathcal V _{mid}) := 0 \ar[r]& F^p\mathcal V _{mid} \ar[r, "\nabla"]& \Omega^1_X\otimes F^{p-1}\mathcal V_{mid} \ar[r]& 0 
\end{tikzcd}
\]

One can check that by \ref{caracl2}, the holomorphic De Rham complex $\Omega^\bullet(\V)_{(2)}$ of $\mathcal V$ is a subcomplex of $V^0DR(\mathcal V_{mid})$. We recall that we have the following result \cite[Proposition 6.14.8]{Sabbah}
\begin{prop}\label{quasi-iso middle extension}
For any $p \in \Z$ the inclusions
\[
F^p\Omega^\bullet(\V)_{(2)} \to F^pV^0 DR(\mathcal V_{mid}) \to F^pDR(\mathcal V_{mid})
\]
are quasi-isomorphisms.
\end{prop}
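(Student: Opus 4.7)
The plan is to reduce to a local question, since outside $\Sigma$ all three complexes coincide with the ordinary holomorphic De Rham complex of the flat bundle $\mathcal V$, where the inclusions are equalities. Fix a puncture $x_0\in\Sigma$ with local coordinate $z$, and work in the stalk there. Both inclusions will be handled by filtering the cokernel by the Kashiwara--Malgrange filtration $V^\bullet$ (finitely many jumps modulo $1$), and showing each graded piece of the resulting quotient complex is acyclic. Throughout, $Gr^\beta \mathcal V_{mid}$ carries the action of $z\del_z$ which, by the characterization of $V^\bullet$, equals $\beta + N_\beta$ with $N_\beta$ nilpotent, and $z\colon Gr^{\beta-1}\mathcal V_{mid}\xrightarrow{\sim} Gr^\beta \mathcal V_{mid}$ is an isomorphism for $\beta > -1$.

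For the second inclusion $F^pV^0DR(\mathcal V_{mid})\to F^pDR(\mathcal V_{mid})$, I would note that $DR(\mathcal V_{mid})=\bigcup_{\beta\geq 0}V^\beta DR(\mathcal V_{mid})$ locally (since $\mathcal V_{mid}$ is $\mathcal D_X$-coherent generated by $\mathcal V_*^{>-1}$), so it suffices to show $F^pV^0DR\to F^pV^\beta DR$ is a quasi-isomorphism for each $\beta>0$. Filtering by $V^\gamma$ with $\gamma\in(0,\beta]$, one reduces to proving acyclicity of
\[
F^pGr^\gamma DR(\mathcal V_{mid}) \;=\; \bigl\{F^pGr^\gamma\mathcal V_{mid}\xrightarrow{\nabla}\Omega^1_X\otimes F^{p-1}Gr^{\gamma-1}\mathcal V_{mid}\bigr\}
\]
for every $\gamma>0$. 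The differential is the map induced by $\del_z$, and factors as the composition $(\gamma+N_\gamma)\circ z^{-1}$; since $\gamma\neq 0$ the scalar part is invertible, and by the strict compatibility relations $zF^p\mathcal V_{mid}^\beta=F^p\mathcal V_{mid}^{\beta+1}$ for $\beta>-1$, plus the description $F^p\mathcal V_{mid}^\beta=(j_*F^p\mathcal V)\cap\mathcal V_*^\beta$, this isomorphism restricts to an isomorphism on Hodge-filtered graded pieces.

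For the first inclusion $F^p\Omega^\bullet(\V)_{(2)}\to F^pV^0DR(\mathcal V_{mid})$, I would use Proposition \ref{caracl2} to identify the source as $\{M_0\mathcal V_*^0\xrightarrow{\nabla}\Omega^1_X(\log\Sigma)\otimes M_{-2}\mathcal V_*^{-1}\}$. Since $\mathcal V_{mid}^0=\mathcal V_*^0$ and $\Omega^1_X(\log\Sigma)\otimes\mathcal V_*^{-1}=\Omega^1_X\otimes\mathcal V_*^{0}$ via $z^{-1}$, the quotient complex localizes entirely on $Gr^0$ and $Gr^{-1}$ and splits, after filtering by the monodromy weight filtration $M_\bullet=W_\bullet(N_\beta)$, into pieces of the form $Gr^M_k Gr^0\mathcal V_{mid}\to Gr^M_{k-2}Gr^{-1}\mathcal V_{mid}$ with differential the nilpotent $N_0$. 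The Hard Lefschetz isomorphism $N_0^k\colon Gr^M_k\xrightarrow{\sim}Gr^M_{-k}$ from Schmid's $\mathrm{SL}_2$-orbit theorem pairs these graded pieces up acyclically, and the Hodge filtration respects this pairing by the structure of the limiting mixed Hodge structure.

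The main obstacle, as is usual with this kind of result, lies not in the unfiltered isomorphisms (which are essentially formal once the $V$-filtration is understood) but in the strict compatibility with $F^\bullet$: one must verify that at each stage the Hodge filtration on the graded pieces $F^pGr^\gamma\mathcal V_{mid}$ and $F^pGr^M_kGr^\beta\mathcal V_{mid}$ transports correctly under the operators $z$, $\del_z$, and $N_\beta$. This strictness is precisely the content of the nilpotent/$\mathrm{SL}_2$-orbit theorems and is developed systematically in \cite[Chapter 6]{Sabbah}; short of invoking this machinery, one cannot obtain the filtered statement purely by a filtered spectral sequence argument.
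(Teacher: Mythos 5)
The paper does not actually prove this proposition: it is quoted directly from \cite[Proposition 6.14.8]{Sabbah}, with the inputs one would need (the identities $z(F^p\mathcal V_{mid}^\beta)=F^p\mathcal V_{mid}^{\beta+1}$ for $\beta>-1$ and $\del_zF^pGr^\beta\mathcal V_{mid}=F^{p-1}Gr^{\beta-1}\mathcal V_{mid}$ for $\beta\leq0$) recalled in the proposition immediately preceding it. Your proposal therefore does strictly more than the paper: it reconstructs the argument of Sabbah's Chapter 6, and the strategy --- localize at a puncture, filter by $V^\bullet$ and by the monodromy weight filtration, use invertibility of $z\del_z=\gamma+N_\gamma$ for $\gamma\neq0$ on the $V$-graded pieces, and use $N^k\colon Gr^W_k\xrightarrow{\sim}Gr^W_{-k}$ together with the polarized limit mixed Hodge structure for strictness in $F$ --- is the right one and is essentially what the cited reference does. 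Two indexing slips should be fixed before this stands as a proof. First, with the paper's decreasing upper-index convention one has $\mathcal V_{mid}=\bigcup_\beta\mathcal V_{mid}^\beta$ as $\beta\to-\infty$, so the graded pieces controlling $F^pV^0DR\to F^pDR$ are the $Gr^\gamma$ with $\gamma<0$, not $\gamma>0$; the mechanism is unchanged since $\gamma+N_\gamma$ is invertible for every $\gamma\neq0$, but as written your induction runs in the wrong direction and never exhausts $\mathcal V_{mid}$. Second, $z^{-1}$ lowers the parabolic index, so $\Omega^1_X(\log\Sigma)\otimes\mathcal V_*^{0}=\Omega^1_X\otimes\mathcal V_*^{-1}$ rather than $\Omega^1_X(\log\Sigma)\otimes\mathcal V_*^{-1}=\Omega^1_X\otimes\mathcal V_*^{0}$; this matters precisely when you match the degree-one term of $\Omega^\bullet(\V)_{(2)}$ against $\Omega^1_X\otimes\mathcal V_{mid}^{-1}$. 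Finally, your own closing caveat is the honest assessment: the filtered strictness in the first inclusion is the only nontrivial content, and your sketch defers it to the $\mathrm{SL}_2$-orbit/limit-MHS package rather than proving it --- which is legitimate, but at that point the argument is no more self-contained than the paper's bare citation of \cite[Proposition 6.14.8]{Sabbah}.
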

\subsection*{Polarized Hodge module on a curve}

If $X$ is a complex manifold, a polarized Hodge module on $X$ is given by the data of $(\mathcal M, F^\bullet)$ a $\mathcal D _X$-module endowed with a good filtration, a perverse sheaf $\mathbb M^{Betti}$ endowed with a quasi-isomorphism $\alpha\colon \mathbb M^{Betti} \to {}^{p}\!DR(\mathcal M)$ and a sesquilinear pairing $S \colon \mathcal M \otimes_{\C} \overline{\mathcal M} \to \mathfrak {Db}_X$ called polarization. We require that those data satisfies some non trivial relations. In the case where $X$ is a Riemann surface polarized Hodge module admits a simple decomposition that we will recall here (see \cite[chapter 7]{Sabbah}) and that we will use as definition.

In the following $X$ will denote a Riemann surface, $\Sigma$ will be a finite set of point of $X$ and we will set $M := X \setminus \Sigma$.

\begin{defn}
A polarized Hodge module $X$ of weight $w$ with singularities at most at $\Sigma$ and of pure support $X$ is a middle extension of a polarized variation of Hodge structure on $M$ of weight $w-1$.

In this case the filtered $\mathcal D_X$ module is given by $\mathcal V _{mid}$ endowed with the Hodge filtration, and the perverse sheaf is given by $j_*\V[1]$ where $j\colon M \to X$ is the inclusion and $\V$ is the underlying local system.
\end{defn}

This definition is not the usual definition of polarized Hodge module with pure support $X$, however it is equivalent of the usual one thanks to \cite[Proposition 7.4.12]{Sabbah}. The shift of weight is there because we will consider the perverse De Rham complex ${}^{p}\!DR(\mathcal V_{mid})$ which is a shift of the usual De Rham complex.

We also need to define polarized Hodge module with punctual support $\Sigma$. Take $\mathcal H _{\Sigma}$ a sheaf of polarized Hodge structure of weight $w$ on $\Sigma$, it is equivalent to fix a polarized Hodge structure $(H_{\Sigma,p},F^\bullet H_{\Sigma,p}, h_p)$ for every point $p\in \Sigma$. We define ${}_{D}\iota(\mathcal H_{\Sigma})$ to be a skycrapper sheaf supported at $\Sigma$, the stalks at $p\in \Sigma$ is given by
\[
{}_{D}\iota \mathcal H_{\Sigma,p} := H_{\Sigma,p}[\del_z].
\]
We define $\del_z\cdot(v\del_z^k) = v\del_z^{k+1}$, for $n\in \N$ we define the action of $\C[z]$ on $H_{\Sigma,p}$ by
\[
z^n\cdot v\del_z^k = \left\{
\begin{array}{cc}
0 &\text{if }n\geq k \\
(-1)^nk(k-1)\dots(k-n+1)v\del_z^{k-n} & \text{ otherwise}
\end{array}\right.
\]
It extends naturally to an action of $\mathcal O_{X,p}$, moreover one has the relation 
\[
f\cdot(\del_z \cdot v\del_z^k) + f'\cdot v\del_z^k = \del_z\cdot(f\cdot v\del_z^k),
\]
so this endows ${}_{D}\iota(\mathcal H_{\Sigma})$ with the structure of a $\mathcal D_X$ module. It is endowed with the natural good filtration
\[
F^p{}_{D}\iota\mathcal H_{\Sigma,p} := \underset{k\in \N}{\bigoplus} (F^{p-k}H_{\Sigma,p})\del_z^k.
\]
Now we define polarized Hodge module with punctual support $\Sigma$.
\begin{defn}
A polarized (pure) Hodge module $\mathcal M$ of weight $w$ with strict support $\Sigma$ is a $\mathcal D _X$-module of the form ${}_{D}\iota\mathcal H_{\Sigma}$ where $\mathcal H_{\Sigma}$ is a sheaf of polarized Hodge structure of weight $w$ on the finite set $\Sigma$.
\end{defn}

Now we recall the support-decomposition theorem for polarized Hodge module on Riemann surfaces (see \cite[Theorem 7.4.10]{Sabbah}).
\begin{thm}
A polarized Hodge module $\mathcal M$ of weight $w$ on $X$ with singularities at most at $\Sigma$ is a $\mathcal D_X$-module $\mathcal M$ of the form $\mathcal M_1 \oplus \mathcal M_2$ where $\mathcal M_1$(resp. $\mathcal M_2$) is a polarized Hodge module of weight $w$ with pure support $X$ (resp. with punctual support $\Sigma$).
\end{thm}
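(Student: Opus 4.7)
The plan is to follow the standard strategy for support decomposition of perverse/Hodge objects on a curve: isolate the punctual part via a polarization-orthogonal splitting, then recognize each summand through its restriction behavior. Since the statement is essentially \cite[Theorem 7.4.10]{Sabbah}, I would proceed via the following steps.

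First, define $\mathcal M_2$ as the maximal sub-$\mathcal D_X$-module of $\mathcal M$ with support contained in $\Sigma$, equivalently the kernel of the localization map $\mathcal M \to \mathcal M(*\Sigma)$, or in local terms the union of the $V$-torsion sections at each puncture $p \in \Sigma$. By construction $\mathcal M_2$ is a holonomic $\mathcal D_X$-module with punctual support. The key use of the polarization $S\colon \mathcal M \otimes_\C \overline{\mathcal M} \to \mathfrak{Db}_X$ is that $S$ restricted to $\mathcal M_2 \otimes \overline{\mathcal M_2}$ remains non-degenerate; this is a local computation at each $p\in\Sigma$ using that the nearby/vanishing cycle functors of Saito respect the polarization and that the punctual part is detected by the vanishing cycle at each puncture.

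Next, I would set $\mathcal M_1$ to be the orthogonal of $\mathcal M_2$ under $S$. Sesquilinearity together with the $\mathcal D_X$-linearity of $S$ on the first argument ensure that $\mathcal M_1$ is a sub-$\mathcal D_X$-module; non-degeneracy of $S|_{\mathcal M_2}$ yields $\mathcal M = \mathcal M_1 \oplus \mathcal M_2$. Because $\mathcal M_2$ already contained every punctual subobject, $\mathcal M_1$ has no nonzero submodule supported in $\Sigma$, and the same applied to the dual (via $S$) shows it has no nonzero punctual quotient either; thus $\mathcal M_1$ has strict support $X$. Its restriction to $M$ is then a locally free $\mathcal O_M$-module with a flat connection, on which the restriction of $F^\bullet$ satisfies Griffiths transversality and the restriction of $S$ is a polarization, so it underlies a polarized complex VHS $(\V, F^\bullet, \bar F^\bullet, S)$ of weight $w-1$ on $M$ (the shift coming from the perverse normalization ${}^p\!DR = DR[1]$). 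To conclude that $\mathcal M_1$ is the middle extension $\mathcal V_{mid}$, I would invoke the characterization by strict support: $\mathcal M_1$ is generated over $\mathcal D_X$ by $\mathcal V_*^{>-1}$ because it contains no $V$-torsion and no localized quotient, which gives the defining formula $\mathcal V_{mid} = \sum_{j\ge 0} \nabla_{\partial_z}^j \mathcal V_*^{>-1}$ from Section 11. Compatibility of the filtration then forces $F^\bullet \mathcal M_1 = F^\bullet \mathcal V_{mid}$ by induction on the $V$-degree, using the relation $\partial_z F^p Gr^\beta = F^{p-1} Gr^{\beta-1}$.

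For the punctual summand, Kashiwara's equivalence reduces $\mathcal M_2$ at each $p \in \Sigma$ to a finite-dimensional $\C$-vector space $H_{\Sigma,p}[\partial_z]$ with the canonical $\mathcal D$-action; the induced filtration is precisely $F^p H_{\Sigma,p}[\partial_z] = \bigoplus_k (F^{p-k} H_{\Sigma,p})\partial_z^k$, and the restriction of $S$ polarizes $(H_{\Sigma,p}, F^\bullet, \bar F^\bullet)$, so $\mathcal M_2 = {}_D\iota\,\mathcal H_\Sigma$ for a sheaf $\mathcal H_\Sigma$ of polarized Hodge structures on $\Sigma$ of weight $w$. The principal obstacle in a self-contained treatment is justifying that the orthogonal splitting for $S$ is compatible with the Hodge filtration, i.e.\ that $F^\bullet\mathcal M_1 \oplus F^\bullet\mathcal M_2 = F^\bullet\mathcal M$; this requires the detailed behaviour of the polarization under the $V$-filtration and the identification of the vanishing cycle with $Gr^0_V \mathcal M / \mathcal V_{mid}$, which is exactly the content of the local monodromy/polarization compatibility in Saito's theory and is why one ordinarily appeals directly to Sabbah's or Saito's proof.
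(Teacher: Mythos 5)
The paper does not actually prove this statement: it is recalled from \cite[Theorem 7.4.10]{Sabbah}, and in the paper's framework the decomposition is essentially built into the definitions, since ``polarized Hodge module with pure support $X$'' is \emph{defined} there to be a middle extension of a polarized variation of Hodge structure and ``with punctual support'' to be a skyscraper of polarized Hodge structures; the only remaining content, namely the comparison with Saito's usual definition, is delegated to \cite[Proposition 7.4.12]{Sabbah}. Your sketch is therefore a reconstruction of the standard argument rather than a parallel of anything written in the paper. As a reconstruction it follows the right strategy: isolate the maximal punctual submodule $\mathcal M_2 = \ker(\mathcal M \to \mathcal M(*\Sigma))$, split it off using non-degeneracy of the polarization on it, and identify the complement as $\mathcal V_{mid}$ through the absence of punctual sub- and quotient objects. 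The two points you flag as requiring Saito's theory are indeed the substantive ones and cannot be dispatched at this level of generality: the non-degeneracy of $S$ on the punctual part rests on the compatibility of the polarization with the vanishing-cycle decomposition $\phi_{z,1}\mathcal M = \mathrm{Im}(\mathrm{can})\oplus\mathrm{Ker}(\mathrm{var})$, and the strictness $F^\bullet\mathcal M = F^\bullet\mathcal M_1\oplus F^\bullet\mathcal M_2$ is precisely what the Hodge module axioms are designed to guarantee. One small caveat: since $S$ takes values in $\mathfrak{Db}_X$, the ``orthogonal complement'' must be read as the subsheaf of sections pairing to zero against $\mathcal M_2$ as distributions, and its $\mathcal D_X$-coherence should be noted explicitly, though this is routine on a curve. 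None of this is an error; your outline matches what Sabbah does, but be aware that the paper itself offers no proof to compare against and simply cites the reference.
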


\subsection*{$L^2$ direct image of polarized Hodge module}

In this section we assume that the covering $\pi\colon \tilde M \to M$ induces an unramified covering $\pi\colon \tilde X \to X$, we set $\Sigma := X\setminus M$.

We remind the following result of \cite{Eyssidieux22} that we state in the case of polarizable (pure) Hodge module. We recall the definition of the abelian category $E_f(\Gamma)$ which was define by Farber and Luck.

\begin{defn}
The category $E_f(\Gamma)$ is the category whose objects are triple $(E_1,E_2,e)$ where $E_1$ and $E_2$ are Hilbert $\mathcal N(\Gamma)$ module of finite type, and $e\colon E_1 \to E_2$ is a bounded $\mathcal N(\Gamma)$ equivariant morphism.

If $E = (E_1,E_2,e)$ and $F = (F_1,F_2,f)$ are two objects of $E_f(\Gamma)$, the set $Hom_{E_f(\Gamma)}(E,F)$ is the set of pair $(\phi_1, \phi_2)$ where $\phi_1 \colon E_1 \to F_1$, $\phi_2 \colon E_2 \to F_2$ are morphism of Hilbert $\mathcal N(\Gamma)$ module satisfying $\phi_2e = f\phi_1$, under the equivalence relation $(\phi_1,\phi_2) \sim (\psi_1,\psi_2)$ if there exists $T\colon F_2 \to E_1$ such that $\psi_2 - \phi_2 = fT$. 
\end{defn}

\begin{prop} \cite{Farber}
The category $E_f(\Gamma)$ is abelian.
\end{prop}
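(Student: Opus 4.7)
The plan is to verify the axioms of an abelian category by constructing kernels and cokernels directly, and then matching $E_f(\Gamma)$ with a known abelian category. The additive structure is essentially tautological: the zero object is $(0,0,0)$, biproducts are componentwise as $(E_1\oplus F_1,\,E_2\oplus F_2,\,e\oplus f)$, the hom-sets inherit an abelian group structure (the set of null-homotopic representatives, i.e.\ pairs $(\phi_1,\phi_2)$ equivalent to $(0,0)$, is stable under sums and under pre- and post-composition by arbitrary morphisms), and composition is bilinear. None of this is delicate.

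For a morphism $\phi=[(\phi_1,\phi_2)]\colon (E_1,E_2,e)\to(F_1,F_2,f)$ I would define the cokernel as $\mathrm{coker}(\phi):=(E_2\oplus F_1,\,F_2,\,(\phi_2,f))$, with the canonical projection from $(F_1,F_2,f)$ induced by the inclusion $F_1\hookrightarrow E_2\oplus F_1$ together with $\mathrm{id}_{F_2}$. Its universal property follows from the definition of the equivalence relation on morphisms: a map $[(\psi_1,\psi_2)]\colon (F_1,F_2,f)\to(G_1,G_2,g)$ becomes null when composed with $\phi$ exactly when there is an operator $T\colon E_2\to G_1$ with $\psi_2\phi_2=gT$, and such a $T$ supplies precisely the extra data needed to extend $\psi_1$ on the second summand $E_2$ and produce a morphism out of the cokernel, unique up to the equivalence relation. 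The kernel is constructed dually, more delicately, as an object whose first slot records the pairs $(x,y)\in E_1\oplus F_1$ witnessing that the image of $\phi$ is absorbed by $f$, up to the slack allowed by the equivalence relation.

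To check the remaining axioms (every monomorphism is a kernel, every epimorphism is a cokernel) the cleanest route is to exhibit an equivalence of categories $c\colon E_f(\Gamma)\to \mathrm{FP}(\mathcal{N}(\Gamma))$, where $\mathrm{FP}(\mathcal{N}(\Gamma))$ denotes the category of finitely presented left $\mathcal{N}(\Gamma)$-modules in the purely algebraic sense, sending $(E_1,E_2,e)$ to the algebraic cokernel $E_2/e(E_1)$. The equivalence relation on morphisms is precisely tuned so that $c$ is well-defined (a homotopy $T$ with $\phi_2-\psi_2=fT$ makes the induced maps on algebraic cokernels coincide) and fully faithful (any $\mathcal{N}(\Gamma)$-linear map between algebraic cokernels lifts to such a pair $(\phi_1,\phi_2)$, unique modulo the relation). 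Essential surjectivity is tautological, since every finitely generated $\mathcal{N}(\Gamma)$-module is a quotient of $\ell^2(\Gamma)^n$ for some $n$ and is itself finitely generated as a Hilbert module.

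The main obstacle, both conceptual and structural, is that bounded $\mathcal{N}(\Gamma)$-equivariant maps between Hilbert modules generally fail to have closed image, so the Hausdorff quotient $E_2/\overline{e(E_1)}$ is the wrong object: it discards the torsion part $\overline{e(E_1)}/e(E_1)$, which is precisely what the extended category was invented to track. The equivalence relation defining $E_f(\Gamma)$ is the homological bookkeeping that lets us work with algebraic quotients instead of topological ones. Once this is in place, the proof reduces to the classical fact that $\mathcal{N}(\Gamma)$ is semi-hereditary (being a finite von Neumann algebra, hence a Baer $*$-ring whose finitely generated one-sided ideals are projective direct summands); consequently finitely presented modules form an abelian subcategory of all $\mathcal{N}(\Gamma)$-modules, and $E_f(\Gamma)$ inherits this abelian structure through $c$. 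The categorical manipulations are formal once that algebraic input is secured.
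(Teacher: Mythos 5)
Your proposal is correct in substance, but it takes a genuinely different route from the one the paper relies on. The paper gives no argument of its own: it invokes Farber, whose proof is intrinsic to the extended category of a von Neumann category --- he constructs kernels and cokernels inside $E_f(\Gamma)$ directly, the existence of kernels being the delicate step that uses the analytic structure of Hilbert $\mathcal N(\Gamma)$-modules (polar decomposition, the fact that closures of images are direct summands). You instead follow what is essentially L\"uck's route: build the cokernel explicitly (your formula $(E_2\oplus F_1, F_2, (\phi_2,f))$ is the right one, and your verification of its universal property via the homotopy $T$ is exactly how the equivalence relation is meant to be used --- note in passing that you have silently corrected the paper's typo $T\colon F_2\to E_1$, which should read $T\colon E_2\to F_1$), then transport the whole problem to the category of finitely presented $\mathcal N(\Gamma)$-modules and conclude from semiheredity of $\mathcal N(\Gamma)$. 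What Farber's approach buys is generality (it works for any von Neumann category, with no module-theoretic input); what yours buys is that the only nontrivial ingredient is a single ring-theoretic fact, and the identification of $E_f(\Gamma)$ with finitely presented modules is in any case the lens through which the paper later uses $\mathcal U(\Gamma)\otimes_{\mathcal N(\Gamma)}-$, so your route is arguably better adapted to the rest of the text. You are also right that once the equivalence with an abelian category is in hand, the direct constructions of kernels and of the "mono is a kernel / epi is a cokernel" axioms become unnecessary, so your hand-waving about the kernel "constructed dually, more delicately" costs you nothing.

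One point does need more care than you give it: the functor $c$ cannot literally send $(E_1,E_2,e)$ to the quotient of the Hilbert space $E_2$ by the subspace $e(E_1)$, because that quotient is not a finitely presented (indeed not even finitely generated) algebraic $\mathcal N(\Gamma)$-module --- already $\ell^2(\Gamma)$ itself fails to be finitely generated over $\mathcal N(\Gamma)$ when $\Gamma$ is infinite. The correct statement routes through L\"uck's equivalence $\nu$ between finitely generated Hilbert $\mathcal N(\Gamma)$-modules and finitely generated projective $\mathcal N(\Gamma)$-modules: one sets $c(E_1,E_2,e)=\operatorname{coker}\bigl(\nu(e)\colon\nu(E_1)\to\nu(E_2)\bigr)$, and it is for this functor that your well-definedness, full faithfulness (lifting via projectivity of $\nu(E_2)$) and essential surjectivity arguments go through verbatim. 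The paper's own remark about the "forgetful functor $(E_1,E_2,e)\mapsto E_2/e(E_1)$" commits the same abuse, so this is a shared imprecision rather than an error specific to you, but in a written proof it must be fixed, since otherwise the target of $c$ is not the abelian category you intend to compare with.
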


\begin{rem}
The category of Hilbert $\mathcal N (\Gamma)$ module is embedded in the category of Farber via the functor
\[
E \to (\{0\},E,0).
\]
Moreover one has a forgetful functor from the category $E_f(\Gamma)$ to the category of $\mathcal N(\Gamma)$-module of finite type given by
\[
(E_1,E_2,e) \to E_2/e(E_1)
\]
\end{rem}

With this we can state the following result of \cite{Eyssidieux22}.

\begin{thm}\cite[Corollary 2]{Eyssidieux22}
Let $(X,\omega_X)$ be a compact Kähler manifold, $\pi\colon \tilde X \to X$ a Galois group of covering group $\Gamma$. We denote by $pHM(X)$ the category of polarized Hodge module on $X$. There exists a $\del$-functor satisfying Atiyah index theorem and Poincaré-Verdier Duality
\[
L^2dR\colon D^bpHM(X) \to D^bE_f(\Gamma)
\]
such that for all Hodge module $\mathbb M$ and $q\in \Z$
\[
H^q(L^2dR(\mathcal M)) \simeq H^q_{(2)}(\tilde X,\pi^*\mathbb M ^{Betti}).
\]
Those cohomology groups are endowed with a filtration $F^\bullet$ induced by Saito's Hodge filtration on $\mathcal M$.
\end{thm}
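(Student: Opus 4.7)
The statement is recalled from Eyssidieux's earlier work \cite{Eyssidieux22}; to prove it from scratch I would proceed as follows. First I would construct the functor $L^2dR$ object-wise on the heart $pHM(X)$ and then extend to the derived category. By the support decomposition theorem for pure Hodge modules on $X$, any $\mathbb M$ splits as a direct sum indexed by irreducible closed subvarieties $Z \subset X$, with each summand the intermediate extension of a polarized variation of Hodge structure on a Zariski-open $U \subset Z_{reg}$; thus it suffices to treat these building blocks. For such a polarized VHS on $U$, I would define $L^2dR(\mathbb M)$ as the Hilbert complex of global sections of a sheaf of $L^2$ De Rham forms on $\tilde X$ computed with respect to the given Kähler metric on $X$ modified to a Poincaré-type metric near the singular locus and along the strata of $Z$, then pulled back to $\tilde X$. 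Realised by a soft $\mathcal N(\Gamma)$-equivariant resolution, this produces an object of $D^b E_f(\Gamma)$.

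The identification $H^q(L^2dR(\mathbb M)) \simeq H^q_{(2)}(\tilde X, \pi^*\mathbb M^{Betti})$ would come from an $L^2$-Poincaré lemma extending Theorem \ref{thm 1} of the excerpt: one must show that the sheaf-theoretic $L^2$ De Rham complex is a soft resolution of $\ell^2\pi^*\mathbb M^{Betti}$. This is the deepest ingredient and rests on the asymptotic analysis of polarized VHS at the boundary going back to Schmid's nilpotent orbit theorem, on Cattani-Kaplan-Schmid for the pure VHS case on smooth compactifications with normal crossing boundary, and on Mochizuki's more recent work which covers the general case via wild harmonic bundles. Applying Mochizuki's $L^2$-Hodge theorem yields both the Poincaré lemma and the existence of a pure Hodge structure at the harmonic level.

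The compatibility with the Hodge filtration $F^\bullet$ on cohomology is then handled by introducing a Dolbeault version of the $L^2$ complex adapted to each $Gr_F^p$ of the filtered $\mathcal D_X$-module, and using the Kähler-type identity $\square_D = 2\square_{D''}$ on the smooth subcomplex (valid near the singular strata since $\nabla'$ is bounded, as in Section 1). This gives a type decomposition of harmonic representatives and descends to a Hodge filtration on cohomology; the torsion part is killed after tensoring with $\mathcal U(\Gamma)$ by Dingoyan's argument in the form used in Sections 8--9. The $\partial$-functor property follows by checking that $L^2dR$ turns short exact sequences in $pHM(X)$ into distinguished triangles in $D^b E_f(\Gamma)$, via a five-lemma argument on the induced long exact sequences of $L^2$ hypercohomology. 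Atiyah's $L^2$-index theorem for the resulting $\Gamma$-Fredholm complex, together with Poincaré-Verdier duality obtained from the $L^2$ Hodge $*$-operator coupled to the polarization, finish the list of required properties.

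The hard part will be the $L^2$-Poincaré lemma for general polarized Hodge modules in arbitrary dimension: the local model near a boundary stratum of the support requires controlling square integrability in terms of the parabolic structure and of the several commuting monodromy filtrations simultaneously, and this is precisely where the full strength of Cattani-Kaplan-Schmid and Mochizuki is needed. The remaining steps—the categorical packaging into a $\partial$-functor, the index theorem, and Poincaré-Verdier duality—are comparatively formal once the local analytic result is in place, and their verification follows the patterns of Sections 5--11 of the excerpt applied in higher dimension.
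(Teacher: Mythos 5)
This theorem is not proved in the paper at all: it is recalled verbatim from \cite[Corollary 2]{Eyssidieux22}, and the material immediately following the statement shows the actual route, which is quite different from yours. Eyssidieux defines $L^2dR$ algebraically as $R\Gamma\circ \ell^2\pi_*\,{}^{p}\!DR(\cdot)$, where $\ell^2\pi_*$ is the $L^2$ direct image functor on coherent sheaves, extended as an \emph{exact} functor to $Mod(\mathcal O_X)$ and then to $QCoh(\mathcal O_X, Diff_X)$ so that it applies to the de Rham complex of the filtered $\mathcal D_X$-module. Because $X$ is compact and the covering $\pi\colon \tilde X\to X$ is unramified, $\ell^2\pi_*$ is independent of the metric, the $\partial$-functor property comes from exactness of $\ell^2\pi_*$, the comparison $H^q(L^2dR(\mathcal M))\simeq H^q_{(2)}(\tilde X,\pi^*\mathbb M^{Betti})$ comes from the Betti-side description $\ell^2(\Gamma)\otimes_{\C[\Gamma]}\pi_!\pi^*\mathbb M^{Betti}$, and the filtration $F^\bullet$ is simply $\ell^2\pi_*$ applied to Saito's filtration. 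No boundary asymptotics, no choice of Poincaré-type metric, and no harmonic theory on $\tilde X$ enter the construction at this stage.

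Your plan has a genuine gap, not merely a stylistic difference. First, you prove too much: the statement only asserts the existence of the functor, the comparison isomorphism, and an induced \emph{filtration}; the existence of a pure Hodge structure on $\mathcal U(\Gamma)\otimes_{\mathcal N(\Gamma)}H^q(L^2dR(\mathbb M))$ is precisely \cite[Conjecture 3]{Eyssidieux22}, which is open in higher dimension and which the present paper proves only for curves with the covering ramified at infinity. Second, the step where you ``apply Mochizuki's $L^2$-Hodge theorem'' to get the Poincar\'e lemma and harmonic Hodge structure on $\tilde X$ does not work as stated: the results of Cattani--Kaplan--Schmid and Mochizuki are $L^2$ statements on the finite-volume base (or its compactification), not on an infinite Galois covering, and they say nothing about the $\mathcal N(\Gamma)$-module structure, $\Gamma$-Fredholmness, or the unreduced torsion $\overline{Im(D)}/Im(D)$ --- the introduction of this paper explicitly says only that such results are ``likely'' to lead to a proof of the conjecture. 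Third, modifying the Kähler metric to Poincaré type near strata is both unnecessary in the theorem's setting (the covering is unramified over a compact $X$, so $\tilde X$ has bounded geometry and the $L^2$ conditions are metric-independent) and hazardous: a different metric changes the $L^2$ complex, and nothing in your sketch shows the resulting groups agree with $H^q(X,\ell^2\pi_*\mathbb M^{Betti})$ as the statement requires; relatedly, without the sheaf-level exactness of $\ell^2\pi_*$ your five-lemma argument for the $\partial$-functor property has no starting point, since harmonic-space constructions are not obviously functorial in $\mathbb M$.
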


We recall the definition the functor $L^2dR$ and some of its properties. For this we need to recall the definition of the functor $\ell^2\pi_{*}$ of $L^2$-direct image as it is introduced in \cite{Campana-Demailly, Eyssidieux00}. It is a functor from the category of coherent $\mathcal O_X$ module to the the category of $\mathcal N(\Gamma)_X$-module where if $\mathcal F$ is a coherent $\mathcal O_X$-module, $U$ is coordinates chart such that $\pi^{-1}(U) \simeq \Gamma \times U$ and we fix $\phi\colon \mathcal O_{X\mid U}^{\oplus N} \to \mathcal F_{U}$ a local presentation
 \[
\ell^2\pi_{*} \mathcal F (U) := \left\{ (s_\gamma)_\gamma \in \mathcal F(U)^\Gamma \left | \begin{array}{ll} \exists (r_\gamma)_\gamma \in \mathcal O_X(U)^\Gamma, & \forall K \subset U \text{ compact}\\ \sum_{\gamma\in \Gamma} \int_K |r_\gamma|^2 <+\infty& \end{array} \right. \right\}.
\]
It can be check that it does not depends of the local presentation and since $X$ is compact, $\ell^2\pi_{*}$ does not depends on the Kähler metric $\omega_X$, moreover by definition $\ell^2\pi_{*}\mathcal F$ is a subsheaf of $\pi_*\pi^* \mathcal F$.
\begin{lem}\cite[Lemma 2.1.1]{Eyssidieux22}
The functor $\ell^2\pi_{*}$ can be extended in a exact functor on the category $Mod(\mathcal O_X)$ of $\mathcal O_X$-module by setting
\[
\ell^2\pi_{*}\mathcal F := \ell^2\pi_*\mathcal O_X \otimes_{\mathcal O _X}\mathcal F.
\]
\end{lem}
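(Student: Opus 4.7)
The strategy is to establish that $\ell^2\pi_*\mathcal O_X$ is a flat $\mathcal O_X$-module; once this is done, exactness of the extended functor $\mathcal F\mapsto \ell^2\pi_*\mathcal O_X\otimes_{\mathcal O_X}\mathcal F$ on $\mathrm{Mod}(\mathcal O_X)$ is the very definition of flatness, and compatibility with the original definition on the subcategory of coherent sheaves reduces to a standard bookkeeping: given a local presentation $\mathcal O_X^m\to \mathcal O_X^n\to \mathcal F\to 0$, the construction recalled in the excerpt exhibits $\ell^2\pi_*\mathcal F$ as the cokernel of $(\ell^2\pi_*\mathcal O_X)^m \to (\ell^2\pi_*\mathcal O_X)^n$, which by right-exactness of the tensor product is canonically $\ell^2\pi_*\mathcal O_X\otimes_{\mathcal O_X}\mathcal F$.

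To prove flatness I would argue stalk-by-stalk. Since $X$ is a Riemann surface, every local ring $\mathcal O_{X,x}$ is a discrete valuation ring, and a module over a DVR is flat if and only if it is torsion-free; hence it suffices to show that multiplication by any non-zero $f\in\mathcal O_{X,x}$ is injective on the stalk $(\ell^2\pi_*\mathcal O_X)_x$. Passing to a trivializing chart $U$ around $x$ with $\pi^{-1}(U)\simeq \Gamma\times U$, a germ of $\ell^2\pi_*\mathcal O_X$ at $x$ is represented by a family $(s_\gamma)_{\gamma\in\Gamma}$ of holomorphic functions on a small open $V\ni x$ satisfying the local $L^2$ condition $\sum_\gamma \int_K |s_\gamma|^2<+\infty$ for every compact $K\subset V$. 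If $f\ne 0$ and $f\cdot s_\gamma=0$ on some neighbourhood of $x$ for every $\gamma$, then each holomorphic $s_\gamma$ vanishes on the dense open complement of the discrete zero-set of $f$, hence vanishes identically by the identity principle; so the germ is zero. This gives torsion-freeness, hence flatness.

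The main ingredient I expect to matter is the dimension of $X$: in higher dimensions the torsion-free criterion would no longer suffice and one would have to verify vanishing of $\mathrm{Tor}^1_{\mathcal O_{X,x}}(\mathcal O_{X,x}/\mathfrak m,(\ell^2\pi_*\mathcal O_X)_x)$, which would require a more delicate analysis of how $L^2$ tuples of holomorphic functions interact with the Koszul complex of a regular system of parameters. In the one-dimensional setting considered here this difficulty does not arise, and the entire argument is essentially a one-line application of the identity principle inside each trivializing chart.
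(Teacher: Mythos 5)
The paper does not actually prove this lemma: it is imported verbatim from \cite[Lemma 2.1.1]{Eyssidieux22}, so there is no in-text argument to measure your proposal against. Judged on its own terms, your proposal splits into a correct half and a half with a genuine gap.

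The flatness argument is correct, and it is a legitimately more elementary route than what the cited (all-dimensional) statement requires. Since $\mathcal O_{X,x}\simeq\C\{z\}$ is a discrete valuation ring, flat is equivalent to torsion-free, and your identity-principle argument does show torsion-freeness of the stalk: if $f\neq 0$ and the germ $f\cdot(s_\gamma)_\gamma$ vanishes, then on a common connected neighbourhood each $fs_\gamma$ is identically zero, so each $s_\gamma$ vanishes off the discrete zero set of $f$ and hence identically. Exactness of $\mathcal F\mapsto \ell^2\pi_*\mathcal O_X\otimes_{\mathcal O_X}\mathcal F$ then follows because exactness and tensor products of sheaves are computed on stalks. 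Your closing remark is also accurate: this shortcut is special to curves.

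The gap is in the sentence you dismiss as ``standard bookkeeping.'' The functor being extended is defined on a coherent sheaf $\mathcal F$ as the \emph{image} of $(\ell^2\pi_*\mathcal O_X)^{\oplus N}\to\pi_*\pi^*\mathcal F$ under a local presentation $\phi$ --- a subsheaf of $\pi_*\pi^*\mathcal F$ --- not as the cokernel of $(\ell^2\pi_*\mathcal O_X)^{m}\to(\ell^2\pi_*\mathcal O_X)^{n}$. Right-exactness of the tensor product identifies $\ell^2\pi_*\mathcal O_X\otimes_{\mathcal O_X}\mathcal F$ with that cokernel, and there is a natural surjection from the cokernel onto the image; but for ``extension'' one needs this surjection to be injective, i.e. that every $\ell^2$ family $(s_\gamma)_\gamma$ with $s_\gamma\in\ker\phi$ for all $\gamma$ can be written as the image of an $\ell^2$ family under the relation matrix. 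In other words, holomorphic solvability of the linear system must imply $L^2$ solvability with bounds uniform in $\gamma$. This uniform lifting statement is the actual analytic content of the Campana--Demailly/Eyssidieux construction (it is also what underlies the paper's claim that the definition does not depend on the presentation), and it is not a formal consequence of right-exactness. On a curve it can be verified by hand --- for instance for the torsion module $\mathcal O/(z^k)$ it reduces to a Schwarz-lemma estimate of the form $\int_K|r_\gamma/z^k|^2\le C\int_{K'}|r_\gamma|^2$ with $C$ independent of $\gamma$ --- but some argument of this kind must be supplied before the two definitions can be declared to agree.
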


We need to extend this functor in the category of $Qcoh(\mathcal O_X, Diff_X)$ of quasi-coherent $\mathcal O_X$-module whose morphisms are differential operators. We recall that a differential operator $P\colon \mathcal F _1 \to \mathcal F_2$ is an operator in the image of
\[
\nu \colon Hom_{\mathcal O_x}(\mathcal F _1, \mathcal F_2 \otimes_{\mathcal O_X} \mathcal D_X) \to Hom_{\C _X}(\mathcal F_1, \mathcal F_2)
\]
where the structure of $\mathcal O_X$-module on $\mathcal F_2 \otimes_{\mathcal O_X} \mathcal D_X$ is given by the right $\mathcal O_X$-module structure, and $\nu$ is given by the left composition by the morphism
\[
\begin{aligned}
\mathcal F_2 \otimes_{\mathcal O_X} \mathcal D_X &\to \mathcal F _2 \\
f\otimes P &\mapsto P(1)f
\end{aligned}
\]
\begin{lem}\label{functoriality l2}\cite[Lemma 2.2.2]{Eyssidieux22}
Let $P := \nu(p) \colon \mathcal F_1 \to \mathcal F_2$ be a differential operator, the morphism $\ell^2\pi_*P := \nu(\ell^2\pi_*p)\colon \ell^2\pi_{*}\mathcal F_1$ to $\ell^2\pi_{*}\mathcal F _2$ is the restriction of $\pi_*\pi^*P\colon \pi^*\pi_* \mathcal F_1 \to \pi_*\pi^*\mathcal F_2$ and it is a morphism of sheaves of $\mathcal N(\Gamma)$-module.  Hence $\ell^2\pi_{*}$ defines an additive functor from the category $QCoh(\mathcal O_X, Diff_X)$ to the category $Mod(\mathcal N(\Gamma)_X)$.
\end{lem}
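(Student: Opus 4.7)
The plan is to use the local description of $\ell^2\pi_{*}$ as tensor product with $\ell^2\pi_{*}\mathcal O_X$ to propagate the $\mathcal O_X$-linear map $p$ to an $\mathcal O_X$-linear map between the $\ell^2$-sheaves, and then to apply the formal construction $\nu$. Since $\ell^2\pi_{*}$ was defined by $\mathcal F \mapsto \ell^2\pi_{*}\mathcal O_X \otimes_{\mathcal O_X} \mathcal F$ and is exact, it commutes with tensor products over $\mathcal O_X$. Therefore the natural isomorphism
\[
\ell^2\pi_{*}(\mathcal F_2 \otimes_{\mathcal O_X} \mathcal D_X) \simeq (\ell^2\pi_{*}\mathcal F_2)\otimes_{\mathcal O_X}\mathcal D_X
\]
converts $\ell^2\pi_{*} p$ into an $\mathcal O_X$-linear morphism $\ell^2\pi_{*}\mathcal F_1 \to (\ell^2\pi_{*}\mathcal F_2)\otimes \mathcal D_X$, to which one applies $\nu$ to obtain $\nu(\ell^2\pi_{*} p)$. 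Additivity of the assignment $P \mapsto \ell^2\pi_{*}P$ is then inherited from the additivity of both $\ell^2\pi_{*}$ and $\nu$.

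Next, I would show that $\nu(\ell^2\pi_{*} p)$ coincides with the restriction of $\pi_{*}\pi^{*}P$. Since the covering is assumed to be unramified, $\pi$ is \'etale, so the canonical isomorphism $\pi^{*}\mathcal D_X \simeq \mathcal D_{\tilde X}$ ensures that $\pi^{*}P$ is a differential operator on $\tilde X$ with the same local coefficients as $P$. Working in a distinguished neighbourhood $U$ with $\pi^{-1}(U) \simeq \Gamma \times U$, the operator $\pi_{*}\pi^{*}P$ acts on a family $(s_\gamma)_{\gamma\in\Gamma}$ by applying $P$ componentwise. On the other hand, unwinding the definition of $\ell^2\pi_{*}$ on a local presentation of $\mathcal F_1$ and $\mathcal F_2$ compatible with $p$ shows that $\nu(\ell^2\pi_{*} p)$ admits exactly the same componentwise description.

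The technical heart of the argument is then to verify that $\pi_{*}\pi^{*}P$ indeed preserves the $\ell^2$-subsheaves: if $(s_\gamma) \in \ell^2\pi_{*}\mathcal F_1(U)$ comes from generators $(r_\gamma) \in \mathcal O_X(U)^{N\Gamma}$ satisfying $\sum_\gamma \int_K |r_\gamma|^2 < \infty$ for each compact $K \subset U$, we need the family $(P(s_\gamma))_\gamma$ to be a local section of $\ell^2\pi_{*}\mathcal F_2$. Choosing the local presentation of $\mathcal F_2$ so as to display $p$ as a matrix of concrete differential operators with fixed coefficients on $U$, the components of $P(s_\gamma)$ are obtained from the $r_\gamma$ by applying a single, $\gamma$-independent differential operator. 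Hence the $L^2$-summability is preserved on any $K' \Subset K$ by Cauchy estimates for holomorphic functions, which bound $L^2$-norms of holomorphic derivatives on $K'$ in terms of $L^2$-norms on $K$ by a constant independent of $\gamma$.

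Finally, for the $\mathcal N(\Gamma)$-equivariance it suffices to observe that $\pi_{*}\pi^{*}P$ commutes with the $\C[\Gamma]$-action permuting the $\gamma$-components (since $P$ is pulled back from the base $X$), and that the $\mathcal N(\Gamma)$-structure on $\ell^2\pi_{*}\mathcal F$ extends this $\C[\Gamma]$-action by continuity in the natural Fr\'echet topology on local sections, exactly in the spirit of the construction of the $\mathcal N(\Gamma)$-module structure on the $L^2$-De Rham sheaves used earlier in the paper. The main obstacle is the $L^2$-preservation step: it relies crucially on the holomorphic nature of the sections and on uniform Cauchy estimates, and without these two ingredients (e.g.\ if $\mathcal F$ were only a smooth $\mathcal C^\infty$-module) the naive argument would fail, forcing one to introduce Sobolev-type hypotheses.
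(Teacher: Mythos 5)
The paper does not actually prove this lemma: it is imported verbatim from \cite[Lemma 2.2.2]{Eyssidieux22}, so there is no in-paper argument to compare against. Your reconstruction is sound and follows the route one would expect: the formal part (compatibility of $\ell^2\pi_*$ with $-\otimes_{\mathcal O_X}\mathcal D_X$ and with $\nu$, additivity) is correct, and the analytic heart --- that $\pi_*\pi^*P$ preserves the $\ell^2$-condition because, in a presentation, $P$ acts on the coefficient functions $r_\gamma$ through a single $\gamma$-independent matrix of scalar differential operators whose $L^2(K')\to L^2(K)$ norms are controlled uniformly by Cauchy estimates for holomorphic functions --- is exactly the right mechanism, and is indeed where holomorphy is indispensable. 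Two small points you leave implicit but should flag: one must lift $p$ to a morphism between the free modules of the chosen presentations (possible locally since free modules are projective), and the identification of $\nu(\ell^2\pi_*p)$ with the restriction of $\pi_*\pi^*P$ is also what guarantees that $\ell^2\pi_*P$ is well defined independently of the choice of $p$ with $\nu(p)=P$. Neither is a gap, just a line each worth writing.
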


With this lemma one can define $L^2dR$ to be the functor $R\Gamma\circ \ell^2\pi_*{}^{p}\!DR(.)$. On the Betti side one can understand it as the functor that sends $\mathbb M ^{Betti}$ to $\ell^2\pi_*\mathbb M^{Betti} := \ell^2(\Gamma)\otimes_{\C[\Gamma]}\pi_!\pi^*\mathbb M ^{Betti}$.

\begin{prop}
Let $\mathcal V_{mid}$ be a polarized Hodge module on $X$ with pure support $X$ and weight $w$. One has a natural filtered inclusions
\[
F^\bullet \Omega^\bullet(\pi^*\V)_{(2)} \to \ell^2\pi_*F^\bullet DR(\mathcal V_{mid})
\]
which is filtered quasi-isomorphism.
\end{prop}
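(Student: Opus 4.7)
The strategy is to reduce the statement to the non-equivariant filtered quasi-isomorphism of Proposition \ref{quasi-iso middle extension} by exploiting the functor $\ell^2\pi_*$. I carry this out in two steps.

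The first step is to identify the source of the inclusion with an $L^2$ direct image. Since the terms of $F^\bullet \Omega^\bullet(\V)_{(2)}$ are coherent $\mathcal{O}_X$-modules (by the monodromy filtration description of Proposition \ref{caracl2}), one may apply $\ell^2\pi_*$ to them termwise; Lemma \ref{functoriality l2} handles the flat connection $\nabla$. I claim there is a natural filtered isomorphism
\[
F^\bullet \Omega^\bullet(\pi^*\V)_{(2)} \;\xrightarrow{\sim}\; \ell^2\pi_* F^\bullet \Omega^\bullet(\V)_{(2)}.
\]
Away from $\Sigma$ this is immediate, as both sheaves coincide with the smooth De Rham complex of $\pi^*\mathcal V$ with its Hodge filtration. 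At a puncture $p \in \Sigma$, I work in a disk neighbourhood $U \ni p$ whose preimage decomposes as a disjoint union of branched covers $\tilde U_{[\gamma]} \to U$ of degree $n_p$, indexed by $\Gamma/\langle\gamma_p\rangle$. The identity $W_\bullet(\pi^*\V|_{\tilde U_{[\gamma]}}) = \pi^*W_\bullet(\V)$ from Section 4 allows me to pull back a local $L^2$-adapted frame of $\mathcal V$ over $U \cap M$ to an $L^2$-adapted frame of $\pi^*\mathcal V$ on each $\tilde U_{[\gamma]} \cap \tilde M$. Proposition \ref{caracl2} then matches $\ell^2$-summable families of sections of $F^p\Omega^k(\V)_{(2)}$ on $U$ with square integrable holomorphic $\pi^*F^{p-k}$-valued $k$-forms on $\pi^{-1}(U) \cap \tilde M$, giving the identification.

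The second step applies $\ell^2\pi_*$ to the filtered quasi-isomorphism of Proposition \ref{quasi-iso middle extension}. Since both $F^\bullet\Omega^\bullet(\V)_{(2)}$ and $F^\bullet DR(\mathcal V_{mid})$ are complexes of coherent $\mathcal{O}_X$-modules with differential operator morphisms, and $\ell^2\pi_*$ is an exact functor on $Mod(\mathcal{O}_X)$ given by tensor product with the flat $\mathcal{O}_X$-module $\ell^2\pi_*\mathcal{O}_X$, it preserves filtered quasi-isomorphisms of such complexes. Hence
\[
\ell^2\pi_* F^\bullet \Omega^\bullet(\V)_{(2)} \;\longrightarrow\; \ell^2\pi_* F^\bullet DR(\mathcal V_{mid})
\]
is a filtered quasi-isomorphism, and composing with the identification of Step 1 concludes the proof.

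The main obstacle is Step 1, namely reconciling the two a priori different $L^2$-conditions at the punctures: the definition of $\ell^2\pi_*$ involves local $\mathcal{O}_X$-presentations and $\ell^2$-summability of holomorphic coefficients, whereas $F^\bullet\Omega^\bullet(\pi^*\V)_{(2)}$ is defined via pointwise square-integrability on $\tilde M$ with the Poincaré metric. Bridging these requires an explicit use of the ramified change of variable $z \mapsto z^{n_p}$ on each $\tilde U_{[\gamma]}$, together with a careful bookkeeping of constants uniform in $\gamma$, in the same spirit as the frame comparisons carried out in the proof of the holomorphic Poincaré lemma of Section 5.
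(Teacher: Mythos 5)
Your overall route is the paper's: identify $F^\bullet\Omega^\bullet(\pi^*\V)_{(2)}$ with $\ell^2\pi_*F^\bullet\Omega^\bullet(\V)_{(2)}$ and push the non-equivariant quasi-isomorphism of Proposition \ref{quasi-iso middle extension} through $\ell^2\pi_*$. Your Step 1 is correct and in fact more detailed than what the paper records for that identification. The gap is in the justification of Step 2. You assert that, because $\ell^2\pi_* = \ell^2\pi_*\mathcal O_X\otimes_{\mathcal O_X}(-)$ is exact on $Mod(\mathcal O_X)$, it preserves quasi-isomorphisms between complexes of coherent sheaves whose differentials are differential operators. This does not follow from exactness alone: the kernels, images and cohomology sheaves of a connection $\nabla$ are not $\mathcal O_X$-submodules (they are only $\C_X$-modules), so the mapping-cone argument that turns ``exact functor'' into ``preserves quasi-isomorphisms'' is not available in $QCoh(\mathcal O_X, Diff_X)$; Lemma \ref{functoriality l2} only gives additivity of $\ell^2\pi_*$ on that category. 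The danger is exactly the phenomenon the rest of the paper is fighting: a differential that is algebraically bijective need not remain surjective after an $\ell^2$ completion.

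The paper closes this at the one place it matters. The inclusion is termwise injective, so it is a quasi-isomorphism if and only if the quotient complex is acyclic; that quotient is a two-term complex of coherent skyscraper sheaves supported on $\Sigma$,
\[
[\nabla]\colon \bigslant{F^pV^0\mathcal V_{mid}}{F^p\mathcal O(\V)_{(2)}} \longrightarrow \bigslant{F^{p-1}V^{-1}\mathcal V_{mid}}{F^p\Omega^1(\V)_{(2)}},
\]
and acyclicity of a two-term complex is precisely the statement that the single induced map $[\nabla]$ is an isomorphism of coherent sheaves, which is what Proposition \ref{quasi-iso middle extension} delivers. One then checks that $[\ell^2\pi_*\nabla]$ coincides with $\ell^2\pi_*[\nabla]$ and that the latter is still an isomorphism --- now a statement about a single map of finite-dimensional skyscrapers, to which the exactness of $\ell^2\pi_*$ genuinely applies. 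To repair your Step 2, insert this reduction to the quotient complex (or an equivalent device making the relevant maps $\mathcal O_X$-linear); as written, the step rests on a general principle that is not true.
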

\begin{proof}
We recall that in the case of the trivial covering the following inclusion is a quasi-isomorphism for all $p\in \Z$ by Proposition \ref{quasi-iso middle extension}
\[
F^p\Omega^\bullet(\V)_{(2)} \to F^pDR(\mathcal V_{mid}).
\]
We want to check that we obtain quasi-isomorphisms
\[
F^p\Omega^\bullet(\pi^*\V)_{(2)} \to \ell^2\pi_*F^pDR(\mathcal V_{mid}).
\]
The result will follow from the functoriality of $\ell^2\pi_*$ given by the previous lemma \ref{functoriality l2}. One has a natural isomorphism
\[
\Omega^\bullet(\pi^*\V)_{(2)} \to \ell^2\pi^* \Omega(\V)_{(2)}
\]
so one has to check that $\ell^2\pi_*\nabla$ induces a isomorphism
\[
[\ell^2\pi_*\nabla] \colon \ell^2\pi_*\left(\bigslant{F^pV^0\mathcal V _{mid}}{F^p\mathcal O(\V)_{(2)}}\right) \to \ell^2\pi_*\left(\bigslant{F^{p-1}V^{-1}\mathcal V _{mid}}{F^p\Omega^1(\V)_{(2)}}\right)
\]
which is true as $[\ell^2\pi_*\nabla]$ coincides with $\ell^2\pi_*[\nabla]$ where 
\[
[\nabla] \colon \bigslant{F^pV^0\mathcal V _{mid}}{F^p\mathcal O(\V)_{(2)}} \to \bigslant{F^{p-1}V^{-1}\mathcal V _{mid}}{F^p\Omega^1(\V)_{(2)}}
\]
is the morphism induced by $\nabla$, and $[\nabla]$ is an isomorphism by Proposition \ref{quasi-iso middle extension}.
\end{proof}

Finally one obtains the following result which gives a positive answer to \cite[Conjecture 3]{Eyssidieux22} for the case of pure Hodge module on Riemann surfaces.
\begin{thm}
If $\mathbb M$ is a Hodge module on a compact Riemann surface $X$, then the groups  $\mathcal U(\Gamma)\otimes_{\mathcal N(\Gamma)}\mathbb H^k(L^2dR(\mathbb M))$ is admits a pure Hodge structure of weight $w+k$ in the abelian category of $\mathcal U(\Gamma)$-modules. The Hodge filtration is induced by Saito's Hodge filtration on the perverse De Rham complex ${}^{p}\!DR(\mathcal M)$. 

In the case where $\mathbb M$ is a polarized Hodge module with pure support $X$ and with singularities at most at $\Sigma := X\setminus M$ there is an isomorphism of $\mathcal U(\Gamma)$-module
\[
\mathcal U(\Gamma)\otimes_{\mathcal N(\Gamma)}\mathbb H^k(L^2dR(\mathbb M)) \simeq \mathcal U(\Gamma)\otimes_{\mathcal N(\Gamma)}H^k_{(2),red}(\tilde M,\pi^*\V,\pi^*\omega_{Pc},\pi^*h)
\]
which is an isomorphism of Hodge structure, where the Hodge structure on the reduced $L^2$-cohomology comes from the decomposition of harmonic forms by type.
\end{thm}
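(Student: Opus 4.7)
The plan is to reduce the statement to the two pieces of the support decomposition of $\mathbb{M}$ on the curve $X$ recalled in this section. Write $\mathbb{M} = \mathbb{M}_1 \oplus \mathbb{M}_2$ where $\mathbb{M}_1$ has pure support $X$ and $\mathbb{M}_2$ has punctual support on $\Sigma$. Since $L^2dR$ is an additive $\partial$-functor and $\mathcal{U}(\Gamma)$ is flat over $\mathcal{N}(\Gamma)$, both the cohomology and the induced filtration split accordingly, so it suffices to produce a Hodge structure of weight $w+k$ on each summand separately and take their direct sum.

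For $\mathbb{M}_1$, which is the middle extension $\mathcal{V}_{mid}$ of a polarized variation of Hodge structure $\mathcal{V}$ of weight $w-1$ on $M$, I would combine the filtered quasi-isomorphism
\[
F^\bullet \Omega^\bullet(\pi^*\V)_{(2)} \longrightarrow \ell^2\pi_* F^\bullet DR(\mathcal{V}_{mid})
\]
established immediately before the statement with the perverse shift ${}^{p}\!DR = DR[1]$. This yields a filtered isomorphism
\[
\mathbb{H}^k(L^2dR(\mathbb{M}_1)) \simeq \mathbb{H}^{k+1}(X, \Omega^\bullet(\pi^*\V)_{(2)}) \simeq H_2^{k+1}(\tilde M, \pi^*\V, \pi^*\omega_{Pc}, \pi^*h).
\]
Applying Theorem \ref{Hodge structure L2 cohomology} then endows $\mathcal{U}(\Gamma) \otimes_{\mathcal{N}(\Gamma)} \mathbb{H}^k(L^2dR(\mathbb{M}_1))$ with a pure Hodge structure of weight $(w-1)+(k+1) = w+k$, whose Hodge filtration is exactly the one induced by Saito's filtration through the comparison above.

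For $\mathbb{M}_2 = {}_{D}\iota \mathcal{H}_{\Sigma}$, an explicit computation of the De Rham complex of $H[\del_z]$ at each puncture $p$ shows that ${}^{p}\!DR({}_{D}\iota \mathcal{H}_\Sigma)$ is quasi-isomorphic to the skyscraper sheaf $\mathcal{H}_\Sigma$ concentrated in degree $0$, with Hodge filtration inherited term by term from $\mathcal{H}_{\Sigma, p}$. Since $\pi\colon \tilde X \to X$ is unramified in this section, $\ell^2\pi_*\mathcal{H}_\Sigma$ at $p$ is simply $\ell^2(\Gamma) \otimes H_p$, so $\mathbb{H}^k(L^2dR(\mathbb{M}_2))$ is concentrated in degree $0$ and, after tensoring with $\mathcal{U}(\Gamma)$, carries a pure Hodge structure of weight $w$ obtained by extending scalars from the one on $\mathcal{H}_\Sigma$. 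This contributes only to the degree $k=0$ piece and is compatible with the decomposition above.

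For the isomorphism with the reduced $L^2$ cohomology when $\mathbb{M}$ has pure support $X$, I invoke the $\Gamma$-Fredholm property of $L^2 DR^\bullet(\tilde M, \pi^*\V)$ proved in Section~9 together with Corollary~\ref{U(Gamma)-torsion}: the torsion subquotient $\overline{\mathrm{Im}\, D}/\mathrm{Im}\, D$ is $\mathcal{U}(\Gamma)$-torsion, so tensoring collapses $H^k_2$ onto the reduced cohomology $H^k_{2,red}$, which is identified with the harmonic forms whose type decomposition matches the Hodge decomposition. The main technical point I expect to have to justify carefully is the compatibility between the Hodge filtration transported from Saito's filtration on ${}^{p}\!DR(\mathcal{V}_{mid})$ and the filtration on harmonic forms coming from the type decomposition; this will follow from Theorem~\ref{Dolbeault lemma} applied to the smooth subcomplex together with the filtered quasi-isomorphism of the paragraph above, which ensure that the two filtrations agree on associated graded pieces at the level of cohomology.
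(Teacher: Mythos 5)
Your proposal is correct and follows essentially the same route as the paper: support-decompose $\mathbb M$ into a pure-support-$X$ part handled by the filtered quasi-isomorphism with $\Omega^\bullet(\pi^*\V)_{(2)}$ together with Theorem \ref{Hodge structure L2 cohomology}, and a punctually supported part whose $L^2$ De Rham cohomology is concentrated in degree $0$ and equal to $\bigoplus_{p\in\Sigma}\ell^2(\Gamma)\otimes_{\C}H_{\Sigma,p}$ with its evident weight-$w$ Hodge structure. Your bookkeeping of the perverse shift (weight $(w-1)+(k+1)=w+k$) and your explicit treatment of the reduced-cohomology identification via $\Gamma$-Fredholmness and Corollary \ref{U(Gamma)-torsion} are slightly more detailed than the paper's own (very terse) argument, but the substance is the same.
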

\begin{proof}
The above proposition and the Theorem \ref{Hodge structure L2 cohomology} treat the case where $\mathbb M$ has pure support $X$, so in the following we can assume that $\mathbb M$ has punctual support $\Sigma$. On this case $\mathbb M$ is of the form $\iota \mathcal H_\Sigma$ where $\mathcal H_\Sigma$ is a sheaf of polarized Hodge structure of weight $w$ on $\Sigma$. In this case $H^k(L^2dR(\mathbb M)) = 0$ unless $k=0$ in which case it is equal to
\[
\bigoplus_{p\in \Sigma} \ell^2(\Gamma)\otimes_{\C}H_{\Sigma,p}.
\]
so the cohomology is fully reduced and one has a Hodge structure on $H^0(L^2dR(\mathbb M))$ of weight $w$ on the category of $\mathcal N (\Gamma)$-module given by the Hodge structure on each $H_{\Sigma,p}$, this gives the wanted Hodge structure on $\mathcal U (\Gamma)\otimes H^0(L^2dR(\mathbb M))$.
\end{proof}
\bibliographystyle{alpha}
\bibliography{Reference.bib}

\textbf{Bastien, Jean}

\textbf{Institut Fourier}, UMR 5582, Laboratoire de Mathématiques

\textbf{Université Grenoble Alpes}, CS 40700, 38058 Grenoble cedex 9, France

\textit{E-mail:} bastien.jean1@univ-grenoble-alpes.fr
\end{document}